%% file: main.tex
\documentclass{article}

\input{preamble.tex}

\title{Correlation decay in area-tilted line ensembles}
\author{Shirshendu Ganguly and Vilas Winstein}
\date{}

\begin{document}

\maketitle

\begin{abstract}
Random surfaces on a hard substrate often exhibit entropic repulsion wherein the
surface is propelled upwards to allow entropically preferable downward
fluctuations. A classical, albeit one dimensional, example is a Brownian
excursion. A much richer two dimensional example arises from a low-temperature
3D Ising model and the related solid-on-solid model in the presence of an
appropriate boundary. A powerful approach to studying such surfaces is through
their level curves, which form a family of non-intersecting random curves. In
\cite{CIW18Tightness,CIW19Confinement}, an ensemble of Brownian lines with geometrically increasing area tilts,
viewed as a Gibbs measure on an infinite family of random curves, was proposed
as a putative limiting model in this case. While this perspective, in
conjunction with algebraic inputs and connections to explicit SDEs such as Dyson
Brownian motion, has proven to be a major success story in the analysis of the
Airy line ensemble \cite{CH14Brownian,AH26Strong}, the area-tilted model lies outside the scope of such
techniques. Nonetheless, there have been important recent developments \cite{CG25Uniqueness,BCG25Characterizing}.

A particularly intriguing question about such line ensembles concerns their
relaxation and mixing properties when viewed as a Markov process, and in
particular the rate of decay of correlations in time. For the Airy line
ensemble, this decay is known to be inverse quadratic, for instance via
determinantal techniques \cite{PS02Scale}. The first quantitative bound on the decay of correlations in the area-tilted model,
established in \cite{CG25Uniqueness}, was slower than polynomial in time (the exponential of a power less
than one of the logarithm). An earlier result \cite{DLZ24Limiting} had established positivity
of the spectral gap for the finite-line version of the ensemble, without quantitative bounds,
using an abstract functional-analytic approach. This left open the important
question of the true decay rate of correlations for the infinite
ensemble, with no consensus prediction in the literature. Settling this question
for sufficiently large area-tilt strength, corresponding to sufficiently low
temperature for the 3D Ising model, we prove exponential decay of correlations
for the infinite ensemble and a uniform (in the number of lines) positive
spectral gap for the finite ensemble. Our proof is based on establishing a precise form of separation of
scales between curves of different indices, using a novel probabilistic
approach involving embedding supercritical branching processes in the line ensemble.

\end{abstract}%

\thispagestyle{empty}%
\vfill%
\noindent%
\makebox[\textwidth][c]{%
\includegraphics[width=\paperwidth]{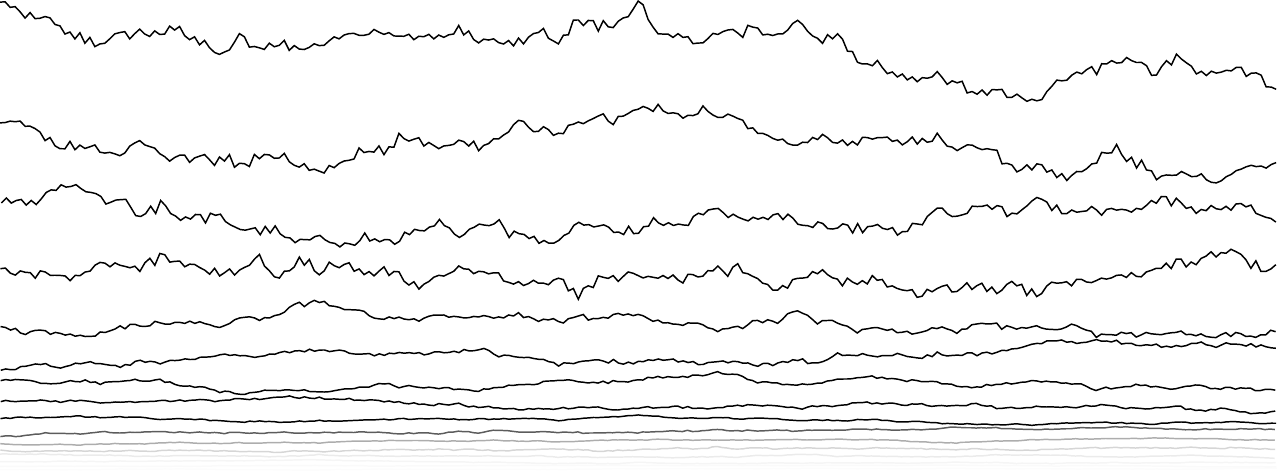}%
}
\vspace{-3cm}%

\clearpage

\setcounter{tocdepth}{2}
\tableofcontents

\vfill

\input{sections/1_intro_0}
\input{sections/2_inputs_0}
\input{sections/3_topline_0}
\input{sections/4_correlation_0}
\input{sections/5_spectralgap_0}

\bibliographystyle{plain}
\bibliography{references}

\end{document}

%% file: preamble.tex
\usepackage[margin=1in]{geometry}
\usepackage{amsmath, mathtools, amsfonts, mathrsfs, amsthm, amssymb, cite, suffix, enumitem, bm, graphbox, graphicx}
\mathtoolsset{showonlyrefs}
\usepackage[colorlinks]{hyperref}
\hypersetup{linkcolor=blue}
\usepackage{xcolor}

\usepackage{multirow}

\newtheorem{theorem}{Theorem}[section]
\newtheorem{lemma}[theorem]{Lemma}
\newtheorem{corollary}[theorem]{Corollary}

\newtheorem{proposition}[theorem]{Proposition}

\theoremstyle{definition}
\newtheorem{definition}[theorem]{Definition}
\newtheorem{remark}[theorem]{Remark}

\renewcommand{\geq}{\geqslant}
\renewcommand{\leq}{\leqslant}

\usepackage{mleftright}
\renewcommand{\left}{\mleft}
\renewcommand{\right}{\mright}

\renewcommand{\emptyset}{\varnothing}
\renewcommand{\P}{\mathbb{P}}
\newcommand{\E}{\mathbb{E}}
\newcommand{\Z}{\mathbb{Z}}
\newcommand{\R}{\mathbb{R}}
\newcommand{\N}{\mathbb{N}}

\newcommand{\cG}{\mathcal{G}}
\newcommand{\cD}{\mathcal{D}}
\newcommand{\cH}{\mathcal{H}}
\newcommand{\cA}{\mathcal{A}}
\newcommand{\cM}{\mathcal{M}}

\newcommand{\cL}{\mathcal{L}}

\newcommand{\cF}{\mathcal{F}}

\newcommand{\cW}{\mathcal{W}}

\newcommand{\cP}{\mathcal{P}}

\newcommand{\Cov}{\operatorname{Cov}}

\newcommand{\ind}[1]{\mathbf{1}_{\{#1\}}}
\newcommand{\eps}{\varepsilon}
\newcommand{\Exp}[1]{\exp\left(#1\right)}
\WithSuffix\newcommand\ind*[1]{\mathbf{1}_{#1}}

\newcommand{\Lip}{\operatorname{Lip}}

\let\temp\phi
\let\phi\varphi
\let\varphi\temp

\newcommand{\X}{\mathbf{X}}
\newcommand{\Y}{\mathbf{Y}}
\newcommand{\W}{\mathbf{W}}

\newcommand{\x}{\mathbf{x}}
\newcommand{\y}{\mathbf{y}}
\newcommand{\f}{\mathbf{f}}
\newcommand{\g}{\mathbf{g}}
\newcommand{\bl}{\bm{\lambda}}

\newcommand{\A}{\mathbb{A}}
\newcommand{\bridgemeasure}{\mathcal{B}}
\newcommand{\atmeasure}{\mathcal{L}}

\newcommand{\generator}{\mathscr{L}}
\newcommand{\gennl}{{\generator_n^\lambda}}
\newcommand{\pinl}{{\pi_n^\lambda}}
\newcommand{\pnl}{\mathscr{P}_{n,\lambda}}
\newcommand{\gapnl}{{\gamma_n^\lambda}}

\newcommand{\extalg}{\mathcal{E}}
\newcommand{\extextalg}{\widetilde{\mathcal{E}}}
\newcommand{\FS}{Y_{\mathrm{FS}}}

\newcommand{\Ai}{\mathrm{Ai}}

\newcommand{\treeb}{\mathcal{T}_b}

\newcommand{\BP}{\cP}

\newcommand{\sX}{\widetilde{X}}
\newcommand{\sY}{\widetilde{Y}}
\newcommand{\sW}{\widetilde{W}}

\newcommand{\ssX}{\widetilde{\X}}
\newcommand{\ssY}{\widetilde{\Y}}
\newcommand{\ssW}{\widetilde{\W}}

\newcommand{\Low}{\mathsf{Low}}
\newcommand{\High}{\mathsf{High}}
\newcommand{\Reverse}{\mathsf{Reverse}}
\newcommand{\AllReversed}{\mathsf{AllReversed}}

\newcommand{\Endpoints}{\mathsf{LowEnds}}

%% file: sections/1_intro_0.tex
\section{Introduction}
Line ensembles are collections of naturally occurring families of interacting
random curves. A classical example is Dyson Brownian motion (DBM), which
describes the motion of the eigenvalues of an $n\times n$ Gaussian unitary
ensemble (GUE) as its complex Gaussian entries perform independent (up to
Hermitian symmetry) Brownian motions. An alternate probabilistic description
\cite{G99Brownian} states that the DBM is the same as a collection of independent Brownian
motions conditioned not to intersect. This immediately implies that they possess
a simple yet powerful Markovian property, namely that the conditional law of a
finite subset of the curves on a compact interval, given everything else, is
that of non-intersecting Brownian bridges respecting the prescribed boundary
data. 

Non-crossing Brownian motions possess a rich algebraic structure via the
Karlin--McGregor formula \cite{KM59Coincidence}, which expresses correlation functions as
determinants.  Indeed, such formulas, along with probabilistic methods, were
employed in \cite{J01Discrete}, where the Airy line ensemble, whose top line is the
parabolic $\mathrm{Airy}_2$ process, was constructed as a scaling limit of the DBM. The
aforementioned resampling invariance property passes to the limit.  This
Markovian property, in this context referred to as the Brownian Gibbs (BG)
property, allows one to think of such line ensembles as special cases of
infinite-volume Gibbs measures. Natural questions about such objects entail a
precise understanding of various observables, including one-point tail
estimates, decay of correlations, ergodic properties, relaxation to
equilibrium, and so on. Probabilistic techniques, in conjunction with crucial
integrable inputs, have greatly facilitated the study of such questions for the
Airy line ensemble; see e.g.\ \cite{CH14Brownian,AH26Strong,GH22Sharp}.

\subsection{Low-temperature level curves and entropic repulsion}

A related class of examples arises from the study of local restrictions of level
curves of discrete random interfaces. Perhaps the most canonical example is
provided by the low-temperature three-dimensional Ising model with a hard floor,
that is, with positive boundary conditions everywhere except on the floor, where
the spins are fixed to be negative. This creates an interface separating the two
phases, which is pushed upward in order to permit entropically favorable
downward fluctuations. This phenomenon is known as \emph{entropic repulsion}.

A rigorous analysis of entropic repulsion in the $(2+1)$-dimensional
Solid-On-Solid (SOS) model---a low-temperature approximation of the
three-dimensional Ising model---was initiated by Bricmont, El Mellouki, and
Fr\"ohlich in 1986 \cite{BEF86Random}. Subsequently, it was shown in \cite{CLMST16Scaling} that the model
exhibits a sequence of nested level lines, each enclosing a macroscopic fraction
of the system (see also \cite{CMT17Entropic} for analogous results for other gradient
interface models). As a consequence of entropic repulsion, the energy associated
with the $i$th level line is proportional to the area enclosed between the $i$th
and $(i+1)$st lines, with a proportionality constant that grows exponentially
with $i$.

To investigate the finer properties of these level lines, the authors of
\cite{CIW18Tightness,CIW19Confinement}, in
a series of two papers, introduced and initiated the study of a conjectural
scaling limit consisting of an infinite collection of non-intersecting Brownian
bridges constrained above a hard wall and subject to geometrically increasing
area tilts. Consequently, the lower curves in the stack (that is, those with
larger indices) experience progressively stronger pressure toward the wall.
Recently, in \cite{HKS25Scaling}, a class of discrete area tilted models of
random walks have been shown to converge to this limiting model,
which we refer to as the \emph{$\lambda$-tilted line ensemble} (LE), denoted
by $\X=(X^1 > X^2 > \cdots)$; a precise definition is given shortly.

Since its introduction, there have been substantial advances in the study of
this model, including results on ergodicity, decay of correlations, tail
behavior, and the characterization of infinite-volume Gibbs measures. Many of
these questions were resolved in the two papers \cite{CG25Uniqueness,BCG25Characterizing}.
However, the
fundamental problem of determining the true decay of correlations of $\X$ remained open.
This question is the primary focus of the present paper, to which we now turn.

\subsubsection*{The question of correlation decay}

For the $\mathrm{Airy}_2$ process (the top line of the Airy line ensemble), it is
known, via determinantal techniques, that
\[
\Cov[\mathrm{Airy}_2(0),\mathrm{Airy}_2(t)] = C t^{-2} + O(t^{-4}),
\]
for some explicit constant $C$ \cite{PS02Scale}.
This slow decay of correlations is a manifestation of the long-range effects
that the bulk curves induce on the top curve.

In the area-tilted model, there are two competing forces at play.  On the one
hand, there are arbitrarily many curves, creating the possibility of long-range
effects as above. On the other hand, the geometrically increasing area tilts
create a confinement of the curves along with a degree of separation of scales
and, consequently, a sense of independence. This leads to the intriguing
question of deciding which effect is dominant.

Another model which shares the feature of being confined despite having
arbitrarily many lines is the \emph{Dyson--Ornstein--Uhlenbeck process}, i.e.\ a
collection of Brownian particles evolving under a confinement potential and a
repulsive interaction. In \cite{BCL23Universal}, relying on an explicit SDE
representation, it was shown that the process has a spectral gap which is
bounded away from zero uniformly in $n$, the number of curves, leading to a
strong form of exponential decay of correlations.

The finite-line version of our area-tilted process also satisfies an SDE, but it
is defined implicitly via the Doob $h$-transform and is not particularly
tractable. Thus, for this model, the two well-known approaches relying on
determinantal or SDE structures are both ruled out.

Nonetheless, there have been important developments. The first work in this
direction \cite{DLZ24Limiting} studied the $n$-line version of $\X$ as a
diffusion process and showed that the generator has a positive spectral gap
\cite[Lemma 2.1]{DLZ24Limiting}, leading to exponential mixing.  However, their
methods, based on abstract functional analysis, were rather non-quantitative,
and no bound on the behavior of the spectral gap as a function of $n$ was given.

The first quantitative statement of correlation decay was given by \cite[Theorem 3.5]{CG25Uniqueness},
where it was shown that
\begin{equation}
\label{eq:37}
    \left| \Cov\left[X^1(0), X^1(t)\right] \right| \leq C \Exp{- c (\log t)^{3/7}}
\end{equation}
for some constants $C, c > 0$, where $X^1$ denotes the top line of $\X$.
One should expect that this covariance is in fact nonnegative via the FKG inequality,
and indeed we will prove this in the present article.
While the ideas leading to the above bound had multiple other strong applications
including an axiomatic characterization of $\X$, the bound itself is 
unsatisfying, as the true nature of correlations in $\X$ is likely to be either
polynomial or exponential.  Of course, \eqref{eq:37} itself does not suggest
which of these should hold, nor has there been even a heuristic presented in
the literature predicting the true correlation decay behavior.

\vspace{3mm}

We now turn to formally defining the area-tilted line ensemble and stating our main results.

\input{sections/1_intro_1_setup}
\input{sections/1_intro_2_results}
\input{sections/1_intro_3_iop}
\input{sections/1_intro_4_acknowledgements}

%% file: sections/1_intro_1_setup.tex
\subsection{Setup}
\label{sec:intro_setup}

In this section we present the definition of the infinite-volume
$\lambda$-tilted line ensemble $\X$ introduced informally above.
We will be relatively brief, only presenting context which is essential to the definition and
to our proofs.
For further background on $\X$ and other line ensembles, we refer the reader to the works
\cite{CG25Uniqueness,BCG25Characterizing,CH14Brownian}.

For any $n \in \N$, let us denote by $\A_+^n \subseteq \R_+^n$ the set
of decreasing positive sequences, i.e. $\x = (x^1, \dotsc, x^n)$ with $x^1 > \dotsb > x^n > 0$,
and similarly let $\A_+^\infty$ denote the set of infinite decreasing positive sequences
with the topology of pointwise convergence, i.e. $\x_k \to \x$ if $x_k^n \to x^n$ for all $n$.
The line ensembles we define will be random continuous functions from $\R$
(or an interval $[\ell,r]$ or $(\ell,r)$) to $\A_+^n$ (or eventually $\A_+^\infty$).
We endow these function spaces with the topology of uniform convergence on compact sets,
and the corresponding measure spaces with the Borel $\sigma$-algebra.

\subsubsection{The finite-line ensemble}
\label{sec:intro_setup_finite}

For any $n \in \N$, $\x,\y \in \A_+^n$, and $\ell < r$,
let us denote by $\bridgemeasure^{\x,\y}_{n,\ell,r}$ the distribution of $n$ independent
Brownian bridges on $[\ell,r]$, with the $i$th bridge starting at $x^i$ and ending at $y^i$.

\begin{definition}[Finite $\lambda$-tilted line ensemble]
\label{def:finiteLE}
For any $\lambda > 0$, let $\atmeasure^{\x,\y,\lambda}_{n,\ell,r}$ denote the probability distribution
on functions $\X_n=(X^1_n,X^2_n,\ldots, X^n_n) : [\ell,r] \to \A_+^n$ defined by the Radon--Nikodym derivative
\begin{equation}
    \frac{d \atmeasure^{\x,\y,\lambda}_{n,\ell,r}}{d \bridgemeasure^{\x,\y}_{n,\ell,r}}(\X_n)
    \propto \Exp{-2 \sum_{i=1}^n \lambda^{i-1} \int_\ell^r X^i_n(t) \,dt}
    \ind{\X_n(t) \in \A_+^n \text{ for all } t \in [\ell,r]}.
\end{equation}
\end{definition}

Note that the implicit normalization constant (termed the partition function) is
finite and nonzero, meaning that
the above definition is indeed valid for any $\x,\y \in \A_+^n$.
We may also extend the definition to all $\x,\y \in \overline{\A_+^n}$, the closure
of $\A_+^n$, in which strict inequality between adjacent elements is replaced by nonstrict inequality.
This is done via a limiting procedure
using a monotonicity property to be discussed in Section \ref{sec:inputs_monotonicity} below,
see for instance \cite{CH14Brownian,CIW18Tightness,CIW19Confinement} for more details.
When ignoring the boundary points $\ell$ and $r$, the
resulting measure is supported on functions $(\ell,r) \to \A_+^n$, i.e.\ the lines will not touch
in the interior of the interval.

Further, in many cases, we will need to impose a nonzero floor.
For any function $f : [\ell,r] \to \R$, we will denote by $\atmeasure^{\x,\y,\lambda}_{n,\ell,r,f}$
the measure as in Definition \ref{def:finiteLE} where the condition $\X_n(t) \in \A_+^n$
is replaced by $X_n^1(t) > \dotsb > X_n^n(t) > f(t)$ for all $t \in [\ell,r]$.
If we have two continuous functions $f, g : [\ell,r] \to \R$
with $f(t) < g(t)$ for all $t \in [\ell,r]$ as well as
$x^i \in (f(\ell), g(\ell))$ and $y^i \in (f(r),g(r))$ for all $i$, we
may also define $\atmeasure^{\x,\y,\lambda}_{n,\ell,r,f,g}$ to be the measure with
a floor $f$ and a ceiling $g$, defined analogously.

As a final remark on Definition \ref{def:finiteLE},
note that the constant $2$ inside the exponential
may be replaced by an arbitrary constant $a > 0$ throughout, as was done in previous works.
All of our results continue to hold for any $a > 0$, but we will not need to
emphasize this overall tilt strength parameter in the present work so we omit it for clarity.
The value $2$ is chosen for consistency with \cite{FS05Constrained}, and will be further commented
on later in Section \ref{sec:inputs_tailbounds}.

\subsubsection{Infinite-volume limits}
\label{sec:intro_setup_infinite}

One may obtain infinite volume line ensembles by taking a local
weak limit of finite ones. For instance, the  limit of
$\atmeasure^{\mathbf{0},\mathbf{0},\lambda}_{n,-T,T}$ as $T \to \infty$,
which we will denote simply by $\atmeasure_n^\lambda$ is an infinite volume
ensemble with $n$ lines. That the local limit exists in this case is a
consequence of monotonicity and this is argued precisely in \cite{CIW18Tightness}. 
This may be viewed as a diffusion process in $\A_+^n$, and it turns out to be the
Langevin diffusion whose stationary distribution may be expressed implicitly in terms of
the solution to a certain PDE; see \cite{IVW18Dyson,DLZ24Limiting} for more information.
While we will mostly be interested in the $n=\infty$ case (again defined
through monotone limits) for the moment let us discuss the simplest case of
$n=1$ which will feature centrally in our analysis.

\begin{remark}[The Ferrari--Spohn diffusion]
\label{rmk:fs}
In the special case of $n=1$, the measure $\atmeasure_1^\lambda$
does not depend on $\lambda$ and describes the celebrated \emph{Ferrari--Spohn diffusion},
which was introduced by \cite{FS05Constrained} as the local weak limit of
Brownian bridges conditioned to lie above an increasingly large semicircle or parabola
(after shifting down by the floor). That this shift causes an area tilt is a
consequence of the Girsanov theorem (we expand on this further in
Section \ref{sec:topline_strategy}).
Since then, the Ferrari--Spohn diffusion has been shown to arise as a scaling limit of 
a variety of area-tilted random walks \cite{ISV15Invariance}, as well as the interface
in the low-temperature 2D Ising model in the critical pre-wetting regime
\cite{GG21Local,IOSV22Critical}.
We will often denote the stationary Ferrari--Spohn diffusion by $\FS$,
though we will also use the term ``Ferrari--Spohn diffusion'' more generally to refer to
instances of Definition \ref{def:finiteLE} with $n=1$, having boundary conditions on a finite domain.
\end{remark}

\subsubsection{The infinite-line ensemble}

We could simply define the $\lambda$-tilted line ensemble measure $\atmeasure_\infty^\lambda$
as the local \emph{monotone} weak limit of the measures $\atmeasure_n^\lambda$
as $n \to \infty$ and this is indeed how it was first constructed in
\cite{CIW18Tightness}. Thus, $\atmeasure_\infty^\lambda$ should be thought of
as the \emph{zero-boundary} infinite volume line ensemble.
However, it was shown in \cite{CG25Uniqueness} to also be the limit of various other
finite line ensembles.
To help build further intuition let us also present an axiomatic viewpoint
introduced by \cite{CG25Uniqueness} which \emph{uniquely characterizes} the
distribution of the $\lambda$-tilted ensemble $\X$.

We begin by introducing the main property which the $\lambda$-tilted line ensemble $\X$
is characterized by, the \emph{$\lambda$-tilted Brownian-Gibbs property} or $\lambda$-BG
property for short.
To set up some notation for this, for any $\x \in \A_+^\infty$ and $n \in \N$,
let us define $\x^{\leq n} = (x^1,\dotsc,x^n) \in \A_+^n$ and
$\x^{>n} = (x^{n+1},x^{n+2},\dotsc) \in \A_+^\infty$, and for any
$\y \in \A_+^n$ and $\x \in \A_+^\infty$ let us define
$\y \x =(y^1,\dotsc,y^n,x^1,x^2,\dotsc) \in \A_+^\infty$. 
We will also use natural extensions of these notations to functions with values
in $\A_+^n$ or $\A_+^\infty$. For instance, $\X^{\le n}$ will denote the top $n$
curves of the ensemble $\X.$
Further, for a line ensemble $\X : \R \to \A_+^\infty$ (i.e. a random continuous function)
let us define
\begin{equation}
\label{eq:defextalg}
    \extalg^n_{\ell,r}(\X) \coloneqq \sigma \left(
        X^i(t) : \text{either } t \notin (\ell,r) \text{ or } i > n
    \right),
\end{equation}
the \emph{exterior $\sigma$-algebra}, for any $\ell < r$ and $n \in \N$.

\begin{definition}[$\lambda$-tilted Brownian-Gibbs property]
\label{def:BG}
A line ensemble $\X : \R \to \A_+^\infty$ satisfies the $\lambda$-BG property if
for any $n \in \N$, any $\ell < r$, and any
bounded measurable function $F$ on the space of paths $[\ell,r] \to \A_+^\infty$, we have
\begin{equation}
\label{eq:BG}
    \E_\X \left[ F(\X) \middle| \extalg^n_{\ell,r}(\X) \right]
    = \E_{\Y \sim \cL^{\X^{\leq n}(\ell), \X^{\leq n}(r),\lambda}_{n,\ell,r,X^{n+1}|_{[\ell,r]}}}
    \left[F(\Y \X^{> n}) \right]
\end{equation}
almost surely as $\extalg^n_{\ell,r}(\X)$-measurable random variables.
\end{definition}

In other words, $\X$ satisfies the $\lambda$-BG property if its distribution is fixed by
the operation of erasing the curves $X^i(t)$ for $t \in (\ell,r)$ and $i \leq n$, and then resampling
them as Brownian bridges under geometrically increasing area tilts conditioned to not intersect.

Note that the $\lambda$-BG property only specifies the conditional law of finitely many paths,
and shifting a line ensemble with the $\lambda$-BG property up deterministically by a constant
would result in another ensemble with the $\lambda$-BG property.
To remove this source of non-uniqueness, we will require that the lines tend to zero as the
index increases.

\begin{definition}[Asymptotically pinned to zero]
\label{def:APZ}
A line ensemble $\X$ is asymptotically pinned to zero if for any $ \eps, T > 0$ there is some
$k \in \N$ such that
\begin{equation}
    \P \left[ \sup_{t \in [-T,T]} X^k(t) \leq \eps \right] \geq 1 - \eps.
\end{equation}
\end{definition}

As it turns out, there is one more source of non-uniqueness for line ensembles satisfying
both Definitions \ref{def:BG} and \ref{def:APZ}.
Namely, there are such ensembles for which the top line grows to infinity like
\begin{equation}
    t^2 + L |t| \text{ as } t \to -\infty
    \qquad \text{ and } \qquad
    t^2 + R t \text{ as } t \to \infty
\end{equation}
whenever $L + R < 0$; see \cite{BCG25Characterizing} for more information.
To rule out this behavior, we introduce one final assumption, that the top line
does not grow to infinity.

\begin{definition}[Uniformly tight]
\label{def:UT}
A line ensemble $\X$ is uniformly tight if for any $\eps > 0$ there is some $C$ such that,
for all $t \in \R$ we have
\begin{equation}
    \P \left[X^1(t) \geq C\right] \leq \eps.
\end{equation}
\end{definition}

Now \cite[Theorem 3.7]{CG25Uniqueness} states that there is a \emph{unique}
line ensemble $\X$ which satisfies the $\lambda$-BG property, is asymptotically pinned
to zero, and is uniformly tight.

\begin{definition}[The $\lambda$-tilted line ensemble]
\label{def:main}
The $\lambda$-tilted line ensemble $\X$ is the unique line ensemble
which satisfies Definitions \ref{def:BG}, \ref{def:APZ}, and \ref{def:UT}.
\end{definition}

In the sequel we will also often use the phrase``$\lambda$-tilted'' for a line
ensemble (with finitely or infinitely many lines) simply to denote that they have
geometrically increasing area tilts with ratio $\lambda$.

%% file: sections/1_intro_2_results.tex
\subsection{Results}
\label{sec:intro_results}
Given the above preparation we can now state our main results. 

\subsubsection{Correlation decay}
\label{sec:intro_results_correlation}

\begin{theorem}[Exponential decay of correlations]
\label{thm:correlation}
    There are some $\lambda_0 > 1$ and $C, \gamma > 0$ such that for all $\lambda \geq \lambda_0$
    and all $i, j \in \N$,
    if $\X$ denotes the $\lambda$-tilted stationary line ensemble
    then for all $t > 0$, we have
    \begin{equation}
        \label{eq:mainbound}
        0 \leq \Cov[X^i(0), X^j(t)] \leq C e^{-\gamma t} \lambda^{-(i+j-2)/3}.
    \end{equation}
    The same bound holds if $\X$ is replaced by the stationary
    $n$-line $\lambda$-tilted ensemble $\X_n \sim \atmeasure_n^\lambda$, for any $n$.
\end{theorem}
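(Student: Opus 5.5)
The nonnegativity of the covariance will come from monotonicity. The $\lambda$-BG property gives stochastic monotonicity of the resampling measures in boundary data and floor, which will be recorded in Section \ref{sec:inputs_monotonicity}. Hence the stationary Markov process $t\mapsto \X(t)$ is attractive, so its law at times $0$ and $t$ satisfies the FKG/Harris inequality, and increasing functions such as $X^i(0)$ and $X^j(t)$ are positively correlated. For the $n$-line ensemble I would obtain this directly from the FKG property of the Gibbs measure $\atmeasure^{\mathbf 0,\mathbf 0,\lambda}_{n,-T,T}$ and pass to $T\to\infty$. The infinite-line case then follows from the monotone limit $n\to\infty$, using uniform integrability from the tail bounds. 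The same limiting argument means it suffices to prove the upper bound for $\X_n$ uniformly in $n$.

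For the upper bound I would use a coupling and write
\[
\Cov[X^i(0),X^j(t)]=\E\bigl[X^i(0)\bigl(\E[X^j(t)\mid \cF_{\le 0}]-\E X^j(t)\bigr)\bigr].
\]
By monotonicity, the conditional law of $\X$ on $[t/2,\infty)$ given $\cF_{\le 0}$ is sandwiched between the ensemble started from the actual configuration at time $0$ and the stationary one. I would run a monotone coupling of two copies: one with top configuration $\X(0)$, one stationary. The difference $X^j(t)-\widetilde X^j(t)$ is nonnegative and vanishes once the two copies have coalesced.

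The core step is showing that coalescence happens at exponential rate uniformly in the number of lines. Here I would exploit separation of scales: for large $\lambda$, line $k$ lives at height of order $\lambda^{-(k-1)/3}$, since the Ferrari--Spohn scaling with tilt $a$ gives height $a^{-1/3}$. Line $k$ then essentially behaves like a Ferrari--Spohn diffusion with tilt $\lambda^{k-1}$ that only weakly feels the lines below it. The plan is as follows.

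First, show via Ferrari--Spohn comparison that each line $X^k$ has exponentially frequent times where it is close to its lower neighbour's scale; these are unit-order windows in which all lines below are pinned near zero relative to the scale of the line above.

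Second, embed a supercritical branching structure. Call a time interval good for line $k$ if the lines below coalesce there. Each good interval at level $k+1$, which has much faster time scale $\lambda^{-2(k)/3}$, spawns many good sub-intervals. The probability that line $k$ fails to find a good window decays exponentially, with a constant uniform in $k$ because of Brownian scaling.

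Third, conclude that with probability at least $1-Ce^{-\gamma t}$ the two coupled copies agree on all lines by time $t$.

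The factor $\lambda^{-(i+j-2)/3}$ then comes from the scale of $X^i(0)$ and the scale of $X^j(t)-\widetilde X^j(t)$. Combining these with Cauchy--Schwarz and the tail bounds of Section \ref{sec:inputs_tailbounds}, one may lose some exponent, which is absorbed by adjusting $\gamma$ and $\lambda_0$.

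The main obstacle is the uniformity in $n$ in the second step. Lower lines are infinitely many and their disagreement can propagate upward, so I expect the hardest part to be a quantitative multiscale bound showing that the influence of line $k+1$ on line $k$ is contracted by a factor $\lambda^{-c}$. Summing over $k$ via the branching process then requires large $\lambda$.
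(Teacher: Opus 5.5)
\textbf{There is a genuine gap, in the upper bound.}

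Your outline for nonnegativity matches the paper's: FKG for the zero-boundary Gibbs measure on $[-T,T]$, then monotone limits in $T$ and $n$. Two caveats. Attractiveness by itself does not give positive association. And the FKG inequality for area-tilted ensembles is not an available input; the paper has to prove it (Proposition \ref{prop:FKG}, via a discrete approximation and the FKG lattice condition).

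Your sandwich claim is not right. Given $\cF_{\le 0}$, the future is exactly the process started from $\X(0)$. That process is not ordered relative to an independent stationary copy, so $X^j(t)-\widetilde X^j(t)$ need not be nonnegative. The natural comparison object is the bottom (zero) configuration.

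More seriously, the core step has no mechanism: that the two copies \emph{agree on all lines} by time $t$ with probability $1-Ce^{-\gamma t}$ uniformly in $n$. Under a monotone coupling the two copies are not independent, so you cannot assemble on-scale events for them into a branching process with independent offspring. If you instead sample them independently, simultaneous equality of all lines at a common time has probability zero. The one-line crossing trick of Lemma \ref{lem:fs_coupling} does not extend to many lines. Your definition of a good interval (``the lines below coalesce there'') is circular. The multiscale contraction of the influence of line $k+1$ on line $k$, which you flag as the main obstacle, is left unproved. So the heart of the argument is missing.

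\textbf{How the paper avoids coalescence.} It compares $\X$ with the ensemble $\Y$ pinned to zero at $-\tfrac t2,\tfrac t2,\tfrac{3t}{2}$. For this ensemble $\Y(0)$ and $\Y(t)$ are independent and $\X\succeq\Y$.

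Lemma \ref{lem:delta} bounds the covariance under \emph{any} coupling by terms such as $\E[X^i(0)(X^i(0)-Y^i(0))]$. It uses the distributional domination to get $\E[(X^i-Y^i)^2]\le 2\E[X^i(X^i-Y^i)]$. Hence it suffices to produce a second coupling in which only the \emph{reverse inequality} $X^i\le Y^i+\eps$ holds at times $0$ and $t$, with probability $1-Ce^{-ct}$ (Proposition \ref{prop:reversal}).

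To build that coupling, $\X$ and $\Y$ are sampled independently. Then $\Low_\tau(\X)$ and $\High_\tau(\Y)$ are independent on-scale events on nested intervals of length $\lambda^{-2|\tau|/3}$. By the scaling relation, each has conditional probability bounded below uniformly in $\tau$. These events dominate a $\mathrm{Binomial}(b,q)$ branching process with $b=\lfloor\lambda^{1/3}/(4M)\rfloor$, which is supercritical for large $\lambda$. An infinite ray yields a single point where $X^k\le Y^k$ for every $k$.

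One then resamples the top $K$ lines on stopping domains bounded by such points, using the strong Gibbs property and the monotone coupling. Lines below $K$ are disposed of by choosing $K=K(t,\eps)$ so large that $\max_{[-t/2,3t/2]}X^{K+1}\le\eps$ with probability $1-e^{-t}$. No line-to-line contraction estimate is needed.

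The exponential rate also does not come from each line failing with exponentially small probability. Each window of length $4H$ succeeds only with a constant probability $\delta_0$, and $e^{-ct}$ comes from having $\Omega(t)$ independent windows. That requires two further ingredients, neither of which appears in your plan:
\begin{enumerate}
\item the a priori bound of Proposition \ref{prop:topline}: a positive fraction of lattice points have $X^1\le H$, with exponentially small failure probability, proved via Girsanov and a smooth envelope for $X^2$;
\item the auxiliary ensembles $\ssX,\ssY$, which decouple the windows.
\end{enumerate}
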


A couple of remarks are in order. First note that this is a significant
improvement over the prior best-known bound \cite[Theorem 3.5]{CG25Uniqueness},
which stated that
\begin{equation}\label{previous1234}
    \left| \Cov[X^1(0), X^1(t)] \right| \leq C \Exp{- c (\log t)^{3/7}}
\end{equation}
for some constants $C, c > 0$. However, the latter was stated for any
$\lambda > 1$, while the above theorem assumes $\lambda$ to be large enough.  We
expect the bound \eqref{eq:mainbound} to be true for all $\lambda>1$ too with $\gamma \to 0$ as
$\lambda \to 1$. We expand on this later in Section \ref{sec:closeto1}.
Additionally, we state the result for correlations between different lines,
not just the top line, and this generalization will be used crucially in the proof of our next result.

Next, note that our result is an upper bound on the covariance rather than the correlation.
For $i,j$ fixed (not depending on $t$), this immediately implies a similar bound
on the correlation as well since they have unit order variances, but for
arbitrary $i,j$ one would need to argue a lower bound on the fluctuations of
$X^i(0)$ and $X^j(t)$ of orders $\lambda^{-(i-1)/3}$ and $\lambda^{-(j-1)/3}$
respectively. Unfortunately, such a lower bound is not recorded anywhere in the
literature.  While it may be proved by carefully analyzing the partition
function of the model, we do not pursue this presently, and further the above
formulation will already suffice for the spectral gap analysis. 

Finally, the nonnegativity of the covariance in \eqref{eq:mainbound} is a
consequence of the FKG inequality as mentioned below \eqref{eq:37}.
This inequality has not yet appeared in the literature for area-tilted line ensembles,
but we need it for our next result as will be explained in Section \ref{sec:intro_iop_spectral},
so we state and prove it in Section \ref{sec:spectralgap}.

\subsubsection{Spectral gap for the diffusion process}
\label{sec:intro_results_spectralgap}

Our second result is a lower bound on the spectral gap for the finite-line process 
which is uniform in the number of lines. Since we are
in an infinite-dimensional setting, some care has to be exercised to define
things properly which is what we first turn to.
Consider the $n$-line $\lambda$-tilted ensemble $\X_n$ for some $n \in \N$.
As mentioned in Section \ref{sec:intro_setup_finite}, this is a stationary
Langevin diffusion process with an invariant distribution $\pinl$ which may be expressed
implicitly as {the square of} the solution to a particular PDE \cite[end of Section 1]{DLZ24Limiting}.
We will consider the generator $\gennl$ of this diffusion process,
which is defined by the following equation, where $\x \in \A_+^n$:
\begin{equation}
\label{eq:gennldef}
    (\gennl f)(\x) \coloneqq \lim_{t \to 0} \frac{\E \left[ f(\X_n(t)) \middle| \X_n(0) = \x \right] - f(\x)}{t}.
\end{equation}
Note that this limit may not exist, even for $f \in L^2(\A_+^n, \pinl)$, but it is well-defined
on a dense subspace $\cD(\gennl)$, e.g.\ by the Hille--Yosida theorem
\cite[Theorem II.3.5]{EN00One}. The latter is a general statement which applies
as soon as the Markov semigroup exhibits continuity in a strong sense at $0$
which in this case is a consequence of the almost sure continuity of the paths
in $\X_n.$ We refer the reader to \cite{EK09Markov} for the complete functional analytic
details.
Thus $\gennl$ is a densely defined unbounded operator, and moreover it is
self-adjoint and negative semidefinite
with respect to the inner product of $L^2(\A_+^n,\pinl)$, which is given by
\begin{equation}
    \left< f, g \right>_\pinl \coloneqq \E\left[f(\X_n(0)) g(\X_n(0)) \right].
\end{equation}
As $\pinl$ is a finite measure, the constant vectors are in this Hilbert space and
form a one-dimensional $0$-eigenspace for $\gennl$. Given this, we define the
\emph{spectral gap} of $\gennl$ through the commonly adopted variational
characterization as
\begin{equation}
\label{eq:gapnldef}
    \gapnl \coloneqq \inf \left\{
        \left< - \gennl f, f \right>_\pinl :
        f \in {\cD(\gennl)} \text{ with } \left< f, 1 \right>_\pinl = 0
        \text{ and } \left< f, f \right>_\pinl = 1
    \right\}.
\end{equation}
Note that because $\gennl$ is densely defined we must have $\gapnl < \infty$.

\begin{theorem}[Uniformly positive spectral gap]
\label{thm:spectralgap}
There are some $\lambda_0 > 1$ and $\gamma > 0$ such that
for all $\lambda \geq \lambda_0$ and all $n \in \N$,
the spectral gap $\gapnl$ of the generator $\gennl$
of the $n$-line $\lambda$-tilted diffusion is at least $\gamma$.
\end{theorem}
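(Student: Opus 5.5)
We will deduce the spectral gap from the covariance bound of Theorem \ref{thm:correlation} for the $n$-line ensemble. The bridge between them is the FKG inequality promised in Section \ref{sec:spectralgap}.

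The standard route goes through the semigroup $P_t = e^{t\gennl}$. The spectral gap is at least $\gamma$ if and only if $\|P_t f\|_{L^2(\pinl)} \leq e^{-\gamma t}\|f\|$ for all mean-zero $f$. Since $\gennl$ is self-adjoint, it is also enough to show that for a dense class of mean-zero $f$ one has $\langle f, P_t f\rangle_\pinl = \Cov[f(\X_n(0)), f(\X_n(t))] \leq C_f e^{-\gamma t}$. Indeed, by the spectral theorem, $\langle f, P_t f\rangle = \int e^{-\mu t}\, d\nu_f(\mu)$. If $\nu_f$ charged $[0,\gamma)$, this quantity would decay more slowly than $C_f e^{-\gamma t}$. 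The constant $C_f$ may depend on $f$; only the rate $\gamma$ must be uniform in $n$.

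We therefore need exponential decay of $\Cov[f(\X_n(0)), f(\X_n(t))]$ at a uniform rate, for $f$ in a class whose span is dense in $L^2(\pinl)$. We take this class to be the coordinatewise increasing functions, say smooth and Lipschitz. Differences of such functions span a dense subspace, and $\langle g-h, P_t(g-h)\rangle$ is controlled by the cross terms $\Cov[g(\X_n(0)),h(\X_n(t))]$ for monotone $g,h$. Note that $\mathbb{R}^n$ is totally ordered coordinatewise only as a lattice, so this is a genuine reduction rather than a triviality.

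For increasing Lipschitz $g,h$, we use a Newman-type inequality: for associated (FKG) random vectors $(U,V)$,
\begin{equation}
0 \leq \Cov[g(U),h(V)] \leq \sum_{i,j} \|\partial_i g\|_\infty \|\partial_j h\|_\infty \Cov[U^i,V^j].
\end{equation}
We apply it to $U=\X_n(0)$ and $V=\X_n(t)$. This requires that the joint law of $(\X_n(0),\X_n(t))$ is associated, i.e.\ the FKG inequality for the line ensemble on path space. We would prove that FKG inequality from the Brownian–Gibbs monotonicity (Section \ref{sec:inputs_monotonicity}) by discretizing to Gaussian-like walks with log-concave, and hence positively correlated, tilts. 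Inserting Theorem \ref{thm:correlation} gives a bound $C e^{-\gamma t}\sum_{i,j}\|\partial_i g\|\|\partial_j h\|\lambda^{-(i+j-2)/3}$. This is finite and independent of $n$ whenever the derivatives are bounded, since the weights $\lambda^{-(i-1)/3}$ are summable. This is exactly where the index dependence of Theorem \ref{thm:correlation} is needed.

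The main obstacle is twofold. First, FKG must be established for the two-time marginal, uniformly in $n$, including the area tilt and the non-intersection constraint. Second, the density/spectral argument must be made rigorous on the unbounded domain with the hard wall. For the second point, we would consider functions depending on finitely many coordinates with compact support in $\A_+^n$, and write each as a difference of two increasing Lipschitz functions, one of them a large multiple of $\sum_i x^i$. We would then use that the spectral measure argument needs decay only for the diagonal quantity $\langle f, P_t f\rangle$. This diagonal quantity is bounded by the sum of the four cross covariances, all nonnegative by FKG, and each decays at rate $\gamma$.
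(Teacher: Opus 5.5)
Your architecture is the paper's. You reduce the gap to exponential decay of $\Cov[f(\X_n(0)),f(\X_n(t))]$ for a dense class of Lipschitz $f$, control that by the Newman/Bulinski--Shashkin inequality under FKG (Lemma \ref{lem:NBS}), and insert Theorem \ref{thm:correlation} with its $n$-independent rate. Your spectral step is a valid and slightly cleaner substitute for the paper's approximate-witness-plus-Jensen argument (Lemma \ref{lem:lipschitz_witness}): the spectral projection of $-\gennl$ onto $[0,\gamma)$ kills a dense set of mean-zero vectors, so it vanishes on the mean-zero subspace. Your splitting $f=(f+K\sum_i x^i)-K\sum_i x^i$ is exactly the trick inside the proof of Lemma \ref{lem:NBS}, which handles non-monotone Lipschitz functions directly.

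The step that fails as written is FKG, which you correctly flag as the crux. Log-concavity does not imply positive association. A centered bivariate Gaussian with negative covariance has a log-concave density, yet its two coordinates, which are increasing functions, are negatively correlated. Nor does the monotonicity of Lemma \ref{lem:monotonicity} by itself give positive correlations within a single measure. It compares two measures with ordered data, and you would need an extra step, such as a Harris-type argument for attractive single-site dynamics.

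The property you need is the FKG lattice condition $\mu(\mathbf{w}_1\wedge\mathbf{w}_2)\,\mu(\mathbf{w}_1\vee\mathbf{w}_2)\geq\mu(\mathbf{w}_1)\,\mu(\mathbf{w}_2)$, i.e.\ log-supermodularity of the discretized weight. It does hold here, for three reasons:
\begin{itemize}
\item the area tilt is linear in the path, hence modular;
\item the nearest-neighbour increment weights have nonnegative cross-interaction (for Gaussian steps, $-\tfrac N2(a-b)^2$ contains the term $+Nab$);
\item the ordering, floor and ceiling constraints define a set closed under coordinatewise $\wedge$ and $\vee$.
\end{itemize}
The paper uses simple random walk paths instead. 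There, parity makes the step constraint a sublattice as well, and the lattice condition holds with equality (Lemmas \ref{lem:stayin} and \ref{lem:linear}). It then passes to the Brownian limit via Donsker's theorem and Lemma \ref{lem:mtfact}.

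Two smaller points. First, this argument gives FKG only on finite domains, so you still need the limit $T\to\infty$ to reach the stationary $\X_n$. Association survives weak limits for bounded continuous increasing functions, after which you truncate using the tail bounds; the paper instead applies Lemma \ref{lem:NBS} at finite $T$ and takes limits on both sides. Second, summability of $\lambda^{-(i-1)/3}$ is not why the index dependence is needed. For fixed $n$ the double sum over $i,j\leq n$ is finite regardless, and $C_f$ may depend on $n$. What matters is that Theorem \ref{thm:correlation} covers every pair $(i,j)$ with a rate $\gamma$ independent of $n$.
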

As may be expected given the suggestive notation, the values of $\lambda_0$ and $\gamma$ in
this result may be taken to be the same as those in Theorem
\ref{thm:correlation}. Further, as remarked in \cite{CG25Uniqueness}, it is
natural to view the infinite-line
$\lambda$-tilted ensemble of Definition \ref{def:main} as an \emph{infinite-dimensional}
diffusion process.
This perspective has not been made fully rigorous, but Theorem \ref{thm:spectralgap}
may be viewed heuristically as providing a spectral gap for the generator of this
infinite-dimensional diffusion.

The only prior work on the spectral gap was \cite[Lemma 2.1]{DLZ24Limiting}, which proved
that the spectral gap is positive for any $n$ by showing that $e^{\gennl}$
is a \emph{compact operator} on $L^2(\A_+^n)$, in particular, that it has discrete spectrum with
no accumulation points other than zero. Note that this already implies that the
correlations in the $n$-line process decay exponentially, albeit with possibly
an $n$ dependent rate which could go to zero as $n\to \infty$ and hence prevents
any assertion about the ensemble with infinitely many lines.

%% file: sections/1_intro_3_iop.tex
\subsection{Proof ideas}
\label{sec:intro_iop}

In this section we provide a brief sketch of the key ideas in the proofs of
Theorems \ref{thm:correlation} and \ref{thm:spectralgap}.

\vspace{3mm}

At a high level, in order to prove a quantitative version of independence at times $0$ and $t$,
we will compare $\X$ to an auxiliary ensemble $\Y$ which has full independence at these times.
Specifically, we will let $\Y$ be a $\lambda$-tilted ensemble which is pinned to zero
at times $-\frac{t}{2}$, $\frac{t}{2}$, and $\frac{3t}{2}$, so that its behavior on
the interval $\left[-\frac{t}{2},\frac{t}{2}\right]$, centered at $0$, is completely independent
from that on the interval $\left[\frac{t}{2}, \frac{3t}{2}\right]$, centered at $t$.
See Figure \ref{fig:auxiliary_ensemble} for an illustration of this ensemble.

\begin{figure}
    \centering
    \includegraphics[width=0.75\textwidth]{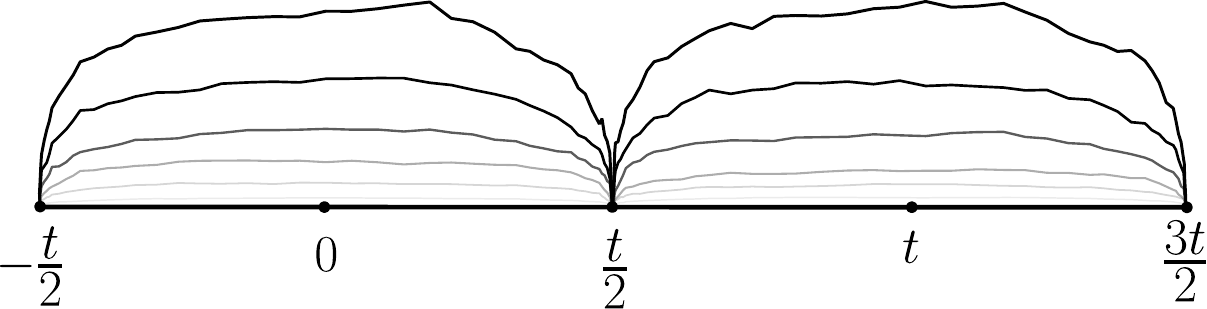}
    \caption{
        The auxiliary line ensemble $\Y$ with pinning which
        ensures that $\Y(0)$ and $\Y(t)$ are independent.
    }
\label{fig:auxiliary_ensemble}
\end{figure}

\subsubsection{Comparison via reversing stochastic dominance}
\label{sec:intro_iop_reverse}

As will be discussed in Section \ref{sec:inputs_monotonicity} below, there is a
monotonicity property for area-tilted line ensembles which implies that
$\X$ stochastically dominates $\Y$.
In other words, there is a coupling between the two
such that $X^k(s) \geq Y^k(s)$ for all $k \in \N$ and all $s \in \left[-\frac{t}{2},\frac{3t}{2}\right]$.
If we can find a coupling which \emph{reverses} this ordering at times $0$ and $t$,
then the joint behavior of $\left( \X(0), \X(t) \right)$ must be similar to that
of $\left( \Y(0), \Y(t) \right)$, which is independent.
In order to achieve this, we first sample $\X$ and $\Y$ independently and then try to find
\emph{random} times $\tau_\ell^0$, $\tau_r^0$, $\tau_\ell^t$, $\tau_r^t$ such that
\begin{equation}
    -\tfrac{t}{2} < \tau_\ell^0 < 0 < \tau_r^0 < \tfrac{t}{2} < \tau_\ell^t < t < \tau_r^t < \tfrac{3t}{2}
\end{equation}
and $X^k(\tau) \leq Y^k(\tau)$ for all $k \in \N$ and $\tau \in \{\tau_\ell^0,
\tau_r^0, \tau_\ell^t, \tau_r^t\}$.  We may then resample both $\X$ and $\Y$ on
the intervals $\left[\tau_\ell^0, \tau_r^0\right]$ and $\left[\tau_\ell^t,
\tau_r^t\right]$ and use another application of the monotonicity property to
couple these resamplings to ensure that the reversed ordering at the boundaries
of the intervals is maintained throughout their interiors.

It is worth pointing out that, while this basic strategy was also employed by
\cite{CG25Uniqueness} to prove \eqref{eq:37}, the key new idea in this paper
entails estimating the \emph{probability} of existence of the above stopping
times, since it is the latter which directly controls the resulting correlation
bound.  In \cite{CG25Uniqueness}, each integer point $r \in \left[-\frac{t}{2},
\frac{3t}{2}\right] \cap \Z$ was considered separately as a possible time of
reversal.  However, having $k$ lines reversed at any given integer point has
probability which is exponentially small in $k$ and hence $k$ must be
logarithmic in $t$. Since there are infinitely many lines, in \cite{CG25Uniqueness}
a priori control was used to estimate the effect of all the lines with index
bigger than $k$ and the eventual conclusion was the  bound presented in
\eqref{previous1234}.

The key realization is that one must take advantage of the spatial correlations
inherent to this model to improve upon the above bound.  The precise observation
we rely on is the fact that when the top lines of $\X$ and $\Y$ reverse, they
will remain reversed for some amount of time, which allows the second lines to
reverse, et cetera. We next see how to devise a strategy based on this
intuition.

\subsubsection{A branching process for finding reversal times}
\label{sec:intro_iop_branching}

As will be discussed in Section \ref{sec:inputs_scaling}, there is a ``1,2,3''-type
scaling relation 
for area-tilted lines relating the area tilt strength to the typical height and
the characteristic width of its Brownian-type correlations.
This ultimately leads to the fact that $X^{k+1}$, which has an area tilt of
strength $2 \lambda^k$, locally looks like a Ferrari-Spohn diffusion on the
spatial scale $\Theta(\lambda^{-2k/3}),$  with height fluctuating at scale
$\Theta(\lambda^{-k/3})$.  The same is true for $Y^{k+1}$.
The expert reader might already recognize this as strongly reminiscent of the
scaling one sees in models in the Kardar-Parisi-Zhang (KPZ) universality class
\cite{Q11Introduction}. This is not a coincidence. The area tilted line involves (via a
Girsanov transformation, see \eqref{eq:girsanov} in Section \ref{sec:topline}
below) a characteristic interaction between
local Brownian fluctuations and a global parabolic constraint which is also the
central feature determining the KPZ universality class.

This natural scale of fluctuations suggests that for any interval of length $\lambda^{-2k/3}$ there is some
positive probability $p$ for the curves $X^{k+1}$ and $Y^{k+1}$ to remain reversed on the whole interval.
We will refrain from attempting to optimize $p$, but rather will establish that,
importantly, $p$ does not depend on $\lambda$ or $k$ as the event being
considered is ``on-scale'' in that after rescaling all relevant quantities are
of order $1$.

Thus we may assemble these events into a branching process where the second line gets $\lambda^{2/3}$
tries to reverse while the first line remains reversed on its interval of length $1$.
Similarly, the third line gets $\lambda^{2 \cdot 2/3}$ tries to reverse under each interval
of length $\lambda^{-2/3}$ where the second line is reversed, et cetera.
See Figure \ref{fig:branching_process} for an illustration of this branching process.

As long as $\lambda^{2/3} p > 1$, this branching process is supercritical meaning that
with positive probability there is some infinite branch, and an infinite branch corresponds to
a point $s$ where $X^k(s) \leq Y^k(s)$ for all $k \in \N$.
Armed with this, we will repeat this entire branching process in each unit length interval
to conclude that the desired reversal times exist as soon as one of the branching
processes (which will be independent by construction) succeeds.
This will
occur with probability $1 - C e^{-ct}$, leading to Theorem
\ref{thm:correlation}.
See Figure \ref{fig:manytrials} for an illustration of this last step.

\begin{figure}
    \centering
    \includegraphics[width=0.4\textwidth, align=b]{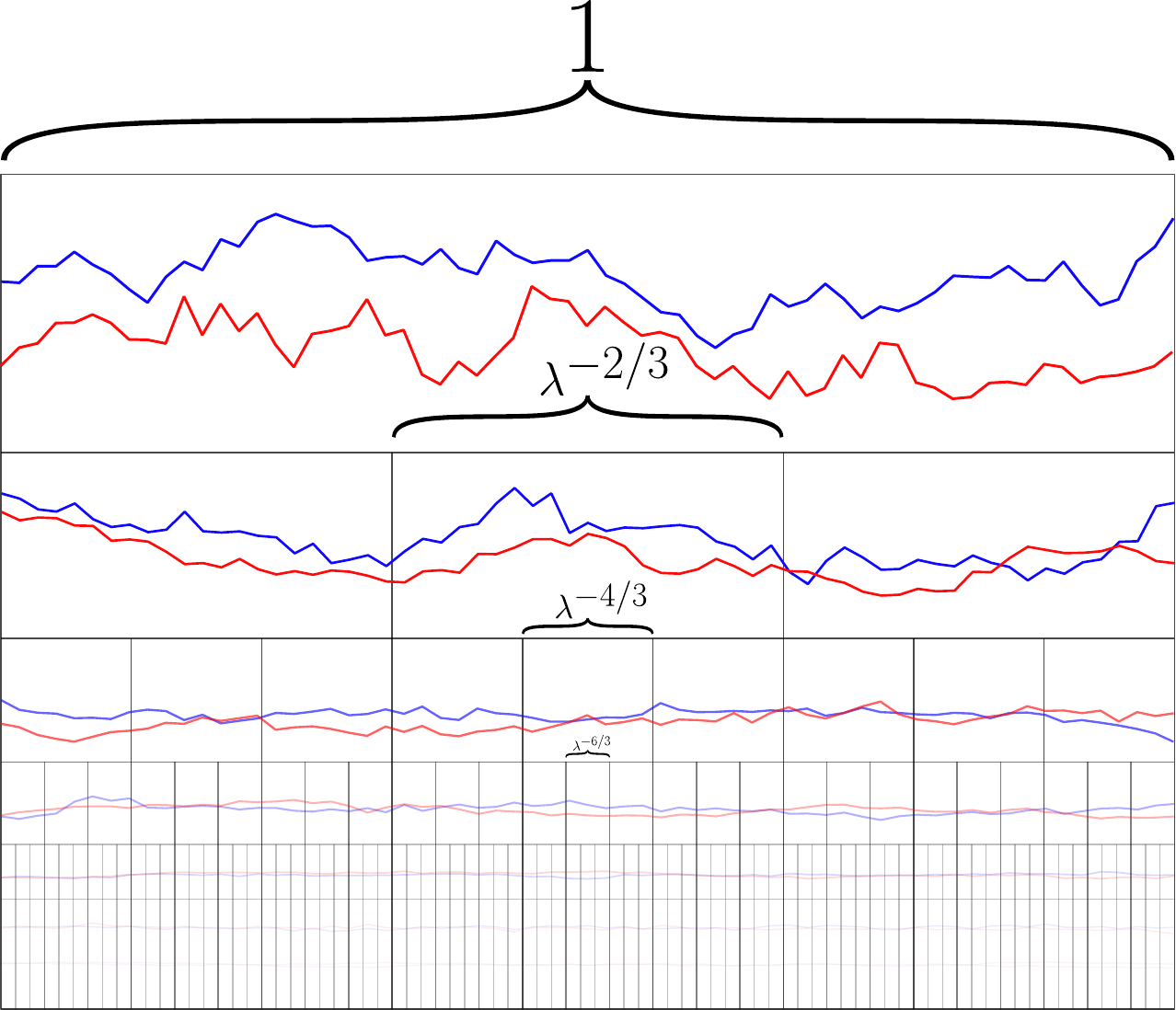}
    \hspace{1cm}
    \includegraphics[width=0.4\textwidth, align=b]{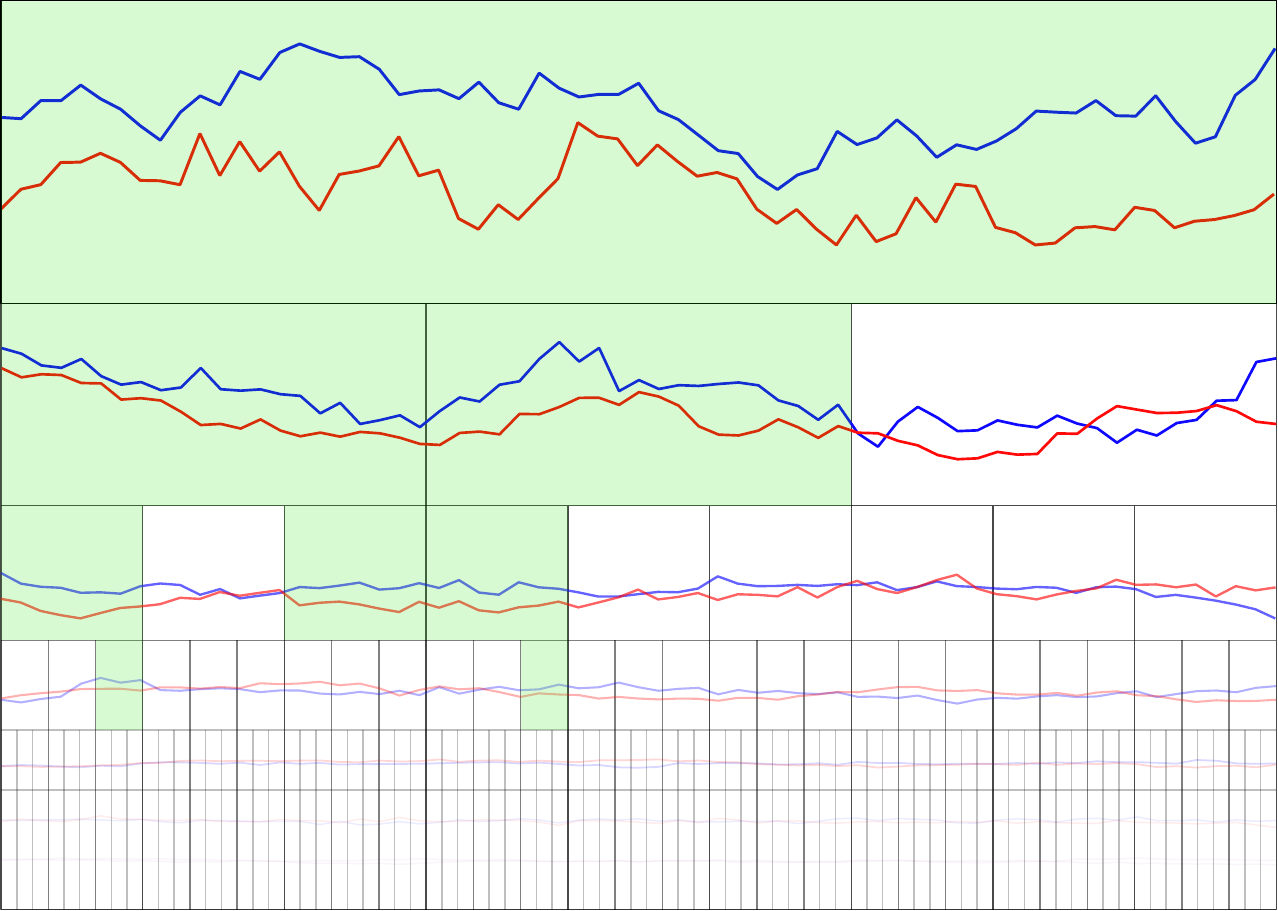}
    \caption{
        Left: the branching process structure of the reversal events
        for lines at the correct scales.
        Here $\X$ is represented in red while $\Y$ is represented in blue.
        Right: the initial few success nodes in the branching process are
        highlighted in green.
    }
\label{fig:branching_process}
\end{figure}

\subsubsection{Uniform spectral gap}
\label{sec:intro_iop_spectral}

Theorem \ref{thm:spectralgap} will be proved in Section \ref{sec:spectralgap}
relying on Theorem \ref{thm:correlation}.
The key observation is that the spectral gap is controlled by the
decay of correlations, not just of the lines in $\X_n$, but of more general \emph{functions}
of the values of $\X_n$ at different times. As will be shown, by standard
approximation theory, one can assume the function to be Lipschitz. 
The key step in the proof then will be to show that for \emph{any} Lipschitz continuous function
$h : \A_+^n \to \R$ there is some $C_h$ for which we have
\begin{equation}
\label{eq:lipschitzdecay}
    \left| \Cov \left[ h(\X_n(0)), h(\X_n(t)) \right] \right| \leq C_h e^{- \gamma t}.
\end{equation}
This is explicitly stated as Proposition \ref{prop:lipschitzdecay} below,
and may be of independent interest as a stronger form of Theorem \ref{thm:correlation}.
A key step in the proof of \eqref{eq:lipschitzdecay} is an application of a classical
covariance inequality \cite{N80Normal,BS98Asymptotical} which holds for systems
satisfying the FKG inequality. While various monotonicity properties for
area-tilted line ensembles have appeared in the literature, as will be discussed
in Section \ref{sec:inputs_monotonicity} below, we could not find a statement of
the FKG inequality in the form which we will need. Hence, we include the proof
in Proposition \ref{prop:FKG} for completeness.

\subsubsection{The case of $\lambda$ close to $1$}
\label{sec:closeto1}

We end this section with a brief discussion of the correlation decay of $\X$
when $\lambda$ is close to $1$. It is known, see for instance \cite{FS23Airy,DS25Uniform}, that when
$\lambda=1$, the ensemble $\X_n$, up to a global shift, essentially behaves as a
DBM with $n$ lines on a domain of size $n^{1/3}$, with fluctuations on scale
$n^{2/3}$. One intuitive way to see this is via the Girsanov transformation (see
Remark \ref{rmk:fs} or \eqref{eq:girsanov}), which converts the model into $n$
non-intersecting Brownian motions above a standard parabola, i.e., a DBM above a
parabola. Indeed, without the parabola, a DBM with $n$ paths over a domain of
size $t$ fluctuates on the scale $\sqrt{nt}$. Balancing this with the effect of
the parabolic curvature, which is of order $t^2$, gives $ \sqrt{nt}\approx t^2,
$ and hence $t\approx n^{1/3}$. It can also be shown that this model exhibits
exponential correlation decay on precisely this scale.

Let us now consider how this picture might inform the behavior of the
$\lambda$-tilted model when $\lambda\approx 1$. Write $\lambda=1+\varepsilon$.
Since $ \lambda^i \approx e^{\varepsilon i}, $ the area-tilt strength remains of
order one for $i=O(\varepsilon^{-1})$. Thus one might speculate that this model
exhibits a coarse version of the scale separation but now across every block of
$O(\varepsilon^{-1})$ consecutive lines, with the top block consisting of
$O(\varepsilon^{-1})$ lines and approximately exhibiting the $\lambda=1$
behavior. The above then suggests that the correlation decay of the full system
is governed by that of this top block and hence we should obtain exponential decay of
correlations at scale $O(\varepsilon^{-1/3}).$ We leave this to future work.

\begin{figure}
    \centering
    \includegraphics[width=0.85\textwidth]{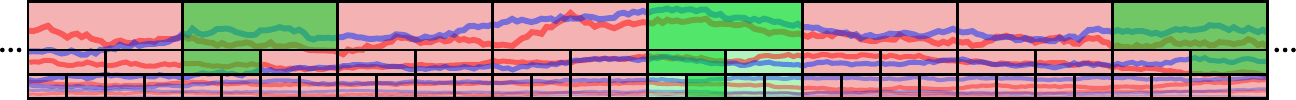}
    \caption{
        We repeat the branching process pictured in Figure \ref{fig:branching_process} in
        each unit length interval, after applying a construction to make these trials independent.
        Since each one has some positive chance to have an infinite branch,
        if there are $\Omega(t)$ independent trials then there will be a successful trial somewhere
        with probability at least $1 - C e^{-ct}$.
    }
\label{fig:manytrials}
\end{figure}

%% file: sections/1_intro_4_acknowledgements.tex
\subsection{Acknowledgements}
\label{sec:intro_acknowledgements}
SG thanks Pietro Caputo and Milind Hegde for useful discussions. 
SG was supported by a Miller Research Professorship at the Miller Institute for Basic Research in Science and NSF Career grant-1945172.
VW was supported by the NSF Graduate Research Fellowship grant DGE 2146752.

%% file: sections/2_inputs_0.tex
\section{Inputs from the literature}
\label{sec:inputs}

Before proceeding with our main analysis, in this section we collect
a few key inputs from the literature on area-tilted line ensembles,
as well as some mild extensions thereof.
These include the strong area-tilted Brownian Gibbs property in Section \ref{sec:inputs_strongbg}
allowing us to resample on so-called \emph{stopping domains},
as well as  the important monotonicity and scaling relations in Sections \ref{sec:inputs_monotonicity}
and \ref{sec:inputs_scaling} respectively,
and some upper tail estimates for $\X$ in Section \ref{sec:inputs_tailbounds}.

Finally, in Section \ref{sec:inputs_comingdown} we produce a bound showing that the top line
of a $\lambda$-tilted line ensemble will come down to an $O(1)$ height in the middle
of the interval $[-T,T]$ under appropriate boundary conditions. 
This follows by iterating \cite[Proposition 5.5]{BCG25Characterizing}, which
states that it will come down to a height of $O(T^\delta)$ for any $\delta > 0$.

\input{sections/2_inputs_1_strongbg}
\input{sections/2_inputs_2_monotonicity}
\input{sections/2_inputs_3_scaling}
\input{sections/2_inputs_4_tailbounds}
\input{sections/2_inputs_5_comingdown}

%% file: sections/2_inputs_1_strongbg.tex
\subsection{Strong area-tilted Brownian Gibbs property}
\label{sec:inputs_strongbg}

Much like the situation discussed in the pioneering work \cite{CH14Brownian} on line ensembles with
a Brownian Gibbs property (with no area tilt), the $\lambda$-BG property of Definition \ref{def:BG}
extends to a \emph{strong} $\lambda$-BG property, where when sampling the top $n$ lines
we may replace the deterministic domain $[\ell,r]$ by a random \emph{stopping domain}
$[\tau_\ell,\tau_r]$ which satisfies $\{ \tau_\ell \leq t \} \cap \{ \tau_r \geq
s \} \in \extalg^n_{t,s}(\X)$ for any $t < s$, recalling the exterior $\sigma$-algebra
defined in \eqref{eq:defextalg}.
This follows from the argument for Lemma 2.5 in \cite{CH14Brownian}.
In fact, the same argument as presented there also shows that we may extend the
notion of stopping domain to allow it to depend on external randomness, in some cases.

We now briefly expand upon this idea; as it is somewhat standard by now, for instance being used
already in \cite{CG25Uniqueness}, we will be somewhat informal.
Suppose that we have a family of $\sigma$-algebras
$\{ \extextalg_{\ell,r}^n : \ell < r, n \in \N \}$
for which \eqref{eq:BG} holds with $\extalg_{\ell,r}^n(\X)$ replaced by $\extextalg_{\ell,r}^n$.
If the random variables $\tau_\ell, \tau_r$ satisfy
\begin{equation}
    \{ \tau_\ell \leq t \} \cap \{ \tau_r \geq s \}
    \in \extextalg_{t,s}^n
\end{equation}
for all $t \leq s$,
then we may define the $\sigma$-algebra
\begin{equation}
    \extextalg_{\tau_\ell,\tau_r}^n = \left\{ A : A \cap \{ \tau_\ell \leq t \} \cap \{ \tau_r \geq s \} \in
    \extextalg_{t,s}^n \text{ for all } t \leq s \right\}.
\end{equation}
With this definition, the proof of \cite[Lemma 2.5]{CH14Brownian} shows that for
any bounded measurable function $F$ on the space of tuples $\{(\ell,r,\X) : \ell
< r, \X \text{ a continuous function } [\ell,r] \to \A_+^\infty \}$, we have
\begin{equation}
\label{eq:strongBG}
    \E_{\X} \left[ F(\tau_\ell, \tau_r, \X) \middle| \extextalg_{\tau_\ell, \tau_r}^n \right]
    = \E_{\Y \sim \atmeasure^{\X^{\leq n}(\tau_\ell), \X^{\leq n}(\tau_r), \lambda}_{n,\tau_\ell,\tau_r,X^{n+1}|_{[\tau_\ell,\tau_r]}}}
    \left[ F(\tau_\ell, \tau_r, \Y \X^{>n}) \right]
\end{equation}
as $\extextalg_{\tau_\ell,\tau_r}^n$-measurable random variables.
{For the right-hand side above,
recall the notation in Definition \ref{def:finiteLE} of the finite $\lambda$-tilted line ensembles
as well as the notation preceding Definition \ref{def:BG}.}

In practice, we will have two independent ensembles $\X$ and $\Y$ and we will
use the extended exterior $\sigma$-algebras
\begin{equation}
    \extextalg^n_{\ell,r} = \extalg^n_{\ell,r}(\X) \otimes \extalg^n_{\ell,r}(\Y).
\end{equation}
Because $\X$ and $\Y$ are independent, the standard
Brownian Gibbs property does hold for this choice, and so we may also
use the strong Brownian Gibbs property with a domain which is a stopping domain
with respect to this class of $\sigma$-algebras to resample either $\X$ or $\Y$ (or both).

Finally, by conditioning on the behavior of the top lines to stay the same as before resampling,
we may remove the restriction that we must resample \emph{all} of the top $n$ lines.
We will use this in our arguments freely by saying that we are resampling only a particular line or
lines.
When resampling the lines with indices between $m$ and $n$, for instance, we will condition on
the following extended notion of the external $\sigma$-algebra
\begin{equation}
    \extalg^{m,n}_{\ell,r}(\X) \coloneqq
    \sigma \left( X^k(s) : k \notin [m,n] \text{ or } s \notin (\ell,r) \right),
\end{equation}
or a version of this which allows for external randomness as above.
If $I = [\ell,r]$, we will often use $\extalg^{m,n}_I$ to denote the same $\sigma$-algebra.

%% file: sections/2_inputs_2_monotonicity.tex
\subsection{Monotonicity}
\label{sec:inputs_monotonicity}

Area-tilted line ensembles satisfy a very useful monotonicity property which states, informally,
that when one raises the boundary conditions, floor or ceiling, or decreases the area tilt strength,
the height of curves will stochastically increase.

To state this formally, let us define the partial order $\preceq$ between functions
$\f, \g : [\ell,r] \to \A_+^n$ by $\f \preceq \g$
if $f^i(t) \leq g^i(t)$ for $i = 1, \dotsc, n$ and for all $t \in [\ell,r]$.
Similarly, $\f \prec \g$ if the nonstrict inequality is replaced by a strict inequality.
For two probability measures $\cL$ and $\cM$ on functions $[\ell,r] \to \A_+^n$,
we will say that $\cL \preceq \cM$, i.e.\ $\cM$ stochastically dominates $\cL$
under the ordering $\preceq$ between functions, if the samples $\X_n \sim \cL$
and $\Y_n \sim \cM$ may be coupled so that $\X_n \preceq \Y_n$ almost surely.

A basic statement of monotonicity, recalling the notation of Definition \ref{def:finiteLE},
is that
\begin{equation}
    \atmeasure^{\x_-, \y_-, \lambda_-}_{n,\ell,r} \preceq
    \atmeasure^{\x_+, \y_+, \lambda_+}_{n,\ell,r}
\end{equation}
if $\x_- \preceq \x_+$, $\y_- \preceq \y_+$, and $\lambda_- \geq \lambda_+$.
We will however need a slight extension of the above result which allows for
more general area tilts which vary across space,
as well as differing floors and ceilings for each line in the ensemble.
In particular, we will need to slightly extend our definition of the
finite $\lambda$-tilted line ensemble to allow for these features.

\begin{definition}[Extended finite $\lambda$-tilted line ensemble]
\label{def:extFLE}
For $\x, \y \in \A_+^n$ and functions
$\f, \g: (\ell,r) \to \A_+^n$ with $\f \prec \g$ and $\bl : (\ell,r) \to \R^n$,
let  $\atmeasure^{\x,\y,\bl}_{n,\ell,r,\f,\g}$ denote the probability measure
with the following Radon--Nikodym derivative relative to the
Brownian bridge measure $\bridgemeasure^{\x,\y}_{n,\ell,r}$:
\begin{equation}
    \frac{d\atmeasure^{\x,\y,\bl}_{n,\ell,r,\f,\g}}{d\bridgemeasure^{\x,\y}_{n,\ell,r}}(\X_n)
    \propto \Exp{-2 \sum_{i=1}^n \int_\ell^r \lambda^i(t) X^i_n(t) \,dt}
    \ind{\X_n(t) \in \A_+^n \text{ for all } t \in [\ell,r]}
    \ind{\f \prec \X_n \prec \g}.
\end{equation}
Note that to recover the geometrically increasing area tilts, we should
take $\bl(t) \equiv (1,\lambda,\lambda^2,\dotsb,\lambda^{n-1})$.
\end{definition}

Now we may state the general monotonicity result which will be used frequently
in the sequel, often simply referred to as ``monotonicity'' without referencing
the following lemma directly.
This lemma is stated in \cite[Lemma 1.4]{CIW19Confinement} which in turn is based
on \cite[Lemma 2.6]{CH14Brownian}.

\begin{lemma}[Monotonicity]
\label{lem:monotonicity}
If $\x_- \preceq \x_+$, $\y_- \preceq \y_+$, $\bl_- \succeq \bl_+$,
$\f_- \preceq \f_+$, and $\g_- \preceq \g_+$, then
\begin{equation}
    \atmeasure^{\x_-,\y_-,\bl_-}_{n,\ell,r,\f_-,\g_-}
    \preceq
    \atmeasure^{\x_+,\y_+,\bl_+}_{n,\ell,r,\f_+,\g_+}.
\end{equation}
\end{lemma}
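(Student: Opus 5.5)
We prove Lemma \ref{lem:monotonicity} by the standard discretize, couple, and pass to the limit route of \cite[Lemma 2.6]{CH14Brownian}. The goal is to reduce to a monotone coupling of finite-dimensional Markov chains. We replace each Brownian bridge on $[\ell,r]$ by a discrete approximation: either Gaussian random walk bridges on a mesh of $m$ equally spaced times, or Bernoulli/simple random walk bridges with diffusive scaling. The area tilt becomes the Riemann sum $-2\sum_i\sum_j \lambda^i(t_j)X^i(t_j)\,\Delta t$, and the constraints $\X\in\A_+^n$ and $\f\prec\X\prec\g$ are imposed at mesh points. For each $m$ this gives a Gibbs measure $\mu_\pm^{(m)}$ on a finite-dimensional (or finite) state space.

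\textbf{Step 1: a monotone coupling in the discrete setting.} Run a heat-bath Glauber dynamics on each $\mu^{(m)}_\pm$. At each step we pick a line $i$ and an interior mesh time $t_j$, then resample the single coordinate $X^i(t_j)$ from its conditional law given all other coordinates. This conditional density is proportional to
\[
\exp\Bigl(-\tfrac{(x-X^i(t_{j-1}))^2+(X^i(t_{j+1})-x)^2}{2\Delta t}-2\lambda^i(t_j)x\,\Delta t\Bigr)
\]
restricted to the interval between $\max(X^{i+1}(t_j),f^i(t_j))$ and $\min(X^{i-1}(t_j),g^i(t_j))$. This is a log-concave density on an interval. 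Raising the neighbouring values shifts the Gaussian center up, decreasing $\lambda^i$ makes the exponential tilt larger, and raising the floor or ceiling moves the interval up. Each of these increases the law in the stochastic order. The cleanest way to see this is that the density ratio between the ``$+$'' and ``$-$'' laws is nondecreasing in $x$ (monotone likelihood ratio), and the truncation interval moves up in both endpoints. So we can couple the two updates through a common uniform variable via the inverse CDF. This preserves $\X_-\preceq\X_+$ coordinatewise, and it handles boundary data, tilts, floors and ceilings simultaneously.

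\textbf{Step 2: from dynamics to stationary laws.} Start the two chains from ordered initial configurations, say the maximal admissible one for $+$ and the minimal one for $-$ in the Bernoulli setting, or any ordered pair in the continuous setting. Run them with the same randomness. The chains are irreducible and aperiodic on the constrained state space and converge to $\mu^{(m)}_\pm$, and ordering is preserved at all times. Hence the limit laws satisfy $\mu^{(m)}_-\preceq\mu^{(m)}_+$. This uses that $\preceq$ is a closed partial order, so stochastic domination survives weak limits, via Strassen's theorem.

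\textbf{Step 3: the continuum limit.} We show $\mu^{(m)}_\pm\Rightarrow\atmeasure^{\x_\pm,\y_\pm,\bl_\pm}_{n,\ell,r,\f_\pm,\g_\pm}$ as $m\to\infty$. We expect this to be the main technical obstacle.

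The discrete bridges converge to Brownian bridges by Donsker's theorem. The Riemann-sum tilts converge to the integrals, uniformly on sets of paths with bounded sup norm. However, the hard constraints are only lower semicontinuous indicators, so convergence of the constrained measures needs the constraint boundary to be null and the partition function to be positive. The strict inequalities $\f\prec\g$ and $\x,\y\in\A_+^n$ (interior points) give positive probability of the open event $\{\f\prec\X\prec\g,\ \X\in\A_+^n\}$. The Brownian bridge measure also gives zero mass to paths that touch a continuous barrier without crossing it, so the Portmanteau theorem applies.

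Once convergence holds, closedness of $\preceq$ together with Strassen's theorem passes the domination to the limit. Tightness of the coupled pairs comes from tightness of the marginals. Boundary data in the closure of $\A_+^n$, or touching floors, are then handled by a further monotone limit, as described after Definition \ref{def:finiteLE}.
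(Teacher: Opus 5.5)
Your proposal is correct and is essentially the argument the paper relies on. The paper does not reprove this lemma but cites \cite[Lemma 1.4]{CIW19Confinement}, itself based on \cite[Lemma 2.6]{CH14Brownian}, and as recalled at the start of Section \ref{sec:spectralgap_discrete} those proofs proceed exactly by a discrete approximation, a monotone Glauber-dynamics coupling, and a continuum limit. One small point in Step 3: you impose the constraints only at mesh points (rightly, since this is what keeps the single-site update monotone), so the constraint event depends on $m$. Rather than a single portmanteau application, realize the Gaussian walk as the Brownian bridge sampled on the mesh, and use your null-boundary fact together with continuity of $\f$ and $\g$ to get almost sure convergence of the constraint indicators.
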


\begin{remark}
\label{rmk:sameboundary}
As mentioned below Definition \ref{def:finiteLE}, this monotonicity principle
allows us to extend that definition or Definition \ref{def:extFLE} to allow for
boundary values $\x,\y \in \overline{\A_+^n}$ (i.e.\ with non-unique entries) by
taking a monotone limit of boundary conditions in the interior of $\A_+^n$.  We
may then extend this to ensembles with infinitely many lines on a finite domain
with boundary conditions which may not be pairwise distinct, which we will use
frequently in our arguments, via weak convergence.  For instance, this procedure
allows us to define the ensemble $\Y$ mentioned in Section \ref{sec:intro_iop}
which has infinitely many lines pinned to zero at three points. Thus, throughout
the rest of the paper we will freely use such auxiliary objects in our arguments
without further commenting on how they are defined.
\end{remark}

Another important consequence is that the ensemble $\overline{\X} = \{ X^2, X^3,
\dotsc \}$ is stochastically dominated by an ensemble of lines with area tilts
$\lambda, \lambda^2, \dotsc$ and no ceiling, which in turn is stochastically
dominated by $\X$ itself.  Inequalities of this type have featured crucially in
arguments {for other line ensembles} in the past.  See e.g.\
\cite{GH22Sharp,GHZ25Van}, where they have often been termed as a counterpart to
the van den Berg-Kesten (BK) inequality for line ensembles.  This property has
also been useful in prior work on area-tilted line ensembles specifically, in
\cite[Remark 2.6]{CG25Uniqueness}, and so we state it as a lemma.

\begin{lemma}
\label{lem:secondline_domination}
Let $\X$ be the stationary infinite-line $\lambda$-tilted ensemble.  Then for
any $k \geq 1$, the distribution of $X^{k+1}(t)$ conditioned on $X^1, \dotsc,
X^k$ (and hence unconditionally) is stochastically dominated by the top line of
the stationary ensemble with area tilts $\lambda^k, \lambda^{k+1}, \dotsc$.  In
particular,  $X^2$ even when conditioned on $X^1$ is stochastically dominated by
$X^1$.
\end{lemma}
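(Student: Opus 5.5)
The plan is to realize $X^{k+1}$, conditionally on $X^1,\dots,X^k$, as a line ensemble with a ceiling, and then remove that ceiling using monotonicity (Lemma \ref{lem:monotonicity}).

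\textbf{Step 1: the conditional law on a finite window.} Fix $T>0$ large and $N>k$. Let $\cF = \sigma(X^1,\dots,X^k)$ and consider the exterior $\sigma$-algebra $\extalg^{k+1,N}_{-T,T}(\X)$. By the (index-restricted) $\lambda$-BG property of Section \ref{sec:inputs_strongbg}, conditionally on this $\sigma$-algebra the lines $X^{k+1},\dots,X^N$ on $[-T,T]$ have the law of the extended ensemble of Definition \ref{def:extFLE}. This ensemble has area tilts $\lambda^k,\dots,\lambda^{N-1}$, boundary values $\X^{k+1..N}(\pm T)$, floor $X^{N+1}$, and ceiling $X^k$.

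\textbf{Step 2: removing the ceiling and raising the floor.} Apply Lemma \ref{lem:monotonicity}: raising the ceiling to $+\infty$ (a monotone limit of ceilings $g\uparrow\infty$) stochastically increases the ensemble. Raising the floor and boundary data also increases it. We replace the floor $X^{N+1}$ by $0$ only after letting $N\to\infty$, which uses asymptotic pinning (Definition \ref{def:APZ}). For the boundary values, we raise them to a large level $M$. In this way the conditional law of $X^{k+1}$ given everything, including $X^1,\dots,X^k$, is dominated by the top line of an $(N-k)$-line ensemble on $[-T,T]$ with tilts $\lambda^k,\lambda^{k+1},\dots$, no ceiling, floor $X^{N+1}$, and boundary data $M$.

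\textbf{Step 3: limits.} First let $N\to\infty$, so that $X^{N+1}\to 0$ uniformly on $[-T,T]$ in probability. Domination is preserved under weak limits because the set $\{f\le g\}$ is closed. Next let $M=M(T)$ grow slowly and $T\to\infty$. We then need that the top line at time $t$ of the infinite ensemble with tilts $\lambda^k,\lambda^{k+1},\dots$ and boundary height $M(T)$ at $\pm T$ converges in law to that of the stationary ensemble with the same tilts.

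The stationary ensemble with tilts $\lambda^k,\lambda^{k+1},\dots$ is, by rescaling, the $\lambda$-tilted ensemble scaled by the ``1,2,3'' relation. By \cite{CG25Uniqueness}, it is the limit of finite-volume ensembles with boundary conditions that grow slowly enough. This is the step I expect to be the main obstacle: we must justify that the influence of the boundary data $M(T)$ washes out as $T\to\infty$. I would handle it with the uniqueness characterization (Definition \ref{def:main}). Any subsequential limit satisfies the BG property. Asymptotic pinning is inherited from the domination by the $X^{N+1}$ floor. Uniform tightness follows from a coming-down estimate of the type in Section \ref{sec:inputs_comingdown}, which brings the top line down to $O(1)$ height in the middle of $[-T,T]$. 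Alternatively, one can simply cite \cite[Remark 2.6]{CG25Uniqueness}.

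\textbf{Step 4: conclusion.} Integrating the conditional domination over $\cF$ gives the unconditional statement. The case $k=1$, combined with the scaling relation, gives domination by the top line of the ensemble with tilts $\lambda,\lambda^2,\dots$. That ensemble is in turn dominated by $X^1$, by monotonicity in the tilts ($\bl_-\succeq\bl_+$).
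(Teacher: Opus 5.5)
Your route differs from the paper's, and as written it has a gap in its load-bearing step (Step 3), which you yourself flag as the main obstacle. You apply the Gibbs property inside the stationary infinite-volume ensemble. There the boundary data $X^{k+1},\dots,X^N$ at $\pm T$ are random and were shaped by the ceiling $X^k$, so removing the ceiling does not give the stationary ensemble with tilts $\lambda^k,\lambda^{k+1},\dots$. It gives an ensemble with all boundary values $M$, and everything then hinges on showing that such ensembles on $[-T,T]$, with $M=M(T)$, converge to the stationary one. You do not prove this, and your sketch via uniqueness is flawed in two places:
\begin{itemize}
\item Asymptotic pinning cannot be ``inherited from the $X^{N+1}$ floor'': a floor is a lower bound, whereas pinning needs upper bounds on deep lines. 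These would have to come from, e.g., ceiling removal in finite volume, rescaling, and Lemma \ref{lem:comingdown}.
\item Uniform tightness at arbitrary $t$ also needs an actual argument, since Lemma \ref{lem:comingdown} only controls the midpoint of the interval.
\end{itemize}
Citing \cite{CG25Uniqueness} instead just outsources the hard step. There is also a smaller unaddressed point in Step 2: you can raise the boundary data to $M$ only on the event that they are at most $M$. This is repairable, because $X^{k+1}(\pm T)<X^k(\pm T)$. The event $\{X^k(\pm T)\le M(T)\}$ is $\sigma(X^1,\dots,X^k)$-measurable, and it holds for all large integer $T$ almost surely by the one-point tail bound and Borel--Cantelli. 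But this needs to be said.

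The missing idea, which is the content of the paper's sketch, is to remove the ceiling in the zero-boundary prelimit rather than in the stationary ensemble. The paper explicitly warns that ``the distribution of random boundary conditions may change when removing the ceiling.''
\begin{itemize}
\item Let $\mathbf{Z}$ be the $n$-line ensemble on $[-T,T]$ with zero boundary conditions. Conditionally on $Z^1,\dots,Z^k$, the remaining lines form an ensemble with deterministic (zero) boundary data, floor $0$ and ceiling $Z^k$.
\item Removing the ceiling yields exactly the zero-boundary $(n-k)$-line ensemble with tilts $\lambda^k,\dots,\lambda^{n-1}$.
\item Both objects converge monotonically as $n,T\to\infty$: the first to $\X$, the second to the stationary ensemble with tilts $\lambda^k,\lambda^{k+1},\dots$.
\item The conditional domination passes to the limit by testing against $\phi(Z^1,\dots,Z^k)\,F(Z^{k+1})$, with $\phi\ge 0$ bounded continuous and $F$ bounded continuous increasing.
\end{itemize}
This route needs no washing-out of boundary data, no $M(T)$ device, and no appeal to the uniqueness characterization.
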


We will only sketch the idea briefly here for $k=1$ with the same argument
working for any $k$.  Let $\Y = (Y^1, \dotsc, Y^n)$ be an ensemble with $n$
lines on a finite domain $[-T,T]$ with zero boundary conditions.  Then the top
line $Y^1$ acts as a nontrivial ceiling, and the distribution of $\overline{\Y}
= (Y^2, \dotsc, Y^n)$ conditioned on any realization of $Y^1$ is stochastically
dominated by the case when $Y^1$ is set to infinity, i.e., without any ceiling.
Then we may conclude the same for the zero boundary infinite ensemble by taking
a  (monotone) local weak limit by sending $n,T \to \infty$ to obtain the result.
Note that in this argument it is important that we use constant (in particular
zero) boundary conditions in the prelimit, as the distribution of random
boundary conditions may change when removing the ceiling.

%% file: sections/2_inputs_3_scaling.tex
\subsection{Scaling relation}
\label{sec:inputs_scaling}
As already mentioned in Section \ref{sec:intro_iop_branching},
the $\lambda$-tilted line ensemble satisfies a ``1,2,3''-type scaling relation,
in the sense that when the height is scaled by a factor of $\rho$,
the width should be scaled by a factor of $\rho^2$ and the area tilt should
be scaled by a factor of $\rho^{-3}$.
This identifies the correct scales for working with various lines in the ensemble,
which is crucial for our branching process setup.

To state this scaling relation rigorously, let us define the \emph{$\rho$-rescaling} $\f^{(\rho)}$
of a function $\f : [\ell,r] \to \A_+^n$ by setting
\begin{equation}
\label{eq:scaling}
    \f^{(\rho)}(t) \coloneqq \rho \cdot \f\left(\rho^{-2}t\right).
\end{equation}
Note that $\f^{(\rho)}$ is a function on $[\rho^2 \ell, \rho^2 r]$.

In the sequel, we will always apply the following scaling relation
for $\rho = \lambda^{-k/3}$, where $\lambda$ is the tilt strength in the
geometrically-increasing area tilted line ensemble of Definition \ref{def:finiteLE},
and $k$ is an integer.
Indeed, this is the setting of the proof of the scaling relation
in \cite[Lemma 1.1]{CIW19Confinement}.
However, the same proof applies to the more general setting of
Definition \ref{def:extFLE}, which allows for a slightly cleaner statement
which we present now.

\begin{lemma}[Scaling relation]
\label{lem:scaling}
For any $\rho > 0$, we have
\begin{equation}
    \X \sim \atmeasure^{\x,\y,\bl}_{n,\ell,r,\f,\g}
    \qquad \text{if and only if} \qquad
    \X^{(\rho)} \sim \atmeasure^{\rho \x, \rho \y, \rho^{-3} \bl^{(\rho)}}_{n, \rho^2 \ell, \rho^2 r, \f^{(\rho)}, \g^{(\rho)}}.
\end{equation}
\end{lemma}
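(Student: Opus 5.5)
The plan is a direct change-of-variables argument. Write $\Phi_\rho$ for the map $\f \mapsto \f^{(\rho)}$ from continuous paths $[\ell,r]\to\R^n$ to continuous paths $[\rho^2\ell,\rho^2 r]\to\R^n$. This map is a bijection with inverse $\Phi_{\rho^{-1}}$, so the reverse implication will follow from the forward one applied with $\rho^{-1}$. It therefore suffices to show that the pushforward of $\atmeasure^{\x,\y,\bl}_{n,\ell,r,\f,\g}$ under $\Phi_\rho$ is the claimed measure. I would track the three ingredients of Definition \ref{def:extFLE} separately: the reference bridge measure, the area-tilt density, and the indicator constraints.

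\emph{Step 1 (reference measure).} By Brownian scaling, if $B$ is a Brownian bridge on $[\ell,r]$ from $x$ to $y$, then $t\mapsto \rho B(\rho^{-2}t)$ is a Brownian bridge on $[\rho^2\ell,\rho^2r]$ from $\rho x$ to $\rho y$. To check this, write $B(s)=x+\frac{s-\ell}{r-\ell}(y-x)+ W(s)$ for a standard bridge $W$. The linear part maps to the linear interpolation between $\rho x$ and $\rho y$. The covariance $\rho^2\,\frac{(s\wedge s'-\ell)(r-s\vee s')}{r-\ell}$ with $s=\rho^{-2}t$ becomes the bridge covariance on the dilated interval. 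Applying this coordinatewise to the $n$ independent bridges gives $(\Phi_\rho)_*\bridgemeasure^{\x,\y}_{n,\ell,r}=\bridgemeasure^{\rho\x,\rho\y}_{n,\rho^2\ell,\rho^2r}$.

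\emph{Step 2 (constraints).} Multiplication by $\rho>0$ is an order-preserving bijection. Hence $\X_n(s)\in\A_+^n$ for all $s$ if and only if $\X_n^{(\rho)}(t)\in\A_+^n$ for all $t$. Likewise $\f\prec\X_n\prec\g$ if and only if $\f^{(\rho)}\prec\X_n^{(\rho)}\prec\g^{(\rho)}$. So the indicators in Definition \ref{def:extFLE} transform exactly into those of the rescaled measure.

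\emph{Step 3 (area tilt).} Substituting $s=\rho^{-2}t$ gives
\[
\int_\ell^r \lambda^i(s)X^i_n(s)\,ds=\int_{\rho^2\ell}^{\rho^2r}\rho^{-3}\lambda^i(\rho^{-2}t)\,\big(X^i_n\big)^{(\rho)}(t)\,dt,
\]
since $ds=\rho^{-2}dt$ and $X^i_n(\rho^{-2}t)=\rho^{-1}(X^i_n)^{(\rho)}(t)$. The exponential weight is therefore exactly the one for the tilt function $t\mapsto\rho^{-3}\bl(\rho^{-2}t)$ on the dilated interval. This matches the ``1,2,3'' heuristic: height scales by $\rho$, width by $\rho^2$, tilt by $\rho^{-3}$. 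I expect this to be the only delicate point, and it is a matter of bookkeeping. The tilt is a strength, not a height, so $\rho^{-3}\bl^{(\rho)}$ in the statement must be read as time-reparametrization by $\rho^{-2}$ together with the factor $\rho^{-3}$. If one instead inserted the extra height factor $\rho$ from \eqref{eq:scaling}, the weight would be off by a power of $\rho$. I would make this reading explicit; in the main application $\bl$ is constant in time, so the reparametrization is trivial and the effective tilts are $\rho^{-3}(1,\lambda,\dots,\lambda^{n-1})$.

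\emph{Conclusion.} Since the density of $\atmeasure$ with respect to the bridge measure is a functional of the path, it transforms under $\Phi_\rho$ by Steps 2 and 3 into the density of the rescaled measure. The normalizing constants agree automatically because both sides are probability measures. Combining with Step 1 yields the forward implication, and applying it with $\rho^{-1}$ gives the converse. Boundary data in $\overline{\A_+^n}$, and the infinite-line versions of Remark \ref{rmk:sameboundary}, follow by applying the identity along the approximating monotone sequences and passing to weak limits. This works because $\Phi_\rho$ is a homeomorphism for the topology of uniform convergence.
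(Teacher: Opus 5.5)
Your proof is correct and is the standard change-of-variables argument (Brownian scaling of the bridge measure, scale-invariance of the ordering, floor and ceiling constraints, and the substitution $s=\rho^{-2}t$ in the area integral) that the paper does not write out but defers to via \cite[Lemma 1.1]{CIW19Confinement}. You are also right about the tilt: the rescaled tilt must be $t\mapsto\rho^{-3}\bl(\rho^{-2}t)$, whereas reading $\bl^{(\rho)}$ literally via \eqref{eq:scaling} would give $\rho^{-2}\bl(\rho^{-2}t)$. Your reading is the one the paper actually uses, e.g.\ when a line with tilt $\lambda^{k}$ becomes a line with tilt $1$ after rescaling by $\rho=\lambda^{k/3}$.
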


This important relation will allow us to prove uniform bounds on various
probabilities of ``on-scale'' events appearing in our argument,
such as the event that the line $X^{k+1}$ in the $\lambda$-tilted line ensemble $\X$
remains above height $\Theta(\lambda^{-k/3})$ for time $\Theta(\lambda^{-2k/3})$.

%% file: sections/2_inputs_4_tailbounds.tex
\subsection{Upper tail bounds}
\label{sec:inputs_tailbounds}

In this section we present various upper tail bounds which will be used
throughout the article, both for the Ferrari--Spohn diffusion and the
infinite-line $\lambda$-tilted ensemble.

\subsubsection{Tail bounds for the Ferrari--Spohn diffusion}
\label{sec:inputs_tailbounds_fs}

Recall from Remark \ref{rmk:fs} that the Ferrari--Spohn diffusion $\FS$
is a stationary Langevin diffusion which is the local weak limit as $T \to \infty$
of the distribution in Definition \ref{def:finiteLE} with $n=1$.
As shown in \cite{FS05Constrained}, the stationary distribution of $\FS$
has a density which is proportional to
\begin{equation}
\label{eq:fssd}
    \Ai(x -\omega_1)^2 \cdot \ind{x > 0},
\end{equation}
where $\Ai$ is the Airy function and $-\omega_1$ is the first zero of $\Ai$.
Note that we keep the original conventions of \cite{FS05Constrained},
which is why we assumed in Definition \ref{def:finiteLE} that the constant
in front of the area tilt is $2$.
When comparing with other works on line ensembles with geometrically increasing
area tilts such as \cite{CG25Uniqueness,BCG25Characterizing},
the reader should note that this convention may not be the same.
The various constants, for instance in the tail bounds which we will state shortly,
differ under this change, but the values of these constants will not be important for
the present article.

Simply using the formula \eqref{eq:fssd} for the stationary distribution
and the known tail behavior of the Airy function \cite[Equation 10.4.59]{AS48Handbook}:
\begin{equation}
    \Ai(x) = \Exp{-\left(\tfrac{2}{3} - o(1)\right) x^{3/2}}
\end{equation}
as $x \to \infty$, one may derive a one-point upper tail bound for $\FS$:

\begin{lemma}[One-point upper tail bound for Ferrari--Spohn diffusion]
\label{lem:fs_uppertail_onepoint}
As $y \to \infty$, we have
\begin{equation}
    \P \left[ \FS(0) > y \right] \leq
    \Exp{- \left( \tfrac{4}{3} - o(1) \right) y^{3/2}}.
\end{equation}
\end{lemma}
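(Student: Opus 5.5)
The plan is to integrate the stationary density \eqref{eq:fssd} directly. Write the normalizing constant as $Z = \int_0^\infty \Ai(x-\omega_1)^2\,dx$, which is a fixed positive finite number. Then
\begin{equation}
    \P\left[\FS(0) > y\right] = Z^{-1}\int_y^\infty \Ai(x-\omega_1)^2\,dx ,
\end{equation}
and the task reduces to bounding this tail integral of the squared Airy function.

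For the upper bound on the integrand, I will use the asymptotic $\Ai(u) = \Exp{-(\tfrac23 - o(1))u^{3/2}}$ as $u\to\infty$, applied with $u = x-\omega_1$. This gives $\Ai(x-\omega_1)^2 \leq \Exp{-(\tfrac43 - o(1))(x-\omega_1)^{3/2}}$. Since $\omega_1$ is a fixed constant,
\begin{equation}
    (x-\omega_1)^{3/2} = x^{3/2}\,(1-\omega_1/x)^{3/2} = (1-o(1))\,x^{3/2}
\end{equation}
as $x\to\infty$. Hence, for every $\eps>0$, there is some $y_0$ such that
\begin{equation}
    \Ai(x-\omega_1)^2 \leq \Exp{-(\tfrac43-\eps)x^{3/2}} \qquad \text{for all } x\geq y_0 .
\end{equation}

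To integrate the tail, I will use that $x^{3/2} \geq y^{3/2} + y^{1/2}(x-y)$ for $x\geq y$, which follows from convexity. This gives, for $y \geq y_0$,
\begin{equation}
    \int_y^\infty \Exp{-(\tfrac43-\eps)x^{3/2}}\,dx \leq \Exp{-(\tfrac43-\eps)y^{3/2}}\cdot \frac{1}{(\tfrac43-\eps)\,y^{1/2}} .
\end{equation}

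Finally, the prefactor $Z^{-1}\big((\tfrac43-\eps)y^{1/2}\big)^{-1}$ is at most $\Exp{\eps y^{3/2}}$ once $y$ is large. This yields the bound with exponent $-(\tfrac43-2\eps)y^{3/2}$. Since $\eps>0$ is arbitrary, this is exactly the claimed bound with an $o(1)$ correction.

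The only real care needed is in converting the $o(1)$ from the Airy asymptotic in the variable $x-\omega_1$ into an $o(1)$ in $y$, uniformly over the tail $x\geq y$. The fixed-$\eps$ formulation above handles this. Note that the $o(1)$ in the Airy asymptotic is in fact $O(\log u / u^{3/2})$, coming from the polynomial prefactor, so no subtlety arises.
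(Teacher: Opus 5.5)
Your proposal is correct and takes the same route as the paper, which only remarks that the bound follows by integrating the stationary density \eqref{eq:fssd} against the Airy tail asymptotic $\Ai(u) = \exp\bigl(-(\tfrac23 - o(1))u^{3/2}\bigr)$. You supply the details the paper omits: the harmless shift by $\omega_1$, the tangent-line bound on the tail integral, and absorbing $Z^{-1}$ and the polynomial prefactor into the $o(1)$.
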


We will also need to bound the maximum value of the Ferrari--Spohn diffusion
on an interval.
The following corollary was proved in \cite[Lemma 5.3]{CG25Uniqueness}.

\begin{corollary}[Interval upper tail bound for Ferrari--Spohn diffusion]
\label{cor:fs_uppertail_interval}
There exist constants $C,c > 0$ such that for all $S \geq 1$ and $y > 0$ we have
\begin{equation}
    \P \left[ \max_{s \in [-S,S]} \FS(s) > y \right] \leq
    C S e^{- c y^{3/2}}
\end{equation}
\end{corollary}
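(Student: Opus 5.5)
We would deduce the bound from the one-point tail bound of Lemma \ref{lem:fs_uppertail_onepoint}, using stationarity, a union bound over a discretization of $[-S,S]$, and a local modulus-of-continuity estimate. Throughout, $y$ may be assumed larger than a fixed constant $y_0$, since for $y \le y_0$ the claimed bound is trivial after enlarging $C$ (the right-hand side is at least $1$ once $C \ge e^{c y_0^{3/2}}$).

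\emph{Step 1 (grid).} Partition $[-S,S]$ into $\lceil 2S\rceil$ intervals $I_j$ of length at most $1$, with left endpoints $s_j$. If $\max_{[-S,S]} \FS > y$, then either $\FS(s_j) > y/2$ for some $j$, or for some $j$ we have $\FS(s_j) \le y/2$ while $\max_{I_j}\FS > y$. By stationarity and Lemma \ref{lem:fs_uppertail_onepoint}, the first alternative has probability at most $(2S+1)\exp(-c y^{3/2})$.

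\emph{Step 2 (excursion within a unit interval).} By stationarity it suffices to bound
\[
\P\left[\FS(0)\le y/2,\ \max_{[0,1]}\FS > y\right].
\]
This requires a rise of $y/2$ in unit time. We compare with Brownian motion, using the Markov property of $\FS$ together with monotonicity (Lemma \ref{lem:monotonicity}): conditioned on its value at $0$, the path on $[0,1]$ is a Brownian-type path with area tilt and floor $0$. The tilt only pushes the path down, so the rise should be dominated by that of a Brownian motion, and hence should be at most of order $e^{-c y^2}\le e^{-cy^{3/2}}$.

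The one delicate point is that the right endpoint value $\FS(1)$ is random and could itself be large. To handle this, we also union bound over the endpoints: either $\FS(1) > y/2$, which has probability at most $e^{-cy^{3/2}}$ by the one-point bound, or both endpoints are at most $y/2$. In the latter case the path on $[0,1]$, conditioned on its endpoints, is a single-line tilted bridge. By Lemma \ref{lem:monotonicity}, removing the tilt and raising both endpoints to $y/2$ gives a stochastically larger object, namely a Brownian bridge from $y/2$ to $y/2$ conditioned to stay above $0$. The probability that this bridge exceeds $y$ is at most $C e^{-c y^2}$ by the reflection principle, since the conditioning on positivity has probability bounded below for $y \ge y_0$.

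\emph{Step 3 (summation).} Summing the bounds from Steps 1 and 2 over $O(S)$ intervals gives $C S e^{-c y^{3/2}}$ for $S\ge 1$. The main care needed is the justification of the endpoint conditioning in Step 2, which rests on the Gibbs/Markov property of $\FS$ (the $n=1$ case of Definition \ref{def:BG}) and is routine.
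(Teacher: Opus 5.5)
Your proof is correct. The paper does not prove this statement itself; it imports it directly from \cite[Lemma 5.3]{CG25Uniqueness}. So the comparison is between a citation and your self-contained derivation.

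Your route reduces the corollary to inputs already stated in the paper:
\begin{itemize}
\item the one-point bound of Lemma \ref{lem:fs_uppertail_onepoint}, applied at $O(S)$ grid points (it is cleanest to include both endpoints of every $I_j$, which you effectively do in Step 2);
\item the one-line Gibbs property of $\FS$;
\item Lemma \ref{lem:monotonicity}, used to raise both boundary values to $y/2$ and to lower the tilt to zero.
\end{itemize}
Lowering the tilt to zero is legitimate because Definition \ref{def:extFLE} allows $\bl \equiv 0$. Alternatively, note that the tilt is a decreasing density, so the positive Brownian bridge dominates the tilted one.

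The reflection principle then bounds the conditional probability of exceeding $y$ on an interval $I$ of length at most $1$ by
\[
\frac{e^{-y^2/(2|I|)}}{1-e^{-y^2/(2|I|)}} \le 2e^{-y^2/2} \qquad \text{for } y \ge 2.
\]
This is of lower order than $e^{-cy^{3/2}}$, so the $y^{3/2}$ exponent comes entirely from the one-point bound, as it should.

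Doing it yourself also gives a small bonus. If you split at $(1-\delta)y$ instead of $y/2$, the bridge term becomes $e^{-2\delta^2 y^2}$, still of lower order. This shows that any $c < \tfrac{4}{3}$ works for large $y$, matching the one-point exponent. The citation buys only brevity.
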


\subsubsection{Tail bounds for the infinite-line ensemble}
\label{sec:inputs_tailbounds_x}

In this subsection let us use $\X$ to denote the $\lambda$-tilted line ensemble
of Definition \ref{def:main}.
The optimal tail bound for the top line at a single point was pinned down
in \cite[Theorem 3.1]{CG25Uniqueness}, showing that to first order in the exponent
we have the same tail behavior as the Ferrari--Spohn diffusion.
Note that our constant differs from the statement of \cite{CG25Uniqueness}
due to our normalization assumption, but for the present work that constant
will not be important.

\begin{theorem}[One-point upper tail bound for $\lambda$-tilted line ensemble]
\label{thm:tailbound_onepoint}
As $y \to \infty$, we have
\begin{equation}
    \P \left[ X^1(0) > y \right] \leq \Exp{- \left(\tfrac{4}{3} - o(1)\right) y^{3/2}}.
\end{equation}
\end{theorem}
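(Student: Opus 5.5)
This bound is recorded from \cite[Theorem 3.1]{CG25Uniqueness}, adjusted to our normalization of the tilt. If I were to reprove it, I would compare $X^1$ with a Ferrari--Spohn line and then use Lemma \ref{lem:fs_uppertail_onepoint}. The difficulty is that $X^1$ sits above the random floor $X^2$ and has random boundary values.

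\textbf{Step 1 (resampling window).} Fix a large $y$ and a window $[-T,T]$ with $T=T(y)$ growing slowly in $y$. By uniform tightness (Definition \ref{def:UT}) and stationarity, $X^1(\pm T)\le M$ for some $M=o(y)$ except on an event of probability $\exp(-K y^{3/2})$ for any prescribed $K$. For this I would use an a priori interval bound of the form $\exp(-cz^{3/2})$ for the top line, e.g.\ from comparison with Corollary \ref{cor:fs_uppertail_interval}, evaluated at $z=M$ chosen just below $y$ in order. Next I condition on $\extalg^1_{-T,T}(\X)$. By the $\lambda$-BG property (Definition \ref{def:BG}), $X^1|_{[-T,T]}$ is then a single area-tilted Brownian bridge with floor $X^2|_{[-T,T]}$. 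By monotonicity (Lemma \ref{lem:monotonicity}), raising the boundary values to $M$ and the floor to $h\coloneqq\sup_{[-T,T]}X^2$ only increases it. With a constant floor $h$, the line minus $h$ is an $n=1$ ensemble with boundary values at most $M$. Its value at $0$ is dominated by a stationary Ferrari--Spohn diffusion plus an error that is negligible once $T\gg M^{1/2}$, since the effect of the boundary dies out in $O(1)$ time after coming down from height $M$. The tail of its value at $0$ above level $y-h$ would then be $\exp(-(\tfrac43-o(1))(y-h)^{3/2})$.

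\textbf{Step 2 (controlling the floor).} By Lemma \ref{lem:secondline_domination}, $X^2$ is dominated by the top line of the ensemble with tilts $\lambda,\lambda^2,\dots$. By Lemma \ref{lem:scaling} with $\rho=\lambda^{-1/3}$, that ensemble is $\lambda^{-1/3}$ times a copy of $X^1$ on a time window shrunk by $\lambda^{-2/3}$. So $\P[h>\delta y]\lesssim T\lambda^{2/3}\exp(-c\,\lambda^{1/2}\delta^{3/2}y^{3/2})$. For the constant $\tfrac43$ this is not good enough on its own: $\delta$ must tend to $0$, which makes the exponent small.

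\textbf{Step 3 (main obstacle).} The sharp constant is the hard part. My plan is to abandon the global sup and use a local floor. I would split the event according to the floor's height $h_j$ on dyadic sub-windows around $0$. Raising a floor only on a window of width $w$ around $0$ helps $X^1$ only as much as the area/Brownian cost of rising above it over that width. So the cost is at least $(\tfrac43-o(1))y^{3/2}$, reduced by at most $O(h\,y^{1/2})$, and only when $X^2$ is itself elevated to height $h$ on a width $\gtrsim y^{1/2}$. That elevation costs order $\lambda^{1/2}h^{3/2}+\lambda h\,y^{1/2}$, by a Girsanov/area lower bound for the rescaled line, which pays area $\lambda h\cdot y^{1/2}$. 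Since $\lambda>1$, this beats the saving $O(hy^{1/2})$ up to constants. I would try to make this precise by resampling $X^1,X^2$ jointly on $[-T,T]$ using the Brownian bridge density (Definition \ref{def:finiteLE}). The goal is to show that the joint cost of $\{X^1(0)>y,\ X^2\ \text{elevated}\}$ is at least $(\tfrac43-o(1))y^{3/2}$, then sum over the $O(\log y)$ dyadic cases. I expect balancing these two costs with explicit constants to be where all the work lies.
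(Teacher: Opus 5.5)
Your first sentence matches the paper's treatment. Theorem~\ref{thm:tailbound_onepoint} is not proved here; it is imported from \cite[Theorem 3.1]{CG25Uniqueness} with the constant adapted to the normalization of Definition~\ref{def:finiteLE}. The paper only uses it qualitatively (moment bounds, high-probability control of boundary values), so the value $\tfrac43$ plays no role downstream. The self-contained reproof you outline, however, does not work as written.

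\textbf{Step 1 rests on a false claim.} You cannot have $X^1(\pm T)\le M$ with $M=o(y)$ outside an event of probability $e^{-Ky^{3/2}}$. Conditionally on the exterior of $[-S,S]$, $X^1$ has nonnegative boundary values and floor $X^2\ge 0$. So by Lemma~\ref{lem:monotonicity}, letting $S\to\infty$, $X^1(T)$ stochastically dominates $\FS(0)$. The explicit density \eqref{eq:fssd} then gives $\P[X^1(T)>M]\ge e^{-(4/3+o(1))M^{3/2}}=e^{-o(y^{3/2})}$.

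The inputs you propose for this step are also inadequate. Uniform tightness gives no rate. The a priori interval bound is circular: comparing $X^1$ with Corollary~\ref{cor:fs_uppertail_interval} already requires removing the floor $X^2$, and Corollary~\ref{cor:tailbound_interval} is presented in the paper as a consequence of this very theorem. Any bound $e^{-cz^{3/2}}$ makes the boundary event negligible only once $M\ge (4/(3c))^{2/3}y$. At that height, domination by $\FS+M$ is useless. Forgetting the boundary via Lemma~\ref{lem:fs_coupling} needs a window of length $\gtrsim M^{3/2}\asymp y^{3/2}$; the $M^{1/2}$ relaxation you invoke is expected but not proved. Over such a window your floor control only gets worse.

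So the boundary cannot simply be discarded; it must be localized using the path itself. For instance, show via an area cost $\gtrsim \eta L y^{3/2}$ that $X^1$ drops below $\eta y$ within distance $L\sqrt y$ of $0$ on each side. Then resample on the stopping domain between those times (Section~\ref{sec:inputs_strongbg}).

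\textbf{Step 3 has no mechanism.} This is where the sharp constant lives, and it is only a heuristic; a balance ``up to constants'' cannot produce $\tfrac43$. Heuristically the floor never lowers the joint cost of $\{X^1(0)>y\}$: the second line's action is nonnegative, and a higher floor only restricts the first line. So what is missing is not a comparison of $\lambda$ against constants, but a rigorous decoupling of $X^1$ from the lower lines at cost $e^{o(y^{3/2})}$.

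One such mechanism uses that $\bridgemeasure$ is a product measure. Apply the $\lambda$-BG property to the top $n$ lines on a window $I$ with floor $X^{n+1}$. Then, since $\P[A\mid W>V]\le \P[A]/\P[W>V]$,
\[
\P\left[ X^1(0)>y \,\middle|\, \extalg^{1,n}_I(\X) \right] \le \frac{\P[W(0)>y]}{\P[W>V \text{ on } I]}.
\]
Here $W$ is a single tilted line on $I$ with the boundary values of $X^1$ and floor $X^{n+1}$. $V$ is the top line of an independent copy of lines $2,\dots,n$ with their boundary data and the same floor.

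For reasonable boundary data the denominator is $e^{-O(|I|)}$, which is negligible when $|I|=O(\sqrt y)$. By Lemmas~\ref{lem:secondline_domination}, \ref{lem:scaling} and~\ref{lem:confinement}, $X^{n+1}$ is uniformly tiny on $I$ once $n$ is large. Without an ingredient of this kind, or the argument of \cite{CG25Uniqueness}, your sketch does not establish the stated bound.
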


A useful corollary is an upper tail bound for the maximum of the top line on any
fixed interval.
This was proved as \cite[Corollary 5.5]{CG25Uniqueness}.

\begin{corollary}[Interval upper tail bound for $\lambda$-tilted line ensemble]
\label{cor:tailbound_interval}
There exist constants $C, c > 0$ such that for all $S \geq 1$ and $y > 0$,
we have
\begin{equation}
    \P \left[ \max_{s \in [-S,S]} X^1(s) > C \log(S) +  y \right]
    \leq C S e^{- c y^{3/2}}.
\end{equation}
\end{corollary}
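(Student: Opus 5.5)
The plan is to deduce the corollary from the one-point bound, Theorem~\ref{thm:tailbound_onepoint}, with a union bound over a grid, and to control fluctuations between grid points using the Brownian Gibbs property together with monotonicity. By stationarity, Theorem~\ref{thm:tailbound_onepoint} gives $\P[X^1(s) > y] \leq \Exp{-c y^{3/2}}$ for every $s$ and all $y$ above some threshold. After enlarging $C$, the same bound holds for all $y > 0$.

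First cover $[-S,S]$ by the $O(S)$ unit intervals $I_j = [j, j+1]$, $j \in \Z \cap [-S-1, S]$. By a union bound it suffices to show, for a single unit interval $I = [0,1]$, that
\begin{equation}
    \P\Bigl[\max_{s \in [0,1]} X^1(s) > y\Bigr] \leq C e^{-c y^{3/2}},
\end{equation}
again using stationarity. Summing over the $O(S)$ intervals then produces the factor $S$, so the $C\log S$ shift is not even needed. It is harmless, since it only weakens the claim.

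To prove the unit-interval estimate, enlarge the window to $[-1,2]$ and consider the event $A = \{X^1(-1) \leq y/2,\ X^1(2) \leq y/2\}$. By the one-point bound, $\P[A^c] \leq 2e^{-c' y^{3/2}}$. On $A$, apply the $\lambda$-BG property (Definition~\ref{def:BG}) with $n=1$ on $[-1,2]$. Conditionally on $\extalg^1_{-1,2}(\X)$, the curve $X^1$ there is a one-line area-tilted bridge above the floor $X^2$, with endpoints at most $y/2$.

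Lemma~\ref{lem:monotonicity} lets us compare with a simpler law: raising the endpoints to $y/2$ and deleting the area tilt (taking $\bl_+ = 0 \preceq \bl_-$) only increases the curve stochastically. The floor $X^2$ is still present, and it is the main obstacle, since it could be high. I would handle it by also intersecting with the event $\{\max_{[-1,2]} X^2 \leq y/4\}$. To bound the probability of its complement, Lemma~\ref{lem:secondline_domination} lets one control $X^2$ by a top line, but doing so is circular.

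A cleaner route avoids the floor issue entirely. Instead of raising the floor, one uses the fact that the conditional law of $X^1$ given the floor is dominated by a Brownian bridge conditioned to stay above the floor. For the maximum of that bridge to exceed $y$, either the floor itself must be high, or the bridge must make a large excursion of size $y/2$ on a unit interval, which costs $e^{-cy^2}$. The floor being high at some point means $X^2(s) > y/2$, hence $X^1(s) > y/2$ there.

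So the genuinely needed input is a chaining or bootstrap argument. Define $M(y) = \sup_s \P[\max_{[s,s+1]} X^1 > y]$. The above gives
\begin{equation}
    M(y) \leq C e^{-c y^{3/2}} + \P[\text{bridge excursion}] + (\text{a term controlled by } M(y/2)\text{-type events on shorter intervals}).
\end{equation}
I expect closing this recursion to be the hardest step. I would try to close it by dyadic refinement: a high maximum forces a high value at some dyadic grid point of mesh $\delta$, unless the path oscillates by more than $y/2$ within a mesh interval. The oscillation event is bounded by comparison, via the Gibbs property and monotonicity, with Brownian bridges above a floor. Since the floor only pushes paths up without increasing their oscillation in the coupled sense, this gives Gaussian bounds $e^{-c y^2/\delta}$. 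Choosing $\delta$ of constant order finishes the argument with a union bound over $O(1/\delta)$ grid points per unit interval.
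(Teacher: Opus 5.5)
The paper does not prove this statement; it imports it from Corollary~5.5 of \cite{CG25Uniqueness}. Your proposal therefore has to stand on its own, and as written it does not. The reduction is fine: the one-point bound plus a union bound over $O(S)$ unit intervals would even remove the $C\log S$ shift. But the unit-interval estimate, which carries all the content, is never established.

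\textbf{The gap.} Your dyadic step rests on the claim that the floor ``only pushes paths up without increasing their oscillation in the coupled sense.'' This is false. Lemma~\ref{lem:monotonicity} orders paths; it says nothing about increments. Given grid values $X^1(u),X^1(u+\delta)\leq y/2$, the event $\{\max_{[u,u+\delta]}X^1>y\}$ has conditional probability one as soon as $X^2$ exceeds $y$ somewhere in $(u,u+\delta)$, whatever mesh you fix in advance. So the Gaussian bound $e^{-cy^2/\delta}$ is only available on an event such as $\{\max_{[u,u+\delta]}X^2\leq(1-\epsilon)y\}$, and you again need an interval-maximum bound for $X^2$. Your ``cleaner route'' has the same defect. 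There, bounding $X^2<X^1$ gives a recursion in which the threshold decreases ($y\mapsto y/2$), and such a recursion cannot close.

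\textbf{The missing idea.} The route you dismissed as circular is not circular, because of scaling. By Lemma~\ref{lem:secondline_domination} and Lemma~\ref{lem:scaling} with $\rho=\lambda^{1/3}$, $\max_{[0,\ell]}X^2$ is stochastically dominated by $\lambda^{-1/3}\max_{[0,\lambda^{2/3}\ell]}X^1$. Write $m_\ell(y)=\P[\max_{[0,\ell]}X^1>y]$. Run your argument with two ingredients:
\begin{itemize}
\item $X^1\leq(1-2\epsilon)y$ at the integer points of $[0,\ell]$, controlled by Theorem~\ref{thm:tailbound_onepoint};
\item on the event $\{\max_{[0,\ell]}X^2\leq(1-\epsilon)y\}$, the Brownian Gibbs property for the top line, which gives an excursion bound $e^{-c\epsilon^2y^2}$ on each unit subinterval.
\end{itemize}
This yields
\begin{equation}
m_\ell(y)\leq C(\ell+2)e^{-cy^{3/2}}+m_{\lambda^{2/3}\ell}(\kappa y),\qquad \kappa=\lambda^{1/3}(1-\epsilon)>1,
\end{equation}
with $\epsilon$ chosen small depending on $\lambda$. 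The threshold now grows geometrically. Iterating $K$ times from $\ell=1$, the accumulated terms $\sum_{k<K}C(\lambda^{2k/3}+2)e^{-c\kappa^{3k/2}y^{3/2}}$ are $O(e^{-cy^{3/2}})$ for large $y$.

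The remainder $m_{\lambda^{2K/3}}(\kappa^K y)$ still needs an a priori rate input to vanish as $K\to\infty$. For instance, Lemma~\ref{lem:confinement} with Markov's inequality gives $m_L(Y)\leq M/(Y-M\log(1+L))$. This tends to zero along the sequence, since $\kappa^K$ grows exponentially while $\log(1+\lambda^{2K/3})$ grows only linearly in $K$. Without such an input the iteration does not close either. Uniformity over $\lambda\geq\lambda_0$ then follows from monotonicity in $\lambda$.
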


Using Lemma \ref{lem:secondline_domination}, we will often apply this bound to lower lines as
well, after applying the scaling relation so that the area tilt strength of the $k$th line becomes
$2\lambda^0 = 2$.

%% file: sections/2_inputs_5_comingdown.tex
\subsection{Coming down estimate}
\label{sec:inputs_comingdown}

In this section we provide an estimate which shows that the top line of an area-tilted
ensemble will come down to an $O(1)$ height from height $T$ within time $T$.

\begin{lemma}[Improved coming down estimate]
\label{lem:comingdown}
    For any $\lambda_0 > 1$ there are constants $C, c > 0$
    such that the following holds for all $T > 0$ and $\lambda \geq \lambda_0$.
    Let $\X$ be a $\lambda$-tilted line ensemble on $[-T, T]$ with infinitely
    many lines, each with boundary conditions $\leq T$ at both ends of the interval.
    Then for any $M > 0$,
    \begin{equation}
        \P \left[
            X^1\left(0\right) \leq M
        \right]
        \geq 1 - C e^{- M^c}.
    \end{equation}
\end{lemma}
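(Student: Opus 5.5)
The plan is to iterate \cite[Proposition 5.5]{BCG25Characterizing}, as announced at the start of this section. That result says the following. If a $\lambda$-tilted ensemble on $[-T,T]$ has boundary data $\leq T$, then with high probability the top line at the center is at most $T^\delta$, for any fixed $\delta>0$. More precisely, I would use it in this form: with probability at least $1-C\exp(-T^{c'})$, the top line stays below $T^\delta$ on a whole subinterval $[-T/2,T/2]$. If the statement only controls the midpoint, I would combine it with stationarity-type arguments and a union bound over a grid; the interval maximum bound of Corollary \ref{cor:tailbound_interval} can also be transferred by monotonicity to cover the interval version. The idea is to bootstrap, producing a sequence of shrinking heights and nested intervals around $0$.

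\textbf{Setting up the bootstrap.} Define $T_0=T$ and $T_{j+1}=T_j^{\delta}$, working on the nested intervals $I_j=[-T_j,T_j]$, which are contained in $[-T,T]$ because $T_{j+1}\le T_j$ once $T_j\ge1$.

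\textbf{The iteration step.} On the event $E_j=\{\max_{I_j}X^1\le T_j\}$, all lines have boundary values $\le T_j$ at $\pm T_j$, since the lines are ordered below $X^1$. Apply the $\lambda$-BG property (Definition \ref{def:BG}) to resample all lines on $I_j$; this needs the infinite-line version on a finite domain, as in Remark \ref{rmk:sameboundary}. By monotonicity (Lemma \ref{lem:monotonicity}), the resampled ensemble is dominated by the ensemble with boundary values exactly $T_j$. The cited proposition then gives $\P[E_{j+1}^c\mid E_j]\le C\exp(-T_{j}^{c'})$, with the event $E_{j+1}$ referring to the subinterval of half-length $T_{j+1}\le T_j/2$. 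Note that the floor stays at zero and the tilts are unchanged, so the estimates are uniform in $\lambda\ge\lambda_0$.

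\textbf{Stopping and summing.} Stop the iteration at the first $j$ with $T_j\le M$, or at a fixed large constant $K_0$ if $M$ is smaller. The total failure probability is $\sum_{j} C\exp(-T_j^{c'})$. Because $T_j$ decreases super-exponentially, this sum is dominated by its last term, which is at most about $C\exp(-M^{c'})$ (with $T_{j}\ge M$ at the last step). For $M$ below a fixed constant, the claim is trivial after adjusting $C$. This yields $\P[X^1(0)\le M]\ge1-Ce^{-M^c}$.

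\textbf{Main obstacle.} The key difficulty is that the input controls only the midpoint value, but the next step needs a bound over an interval of length comparable to $T_{j+1}$ in order to serve as new boundary data. I would handle this in one of two ways. The first is to apply the proposition at the two points $\pm T_{j+1}$, each the center of a subinterval of length $\ge T_j/2$ contained in $I_j$; this requires only two points, giving boundary data at $\pm T_{j+1}$ directly. This is cleaner, since the BG step only needs boundary values, not a supremum. The second is to use a grid union bound. I prefer the first, applying monotonicity with boundary $T_j$ on the shifted intervals $[\pm T_{j+1}-T_j/2,\pm T_{j+1}+T_j/2]$. I also need to check that the constants in the cited proposition are uniform in $\lambda\ge\lambda_0$ and in the number of lines.
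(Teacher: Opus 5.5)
\textbf{Gap in the iteration step.} Your overall plan (iterate the cited proposition and sum a super-geometric series of failure probabilities) is the right one. However, nothing in your argument actually produces boundary data at $\pm T_{j+1}$ when $T_{j+1}=T_j^\delta\ll T_j$.

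First, the input is not a midpoint bound, nor a bound on $[-T/2,T/2]$. Proposition \ref{prop:comingdown_input} controls $X^1$ only at the two points $\pm\eta T$, with $\eta=\eta(\lambda)\in(0,1)$ fixed, giving $X^1(\pm\eta T)\le(\eta T)^\delta$.

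Second, your workaround does not close. The shifted intervals $[\pm T_{j+1}-T_j/2,\pm T_{j+1}+T_j/2]$ have endpoints strictly inside $I_j$, so using them requires $X^1\le T_j$ at interior points of $I_j$. Resampling on $I_j$ (which is what lets you invoke the proposition at all) can only use exterior information, namely the values at $\pm T_j$; the interior content of $E_j$ is not available afterwards. Even granting those interior values, the step only outputs two-point information at $\pm T_{j+1}$, not the supremum event $E_{j+1}$ that your next step assumes, so the induction does not close.

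The grid/stationarity alternative fails for the same reason. The ensemble on $I_j$ with boundary data is not stationary, and each grid application again needs boundary values at interior points. Also, boundary value $T_j$ on an interval of half-length $T_j/2$ is outside the hypothesis of the cited statement.

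\textbf{How to repair it.} Use the two points the proposition does control as the next boundary. The paper takes $\delta=1$ and $\cA_k=\{X^1(\pm\eta^kT)\le\eta^kT\}$, which is exterior-measurable for $[-\eta^kT,\eta^kT]$. Resampling there and applying the proposition gives $\P[\cA_k\cap\cA_{k+1}^c]\le Ce^{-(\eta^kT)^c}$. Let $K$ be the largest integer with $\eta^KT>M/2$. The failures then sum to at most $C\sum_{j\ge0}e^{-(\eta^{-j}M/2)^c}\le Ce^{-(M/2)^c}$: geometric shrinkage of the scales already makes the exponents grow geometrically, so the sum is dominated by a single term.

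Since a two-point iteration never reaches the center, one last step is needed. On $\{X^1(\pm\eta^{K+1}T)\le M/2\}$, dominate by a zero-boundary ensemble shifted up by $M/2$ (itself dominated by the stationary ensemble), and apply Theorem \ref{thm:tailbound_onepoint}.

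If you prefer your super-exponential scheme with $\delta<1$, you must insert the same domination together with Corollary \ref{cor:tailbound_interval}. This upgrades $X^1(\pm\eta T_j)\le(\eta T_j)^\delta$ to a supremum bound of order $T_j^\delta$ on $[-\eta T_j,\eta T_j]$, which yields legitimate two-point data at $\pm T_{j+1}$.

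\textbf{Uniformity in $\lambda$.} This does not follow from ``the tilts being unchanged'': the constants $\eta,C,c$ in the proposition depend on $\lambda$. The correct reduction is that $\{X^1(0)\le M\}$ is a decreasing event and a larger tilt lowers the ensemble (Lemma \ref{lem:monotonicity}). So it suffices to treat $\lambda=\lambda_0$.
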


Our starting point is the following estimate from \cite{BCG25Characterizing}, which
states that the top line will come down to a height of $O(T^\delta)$ for any $\delta > 0$. We will then iterate this.

\begin{proposition}[Coming down estimate, {\cite[Proposition 5.5]{BCG25Characterizing}}]
\label{prop:comingdown_input}
    Let $\X$ be the $\lambda$-tilted line ensemble on $[-T,T]$
    with infinitely many lines, all of them having boundary conditions $T$
    at both ends of the interval.
    For any $\delta > 0$ there exist constants $\eta = \eta(\lambda) > 0$
    and $C,c > 0$ depending on $\lambda$ and $\delta$ such that
    \begin{equation}\label{comingdown432}
        \P \left[X^1(\pm \eta T) \leq (\eta T)^\delta \right]
        \geq 1 - C e^{-T^c}.
    \end{equation}
\end{proposition}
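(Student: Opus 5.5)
The plan is an energy--entropy comparison. We handle the infinitely many lines through the scaling relation (Lemma~\ref{lem:scaling}) and monotonicity (Lemma~\ref{lem:monotonicity}), and we show that the lower lines descend faster than the top one.

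\emph{Reduction.} By monotonicity it suffices to bound $\P[X^1(\eta T) > (\eta T)^\delta]$; the point $-\eta T$ is symmetric. Truncating to $n$ lines with a zero floor below line $n$ only lowers the ensemble, so that is not the direction we need. Instead we would work with $n$ lines and take a monotone limit, with bounds uniform in $n$. The key heuristic is as follows. A single area-tilted Brownian bridge with tilt $2\lambda^{k-1}$, started at height $T$, comes down to height $O(\lambda^{-(k-1)/3})$ in time of order $\sqrt{T}\lambda^{-(k-1)/2}$, which is much less than $\eta T$. To make this precise for line $k$, apply Lemma~\ref{lem:scaling} with $\rho=\lambda^{(k-1)/3}$. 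This turns line $k$ into a tilt-$2$ line on an interval of length $T\lambda^{2(k-1)/3}$, with boundary height $T\lambda^{(k-1)/3}$. The ratio of descent time to interval length is then $\sqrt{T\lambda^{(k-1)/3}}/(T\lambda^{2(k-1)/3})$, which decreases in $k$. So, relative to their own scales, the lower lines come down even sooner and farther from the endpoints than the top line.

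\emph{Main step.} We would argue inductively from the top down, using Gibbs resampling on nested stopping domains.

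First, for the top line, fix an interval $I=[\eta T/2, 2\eta T]$ and define a deterministic ``descent profile'' $\phi$. It drops from $T$ to height $\tfrac12(\eta T)^\delta$ within time $\sqrt T$ of the endpoints and stays flat in between. We lower-bound the partition-function weight of the configurations in which all lines stay below their scaled profiles $\lambda^{-(k-1)/3}\phi$. This requires bounding the Brownian cost of each line following its profile, and summing those costs over $k$; the sum converges because the descent cost of line $k$ decays like $T^{3/2}\lambda^{-(k-1)\cdot c}$ after rescaling.

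Second, we upper-bound the weight of configurations with $X^1>(\eta T)^\delta$ somewhere near $\eta T$. Such a configuration either pays an area cost of at least $(\eta T)^\delta\cdot(\eta T)^{2\delta}$, since by Brownian regularity it stays high on a window of width about $(\eta T)^{2\delta}$, or it pays a corresponding Brownian fluctuation cost.

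Comparing the two bounds gives a probability at most $e^{-T^{c}}$, where $c$ depends on $\delta$. Because the lower lines only act as a floor for the upper ones, monotonicity lets us first push lines $2,3,\dots$ below their profiles and then treat $X^1$ as a single line above a known floor. We then use Corollary~\ref{cor:fs_uppertail_interval} after rescaling.

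\emph{Main obstacle.} The hard step is the lower bound on the probability that all infinitely many lines descend simultaneously while remaining non-intersecting. Naively, each line pays a Brownian cost of order $T$ to drop from height $T$, and this sum over lines diverges. The fix would be to let line $k$ descend to height only $\lambda^{-(k-1)/3}\cdot\mathrm{poly}(T)$, so that its cost after rescaling is summable. We would also use the strong tilts of the deep lines: their descent is in fact \emph{favoured} by the area weight, so the net energy--entropy change per line is negative for large $k$. We would try first to show this bookkeeping line by line, by comparing each line with a Ferrari--Spohn diffusion with boundary height $T$. We expect the constants $\eta$ and $C,c$ to depend on $\lambda$ and $\delta$, as in the statement.
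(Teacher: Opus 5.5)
The paper does not prove this statement itself. It imports \cite[Proposition 5.5]{BCG25Characterizing} and observes that the argument there goes through verbatim for linear boundary data $T$ and for every line including the top one. (That result is stated for the second line, with boundary data $T^2-2KT$.) The argument is uniform in the number of lines $n$, and the paper passes to infinitely many lines by a monotone limit. Your reduction to $n$ lines with uniform-in-$n$ bounds plus $X^1_n\uparrow X^1$ matches that last step. The energy--entropy argument you propose for the core estimate, however, has a genuine gap. What you call the ``main obstacle'' is in fact the entire content of the cited result.

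\emph{The cost bookkeeping is wrong.} After rescaling by $\rho=\lambda^{(k-1)/3}$, line $k$ is a tilt-$2$ line started at height $S_k=T\lambda^{(k-1)/3}$. Its cheapest descent (along $f''=2$) costs $\asymp S_k^{3/2}=T^{3/2}\lambda^{(k-1)/2}$. This \emph{grows} in $k$, and since Lemma~\ref{lem:scaling} preserves probabilities, it is the true cost. Stopping line $k$ at $\lambda^{-(k-1)/3}\mathrm{poly}(T)$ does not help, because the cost is dominated by the drop from $T$. So your lower bound on the weight of ``all lines below their profiles'' is not uniform in $n$; it tends to $0$ as $n\to\infty$.

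More fundamentally, bad configurations are also pinned at $T$ at $\pm T$. The good and bad weights therefore share a common descent cost of order at least $T^{3/2}$ from the top line alone, while the effect you want to detect is only $(\eta T)^{3\delta}$. Separate crude bounds on the two weights cannot resolve a difference that small. The common cost must cancel, for instance by resampling on a stopping domain whose boundary values are already low. Producing such a domain is exactly what is to be proved.

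\emph{The second half of your plan is circular.} The claim that ``monotonicity lets us push lines $2,3,\dots$ below their profiles'' is not a monotonicity statement. To dominate $X^1$ by a single line above a known floor, you need a high-probability upper bound on $X^2$ near $\pm\eta T$. That is the proposition itself for $(X^2,X^3,\dots)$. By the ceiling-removal argument behind Lemma~\ref{lem:secondline_domination} together with Lemma~\ref{lem:scaling}, this sub-ensemble is dominated by a copy of the original problem at the larger scale $\lambda^{2/3}T$ (its boundary height is $\lambda^{1/3}T\le\lambda^{2/3}T$). So your top-down induction regresses to ever larger scales and has no base case.

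The missing ingredient is a mechanism that controls all the lower lines simultaneously and uniformly in $n$, since they could form a high floor that blocks $X^1$ from descending. The argument of \cite{BCG25Characterizing} supplies that mechanism, and your sketch does not replace it.
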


The original formulation of \cite[Proposition 5.5]{BCG25Characterizing} was stated for the
\emph{second} line, but with a much higher boundary condition namely $T^2 - 2KT$. For such a high boundary condition indeed a result such as \eqref{comingdown432} can only hold from the second line onwards. This is because the decay of the $i^{th}$ line essentially occurs along the parabola $x \rightarrow \lambda^{i-1}x^2$. Consequently, for the top line, the parabolic decay is not sufficiently rapid for the above result to hold. 
In contrast, for all the other lines, since $\lambda^{i-1}$ is strictly greater than $1$, the stronger parabolic decay forces the line to come down. Nonetheless, since the boundary condition in the above lemma is only linear in $T$ and not quadratic, the same argument as in \cite[Proposition 5.5]{BCG25Characterizing} goes through verbatim for all the lines, including the top one.

We also point out that \cite[Proposition 5.5]{BCG25Characterizing} was stated only for the $n$-line
ensemble rather than the infinite-line ensemble, but the estimate was uniform in
$n$.
Thus taking a local weak limit to arrive at the infinite-line ensemble,
monotonicity ensures that the estimate carries over.

\begin{proof}[Proof of Lemma \ref{lem:comingdown}]
By monotonicity it suffices to prove the result for $\lambda = \lambda_0$,
so let us set $\eta = \eta(\lambda_0) \in (0,1)$ as defined in
Proposition \ref{prop:comingdown_input}.
We will repeatedly apply that proposition with $\delta = 1$.
Let us define the events
\begin{equation}
    \cA_k(\X) = \left\{
        X^1\left(\pm \eta^k T\right) \leq \eta^k T
    \right\}.
\end{equation}
Then $\cA_0(\X)$ holds by assumption, and if $\cA_k(\X)$ holds then we may resample
$\X$ inside of the interval $[-\eta^k T, \eta^k T]$ and apply
Proposition \ref{prop:comingdown_input} to observe that
\begin{equation}
    \P \left[ \cA_{k+1}(\X) \middle| \cA_k(\X) \right]
    \geq 1 - C e^{- (\eta^k T)^c }.
\end{equation}
Now let us define $K$ to be the largest integer such that $\eta^K T > \frac{M}{2}$.
Then $\eta^{K+1}T \leq \frac{M}{2}$, so if $\cA_{K+1}(\X)$ holds we have
$X^1(\pm \eta^{K+1}T) \leq \frac{M}{2}$.
If on the other hand $\cA_{K+1}(\X)$ fails, then there must have been some first
value of $k \leq K$ for which $\cA_k(\X)$ failed, and so we have
\begin{align}
    \P \left[ X^1\left(\pm \eta^{K+1} T\right) > \tfrac{M}{2} \right]
    &\leq \sum_{k=0}^K \left( 1 - \P \left[ \cA_{k+1}(\X) \middle| \cA_k(\X) \right] \right) \\
    &\leq C \sum_{k=0}^K e^{-(\eta^k T)^c}.
\end{align}
Now since $\eta^K T > \frac{M}{2}$, have $\eta^k T > \eta^{k-K} \frac{M}{2}$, and so
substituting $j = K-k$ we find that
\begin{align}
    \P \left[ X^1\left(\pm \eta^{K+1} T\right) > \tfrac{M}{2} \right]
    &\leq C \sum_{j=0}^K e^{- \left( \eta^{-j} \frac{M}{2} \right)^c} \\
    &= C \sum_{j=0}^K \left( e^{- \left(\frac{M}{2}\right)^c} \right)^{\eta^{-cj}}.
\end{align}
This is a sum of terms shrinking faster than geometrically since $\eta \in (0,1)$,
and thus by adjusting the constants $C,c>0$ we have
\begin{equation}
    \P \left[ X^1\left(\pm \eta^{K+1} T\right) > \tfrac{M}{2} \right]
    \leq C e^{- \left(\frac{M}{2}\right) ^c}.
\end{equation}
Now if $X^1(\pm \eta^{K+1}T) \leq \frac{M}{2}$,
then $\X$ is stochastically dominated on $[-\eta^{K+1}T, \eta^{K+1}T]$
by $\Y + \frac{M}{2}$, where $\Y$ is a zero-boundary-condition infinite-line $\lambda$-tilted
ensemble on this interval.
Thus applying the tail bound Theorem \ref{thm:tailbound_onepoint} for $\Y$ (which is stochastically
dominated by $\X$) we find that
\begin{equation}
    \P \left[ X^1(0) > M \right]
    \leq C e^{- \left(\frac{M}{2}\right)^c} + \Exp{-\left(\tfrac{4}{3} -o(1)\right) \left(\tfrac{M}{2}\right)^{3/2}},
\end{equation}
which finishes the proof.
\end{proof}

%% file: sections/3_topline_0.tex
\section{A priori control on the top line}
\label{sec:topline}

In this section we demonstrate an a priori control
on the top line, showing that it must be below some $O(1)$ height
at most points in a lattice.

\begin{proposition}[A priori control on the top line]
\label{prop:topline}
    For any fixed $\eta > 0$ and $\lambda_0 > 1$, there are some $H > 0$ and
    $C,c > 0$ such that for all $\lambda \geq \lambda_0$, all $\Delta \geq 1$,
    and all large enough $T$,
    if $\X$ denotes the $\lambda$-tilted line ensemble, then
    \begin{equation}
        \P \left[
            \# \left\{
                j \in \Z : |j \Delta| \leq T \text{ and } X^1(j\Delta) > H
            \right\} > \eta \frac{2T}{\Delta}
        \right] \leq C e^{-c T}.
    \end{equation}
\end{proposition}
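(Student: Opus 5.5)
The plan is a \emph{renewal/decoupling} argument: split $[-T,T]$ into blocks, show that at the block boundaries the top line is ``reset'' to an $O(1)$ height with overwhelming probability, and then reduce the count of high lattice points to a sum of nearly independent Bernoulli indicators controlled by a Chernoff bound. Before the details, one caveat. Read literally, the uniformity in $\Delta\geq 1$ appears false. For $\Delta \geq T$ the lattice $\{j: |j\Delta|\le T\}$ is just $\{0\}$, and $\eta\frac{2T}{\Delta}<1$ once $\eta<\frac12$. The event is then $\{X^1(0)>H\}$, whose probability is of order $e^{-(\frac43+o(1))H^{3/2}}$ by Theorem~\ref{thm:tailbound_onepoint}. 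That is a fixed positive number, not $Ce^{-cT}$. What the following plan actually establishes is the bound $Ce^{-c\,\eta T/\Delta}\cdot e^{-c' \eta(T/\Delta) H^{3/2}}$. This equals $Ce^{-cT}$ uniformly for $\Delta$ in any bounded range. That is the regime in which Section~\ref{sec:correlation} presumably uses the proposition ($\Delta$ a fixed large constant chosen before $T$).

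\textbf{Step 1 (resets).} Fix a block length $L\ge \Delta$ (take $L=\Delta$), and let $t_m = mL$ for $|t_m|\le T$. Corollary~\ref{cor:tailbound_interval} applied on $[-T,T]$ gives $\max_{[-T,T]}X^1\le C\log T + T^{1/2}$ off an event of probability $CTe^{-cT^{3/4}}$. This is not good enough, so instead I would use Lemma~\ref{lem:comingdown} locally. Let $M_0$ be a large constant and call $t_m$ \emph{good} if $X^1(t_m)\le M_0$. I would argue that the number of bad $t_m$ exceeds $\frac{\eta}{4}\cdot\frac{2T}{L}$ only with probability $e^{-cT/L}$. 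To do this, resample on consecutive triples of block points: conditionally on the exterior $\sigma$-algebra $\extalg^{1,\infty}$ outside $(t_{m-1},t_{m+1})$, with boundary values capped at $O(L)$ (which holds except with probability $e^{-cL^{3/2}}$ per point by Theorem~\ref{thm:tailbound_onepoint} and Lemma~\ref{lem:secondline_domination}), Lemma~\ref{lem:comingdown} together with monotonicity (Lemma~\ref{lem:monotonicity}) gives $\P[t_m \text{ bad}\mid\cdot]\le Ce^{-M_0^c}$. Treating even and odd $m$ separately then makes the bad indicators stochastically dominated by i.i.d.\ Bernoulli$(Ce^{-M_0^c})$ variables. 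This is the standard one-sided domination from sequentially revealing and resampling.

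\textbf{Step 2 (high points between good resets).} Conditionally on two consecutive good block points, monotonicity lets me dominate $X^1$ on $[t_m,t_{m+1}]$ by $M_0$ plus a zero-boundary infinite $\lambda$-tilted ensemble on that interval. That ensemble in turn is dominated by the stationary ensemble (Lemma~\ref{lem:monotonicity}), so $\P[X^1(j\Delta)>H\mid \cdot]\le e^{-c(H-M_0)^{3/2}}$. The Markov structure across good block points makes these events conditionally independent across blocks. A Chernoff bound for Binomial$(2T/\Delta,\,p_H)$ exceeding $\frac{\eta}{2}\cdot\frac{2T}{\Delta}$, with $p_H\ll\eta$ obtained by choosing $H$ large, gives the bound $e^{-c\eta (T/\Delta)\log(\eta/p_H)}$. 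Adding the two steps yields the estimate quoted above.

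\textbf{Main obstacle.} The delicate point is Step~1: the resets must be made genuinely independent while the conditioning is on the \emph{entire} infinite family of lines below. I would handle this with the strong Gibbs property of Section~\ref{sec:inputs_strongbg}, resampling all lines at once on each block. The cap on the boundary values needed to invoke Lemma~\ref{lem:comingdown} is itself a high-point event, so it should be absorbed into the same Bernoulli domination. The passage from $e^{-cT/\Delta}$ to $e^{-cT}$ is available only when $\Delta=O(1)$, by the caveat above.
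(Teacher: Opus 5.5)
Your caveat about uniformity in $\Delta$ is correct and harmless. The proposition is only used with $\Delta=4H$ fixed (Lemma \ref{lem:correlation_init}). The paper's own argument also only yields a bound of the form $e^{-cT/\Delta}$: its prefactor $(1-\Delta/2T)^{-2}$ degenerates as $\Delta\to 2T$, and its Bernstein bound is for the $\Delta$-skeleton chain, which has $\approx 2T/\Delta$ terms and gap at most $1$.

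\textbf{The gap is Step 1, which is circular.} For an even block point $t_m$, Lemma \ref{lem:comingdown} gives $\P[t_m\text{ bad}\mid\cdot]\le Ce^{-M_0^c}$ only on the event that the odd neighbours satisfy the cap $X^1(t_{m\pm1})\le L$. That lemma needs boundary data at most the half-length, and for $L=\Delta$ of order $1$ this cap fails with probability of order $1$. When the caps fail badly, say $X^1(t_{m\pm1})\gg L^2$, the conditional probability that $t_m$ is bad is close to $1$. So no unconditional domination by i.i.d.\ Bernoulli$(Ce^{-M_0^c})$ variables exists.

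What you actually get is $\#\{\text{bad even}\}\le 2\,\#\{\text{odd } m: X^1(t_m)>L\}+\mathrm{Bin}$. The marginal bound $\P[X^1(t_m)>L]\le e^{-cL^{3/2}}$ says nothing about a positive fraction of the odd caps failing simultaneously. That large-deviation estimate for counts is exactly the statement you are proving, at level $L$ instead of $H$. Absorbing it into the same Bernoulli domination just moves the problem to the other parity.

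The only cap available with probability $1-e^{-cT}$ without circularity is the global one from Corollary \ref{cor:tailbound_interval}, namely $\max_{[-T,T]}X^1\le CT^{2/3}$. Feeding that into Lemma \ref{lem:comingdown} forces blocks of half-length $\gtrsim T^{2/3}$. That leaves $O(T^{1/3})$ trials and a bound like $e^{-cT^{1/3}}$, and the lattice points inside each block again need concentration.

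\textbf{The missing idea.} You need to import the exponential concentration from a genuinely one-dimensional Markov process where it is already known, rather than manufacture resets inside the infinite ensemble. The paper proceeds as follows.
\begin{itemize}
\item It bounds $X^2$ by a random smooth floor $\Phi$ with $\Phi''\le\frac12$ (Lemma \ref{lem:smoothcontrol}), built from shifted copies of the logarithmic envelope of Lemma \ref{lem:confinement}.
\item Since $X^2$ given $X^1$ is dominated by a copy of $X^1$ (Lemma \ref{lem:secondline_domination}), a Paley--Zygmund argument shows that the event $\cH$ that $\Phi\le H/2$ at most lattice points has conditional probability, given $X^1$, bounded below by a constant.
\item Because $\cF$ is $X^1$-measurable, $\P[\cF]\le C\,\P[\cF\mid\cH]$. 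So the lower lines need only constant-probability control, never exponential control.
\item On $\cH$, $X^1$ is dominated by a Ferrari--Spohn line $W$ above $\Phi$. The Girsanov identity \eqref{eq:girsanov} turns $W-\Phi$ into a single line with tilt $1-\Phi''\ge\frac12$, hence dominated by a rescaled Ferrari--Spohn diffusion. Lemma \ref{lem:fs_coupling} couples this to a stationary one on most of $[-T,T]$.
\item The $e^{-cT}$ bound then comes from the positive spectral gap of that one-dimensional diffusion, via a Bernstein inequality for Markov chains.
\end{itemize}
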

This will provide an important starting point for our branching process analysis
in the next section, where we will assume that $\lambda$ is large enough.
Note that in this regime, a concentration statement as above would have been
straightforward to obtain if we already had at our disposal a correlation decay
statement as Theorem \ref{thm:correlation}. However, we need the former to prove
the latter, and hence a new idea is needed to prove the above.
Further, note that the above result holds for $\lambda$
close to $1$ as well, and hence might be a useful tool for probing the behavior
of the model in this regime.

\subsection{Proof via Girsanov transformation}
\label{sec:topline_strategy}

In this section we prove Proposition \ref{prop:topline} modulo a list of inputs.
A key tool will be a Girsanov transformation which will be described shortly. We
will also use the single line version of Theorem \ref{thm:correlation}, i.e.,\ the
exponential decay of correlations for the Ferrari--Spohn diffusion.  As a Markov
process, the Ferrari--Spohn diffusion equilibrates to its
stationary distribution rapidly, and it will frequently be convenient to
forget about the boundary conditions when considering it on a bounded interval
far away enough from the boundary,
which is captured by the following lemma. The following lemma will be useful not only for
the proof of Proposition \ref{prop:topline}
but also separately in the proof of Theorem \ref{thm:correlation}.

\begin{lemma}[Coupling with a stationary process]
\label{lem:fs_coupling}
There are some constants $D, C, c > 0$ such that the following holds
for all $T, S > 0$ satisfying $D S^{3/2} < T$.
Let $X$ denote a Ferrari--Spohn diffusion on $[-T,T]$ with boundary conditions
$X(\pm T) \leq S$.
Then
for all $t \in [D S^{3/2},T]$,
$X$ may be coupled with the stationary Ferrari--Spohn diffusion $\FS$
such that we have
\begin{equation}
    \P \left[ X(s) = \FS(s) \text{ for all } |s| \leq T - t \right]
    \geq 1 - C e^{- c t}.
\end{equation}
\end{lemma}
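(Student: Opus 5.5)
We will build the coupling in two stages: first bring $X$ down to height $O(1)$ at two symmetric points using the coming down estimate, then run many independent attempts to make $X$ and $\FS$ cross, and resample in between so that the two paths agree on the crossing interval.

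\textbf{Stage 1: coming down.} Apply Lemma \ref{lem:comingdown} with $n=1$; since Definition \ref{def:finiteLE} for $n=1$ does not depend on $\lambda$, its proof applies equally to a single line. By the scaling relation (Lemma \ref{lem:scaling}) with $\rho$ chosen so that the boundary height $S$ matches the interval width, a Ferrari--Spohn diffusion started from height $S$ descends to an $O(1)$ height within time of order $S^{3/2}$. More precisely, on intervals $[T-DS^{3/2},T]$ and $[-T,-T+DS^{3/2}]$ we resample, using monotonicity, against a version with boundary values exactly $S$. This shows that with probability at least $1-Ce^{-cS^{c}}$ there are points $a_0\in[-T,-T+DS^{3/2}]$ and $b_0\in[T-DS^{3/2},T]$ with $X(a_0),X(b_0)\le 1$. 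This failure probability is not yet of the form $e^{-ct}$, so we treat the region $[DS^{3/2},t]$ separately in Stage 2.

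\textbf{Stage 2: repeated trials to cross $\FS$.} Sample $X$ and $\FS$ independently, with $\FS$ stationary. Divide each of $[-T+DS^{3/2},-(T-t)]$ and $[T-t,T-DS^{3/2}]$ into unit blocks; there are about $t-DS^{3/2}\gtrsim t$ of them (taking $D$ large, and $t\ge DS^{3/2}$, may require halving constants). Proceed block by block from the outside inward. At each block, by the Markov property both processes have some current values, and we show the following uniform bound. There is $p>0$ such that, conditional on the past, with probability at least $p$ both paths visit $[0,1]$ in the block and cross, i.e.\ there is a time $\tau$ in the block with $X(\tau)=\FS(\tau)$. Two ingredients give this:
\begin{itemize}
\item[(i)] $\FS$ is at height $\le H$ at most block endpoints, with exponentially small exceptions, by Corollary \ref{cor:fs_uppertail_interval} and stationarity. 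The same holds for $X$ by monotonicity, since it is dominated by a process from the previous Stage 1 height.
\item[(ii)] From heights $\le H$ at both ends of a unit interval, the Ferrari--Spohn bridge has positive probability, uniformly in the endpoints, of staying in $[0,1/2]$ in the middle part and in $[3/4,H+1]$ elsewhere. This is a positive-density Brownian bridge computation: the area tilt on a unit interval gives a Radon--Nikodym factor bounded above and below.
\end{itemize}
Requiring $X$ low while $\FS$ is high at one time, and the reverse at a nearby time, forces a crossing by continuity. Crossing times defined as the first crossing scanning outward-in are stopping domains in the sense of Section \ref{sec:inputs_strongbg}, using the product $\sigma$-algebras. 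Hence the trials dominate independent Bernoulli$(p)$ variables, and a crossing on each side fails with probability $\le (1-p)^{ct}$.

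\textbf{Stage 3: splicing.} Given crossing times $\tau_\ell<-(T-t)$ and $\tau_r>T-t$ with $X(\tau_\ell)=\FS(\tau_\ell)$ and $X(\tau_r)=\FS(\tau_r)$, the strong Gibbs property says that both paths on $[\tau_\ell,\tau_r]$ are Ferrari--Spohn bridges with the same boundary data. We therefore set $X=\FS$ there, redefining the coupling. Summing failure probabilities gives $Ce^{-ct}$, plus the Stage 1 term $Ce^{-cS^c}$. The Stage 1 term is absorbed either because the Stage 2 argument also handles descent (trials begin only after the lowering), or by rerunning the descent with the bound in $t$: since $t\ge DS^{3/2}$, the descent has time $\gtrsim t$ and a Brownian-type large deviation gives $e^{-ct}$.

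The main obstacle is the uniform-in-history estimate (ii) together with the stopping-domain measurability, and more precisely making the Stage 1 error exponential in $t$ rather than in $S^c$. For that step I would first try a direct comparison: dominate $X$ by a Brownian motion with drift $-c$ above height $1$, via Girsanov with the area tilt. This gives descent within time $O(S)$ except with probability $e^{-ct}$.
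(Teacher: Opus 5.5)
Your architecture matches the paper's. You bring $X$ down near both ends, find points in windows of length of order $t$ at each end where $X$ and an independent stationary $\FS$ coincide, and splice once via the strong Gibbs property on the resulting stopping domain. Your insistence on a genuine sign change of $X-\FS$ is exactly what the splice needs; the paper's write-up only records times where $\FS\ge\FS'+M\ge X$. But the content of the lemma lies in two estimates with failure probability $e^{-ct}$, and neither is established.

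The first is the descent. Lemma \ref{lem:comingdown} bounds the failure probability by $Ce^{-M^c}$, which depends on the target height alone. At $M=1$ this is not small, so it does not give your Stage~1 claim, and no choice of window or rescaling turns it into a bound exponential in the available time. You also never control $X$ at the inner endpoint of your descent window, so comparing with ``boundary values exactly $S$'' is unjustified there. The drifted-Brownian fallback does not work as stated. \eqref{eq:girsanov} trades a curved floor for a change in area tilt; it does not convert an area tilt into a constant negative drift. Moreover, $X$ run forward in time is an $h$-transformed bridge whose drift depends on the right boundary value and the remaining time. The missing input is \cite[Lemma 2.7]{BCG25Characterizing} (Lemma \ref{lem:fs_comingdown}), whose error $Ce^{-cT}$ is exponential in the interval length once $T\ge S^{3/2}$. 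The paper first gets $X(\pm(T-t/2))\le S+t^{2/3}/D$ from $X\preceq\FS+S$ and Lemma \ref{lem:fs_uppertail_onepoint}, at cost $e^{-ct}$. It then applies that lemma on $[-T,-T+t/2]$ and $[T-t/2,T]$; the hypothesis $t\ge DS^{3/2}$ is exactly what makes $t/4\ge(S+t^{2/3}/D)^{3/2}$.

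The second is your step (i), which does not follow from Corollary \ref{cor:fs_uppertail_interval} and stationarity. Those give only first-moment information: for fixed $H$ the bound $CSe^{-cH^{3/2}}$ exceeds $1$ once $S\asymp t$. Having all but a small fraction of the $\asymp t$ block endpoints below $H$, outside an event of probability $e^{-ct}$, requires decorrelation. Without it, ``conditional on the past, probability at least $p$'' is false as stated: from a large current value, the chance of reaching $[0,1]$ within one unit block is not bounded below. The natural repair is to condition on the integer-time skeleton, so that blocks become conditionally independent bridges, and then count blocks whose four endpoint values are $\le H$. That reduces to exactly this concentration bound. The paper supplies it through the positive spectral gap of the Ferrari--Spohn diffusion \cite{FS05Constrained} and the Bernstein inequality for Markov chains \cite[Theorem 3.3]{P15Concentration}, applied to the pair chain $(\FS(n),\FS'(n))$. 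It then transfers the control to $X$ by coupling $X\le\FS'+M$ after the descent. Your (ii) is also self-contradictory as written, since a continuous path cannot lie in $[0,1/2]$ on the middle part and in $[3/4,H+1]$ elsewhere. The intended statement, about prescribed low and high values at two interior times uniformly over endpoint values in $[0,H]$, is fine.
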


This result is not optimal since $X$ should come down to $O(1)$ height after only time $S^{1/2}$.
Nevertheless, Lemma \ref{lem:fs_coupling} will suffice for our purposes.
Although this lemma captures the mixing behavior of the Ferrari--Spohn diffusion which has
been understood since \cite{FS05Constrained}
explicitly calculated the spectral gap for this process, we could not find an explicit
statement of the above form in the literature so we provide a short proof
shortly in Section \ref{sec:topline_fs}.

Secondly, to use the above single line result in the proof of Proposition
\ref{prop:topline}, we would like to view the top line $X^1$ as a Ferrari--Spohn
diffusion with the second line acting $X^2$ as a floor. 
However, to pursue this approach one needs some regularity conditions on the floor. To this end, 
shortly we will state a result controlling $X^2$ by a random smooth function
$\Phi$ such that $X^2(t) \leq \Phi(t)$. Given this, we will then control the gap
process $X^1-\Phi(t)$. To do this we will apply a  a \emph{Girsanov
transformation} which allows us to stochastically dominate the latter by a
Ferrari-Spohn diffusion with a certain area tilt strength depending on the
smoothness of $\Phi$. An argument of this flavor in this context had
first appeared in  \cite[Section 3.2]{CIW19Confinement}.

We now expand a bit more on this. Recall that Girsanov's theorem provides the
Radon-Nikodym derivative of a Brownian motion with a drift, with respect to a
standard Brownian motion. Using this one can verify the following. For an interval
$[\ell,r]$ and a twice differentiable function $f$ with $f(\ell)=0, f(r)=0$,
letting $B$ denote the standard Brownian bridge on the interval, the law of
$B+f$ admits the following Radon-Nikodym derivative 
\begin{equation}\label{rn23}
    \frac{d \left(B+f\right) }{d B}(X)
    \propto \Exp{\int_\ell^r f'(t)dX(t)} \propto \Exp{\int_\ell^r -f''(t)X(t)dt}
\end{equation}
where the first step is by Girsanov and the last step is integration by parts
(for brevity we denote by $B$ and $B+f$ their respective laws). Note that the
assumption of the endpoints being zero can be easily removed by adding an affine
function which has a vanishing second derivative. Thus the second derivative of
$f$ acts as an area tilt term. One can now condition on positivity to get that
the law of $B+f$ conditioned to be positive is the same as that of an
area-tilted Brownian excursion.

Thus the Girsanov
transformation can be thought of as a relationship between imposing a floor and
changing the area tilt strength. When $f= t^2$ is a parabola,
$f''=2$ is a constant and hence one gets the Ferrari--Spohn diffusion as
already mentioned in Remark \ref{rmk:fs}. We next record the more general fact
that we will make use of in our analysis. If $Y$ is a single Brownian line with
variable area tilt strength given by $\lambda : [\ell,r] \to \R$ and a floor
given by a $C^2$ function $\Phi : [\ell,r] \to \R$, then $Y - \Phi$ is a
Brownian line with variable area tilt strength given by $t \mapsto \lambda(t) -
\Phi''(t)$.  In symbols, recalling the notation of Definition \ref{def:extFLE},
we have
\begin{equation}
\label{eq:girsanov}
    Y \sim \atmeasure^{x,y,\lambda(\cdot)}_{1,\ell,r,\Phi}
    \qquad \Longleftrightarrow \qquad
    Y - \Phi \sim \atmeasure^{x-\Phi(\ell),y-\Phi(r),\lambda(\cdot) - \Phi''(\cdot)}_{1,\ell,r,0},
\end{equation}
which has appeared as \cite[Equation 3.14]{CIW19Confinement}.  If we have a
uniform lower bound on $\lambda(t) - \Phi''(t)$ then we may apply
monotonicity to stochastically dominate $Y - \Phi$ by a single line with a
constant area tilt strength, at which point we have exponential decay of
correlations allowing us to achieve the desired bound.

\vspace{3mm}

The following lemma provides the desired control on $X^2$.

\begin{lemma}
\label{lem:smoothcontrol}
For any fixed $\alpha > 0$ and $\lambda_0 > 1$, there are some constants $H > 0$ and $C, c > 0$ such that
the following statement holds for all $\lambda > \lambda_0$, all $\Delta \geq 1$, and all large enough $T > 0$.
Let $\X$ denote the $\lambda$-tilted line ensemble.
Then there is a random smooth function $\Phi : \R \to \R_+$ satisfying the following three properties:
\begin{enumerate}
    \item Almost surely, we have $X^2(t) \leq \Phi(t)$ for all $t \in \R$. \label{item:bound}
    \item Almost surely, we have $\Phi''(t) \leq \frac{1}{2}$ for all $t \in \R$. \label{item:smallcurv}
    \item Defining the event
    \begin{equation}
        \cH \coloneqq \left\{ \# \left\{ j \in \Z : |j \Delta| \leq T \text{ and } \Phi(j\Delta) \leq \tfrac{H}{2} \right\}
        \geq \frac{\alpha-2}{\alpha} \left( \frac{2T}{\Delta} - 1\right) \right\},
    \end{equation}
    we have $\P [\cH | X^1]  \geq \left( \frac{1-\Delta/2T}{\alpha} \right)^2$ almost surely.
    \label{item:ghp}
\end{enumerate}
\end{lemma}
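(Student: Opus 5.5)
We plan to build $\Phi$ from an auxiliary ensemble which dominates the lower lines but is independent of $X^1$, and then to control it by the tail bounds of Section \ref{sec:inputs_tailbounds}. By Lemma \ref{lem:secondline_domination}, conditionally on $X^1$, the line $X^2$ is stochastically dominated by the top line $Z$ of a stationary ensemble with area tilts $\lambda, \lambda^2, \dotsc$. Take $Z$ independent of $X^1$ and, using external randomness, couple $X^2 \leq Z$ conditionally on each realization of $X^1$. By Lemma \ref{lem:scaling} with $\rho = \lambda^{1/3}$, the process $\lambda^{1/3} Z(\lambda^{-2/3}\,\cdot)$ is the top line of the standard $\lambda$-tilted ensemble. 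Since $\lambda \geq \lambda_0 > 1$, $Z$ is therefore stochastically smaller than a process obeying the one-point and interval tail bounds of Theorem \ref{thm:tailbound_onepoint} and Corollary \ref{cor:tailbound_interval}, with constants uniform in $\lambda$. Every statement about $\Phi$ will be a statement about $Z$, so conditioning on $X^1$ costs nothing.

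To obtain a pointwise upper bound on $Z$ with one-sided curvature $\Phi'' \leq \frac12$, we use that infima of functions with $f'' \leq \frac12$ are again semiconcave with the same constant. For each $k \in \Z$ set
\begin{equation}
    M_k \coloneqq \sup_{s \in \R} \left[ Z(s) - \tfrac14 (s - k\Delta)^2 \right],
    \qquad
    \Psi(t) \coloneqq \inf_{k \in \Z} \left[ M_k + \tfrac14 (t - k\Delta)^2 \right].
\end{equation}
Each parabola $t \mapsto M_k + \frac14(t-k\Delta)^2$ lies above $Z$, so $\Psi \geq Z \geq X^2$. Each parabola also has second derivative exactly $\frac12$, so $\Psi'' \leq \frac12$ in the distributional sense. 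Moreover $\Psi(j\Delta) \leq M_j$, and $M_k$ is a.s. finite because $Z$ grows at most logarithmically (Corollary \ref{cor:tailbound_interval} plus Borel--Cantelli). Finally, let $\Phi \coloneqq \Psi * \varphi_\epsilon + \epsilon'$ for a smooth mollifier $\varphi_\epsilon$. Mollification preserves the curvature bound $\Phi'' \leq \frac12$. The constant shift $\epsilon'$ restores $\Phi \geq Z$: on bounded sets $\Psi$ is locally Lipschitz, and one can let $\epsilon, \epsilon'$ depend on position if needed. We may take everything positive, giving $\Phi : \R \to \R_+$ and items \ref{item:bound} and \ref{item:smallcurv}.

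For item \ref{item:ghp}, bound $\P[M_0 > H/2]$. Splitting $s$ into unit blocks $|s| \in [m, m+1]$ and applying Corollary \ref{cor:tailbound_interval} to each block gives
\begin{equation}
    \P[M_0 > H/2] \leq \sum_{m \geq 0} C\, e^{-c\,(H/2 + m^2/4 - C)^{3/2}} \eqqcolon \delta(H),
\end{equation}
which tends to $0$ as $H \to \infty$. By stationarity the same bound holds for every $M_j$. Let $N$ be the number of lattice points $j$ with $|j\Delta| \leq T$, so $N \geq \frac{2T}{\Delta} - 1$. Since $\Phi(j\Delta) \leq M_j + o(1)$, the expected number of $j$ with $\Phi(j\Delta) > H/2$ is at most $\delta N$. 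By Markov's inequality the number of bad $j$ exceeds $\frac{2}{\alpha}N$ with probability at most $\alpha\delta/2$. Choosing $H$ so large that $1 - \alpha\delta/2 \geq \alpha^{-2}$ yields the stated lower bound $\left(\frac{1-\Delta/2T}{\alpha}\right)^2$. This holds conditionally on $X^1$ because $Z$ is independent of $X^1$ (for $\alpha \le 1$ the bound is trivial after adjusting).

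The main obstacle is making the coupling $X^2 \leq Z$ rigorous conditionally on the whole path $X^1$ while keeping $\Phi$ a genuine random function. Lemma \ref{lem:secondline_domination} is only sketched via finite-volume monotone limits, so this coupling has to be built there and passed to the limit. The second delicate point is ensuring the mollification keeps $\Phi \geq X^2$ everywhere with an error that is uniform enough not to spoil $\Phi(j\Delta) \leq H/2$. We would first try a position-dependent mollification scale chosen from the local modulus of continuity of $\Psi$.
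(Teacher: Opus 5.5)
The gap is in the smoothing step. You flag it as delicate, but you do not resolve it, and the suggested remedy does not work as stated.

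\textbf{Why the smoothing fails.} Item 1 must hold on all of $\R$, but your $\Psi$ is only locally Lipschitz.
\begin{itemize}
\item Near an excursion of $Z$ to height $h$, the minimizing parabolas are centred at distance of order $\sqrt h$, so $|\Psi'|$ is of order $\sqrt h$ there.
\item At the top of such an excursion (once $\sqrt h\gg\Delta$), several lattice parabolas touch $Z$ at the same point. $\Psi$ therefore meets $Z$ with a concave corner whose slope jump is of order $\sqrt h$.
\item At that point $\Psi*\varphi_\epsilon$ lies below $\Psi=Z$ by order $\epsilon\sqrt h$.
\end{itemize}
Since $Z$ has arbitrarily high excursions on $\R$, no fixed pair $\epsilon,\epsilon'$ gives $\Phi\ge Z$ everywhere. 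The bound $\Phi(j\Delta)\le M_j+o(1)$ that you use for item 3 also presupposes such fixed small constants.

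Position-dependence does not rescue this. A position-dependent additive correction must be paid for out of the curvature budget. That budget is exactly saturated ($(\Psi*\varphi_\epsilon)''=\frac12$) on every stretch where a single parabola is active. A position-dependent mollification scale destroys the identity $(\Psi*\varphi_\epsilon)''=\Psi''*\varphi_\epsilon$, on which your curvature bound rests. Neither version is shown to keep $\Phi''\le\frac12$.

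\textbf{The missing idea and a repair.} You need to make the envelope robust to smoothing before you smooth.
\begin{itemize}
\item The paper does this with globally $1$-Lipschitz building blocks: the logarithmic envelope of Lemma~\ref{lem:confinement}, with a wide parabola glued in near the origin (Lemma~\ref{lem:smallphi}). The minimum $\tilde\Phi$ is then $1$-Lipschitz, and a unit-scale bump plus the shift $+1$ suffices.
\item Within your framework an equally cheap fix is to build the parabolas over $Z^{(1)}(s)=\max_{|u|\le 1}Z(s+u)$. Then $\Psi^{(1)}\ge Z^{(1)}$. For a mollifier supported in $[-1,1]$, $\Psi^{(1)}*\varphi_1(t)\ge\min_{|v|\le1}Z^{(1)}(t-v)\ge Z(t)$, with no shift needed. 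The curvature bound survives.
\item With this fix, $\Phi(j\Delta)\le M^{(1)}_j+\frac14$, where $M^{(1)}_j=\sup_v\bigl[Z(v)-\frac14(|v-j\Delta|-1)_+^2\bigr]$ satisfies the same block tail bound as your $M_j$.
\end{itemize}

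\textbf{How the repaired route compares to the paper.} With that change the rest of your argument goes through, and it is a genuine alternative.
\begin{itemize}
\item Your semiconcavity argument is correct.
\item The interval tail bound makes each failure probability $\delta(H)$ small, so plain Markov gives item 3 with probability close to $1$. Only $\alpha>2$ matters, since otherwise $\cH$ is trivial. The paper gets only $\frac{\alpha-1}{\alpha}$ per point from the expectation bound and needs Paley--Zygmund.
\end{itemize}

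\textbf{The coupling with $Z$ is unnecessary.} Your $M_k$, $\Psi$ and $\Phi$ are increasing functionals of the path, so you can build them from $X^2$ itself. Then $\cH$ is a decreasing event, and the conditional domination of Lemma~\ref{lem:secondline_domination} gives item 3 directly, exactly as the paper does for $\cG$. This also keeps $\Phi$ measurable with respect to $X^2$. The proof of Proposition~\ref{prop:topline} relies on that when it treats $X^1$ as a Gibbs line above the floor $\Phi$. With your independent $Z$, conditioning on $\Phi$ could distort the conditional law of $X^1$.
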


While the above two lemmas will be proven in the two forthcoming sections, in
the remainder of this section we use the above to prove Proposition
\ref{prop:topline}.
The argument is also illustrated in Figure \ref{fig:bk}.

\begin{figure}
    \centering
    \includegraphics[width=0.85\textwidth]{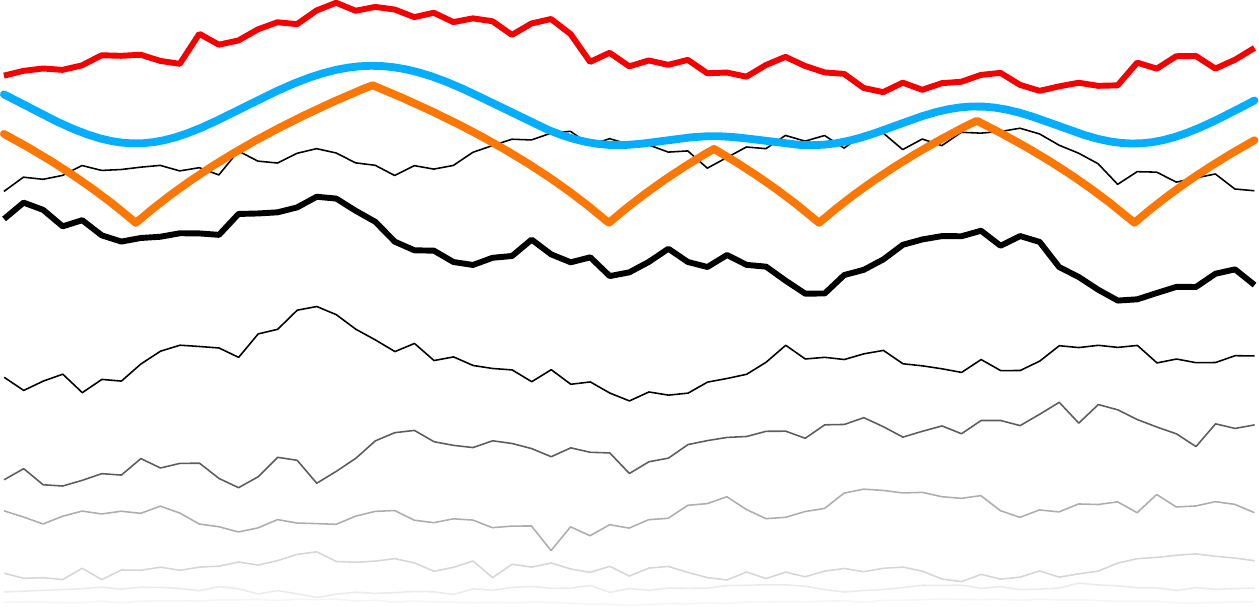}
    \caption{
        An illustration of the proof of Proposition \ref{prop:topline}.
        The ensemble $\X$ appears in black.
        We bound the second curve (thicker in the figure) by a random smooth function $\Phi$
        (blue curve) which is controlled at many points by Lemma \ref{lem:smoothcontrol}.
        Then the top line $X^1$ is stochastically dominated by a Ferrari--Spohn
        diffusion with a floor given by $\Phi$; this is the
        top red Brownian curve in the figure.
        The random smooth function $\Phi$ is constructed in Section \ref{sec:topline_bk}
        by applying a one-point confinement estimate (given by Lemma \ref{lem:confinement} below)
        at many points and taking the minimum; this gives the orange curve in the figure,
        which is then smoothed to obtain the blue curve.
    }
\label{fig:bk}
\end{figure}

\begin{proof}[Proof of Proposition \ref{prop:topline}]
Let $\cF$ be the event in the statement of this proposition, namely 
\begin{equation}
    \cF \coloneqq \left\{ \# \left\{ j \in \Z : |j \Delta| \leq T \text{ and } X^1(j\Delta) > H \right\}
    > \eta \frac{2T}{\Delta} \right\}.
\end{equation}
We aim to show that $\P[\cF] \leq C e^{-c T}$ for some constants $C, c > 0$.

Let us apply Lemma \ref{lem:smoothcontrol} with some choice of $\alpha > 0$ to be fixed later.
Note that by averaging item \ref{item:ghp} of that lemma over all $X^1$ satisfying $\cF$,
we obtain $\P [ \cH | \cF ]  \geq \left( \frac{1 - \Delta/2T}{\alpha} \right)^2$ as well.
By reassembling the conditional expectation we find
\begin{equation}
\label{eq:br}
    \P \left[ \cF \right] \leq \left( \frac{\alpha}{1 - \Delta/2T} \right)^2 \P \left[ \cH \cap \cF \right]
    \leq \left( \frac{\alpha}{1 - \Delta/2T} \right)^2 \P \left[ \cF \middle| \cH \right].
\end{equation}
Therefore to prove $\P[\cF] \leq C e^{-c T}$, it suffices
to get an exponential upper bound for $\P \left[ \cF \middle| \cH \right]$ instead.

For this, we first apply monotonicity using item \ref{item:bound} of Lemma \ref{lem:smoothcontrol}
to see that $X^1$ on the interval $[-T,T]$
is stochastically dominated by $W$, a Ferrari--Spohn diffusion with a floor given by $\Phi$
and boundary conditions given by $W(\pm T) = \max\{X^1(\pm T), \Phi(\pm T)\}$.
Next, we use the Girsanov transformation \eqref{eq:girsanov} to find that $W - \Phi$ is a single Brownian
line with variable area tilt strength given by $\lambda(t) = 1 - \Phi''(t)$.
Since $1 - \Phi''(t) \geq \frac{1}{2}$ by item \ref{item:smallcurv}, another application of
monotonicity ensures that $W - \Phi(t)$ is stochastically dominated by a single Brownian line $Z$ with area
tilt strength $\frac{1}{2}$ and boundary conditions given by $Z(\pm T) = \max\{X^1(\pm T) - \Phi(\pm T), 0\}$.

Now $Z$ is simply a rescaled version of the standard Ferrari--Spohn diffusion,
and so Lemma \ref{lem:fs_coupling} applies to $Z$ after adjusting the constants appropriately.
Note that by the upper tail bound of Theorem \ref{thm:tailbound_onepoint} applied to $X^1(\pm T)$,
for any $\delta > 0$ (to be fixed later)
we have have $Z(\pm T) \leq (\delta T)^{2/3}$ with probability $1 - e^{- c_\delta T}$ for some $c_\delta > 0$,
so we may assume that this occurs and take $S = (\delta T)^{2/3}$ in that lemma.
Thus under this assumption Lemma \ref{lem:fs_coupling} states that there are some constants $D,C,c>0$
such that we may couple $Z$ with a stationary
rescaled Ferrari--Spohn diffusion $2^{1/3} \FS$ so that
\begin{equation}
    \P \left[ Z(s) = 2^{1/3} \FS(s) \text{ for all } |s| \leq T - t \right] \geq 1 - C e^{-c t}
\end{equation}
whenever $D \delta T \leq t \leq T$.
Let us take $\delta$ small enough so that $D\delta \leq \frac{\eta}{4}$, $\eta$ being the constant
in the hypothesis of the proposition we are proving.
We thus have $Z = 2^{1/3} \FS$ on $[- T + \frac{\eta}{4} T, T - \frac{\eta}{4} T]$ with probability at least $1 - C e^{-c T}$.

Now, since the process $2^{1/3} \FS$ has a stationary probability distribution,
we may increase the constant $H$ given by Lemma \ref{lem:smoothcontrol}
(which does not change the bounds given by that lemma by monotonicity) until we have
\begin{equation}
    \P \left[ 2^{1/3} \FS(0) > \tfrac{H}{2} \right] \leq \frac{\eta}{8}.
\end{equation}
Further, the spectral gap of $2^{1/3} \FS$ is known to be positive by \cite[Section 2, Equation (2.11)]{FS05Constrained}.

Appealing thus to the standard analysis of a Markov chain with a positive spectral gap, the number of
points $j \in \Z$ with $|j \Delta| \leq T$ for which $2^{1/3} \FS(j\Delta) > \frac{H}{2}$ must
concentrate around its mean.
For instance, we may apply the Bernstein-type inequality of \cite[Theorem 3.3]{P15Concentration},
specifically equation (3.21) of that result, to the Markov chain $(2^{1/3} \FS(j\Delta))_j$
with the function $f(x) = \ind{x > \frac{H}{2}}$,
to conclude that 
\begin{align}
    &\P \left[
        \left|
        \sum_{j= - \lfloor T/\Delta \rfloor}^{\lfloor T/\Delta \rfloor}
        \ind{2^{1/3} \FS(j\Delta) > \frac{H}{2}}
        - (2 \lfloor T/\Delta \rfloor + 1)
        \P \left[ 2^{1/3} \FS(0) > \tfrac{H}{2} \right]
        \right|
        > \xi
    \right] \\
    &\qquad \qquad \qquad \qquad
    \qquad \qquad \qquad \qquad
    \qquad \qquad \qquad \qquad
    \leq 2 \Exp{- \frac{\xi^2 \gamma}{4 (2 \lfloor T/\Delta \rfloor + 1) + 10 \xi}},
    \label{eq:bernstein}
\end{align}
where $\gamma$ is the spectral gap of the chain $(2^{1/3} \FS(j\Delta))_j$.
We may take $\xi$ to be any positive multiple of $T$ to obtain a bound
which is exponentially small in $T$.
In particular, we find that
\begin{equation}
    \P \left[
        \# \left\{ j \in \Z : |j\Delta| \leq T \text{ and } 2^{1/3} \FS(s) \geq \tfrac{H}{2} \right\}
        \geq \frac{\eta}{4} \frac{2T}{\Delta}
    \right] \leq C e^{-c T}.
\end{equation}
By the couplings described above,
this means that with probability at least $1 - C e^{-cT}$, there are at most $\frac{\eta}{2} \frac{2T}{\Delta}$
points $j \in \Z$ with $|j\Delta| \leq T$ for which $X^1(j\Delta) \geq \Phi(j\Delta) + \tfrac{H}{2}$.

Finally, we set $\alpha$ large enough at the beginning of the argument
so that $\frac{\alpha-2}{\alpha} \geq 1 - \frac{\eta}{4}$, which means that
under $\cH$ we have
\begin{equation}
    \# \left\{ j \in \Z : |j\Delta| \leq T \text{ and } \Phi(j\Delta) \leq \tfrac{H}{2} \right\} \geq
    \left(1 - \frac{\eta}{4} \right) \left( \frac{2T}{\Delta} - 1 \right).
\end{equation}
Thus under $\cH$, with probability at least $1 - C e^{- c T}$, there are at most $\eta \frac{2T}{\Delta}$
points $j \in \Z$ with $|j\Delta| \leq T$ for which $X^1(j\Delta) \geq H$.
In other words, $\P[\cF|\cH] \leq C e^{-cT}$, which finishes the proof by \eqref{eq:br}.
\end{proof}

It remains to prove Lemma \ref{lem:fs_coupling} and Lemma
\ref{lem:smoothcontrol} which is done in the forthcoming Sections
\ref{sec:topline_fs} and \ref{sec:topline_bk} respectively. 

\input{sections/3_topline_1_fs}
\input{sections/3_topline_2_bk}

%% file: sections/3_topline_1_fs.tex
\subsection{Exponential mixing of the Ferrari--Spohn diffusion}
\label{sec:topline_fs}

In this section, we will prove Lemma \ref{lem:fs_coupling}, showing that we may couple
a Ferrari--Spohn diffusion $X$ with boundary conditions to a stationary one $\FS$
such that they agree on a large interval with high probability.

We will make use of the following lemma, proved in \cite[Lemma 2.7]{BCG25Characterizing}
for the Ferrari--Spohn  diffusion with boundary conditions,
which mitigates the effect of high boundary conditions away from the boundary.
As already mentioned in \cite{BCG25Characterizing}, the following lemma does not give the
optimal result, and thus our Lemma \ref{lem:fs_coupling} is not optimal,
since $X$ should come down to $O(1)$ height after only time $S^{1/2}$.

\begin{lemma}[{\cite[Lemma 2.7]{BCG25Characterizing}}]
\label{lem:fs_comingdown}
There are some constants $M, C, c > 0$ such that for all $S > 0$
and $T \geq S^{3/2}$, if $X$ denotes a Ferrari--Spohn diffusion on $[-T,T]$
with boundary conditions $X(\pm T) = S$, then
\begin{equation}
    \P \left[ X(t) \geq M \text{ for all } t \in [-T,T] \right]
    \leq C e^{-c T}.
\end{equation}
\end{lemma}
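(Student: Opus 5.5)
The plan is a direct energy-versus-entropy comparison of Radon--Nikodym weights. By Definition \ref{def:finiteLE} with $n=1$, the law of $X$ is the Brownian bridge law $\bridgemeasure^{S,S}_{1,-T,T}$ reweighted by $Z^{-1}\Exp{-2\int_{-T}^T X(t)\,dt}\ind{X>0}$, where
\begin{equation}
    Z = \E_{B\sim\bridgemeasure^{S,S}_{1,-T,T}}\left[\Exp{-2\textstyle\int_{-T}^T B(t)\,dt}\ind{B>0 \text{ on } [-T,T]}\right].
\end{equation}
Therefore
\begin{equation}
    \P\left[X(t)\geq M \ \forall t\right] = Z^{-1}\,\E\left[\Exp{-2\textstyle\int B}\ind{B\geq M \text{ on } [-T,T]}\right] \leq Z^{-1} e^{-4MT}.
\end{equation}
The numerator bound uses only $\int_{-T}^T B \geq 2MT$ on the event, together with the fact that the bridge probability is at most $1$. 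The whole proof therefore reduces to a lower bound $Z\geq c_0 e^{-KT}$ with $c_0,K$ absolute constants, valid whenever $T\geq S^{3/2}$. Once this is in hand, choosing $M = K/4+1$ gives the claim with $c=4$ and $C=1/c_0$.

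For the lower bound on $Z$, I would restrict to an explicit favorable event. Fix a deterministic path $f$ on $[-T,T]$ with $f(\pm T)=S$. On $[-T,-T+S^{1/2}]$ it descends linearly from $S$ to height $1$, it stays at height $1$ in the middle, and it ascends symmetrically on $[T-S^{1/2},T]$. This is possible when $S\geq1$ (so that $2S^{1/2}\leq 2S^{3/2}\leq 2T$). When $S<1$, simply take $f$ to be the line from $S$ to $1$ and back over unit times, or just note that monotonicity in the boundary data reduces to $S=1$. Consider the event $A$ that $|B-f|\leq \tfrac12$ throughout. On $A$ we have $B>0$, and
\begin{equation}
    \int_{-T}^T B \leq \int_{-T}^T \left(f+\tfrac12\right) \leq 2 S\cdot S^{1/2} + 3T \leq 5T,
\end{equation}
using $S^{3/2}\leq T$. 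Hence $Z\geq e^{-10T}\,\P[A]$.

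To bound $\P[A]$ from below I would use the Cameron--Martin/Girsanov formula, as in \eqref{rn23}, to shift the bridge by $f-\ell$, where $\ell\equiv S$ is the affine interpolation of the (equal) endpoints. The Cameron--Martin cost is $\Exp{-\tfrac12\int (f')^2}$ times a factor controlled on the tube event. Here $\int (f')^2 \approx 2S^2/S^{1/2} = 2S^{3/2}\leq 2T$. The stochastic integral term $\int f'\,dB$ also needs care: on the tube, after integrating by parts, it equals boundary terms plus $-\int f'' (B-f)$. Since $f''$ is a sum of point masses of total mass $O(S^{1/2})$ and $|B-f|\leq\frac12$, this is $O(S^{1/2})=O(T)$. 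It remains to bound the probability that a standard Brownian bridge on an interval of length $2T$ (from $0$ to $0$) stays in $[-\tfrac12,\tfrac12]$. By the small-ball/eigenvalue estimate this is at least $c_0e^{-\pi^2 T}$, since $\pi^2/(2\cdot(1)^2)$ is the principal Dirichlet eigenvalue rate for a strip of width $1$. Combining these gives $\P[A]\geq c_0e^{-K'T}$, hence $Z\geq c_0 e^{-KT}$.

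The main obstacle is exactly this uniform lower bound on the partition function. It requires handling the descent phase so that its cost, both the area $S^{3/2}$ and the Dirichlet energy $S^{3/2}$, is absorbed into $O(T)$; this is precisely where the hypothesis $T\geq S^{3/2}$ enters. It also requires checking that the Girsanov correction is genuinely bounded on the tube event rather than only in expectation. If the integration-by-parts bound on $\int f'\,dB$ proves fiddly, I would instead smooth the corners of $f$, or apply the Markov property at the times $\pm(T-S^{1/2})$ and bound the descent segment and the flat segment separately.
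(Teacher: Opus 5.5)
The paper gives no proof of this lemma; it is quoted from \cite{BCG25Characterizing}, so there is no in-paper argument to compare with. Judged on its own, your energy--entropy argument is correct. The numerator bound $e^{-4MT}$ is immediate. For $S\geq 1$, your tube around the piecewise linear $f$ gives $Z\geq e^{-(12+\pi^2)T}$, as follows.
\begin{enumerate}
    \item The area factor is at least $e^{-10T}$.
    \item The Cameron--Martin energy is $\tfrac12\int (f')^2=(S-1)^2/S^{1/2}\leq S^{3/2}\leq T$.
    \item After integration by parts, the Paley--Wiener term is the integral of the tube deviation (bounded by $\tfrac12$) against $df'$. The measure $df'$ consists of two interior atoms of mass $(S-1)/S^{1/2}$ each, and the boundary terms vanish since $B-f=0$ at $\pm T$. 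So this term is at most $(S-1)/S^{1/2}\leq T$ in absolute value, and no smoothing of $f$ is needed.
    \item The bridge small-ball probability is at least $2\sqrt{4\pi T}\,e^{-\pi^2T}\geq e^{-\pi^2T}$ for $T\geq1$. This follows from the Dirichlet eigenfunction expansion of the strip heat kernel at the origin, all of whose terms are nonnegative.
\end{enumerate}
Your route also shows exactly where $T\geq S^{3/2}$ enters, and why the lemma is not sharp: the full cost of the descent is charged against $T$.

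One small repair is needed. Your treatment of $S<1$ (unit-time ramps, or monotone reduction to $S=1$) tacitly requires $T\geq 1$, which $T\geq S^{3/2}$ does not give when $S<1$. The simplest fix is to note that the event includes the endpoints, where $X(\pm T)=S$, so it is empty whenever $S<M$. Since your choice $M=K/4+1$ exceeds $1$, you only need the case $S\geq M\geq 1$. There $T\geq S^{3/2}\geq 1$ holds automatically and your construction applies verbatim.
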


\begin{proof}[Proof of Lemma \ref{lem:fs_coupling}]
First, $X$ is stochastically dominated by $\FS + S$ by monotonicity.
So Lemma \ref{lem:fs_uppertail_onepoint} implies that for any constant $D$,
$X(\pm T \mp \frac{t}{2}) \leq S + \frac{1}{D} t^{2/3}$ with probability at least
$1 - C e^{-c t}$ for some constants $C,c > 0$.
Thus we may apply Lemma \ref{lem:fs_comingdown} to the intervals
$[-T,-T+\frac{t}{2}]$ and $[T-\frac{t}{2},T]$ as
$\frac{t}{4} \geq (S + \frac{1}{D} t^{2/3})^{3/2}$
if $t$ is large enough, also using our assumption that $t \geq D S^{3/2}$.
This shows that with probability at least $1 - C e^{-ct}$
there is some stopping domain $[\tau_\ell,\tau_r]$ with
$\tau_\ell \leq -T+\frac{t}{2}$ and $\tau_r \geq T-\frac{t}{2}$ such that
$X(\tau_\ell), X(\tau_r) \leq M$.
Now another application of monotonicity
implies that $X$ restricted to $[\tau_\ell,\tau_r]$ is stochastically
dominated by a Ferrari--Spohn diffusion shifted up by $M$,
and we may as well restrict the interval to $[-T+\frac{t}{2},T-\frac{t}{2}]$.

So consider two independent stationary Ferrari--Spohn diffusions
$\FS$ and $\FS'$ on $[-T+\frac{t}{2},T-\frac{t}{2}]$.
Under the high-probability event described above,
we may couple $X$ to $\FS'$ so that $X \leq \FS'+M$ on this interval.
Thus if there are times $s_\ell \in [-T+\frac{t}{2}, -T+t]$ and $s_r \in [T-t, T-\frac{t}{2}]$
for which $\FS(s_\ell) \geq \FS'(s_\ell) + M$ and
$\FS(s_r) \geq \FS'(s_r) + M$, then there must also be a stopping
domain $[\tau_\ell', \tau_r']$ with $\tau_\ell \leq s_\ell$
and $\tau_r \geq s_r$ for which $\FS(\tau_\ell) = X(\tau_\ell)$
and $\FS(\tau_r) = X(\tau_r)$.
We may thus resample both processes in this stopping domain
and couple them perfectly.

It just remains to prove the existence of the aforementioned
times $s_\ell$ and $s_r$.
But since $\FS$ and $\FS'+M$ are independent stationary Markov
processes and the product of their stationary distributions
assigns positive measure to the event $\{ \FS(0) > \FS'(0)+M\}$,
such times exist with probability $1 - C e^{-ct}$ by standard
Markov process analysis.
For instance, we may again apply the Bernstein-type inequality of
\cite[Theorem 3.3]{P15Concentration},
as in \eqref{eq:bernstein} in the proof of Proposition \ref{prop:topline},
but this time with the Markov chain $W_n = (\FS(n), \FS'(n))$
and the function $f((x,y)) = \ind{x > y + M}$.
In this context, that result states that
\begin{equation}
\label{eq:markovchain}
    \P \left[
        \left|
            \sum_{n=\lceil T - t \rceil}^{\lfloor T - t/2 \rfloor} \ind{\FS(n) > \FS'(n) + M}
            - \lfloor t/2 \rfloor \P[\FS(0) > \FS'(0) + M]
        \right| > \xi
    \right]
    \leq 2 \Exp{- \frac{\xi^2 \gamma}{4 \lfloor t / 2 \rfloor + 10 \xi}},
\end{equation}
where $\gamma$ is the spectral gap of the chain $(W_n)$.
As this is a product of two chains which have positive spectral gap, again by
\cite[Section 2, Equation (2.11)]{FS05Constrained}, we may take $\xi$ to be some
small multiple of $t$ and let $t$ be large enough to
get an exponentially small (in $t$) upper bound on the probability that the time $s_r$ 
does not exist.
Similarly, we see that $s_\ell$ exists with all but exponentially small probability,
which concludes the proof.
\end{proof}

%% file: sections/3_topline_2_bk.tex
\subsection{Controlling the second line by a smooth function}
\label{sec:topline_bk}

In this section we will prove Lemma \ref{lem:smoothcontrol}, showing that the second line
of $\X$ is bounded by a smooth function $\Phi$ which satisfies a constant bound at many points in
a $\Delta$-spaced lattice inside of $[-T,T]$.

For this, we will make use of the following confinement estimate which gives control at individual points.
This estimate is a slight optimization of \cite[Theorem 3.1]{CIW19Confinement}
which was stated explicitly as follows in \cite[Theorem 2.7]{CG25Uniqueness},
though we will use it as a lemma in our arguments and state it as such. The original statement of \cite[Theorem 2.7]{CG25Uniqueness} was for
a finite-line ensemble with $n$ lines,
and the supremum was taken over a finite interval $[-T,T]$.
However, the bound is uniform in both $n$ and $T$ so these parameters may both be
taken to infinity by monotonicity and the monotone convergence theorem.

\begin{lemma}[Logarithmic confinement estimate]
\label{lem:confinement} 
{For any $\lambda > 1$} there is some $M > 0$ such that for the $\lambda$-tilted line ensemble $\X$, we have
\begin{equation}
\label{expectedcontrol1234}
    \E \left[ \sup_{t \in \R} \left[X^1(t) - \psi(t) \right]_+ \right] \leq M,
\end{equation}
where $\psi(t) = M \log(1 + |t|)$ and $[a]_+ = \max\{a,0\}$.
\end{lemma}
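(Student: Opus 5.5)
The plan is to derive the estimate directly from the interval upper tail bound of Corollary~\ref{cor:tailbound_interval}, combined with stationarity of $\X$ and a union bound over unit blocks. First I would note that $\X$ is translation invariant. Indeed, for any $s\in\R$ the shifted ensemble $\X(\cdot+s)$ still has the $\lambda$-BG property, is asymptotically pinned to zero, and is uniformly tight (Definitions~\ref{def:BG}, \ref{def:APZ}, \ref{def:UT}). By the uniqueness in Definition~\ref{def:main}, it therefore has the same law as $\X$.

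Applying Corollary~\ref{cor:tailbound_interval} with $S=1$ and translating, there are constants $C_0,C,c>0$ such that for every $j\in\Z$ and $y>0$,
\begin{equation}
    \P\Bigl[\max_{s\in[j-1,j+1]} X^1(s) > C_0 + y\Bigr] \leq C e^{-c y^{3/2}}.
\end{equation}
On the block $[j-1,j+1]$ we have $\psi(t)\geq M\log(\max\{1,|j|\})$, since $1+|t|\geq |j|$ there. Hence if $\sup_t [X^1(t)-\psi(t)]_+ > u$ for some $u>0$, then for some $j$ the block maximum exceeds $M\log(\max\{1,|j|\}) + u$. Using the union bound and the superadditivity $(a+b)^{3/2}\geq a^{3/2}+b^{3/2}$ for $a,b\geq 0$, I would estimate, for $u\geq C_0$,
\begin{equation}
    \P\Bigl[\sup_t [X^1(t)-\psi(t)]_+ > u\Bigr]
    \leq C e^{-c(u-C_0)^{3/2}} \sum_{j\in\Z} e^{-c\,(M\log \max\{1,|j|\})^{3/2}}.
\end{equation}
The sum is finite for every $M>0$ because $(\log|j|)^{3/2}$ grows faster than any multiple of $\log|j|$. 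It is moreover nonincreasing in $M$, so it is bounded by its value $K$ at $M=1$ whenever $M\geq 1$.

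Integrating in $u$, I would write $\E[Z]\leq C_0+\int_{C_0}^\infty \P[Z>u]\,du$ for $Z=\sup_t[X^1(t)-\psi(t)]_+$. This gives $\E[Z]\leq C_0 + CK\int_0^\infty e^{-cv^{3/2}}\,dv$, a finite constant $B$ that does not depend on $M$ once $M\geq 1$. Choosing $M\geq\max\{1,B\}$ then yields \eqref{expectedcontrol1234}.

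The only slightly delicate point is this self-consistency: $M$ appears both in $\psi$ and as the bound. It is resolved because enlarging $M$ only decreases the left side, so a single large $M$ works. The hypothesis $\lambda>1$ enters only through the constants $C_0$, $C$, $c$ of Corollary~\ref{cor:tailbound_interval} (or, equivalently, through \cite[Theorem 2.7]{CG25Uniqueness} applied to finite ensembles and passed to the limit by monotone convergence).
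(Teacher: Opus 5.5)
Your argument is correct relative to the inputs the paper takes for granted, but it follows a different route from the paper's.

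\emph{The paper's route.} The paper does not derive Lemma~\ref{lem:confinement} from anything else in the text. It imports \cite[Theorem 2.7]{CG25Uniqueness}, a form of \cite[Theorem 3.1]{CIW19Confinement}, which is proved for the zero-boundary $n$-line ensembles $\Y_{n,T}$ on $[-T,T]$ with a bound uniform in $n$ and $T$. It then passes to $\X$ using the monotone coupling of these finite ensembles, under which $\sup_{|t|\leq T}[Y^1_{n,T}(t)-\psi(t)]_+$ increases to $\sup_{t}[X^1(t)-\psi(t)]_+$, together with monotone convergence.

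\emph{Your route.} You reduce the lemma to Corollary~\ref{cor:tailbound_interval} using:
\begin{itemize}
\item stationarity, where your uniqueness argument for shift invariance is fine;
\item a union bound over unit blocks;
\item the observation that your bound $B$ is uniform in $M\geq 1$, which resolves the self-referential choice of $M$.
\end{itemize}
This gives more than the statement claims. It yields an $e^{-cu^{3/2}}$ tail for $\sup_t[X^1(t)-\psi(t)]_+$, not just a bound on its mean. It also shows that any envelope with $\sum_j e^{-c\psi(j)^{3/2}}<\infty$ would work, for instance $A(\log(1+|t|))^{2/3}$ with $A$ large, so the logarithm in $\psi$ is far from sharp.

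\emph{The cost.} Your argument is only a reduction between two imported results, not an independent proof. Corollary~\ref{cor:tailbound_interval} is \cite[Corollary 5.5]{CG25Uniqueness}, which appears later in that paper, at a point where the confinement estimate is already available as an a priori input. If you wanted a standalone proof, you would have to check that the tail corollary does not itself rest on Lemma~\ref{lem:confinement}. Within the present paper, where both results are black-box inputs stated before their use, your deduction is legitimate.
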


By Lemma \ref{lem:secondline_domination}, the above statement applies to the
second line $X^2$, even conditionally on $X^1.$ Further by a second moment
argument, we will show that with high (but constant) probability, the above
control applies centered at multiple points which allows us to create an
efficient envelope by taking the minimum of many shifted copies of the function $\psi$
in the above lemma.
A smoothening process is then employed to obtain $\Phi$ since the
function $\psi$ has a sharp downwards-pointing corner at the origin, and
additionally taking the minimum will result in upwards-pointing corners.

Now it just remains to prove Lemma \ref{lem:smoothcontrol}.
We do this in the following two subsections: first, in Section \ref{sec:topline_bk_bound} we exhibit a function $\Psi$
which bounds $X^2$ and satisfies a version of item \ref{item:ghp} in Lemma \ref{lem:smoothcontrol}, but may not be smooth.
Then in Section \ref{sec:topline_bk_smooth}, we smoothen $\Psi$ to obtain $\Phi$ while retaining the desired properties.

\subsubsection{Bounding the second line}
\label{sec:topline_bk_bound}

Start by noting that in Lemma \ref{lem:confinement} we may take $M$ to be uniform over all $\lambda$ greater than
any fixed $\lambda_0 > 1$ by monotonicity.
We would like to apply this bound at many points and take the minimum over shifts of $\psi$ as our initial bound.
To that end, let us fix $\alpha$ as in the statement of Lemma \ref{lem:smoothcontrol} and define for any $j \in \Z$
the event
\begin{equation}
\label{eq:Ajdef}
    \cA_j \coloneqq \left\{ \sup_{t \in \R} [X^1(t) - \psi(t - j\Delta)]_+ \leq \alpha M \right\}.
\end{equation}
We then define the nonsmooth bound $\Psi$ as follows:
\begin{equation}
\label{eq:bigpsidef}
    \Psi(t) \coloneqq \min \left\{ \psi(t - j\Delta) : j \in \Z \text{ and } \cA_j \text{ holds} \right\} + \alpha M,
\end{equation}
and note that by definition we have $X^2(t) \leq \Psi(t)$ for all $t \in \R$.
Now we control $\Psi(t)$ by using the second moment method to show
that $\cA_j$ holds for many values of $j$ with $|j\Delta| \leq T$ with some reasonable probability,
even after conditioning on the first line.
Note that whenever $\cA_j$ holds, we have $\Psi(j\Delta) \leq \alpha M$ since $\psi(0) = 0$.

\begin{lemma}
\label{lem:2mmnew}
For $\alpha > 0$ fixed and $\cA_j$ defined as above, let us define the event
\begin{equation}
    \cG \coloneqq \left\{ \# \left\{ j \in \Z : |j \Delta| \leq T \text{ and } \cA_j \text{ holds} \right\}
    \geq \frac{\alpha-2}{\alpha} \left( \frac{2T}{\Delta} - 1 \right) \right\}.
\end{equation}
Then we have $\P[\cG | X^1 ] \geq \left( \frac{1 - \Delta/2T}{\alpha} \right)^2$ almost surely.
\end{lemma}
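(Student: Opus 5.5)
The plan is a first-moment (Markov) argument, conditional on $X^1$. The name "second moment" in the text suggests more is needed, but I expect Markov to already give the stated bound. Throughout I read the events $\cA_j$ in \eqref{eq:Ajdef} as concerning the second line, i.e.\ $\cA_j = \{\sup_t [X^2(t)-\psi(t-j\Delta)]_+ \leq \alpha M\}$. This is the reading under which $X^2\leq\Psi$ holds and conditioning on $X^1$ is meaningful.

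\textbf{Step 1: conditional control of each $\cA_j$.} By Lemma \ref{lem:secondline_domination}, conditionally on $X^1$ the whole process $X^2$ is stochastically dominated by the top line of the stationary ensemble with tilts $\lambda,\lambda^2,\dotsc$, used at the level of paths as in the discussion after Lemma \ref{lem:confinement}. That top line is in turn dominated by $X^1$ of the $\lambda$-tilted ensemble $\X$, by monotonicity (Lemma \ref{lem:monotonicity}), since its tilts are larger. The functional $\x\mapsto\sup_t[x(t)-\psi(t-j\Delta)]_+$ is increasing, and $\X$ is stationary, so a time shift together with Lemma \ref{lem:confinement} gives
\[
\E\Bigl[\sup_{t}[X^2(t)-\psi(t-j\Delta)]_+ \Bigm| X^1\Bigr]\leq M
\]
for every $j$, almost surely. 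Here $M$ is uniform in $\lambda\geq\lambda_0$. By Markov's inequality, $\P[\cA_j^c\mid X^1]\leq 1/\alpha$.

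\textbf{Step 2: counting.} Let $N$ be the number of $j$ with $|j\Delta|\leq T$, so $N\geq 2T/\Delta-1$, and let $F$ be the number of such $j$ for which $\cA_j$ fails. By Step 1, $\E[F\mid X^1]\leq N/\alpha$, so Markov gives $\P[F\leq 2N/\alpha\mid X^1]\geq 1/2$. On the event $\{F\leq 2N/\alpha\}$, the number of successes satisfies
\[
N-F\geq\Bigl(1-\tfrac{2}{\alpha}\Bigr)N\geq\tfrac{\alpha-2}{\alpha}\Bigl(\tfrac{2T}{\Delta}-1\Bigr)
\]
whenever $\alpha\geq 2$, so $\cG$ holds.

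\textbf{Step 3: conclusion.} If $\alpha\geq2$, Step 2 gives $\P[\cG\mid X^1]\geq\frac12\geq\frac1{\alpha^2}\geq\bigl(\frac{1-\Delta/2T}{\alpha}\bigr)^2$. If $\alpha<2$, the threshold in $\cG$ is negative, so $\cG$ holds surely and the bound is trivial.

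\textbf{Main obstacle.} The only delicate point is Step 1. One must justify that the conditional domination in Lemma \ref{lem:secondline_domination} holds at the level of whole paths on $\R$, not just one-dimensional marginals, so that it can be applied to the supremum functional. I would argue this from the prelimit with zero boundary conditions, exactly as in the sketch after that lemma. One then passes to the limit using monotone convergence of the supremum over growing intervals, as done for Lemma \ref{lem:confinement}.
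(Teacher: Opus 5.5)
Your argument is correct. You are also right that the $X^1$ in \eqref{eq:Ajdef} should be $X^2$; the paper's own proof confirms this reading.

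Your route differs from the paper's in how the counting is done. The paper applies the path-level conditional domination of $(X^2,X^3,\dotsc)$ given $X^1$ by $\X$ once, to the whole decreasing event, obtaining $\P[\cG\mid X^1]\geq\P[\cG']$, where $\cG'$ is the analogous event for the top line of $\X$. It then works unconditionally: Markov and stationarity give the lower bound $\frac{\alpha-1}{\alpha}(\frac{2T}{\Delta}-1)$ on the expected number of successes, the second moment is bounded trivially by $(\frac{2T}{\Delta}+1)^2$, and Paley--Zygmund with $\theta=\frac{\alpha-2}{\alpha-1}$ finishes. You instead stay conditional on $X^1$, apply the domination separately to each increasing functional $\sup_t[X^2(t)-\psi(t-j\Delta)]_+$, and apply Markov to the number of failures.

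The probabilistic input is identical in both routes: path-level conditional domination, stationarity, and Lemma \ref{lem:confinement}. So the obstacle you flag is equally present in the paper.

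Because the paper's second-moment bound is the trivial one, its Paley--Zygmund step is really a first-moment argument in disguise. Your version makes this explicit and gives the stronger, $\alpha$-independent bound $\frac12$ for $\alpha\geq 2$. It also delivers the stated constant honestly. Paley--Zygmund as carried out in the paper yields $\alpha^{-2}\bigl(\frac{2T/\Delta-1}{2T/\Delta+1}\bigr)^2$, which is strictly smaller than $\bigl(\frac{1-\Delta/2T}{\alpha}\bigr)^2$ when $2T>\Delta$. So the paper's final ``bounding the fraction'' goes the wrong way, though harmlessly, since only some positive constant is used downstream.

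One small caveat: your Step 3 tacitly uses $\Delta\leq 2T$. The statement itself needs this as well, since otherwise its right-hand side can exceed $1$.
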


\begin{proof}[Proof of Lemma \ref{lem:2mmnew}]
By Lemma \ref{lem:secondline_domination}, it suffices to show a similar statement for the top line
of $\X$ instead, since $\cG$ is a decreasing event.
More precisely, let us set
\begin{equation}
    \cA_j' = \left\{ \max_{t \in \R} \left[ X^1(t) - \phi(t - j\Delta) \right]_+ \leq \alpha M \right\}
\end{equation}
and 
\begin{equation}
    \cG' \coloneqq \left\{ \# \left\{ j \in \Z : |j \Delta| \leq T \text{ and } \cA_j' \text{ holds} \right\}
    \geq \frac{\alpha-2}{\alpha} \left( \frac{2T}{\Delta} - 1 \right) \right\}.
\end{equation}
Then since $\cG'$ is decreasing and $\overline{\X} = \{ X^2, X^3, \dotsc, \}$ conditioned on $X^1$
is stochastically dominated by $\X$ as mentioned in Lemma \ref{lem:secondline_domination},
we have $\P\left[ \cG \middle| X^1 \right] \geq \P\left[ \cG' \right]$ almost surely.

Now by Markov's inequality applied to \eqref{expectedcontrol1234} and stationarity of $\X$, we have
\begin{equation}
    \E \left[ \sum_{j : |j \Delta| \leq T} \ind*{\cA_j'} \right] \geq 
    \frac{\alpha-1}{\alpha} \left( \frac{2T}{\Delta} - 1 \right),
\end{equation}
since there are at least $\frac{2T}{\Delta} - 1$ terms in the sum.
And, since there are at most $\frac{2T}{\Delta} + 1$ terms, we also have
\begin{equation}
    \E \left[ \left( \sum_{j : |j \Delta| \leq T} \ind*{\cA_j'} \right)^2 \right]
    \leq \left( \frac{2T}{\Delta} + 1 \right)^2.
\end{equation}
So by the Paley--Zygmund inequality we have
\begin{equation}
    \P \left[
        \sum_{j : |j \Delta| \leq T} \ind*{\cA_j'} \geq
        \theta \frac{\alpha-1}{\alpha} \left( \frac{2T}{\Delta} - 1 \right)
    \right] \geq (1 - \theta)^2 \left( \frac{\alpha-1}{\alpha} \right)^2
    \left( \frac{\frac{2T}{\Delta}-1}{\frac{2T}{\Delta}+1} \right)^2.
\end{equation}
Plugging in $\theta = \frac{\alpha-2}{\alpha-1}$ for instance, and bounding the fraction,
we have
\begin{equation}
    \P \left[
        \sum_{j : |j \Delta| \leq T} \ind*{\cA_j'}
        \geq \frac{\alpha-2}{\alpha} \left( \frac{2T}{\Delta} - 1 \right)
    \right] \geq \left( \frac{1 - \Delta/2T}{\alpha} \right)^2.
\end{equation}
This finishes the proof.
\end{proof}

\subsubsection{Smoothing the bound}
\label{sec:topline_bk_smooth}

We now apply multiple smoothing operations to $\Psi$ in order to obtain the function $\Phi$ of
Lemma \ref{lem:smoothcontrol}.
Of primary importance is the downwards-pointing corner at $0$ of $\psi(t) = M \log(1 + |t|)$, which we fix by replacing the function $\psi$
in the minimum \eqref{eq:bigpsidef} defining $\Psi$ by a smoothed version $\phi$, defined in Lemma \ref{lem:smallphi} below.

Note that after doing this replacement and taking the minimum over various shifted copies of $\phi$,
we will arrive at a function $\tilde{\Phi}$ which is still not smooth, having upwards-pointing corners
at locations where the minimizing function changes (see the upwards-pointing corners in the orange curve of Figure \ref{fig:bk} for instance).
As such, we will perform a further smoothing operation to obtain the desired function $\Phi$.
So the function $\phi$ defined in the following lemma is only an intermediate stage;
as such, we do not aim for the best possible result.
In particular, the somewhat artificial looking item \ref{item:deriv2} below could be easily improved, but this will not
be necessary for our purposes.

\begin{lemma}
\label{lem:smallphi}
Let $\psi(t) = M \log(1 + |t|)$ for some $M > 0$ as in Lemma \ref{lem:confinement}.
Then for any $\alpha > 0$, there is some $H > 0$ for which there exists
a symmetric $C^1$ function $\phi$ on $\R$ such that
\begin{enumerate}[label=(A.\arabic*)]
    \item For all $t \in \R$, we have $\phi(t) \geq \psi(t)$, \label{item:phigtrpsi}
    \item For all $t \in [-\frac{1}{2},\frac{1}{2}]$, we have $\phi(t) \leq \frac{H}{2} - \alpha M - 1$, \label{item:lessh2}
    \item For all $t \in \R$, we have $|\phi'(t)| \leq 1$, \label{item:deriv1}
    \item For all but two values of $t \in \R$, the function $\phi$ is twice differentiable
    and satisfies $\phi''(t) \leq \frac{1}{2}$. \label{item:deriv2}
\end{enumerate}
\end{lemma}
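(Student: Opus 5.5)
We construct $\phi$ explicitly: it equals $\psi(t)=M\log(1+|t|)$ plus a constant far from the origin, and a symmetric quadratic cap near the origin that removes the corner. Since $\psi'(t)=M/(1+t)$ for $t>0$, the slope of $\psi$ drops below $1$ once $t\ge M$. So we first pick a matching point $t_0\ge \max\{M,1\}$, which guarantees $|\psi'(t)|\le 1$ for $|t|\ge t_0$. On the region $|t|\ge t_0$ we would like to set $\phi=\psi$. However, a quadratic cap glued to $\psi$ in a $C^1$ fashion at $t_0$ would have to reach $\psi$'s value there, and its curvature is limited to $\frac12$, so we keep an additive constant free. The candidate is
\begin{equation}
    \phi(t) = \begin{cases}
        a + \tfrac{b}{2} t^2 & |t| \leq t_0, \\
        \psi(t) + K & |t| > t_0,
    \end{cases}
\end{equation}
with $b = \psi'(t_0)/t_0 = M/(t_0(1+t_0))$ and $K = a + \tfrac{b}{2}t_0^2 - \psi(t_0)$. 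These choices make $\phi$ continuous and $C^1$ at $\pm t_0$ for any $a$.

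We then check the four properties in turn.

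\textbf{(A.3).} On $|t|\le t_0$ we have $|\phi'(t)|=b|t|\le bt_0=\psi'(t_0)\le 1$. Outside, $|\phi'|=|\psi'|\le 1$ by the choice of $t_0$.

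\textbf{(A.4).} Away from the gluing points $\pm t_0$, $\phi$ is twice differentiable. Inside we have $\phi''=b\le M/t_0^2\le \frac12$, which holds once $t_0\ge\sqrt{2M}$; we enlarge $t_0$ accordingly. Outside, $\psi''<0\le\frac12$. The only two non-twice-differentiable points are $\pm t_0$, as allowed.

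\textbf{(A.1).} Outside $[-t_0,t_0]$ we need $K\ge 0$. Inside, $\psi(t)\le \psi(t_0)$ while $\phi(t)\ge a$, so it suffices that $a\ge \psi(t_0)$. We take $a=\psi(t_0)$, which also gives $K=\frac b2 t_0^2\ge0$.

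\textbf{(A.2).} On $[-\frac12,\frac12]$ the function $\phi$ is bounded by $a+\frac b8$, a finite constant depending only on $M$. We therefore choose $H$ large enough that $\frac H2-\alpha M-1\ge a+\frac b8$. This is where the dependence of $H$ on $\alpha$ enters.

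Symmetry of $\phi$ is immediate from the construction. There is no real obstacle. The only delicate point is the order of choices: first $t_0$ (depending only on $M$), then $a$, $b$, $K$, and finally $H$. This order avoids circularity, since (A.2) is the only constraint involving $H$ and it is a pure lower bound on $H$.
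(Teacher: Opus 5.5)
Your proof is correct and is essentially the paper's construction: glue a symmetric parabola of curvature at most $\frac12$ onto $\psi$ in a $C^1$ way at a point where $|\psi'|\le 1$, then choose $H$ last from (A.2). The one difference is your additive constant $K=\frac b2 t_0^2$, which makes (A.1) immediate. Contrary to your remark, it is not needed: the paper glues to $\psi$ itself (your $K=0$, i.e.\ $a=\psi(t_0)-\frac b2 t_0^2$), and (A.1) follows since $(\phi-\psi)'(t)=bt-\frac{M}{1+t}\le 0$ on $[0,t_0]$ while $\phi-\psi$ vanishes at $t_0$. Either version works equally well in the later smoothing step.
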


\begin{proof}[Proof of Lemma \ref{lem:smallphi}]
To see that such a function $\phi$ exists, one possible construction is to glue in a wide parabola,
i.e.\ set
\begin{equation}
    \phi(t) = \begin{cases}
        a t^2 + b & \text{if } |t| \leq c, \\
        \psi(t) & \text{otherwise},
    \end{cases}
\end{equation}
for some values of $a,b,c$.
As long as we take $a \in (0,\frac{1}{4})$, then item \ref{item:deriv2} holds because
$\psi''(t)$ is negative away from $0$; the two possible points where $\phi$ is not twice differentiable
will be $\pm c$.
To ensure that the function is continuous we must take $b = \psi(c) - a c^2$,
and to ensure that the function is $C^1$ on all of $\R$, we must take $c > 0$ to satisfy
\begin{equation}
    2 a c = \frac{M}{1+c}, \qquad \text{i.e.} \qquad
    c = \tfrac{1}{2} \left( \sqrt{1 + \frac{2 M}{a}} - 1 \right).
\end{equation}
Now since $\phi'(t)$ increases from $0$ to $2 a c = \frac{M}{1+c}$ as $t$ increases from $0$ to $c$ and then
decreases back to $0$ as $t$ increases beyond $c$, by taking $c$ large enough in the construction
(which corresponds to taking $a$ small enough) we may ensure that item \ref{item:deriv1} holds.
The constant $H$ of item \ref{item:lessh2} is then determined by the choices made in the construction.
Finally, since $\psi$ is concave on $(-c,0)$ and $(0,c)$ while $\phi$ is convex on $(-c,c)$,
and their derivatives agree at $\pm c$, item \ref{item:phigtrpsi} holds.
\end{proof}

Having fixed $\alpha$ and letting $\phi$ be as in Lemma \ref{lem:smallphi},
let us now define a partially smoothed function $\tilde{\Phi}$ via
\begin{equation}
\label{eq:tildephidef}
    \tilde{\Phi}(t) \coloneqq \min \left\{ \phi(t - j\Delta) : j \in \Z \text{ and } \cA_j \text{ holds} \right\} + \alpha M.
\end{equation}
By item \ref{item:phigtrpsi} of Lemma \ref{lem:smallphi}, we have $\tilde{\Phi}(t) \geq \Psi(t) \geq X^2(t)$ for all $t$,
recalling the definition \eqref{eq:bigpsidef} of $\Psi$.
Additionally, whenever $\cA_j$ holds, we have $\tilde{\Phi}(t) \leq \frac{H}{2} - 1$ for all $t \in [j\Delta-1, j\Delta+1]$
by item \ref{item:lessh2}.
However, the function $\tilde{\Phi}$ is still not smooth since the function $\phi$ is only $C^1$ and moreover
the points where the supremum in \eqref{eq:tildephidef} switches from one curve to another will result in corner points
where $\tilde{\Phi}$ is not even differentiable.
So we will apply one more smoothing operation to the whole function $\tilde{\Phi}$ resulting in a suitable smooth
function $\Phi$.
This is done by the following lemma.

\begin{lemma}
\label{lem:bigphi}
Fix $\alpha, M$ as above and let $H$ be the constant introduced in Lemma \ref{lem:smallphi}
to define $\phi$.
Further let $\tilde{\Phi}$ be the random function defined in \eqref{eq:tildephidef}.
Then there is a random smooth function $\Phi$ which satisfies
\begin{enumerate}[label=(B.\arabic*)]
    \item For all $t \in \R$, we have $\Phi(t) \geq \tilde{\Phi}(t)$, \label{item:tildgtr}
    \item For all $t \in \R$, we have $\Phi''(t) \leq \frac{1}{2}$, \label{item:tildd2}
    \item For all $j \in \Z$ such that $\cA_j$ holds, we have $\Phi(j\Delta) \leq \frac{H}{2}$. \label{item:tildh2}
\end{enumerate}
\end{lemma}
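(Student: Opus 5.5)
Since $\tilde\Phi$ is a minimum of $C^1$ functions with $|\phi'|\le 1$, it is $1$-Lipschitz. Moreover, on any interval where a single translate $\phi(\cdot-j\Delta)$ achieves the minimum, its second derivative is at most $\tfrac12$, except at finitely many points per unit length. I will therefore smooth $\tilde\Phi$ by mollification and then add a small constant.

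The first step is a one-sided curvature bound in the distributional sense. A minimum of functions each satisfying $f''\le \tfrac12$ still satisfies $\tilde\Phi''\le\tfrac12$ as distributions: at a switching point the minimum creates a concave (downward-pointing in second derivative) kink, so the singular part of $\tilde\Phi''$ is a negative measure. The individual $\phi$ are $C^1$ with $\phi''\le\tfrac12$ off two points, and since $\phi'$ is continuous these points contribute no singular mass. The cleanest way to phrase this is that $t\mapsto \phi(t-j\Delta)-\tfrac14 t^2$ is concave, because it is $C^1$ with nonincreasing derivative. A pointwise minimum of concave functions is concave, so $\tilde\Phi(t)-\tfrac14t^2$ is concave. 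The shifts by $j\Delta$ do not matter here, since subtracting $\tfrac14 t^2$ from $\phi(t-j\Delta)$ differs from $\phi(t-j\Delta)-\tfrac14(t-j\Delta)^2$ only by an affine function.

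Next, fix a smooth symmetric mollifier $\rho$ supported on $[-\tfrac12,\tfrac12]$ and set $\Phi=\tilde\Phi*\rho+1$. Convolution preserves concavity, and $(\tfrac14t^2)*\rho=\tfrac14t^2+\text{const}$, so $\Phi-\tfrac14t^2$ is smooth and concave. This gives \ref{item:tildd2}. For \ref{item:tildgtr}, concavity of $g=\tilde\Phi-\tfrac14 t^2$ together with symmetry of $\rho$ would only give $g*\rho\le g$, which is the wrong direction. Instead I use the Lipschitz bound: $|\tilde\Phi*\rho-\tilde\Phi|\le \sup_{|s|\le 1/2}|s|\le\tfrac12$, so $\Phi\ge\tilde\Phi+\tfrac12\ge\tilde\Phi$. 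For \ref{item:tildh2}, when $\cA_j$ holds we have $\tilde\Phi\le \tfrac H2-1$ on $[j\Delta-1,j\Delta+1]$ by \ref{item:lessh2}, and the averaging window $[j\Delta-\tfrac12,j\Delta+\tfrac12]$ lies inside this interval. Hence $\Phi(j\Delta)\le \tfrac H2-1+1=\tfrac H2$.

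Here \ref{item:lessh2} only covers $|t|\le\tfrac12$, which puts $\phi(t-j\Delta)$ below $\tfrac H2-\alpha M-1$ only for $|t-j\Delta|\le\tfrac12$. That suffices, because the mollifier support has radius $\tfrac12$: on $[j\Delta-\tfrac12,j\Delta+\tfrac12]$ we get $\tilde\Phi\le \tfrac H2-1$. Measurability in the randomness is clear since the construction is a deterministic map of the set $\{j:\cA_j\}$, and nonnegativity of $\Phi$ follows from $\phi\ge\psi\ge0$.

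The main obstacle is the concavity claim for a minimum over possibly infinitely many translates. This needs $\tilde\Phi$ to be finite and locally a minimum over finitely many $j$. It holds provided at least one $\cA_j$ occurs, since $\phi$ grows, so only nearby $j$ can compete locally. If no $\cA_j$ holds, I take $\Phi\equiv+\infty$ formally, or handle that null or trivial case separately, since then \ref{item:tildh2} is vacuous.
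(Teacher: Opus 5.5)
Your proof is correct. It uses the same construction as the paper, $\Phi=\tilde\Phi*\rho+1$ with a bump supported in $[-\tfrac12,\tfrac12]$. Your arguments for (B.1) (the bound $\int\rho(s)|s|\,ds\le\tfrac12$ plus the $1$-Lipschitz property) and for (B.3) (averaging over a window of radius $\tfrac12$) are essentially the paper's.

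The genuine difference is in (B.2). The paper writes $\Phi''=\chi''*\tilde\Phi$ and integrates by parts twice on $\R\setminus E$, where $E$ is the discrete set of non-smooth points. It bounds the absolutely continuous part using (A.4). It then shows the jump terms $\chi(t-r)\bigl(\tilde\Phi'(r+)-\tilde\Phi'(r-)\bigr)$ are nonpositive. For this it uses the symmetry and radial monotonicity of $\phi$: each switching point of the minimum is a midpoint between two centres, so $\tilde\Phi'(r+)=-\tilde\Phi'(r-)\le 0$.

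You instead observe that $t\mapsto\phi(t-j\Delta)-\tfrac14t^2$ is concave. Hence $\tilde\Phi-\tfrac14t^2$ is concave, and concavity survives mollification. Your semiconcavity route is shorter and more robust. It needs neither the discreteness of $E$, nor the explicit location of switching points, nor the symmetry of $\phi$, and it avoids all boundary-term bookkeeping. The paper's computation is more hands-on but is tied to the specific shape of $\phi$.

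Two small remarks. First, what you call the main obstacle is not one. An infimum of an arbitrary family of concave functions is concave wherever it is finite, so you do not need local finiteness of the minimum. Moreover, $\tilde\Phi$ is finite everywhere as soon as a single $\cA_j$ holds. Second, you are right that (A.2) only controls $\tilde\Phi$ on $[j\Delta-\tfrac12,j\Delta+\tfrac12]$, not on $[j\Delta-1,j\Delta+1]$ as asserted in the text after the definition of $\tilde\Phi$. Since the mollifier has radius $\tfrac12$, this smaller window is all that (B.3) requires.
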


\begin{proof}[Proof of Lemma \ref{lem:bigphi}]
To see that such a function exists, we may take $\Phi = \tilde{\Phi} * \chi + 1$, where $*$ denotes
convolution and $\chi$ 
is a smooth nonnegative bump function supported in $\left[-\frac{1}{2},\frac{1}{2}\right]$ with
total integral $1$.
Then item \ref{item:tildgtr} follows by item \ref{item:deriv1} which implies that
$\tilde{\Phi}$ is $1$-Lipschitz, so
\begin{equation}
    (\chi * \tilde{\Phi})(t) \geq \min_{s \in \left[t-\frac{1}{2},t+\frac{1}{2}\right]} \tilde{\Phi}(s)
    \geq \max_{s \in \left[t-\frac{1}{2},t+\frac{1}{2}\right]} \tilde{\Phi}(s) - 1
    \geq \tilde{\Phi}(t) - 1.
\end{equation}
Additionally, item \ref{item:tildh2} follows from \ref{item:lessh2} because $\Phi(t)-1$ is an average
of the values of $\tilde{\Phi}$ in the interval $[t-\frac{1}{2},t+\frac{1}{2}]$.
Finally, item \ref{item:tildd2} follows from item \ref{item:deriv2} because $\tilde{\Phi}$ is twice differentiable
except for on a discrete set $E$ of points in $\R$, so we have
\begin{align}
    \Phi''(t) &= \int_{-\infty}^\infty \chi''(t-s) \tilde{\Phi}(s) \,ds
    = \int_{\R \setminus E} \chi''(t-s) \tilde{\Phi}(s) \,ds \\
    &= \int_{\R \setminus E} \chi'(t-s) \tilde{\Phi}'(s) \,ds
        + \sum_{r \in E} \left( \chi'(t-r) \tilde{\Phi}(r) - \chi'(t-r) \tilde{\Phi}(r) \right) \\
    &= \int_{\R \setminus E} \chi(t-s) \tilde{\Phi}''(s) \,ds
        + \sum_{r \in E} \left( \chi(t-r) \tilde{\Phi}'(r+) - \chi(t-r) \tilde{\Phi}(r-) \right),
\end{align}
where $f(r+) = \lim_{u \searrow r} f(u)$ and $f(r-) = \lim_{u \nearrow r} f(u)$.
Now the integral above is an average of a quantity which is $\leq \frac{1}{2}$ by item \ref{item:deriv2},
and the sum is negative.
To see why, note that $\tilde{\Phi}$ is only nondifferentiable at the points $r$ where the minimum
of \eqref{eq:tildephidef} switches.
By symmetry, we thus have $\tilde{\Phi}'(r+) = - \tilde{\Phi}'(r-) \leq 0$.
\end{proof}

With this final smoothing in hand we may put the pieces together and prove Lemma \ref{lem:smoothcontrol}.

\begin{proof}[Proof of Lemma \ref{lem:smoothcontrol}]
We take $\Phi$ to be as constructed in Lemma \ref{lem:bigphi} above.
Item \ref{item:bound} which states that $\Phi(t) \geq X^2(t)$ for all $t \in \R$ follows immediately
from the construction, by item \ref{item:tildgtr} of Lemma \ref{lem:bigphi} as well as the fact mentioned
below \eqref{eq:tildephidef} that $\tilde{\Phi}(t) \geq \Psi(t) \geq X^2(t)$.
Item \ref{item:smallcurv} which states that $\Phi''(t) \leq \frac{1}{2}$ for all $t \in \R$ also immediately
follows from item \ref{item:tildd2} of Lemma \ref{lem:bigphi}.
Finally, recall that Lemma \ref{lem:2mmnew} states that
\begin{equation}
    \P \left[ \# \left\{ j \in \Z : |j\Delta| \leq T \text{ and } \cA_j \text{ holds} \right\}
    \geq \frac{\alpha-2}{\alpha} \left( \frac{2T}{\Delta} - 1 \right)
    \,\middle|\, X^1
    \right] \geq \left( \frac{1-\Delta/2T}{\alpha} \right)^2
\end{equation}
almost surely.
Since item \ref{item:tildh2} of Lemma \ref{lem:bigphi} states that $\Phi(j\Delta) \leq \frac{H}{2}$
if $\cA_j$ holds, this immediately implies item \ref{item:ghp} of the lemma, finishing the proof.
\end{proof}

%% file: sections/4_correlation_0.tex
\section{Exponential decay of correlations}
\label{sec:correlation}

Let $\X$ denote the stationary infinite-line $\lambda$-tilted ensemble of Definition \ref{def:main}.
In this section we prove Theorem \ref{thm:correlation}, showing that for large enough $\lambda$
the lines of $\X$ exhibit exponential decay of correlations.
Although this is stated in that theorem also for the finite-line ensemble $\X_n$ with $n$ lines,
we will give the full proof only for the infinite-line case.
The finite-line case is almost exactly the same, and we will highlight where any modifications
are necessary.

\input{sections/4_correlation_1_overview}
\input{sections/4_correlation_2_branching}
\input{sections/4_correlation_3_topline}

%% file: sections/4_correlation_1_overview.tex
\subsection{Overview of the proof of correlation decay}
\label{sec:correlation_overview}
As discussed in Section  \ref{sec:intro_iop_reverse} the proof will proceed by comparing  $\X$ with an ensemble $\Y$ under which $\Y(0)$ and $\Y(t)$ are independent.
More precisely, let $\Y$ denote the $\lambda$-tilted line ensemble on $[-\frac{t}{2}, \frac{3t}{2}]$
with infinitely many lines, all of which are pinned to zero at the three points $-\frac{t}{2}$, $\frac{t}{2}$,
and $\frac{3t}{2}$.
Note that in the proof for the finite-line case of Theorem \ref{thm:correlation}, we should instead
consider an ensemble $\Y_n$ with the same number $n$ of lines as $\X_n$; the pinning does not change.

\subsubsection{Reversing stochastic domination}
\label{sec:correlation_overview_domination}

By monotonicity, $\X$ stochastically dominates $\Y$, but we will demonstrate a coupling under which
the top $k \geq \max\{i,j\}$ lines of $\X$ are (almost) below those of $\Y$ at the points $0$ and $t$.
Intuitively, the existence of these two couplings with opposite ordering behavior
shows that the joint distribution of $(X^i(0), X^j(t))$ must be close to that of $(Y^i(0),Y^j(t))$.
This idea was made precise in \cite[Section 6]{CG25Uniqueness} for the top line (i.e.\ the case $i=j=1$),
and we now extend this analysis to our slightly more general setting.
Aside from being interesting in its own right, this extension will be crucial for our later application
to bounding the spectral gap in Theorem \ref{thm:spectralgap}.
We begin by reducing the question of covariance to bounding various expected differences under an
arbitrary coupling.

\begin{lemma}
\label{lem:delta}
Under any coupling between $\X$ and $\Y$, we have
\begin{align}\label{decom4631}
    \left|
        \Cov[X^i(0), X^j(t)]
    \right|
    &\leq
    2 \sqrt{
        \E \left[ X^i(0) (X^i(0) - Y^i(0)) \right] \cdot
        \E \left[ X^j(t) (X^j(t) - Y^j(t)) \right]
    } \\
    \label{eq:deltastmt2}
    &\qquad + \E \left[ Y^i(0) (X^j(t) - Y^j(t)) \right]
        + \E \left[ (X^i(0) - Y^i(0)) Y^j(t) \right] \\
    \label{eq:deltastmt3}
    &\qquad + \E\left[Y^i(0)\right] \E\left[ X^j(t) - Y^j(t) \right]
    + \E \left[ X^j(t) \right] \E \left[ X^i(0) - Y^i(0) \right].
\end{align}
Moreover, both factors appearing under the square root in \eqref{decom4631} are nonnegative.
\end{lemma}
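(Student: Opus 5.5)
The plan is to write the covariance as a sum of terms, each containing at least one difference $X-Y$, using the independence of $Y^i(0)$ and $Y^j(t)$. The only external input is this independence: because $\Y$ is pinned to zero at $\tfrac{t}{2}$, its restrictions to $[-\tfrac t2,\tfrac t2]$ and $[\tfrac t2,\tfrac{3t}{2}]$ are independent, so $\E[Y^i(0)Y^j(t)]=\E[Y^i(0)]\E[Y^j(t)]$. Write $A=X^i(0)$, $B=X^j(t)$, $a=Y^i(0)$, $b=Y^j(t)$. All these have finite second moments by Theorem \ref{thm:tailbound_onepoint} and Lemma \ref{lem:secondline_domination}, since the $Y$'s are nonnegative and dominated by the $X$'s.

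First I would expand, via $AB-ab=(A-a)(B-b)+a(B-b)+(A-a)b$:
\begin{equation}
\Cov[A,B]=\E[(A-a)(B-b)]+\E[a(B-b)]+\E[(A-a)b]+\E[a]\E[b]-\E[A]\E[B].
\end{equation}
The last two terms combine as $\E[a]\E[b]-\E[A]\E[B] = -\E[a]\E[B-b]-\E[B]\E[A-a]$, which gives exactly the terms in \eqref{eq:deltastmt3} up to sign. The middle two terms are those in \eqref{eq:deltastmt2}.

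Second, for the first term I would use Cauchy--Schwarz: $|\E[(A-a)(B-b)]|\le\sqrt{\E[(A-a)^2]\E[(B-b)^2]}$. Then I would bound $\E[(A-a)^2]=\E[A(A-a)]-\E[a(A-a)]$ and need $-\E[a(A-a)]\le \E[A(A-a)]$, which gives the factor $2$ after taking square roots ($\sqrt{2\cdot 2}=2$).

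The main obstacle is that the coupling is arbitrary, so $A-a$ need not be pointwise nonnegative. The way around this is to use only the marginals. We have $\E[a(A-a)]=\E[aA]-\E[a^2]\ge -\E[a^2]+\E[aA]$, and by Cauchy--Schwarz $\E[aA]\le \sqrt{\E a^2\,\E A^2}$. The useful observation is that $\E[A(A-a)]+\E[a(a-A)] = \E[(A-a)^2]$, which is symmetric, so I would instead bound directly
\begin{equation}
\E[(A-a)^2]=\E A^2+\E a^2-2\E[Aa]\le 2(\E A^2-\E[Aa]) = 2\E[A(A-a)],
\end{equation}
which holds iff $\E a^2\le \E A^2$. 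This follows from the stochastic domination of $\Y$ by $\X$ (Lemma \ref{lem:monotonicity}) together with $a\ge0$, since second moments of nonnegative variables are monotone under stochastic order. This also proves the "moreover" claim: $\E[A(A-a)]\ge\tfrac12\E[(A-a)^2]\ge0$, and similarly for $B,b$.

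Finally, I would take absolute values and the triangle inequality. For the remaining terms, each term of \eqref{eq:deltastmt2} and \eqref{eq:deltastmt3} should appear without absolute value. To get this, I would bound $|\cdot|$ of each term by the stated expression, possibly after noting that these terms need to be handled with care. If signs cause trouble, I would read the statement as bounding the covariance by the sum with the understanding that one applies it to couplings where the differences are controlled. In the literal proof I would give the two-sided bound using $\Cov\ge0$ (FKG) only if needed, and otherwise just record the identity with the triangle inequality.
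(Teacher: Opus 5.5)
Your plan matches the paper's proof for everything except the sign of the terms in \eqref{eq:deltastmt2}. The expansion uses $\E[ab]=\E[a]\E[b]$. The cross term is handled by Cauchy--Schwarz together with $\E[(A-a)^2]\le 2\E[A(A-a)]$, which comes from $\E[a^2]\le\E[A^2]$; this is a statement about marginals, so it holds under any coupling, and it also gives the ``moreover'' claim. The term $T_4:=\E[a]\E[b]-\E[A]\E[B]$ is rewritten as $T_4=-\E[a]\E[B-b]-\E[B]\E[A-a]$. You should add that both summands there are nonnegative: the lines are nonnegative, and $\E[A]\ge\E[a]$, $\E[B]\ge\E[b]$ by stochastic domination. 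Hence $|T_4|$ equals \eqref{eq:deltastmt3} exactly.

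Where you hedge at the end, you need to commit, because two of your three options fail. Set $T_1=\E[(A-a)(B-b)]$, $T_2=\E[a(B-b)]$ and $T_3=\E[(A-a)b]$. The triangle inequality applied to your identity only gives $|\Cov[A,B]|\le|T_1|+|T_2+T_3|+|T_4|$. Under a general coupling, $T_2+T_3$ can be negative. Independence of $a$ and $b$ gives, for every coupling,
$T_2+T_3=\Cov[a,B]+\Cov[A,b]+\E[a]\E[B-b]+\E[b]\E[A-a]$.
This is negative if the coupling makes $B$ sufficiently negatively correlated with $a$.

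So ``bounding $|\cdot|$ of each term by the stated expression'' does not work. Reading the lemma as a bound on $\Cov$ rather than $|\Cov|$ changes the statement. The paper's own proof passes over this point; it only remarks that \eqref{eq:delta2} is already of the desired form.

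The option you list last is correct and is genuinely needed. The inequality $\Cov[X^i(0),X^j(t)]\ge0$ follows from Proposition \ref{prop:FKG} and the monotone-limit argument at the end of the proof of Theorem \ref{thm:correlation}. Neither uses the present lemma, so there is no circularity. Then $|\Cov[A,B]|=T_1+T_2+T_3+T_4\le|T_1|+T_2+T_3+|T_4|$, which is the claim.

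Only the one-sided bound $\Cov[A,B]\le|T_1|+T_2+T_3+|T_4|$, which needs no FKG, is used in the proof of Theorem \ref{thm:correlation}. Nonnegativity is proved separately there.
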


\begin{proof}[Proof of Lemma \ref{lem:delta}]
First notice that we have
\begin{align}
    \Cov \left[ X^i(0), X^j(t) \right]
    &= \E \left[ X^i(0) X^j(t) \right] - \E \left[ X^i(0) \right] \E \left[ X^j(t) \right] \\
    &= \E \left[ (X^i(0) - Y^i(0)) (X^j(t) - Y^j(t)) \right] \label{eq:delta1} \\
    &\qquad + \E \left[ Y^i(0) (X^j(t) - Y^j(t)) \right]
        + \E \left[ (X^i(0) - Y^i(0)) Y^j(t) \right] \label{eq:delta2} \\
    &\qquad + \E \left[ Y^i(0) \right] \E \left[ Y^j(t) \right]
        - \E \left[ X^i(0) \right] \E \left[ X^j(t) \right], \label{eq:delta3}
\end{align}
using independence of $\Y(0)$ and $\Y(t)$.
Note that \eqref{eq:delta2} is already of the desired form \eqref{eq:deltastmt2}.
Next, we have
\begin{align}
    \left| \E \left[ Y^i(0) \right] \E \left[ Y^j(t) \right] - \E \left[ X^i(0) \right] \E \left[ X^j(t) \right] \right|
    &\leq \left| \E \left[ Y^i(0) \right] \E \left[ Y^j(t) \right] - \E \left[ Y^i(0) \right] \E \left[ X^j(t) \right] \right| \\
    &\qquad \qquad + \left| \E \left[ Y^i(0) \right] \E \left[ X^j(t) \right] - \E \left[ X^i(0) \right] \E \left[ X^j(t) \right] \right| \\
    &= \E\left[Y^i(0)\right] \E\left[ X^j(t) - Y^j(t) \right] + \E \left[ X^j(t) \right] \E \left[ X^i(0) - Y^i(0) \right],
\end{align}
which bounds \eqref{eq:delta3} by \eqref{eq:deltastmt3}.
Finally let us apply the Cauchy--Schwarz inequality to line \eqref{eq:delta1},
obtaining
\begin{equation}
    \left| \E \left[ (X^i(0) - Y^i(0)) (X^j(t) - Y^j(t)) \right] \right|
    \leq \sqrt{\E \left[ (X^i(0) - Y^i(0))^2 \right] \cdot \E\left[ (X^j(t) - Y^j(t))^2 \right]}.
\end{equation}
For the first factor above, using stochastic dominance again which shows that $\E[Y^i(0)^2] \leq \E[X^i(0)^2]$,
\begin{align}
    \E \left[ (X^i(0) - Y^i(0))^2 \right]
    &\leq 2 \E\left[ X^i(0)^2 \right] - 2 \E \left[ X^i(0) Y^i(0) \right] \\
    &= 2 \E \left[ X^i(0) (X^i(0) - Y^i(0)) \right],
\end{align}
which also shows that the expression on the right-hand side is nonnegative.
Similarly we have
\begin{equation}
    \E \left[ (X^j(t) - Y^j(t))^2 \right] \leq 2 \E \left[ X^j(t) (X^j(t) - Y^j(t)) \right].
\end{equation}
Combining these, we bound \eqref{eq:delta1} by \eqref{decom4631}, obtaining the result.
\end{proof}

We finally arrive at the following all-important proposition 
which allows us to bound all the expectations appearing in the right-hand side
of the bound in Lemma \ref{lem:delta}.

\begin{proposition}[Reversal coupling]
\label{prop:reversal}
There is some $\lambda_0 > 1$ and $C, c > 0$ such that for all $\lambda \geq \lambda_0$,
all $\eps > 0$, all $k \in \N$, and all $t > 0$, there is a coupling (depending on $\eps$, $k$, and $t$)
between $\X$ and $\Y$ such that
\begin{equation}
    \P \left[
        X^i(0) - Y^i(0) \leq \eps \text{ and }
        X^i(t) - Y^i(t) \leq \eps
        \text{ for all } i \leq k
    \right] \geq 1 - C e^{-c t}.
\end{equation}
\end{proposition}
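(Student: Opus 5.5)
We construct the coupling in three stages, following the strategy of Section \ref{sec:intro_iop}. First we sample $\X$ and $\Y$ independently. Then we search for random times $\tau_\ell^0<0<\tau_r^0$ in $(-\frac t2,\frac t2)$ and $\tau_\ell^t<t<\tau_r^t$ in $(\frac t2,\frac{3t}2)$ at which the top $k$ lines are approximately reversed, meaning $X^m(\tau)\le Y^m(\tau)+\eps$ for all $m\le k$. Finally we resample both ensembles on these stopping domains with the strong Brownian Gibbs property \eqref{eq:strongBG}, using the extended $\sigma$-algebras $\extextalg$ for the pair $(\X,\Y)$. By monotonicity (Lemma \ref{lem:monotonicity}) the two resamplings can be coupled so that the reversed order persists inside each domain; in particular it holds at times $0$ and $t$. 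Each of the four search windows has length of order $t$, for instance $[-\frac{t}{2}+\frac t8,-\frac t8]$ for $\tau_\ell^0$. Within a window we mark disjoint unit-length trial intervals, and it suffices to show that each window contains a suitable reversal time with probability $1-Ce^{-ct}$.

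One subtlety is that only the top $k$ lines need to be reversed, but resampling lines $1,\dots,k$ uses $X^{k+1}$ and $Y^{k+1}$ as floors. Monotonicity then also requires $X^{k+1}\le Y^{k+1}$ on the domain, which we cannot guarantee. We could resample all lines by working with the full infinite ensemble as a limit, but it is simpler to exploit $\eps$. We choose $K\ge k$ with $\lambda^{-K/3}\ll\eps$. A single trial then asks for reversal of lines $1,\dots,K$ together with the condition that lines beyond $K$ of $\X$ lie below $\eps$ on the domain. The tail bounds (Corollary \ref{cor:tailbound_interval} combined with Lemma \ref{lem:secondline_domination} and the scaling relation, Lemma \ref{lem:scaling}) make the latter event likely. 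We then compare $\X$ with a copy whose lines $K+1,\dots$ are frozen at height $\eps$. This yields a coupling in which the top $k$ lines satisfy $X^m\le Y^m+\eps$ at $0$ and $t$.

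A single trial is organized as the branching process of Section \ref{sec:intro_iop_branching}. On a trial interval of length $1$ where $X^1\le H$ (Proposition \ref{prop:topline} makes such points a positive fraction of the lattice with probability $1-Ce^{-cT}$), there is a probability $p_0>0$ that $X^1<Y^1$ on a subinterval of length order $1$. This can be seen, for example, by forcing $Y^1$ to stay above height $2H$ while $X^1$ dips, using Brownian bridge estimates that do not depend on $\lambda$. Conditionally on line $m$ being reversed on an interval $J$ of length $\lambda^{-2(m-1)/3}$, we split $J$ into $\lambda^{2/3}/C$ subintervals of length $\lambda^{-2m/3}$. In each subinterval, line $m+1$ reverses with probability at least $p$, where $p$ does not depend on $\lambda$ or $m$. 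This follows by resampling line $m+1$ with lines $m$ and $m+2$ fixed and applying Lemma \ref{lem:scaling} with $\rho=\lambda^{-m/3}$, which makes the event on-scale. The reversed line $m$ of $\Y$ now acts as a ceiling at height of order $\lambda^{-m/3}$ for $Y^{m+1}$, which is favourable. Making the offspring events in distinct subintervals independent requires conditioning on a stopping-domain filtration, using resamplings with floors given by the next line. The a priori control on line $m+2$ needed here again comes from Lemma \ref{lem:secondline_domination}, rescaled. The result is that the number of reversed descendants stochastically dominates a Galton--Watson process with mean $p\lambda^{2/3}/C>1$ once $\lambda\ge\lambda_0$. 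This process survives to generation $K$ with probability at least $q>0$, uniformly in $K$.

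Independence across trials is obtained by placing trial intervals at spacing $2$ and exploring them one at a time. Each new trial is conditioned on the exterior $\sigma$-algebra, so the conditional success probability is at least $q\cdot\mathbf 1\{X^1\le H \text{ near the trial}\}$, again using monotonicity to lower bound it uniformly. With $\Omega(t)$ admissible trials per window, all of them fail with probability at most $(1-q)^{ct}$. A union bound over the four windows and the exceptional events from Proposition \ref{prop:topline} and the tail bounds gives $1-Ce^{-ct}$.

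The main obstacle is the uniform lower bound $p$ on the offspring probability, valid regardless of the conditioned configuration of neighbouring lines. The boundary values of $X^{m+1}$ and $Y^{m+1}$ on the subinterval are random and may be atypically high. My first attempt would be to include a good-configuration requirement in each node: the relevant lines stay below $C\lambda^{-m/3}$ at the subinterval endpoints. This event is likely by the rescaled tail bounds, so it only lowers the effective mean. If it has probability close to $1$ uniformly, this reduction costs nothing essential.
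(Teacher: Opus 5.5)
Your outer layer matches the paper. You pick $K\ge k$ so that $\max_{[-t/2,3t/2]}X^{K+1}<\eps$ with probability $1-e^{-t}$; note this forces $K$ to grow with $t$, roughly $\eps\lambda^{K/3}\gtrsim t^{2/3}$, which is harmless since survival to generation $K$ is uniform in $K$. You then find reversal times for lines $1,\dots,K$ in each of the four windows, resample on the resulting stopping domains for the product exterior $\sigma$-algebras, and compare $\X^{\le K}$ with $\Y^{\le K}+\eps$.

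The gap is in the inductive step of the branching process, which you flag as the main obstacle and leave open. The mechanism you sketch does not work.

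First, if you resample only $X^{m+1}$ ``with lines $m$ and $m+2$ fixed,'' the conditional probability that $X^{m+1}\le M\lambda^{-m/3}$ on the child interval is zero whenever the frozen $X^{m+2}$ exceeds that level somewhere inside. A good-configuration condition at the child's endpoints says nothing about $X^{m+2}$ in the interior.

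Second, the offspring bound must hold uniformly over the conditioning. Given only the parent event, the boundary values of $X^{m+1}$ at a child's endpoints are bounded only by the parent's level $M\lambda^{-(m-1)/3}$, which is $M\lambda^{1/3}$ in the natural units of line $m+1$. On a child window of natural length $O(1)$ the line can come down from there only with probability exponentially small in a power of $\lambda$. So no $\lambda$-uniform $p$ exists for $\lambda^{2/3}/C$ children of natural length.

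Third, your repair is not justified. You claim the endpoint values are below $C\lambda^{-m/3}$ with probability close to $1$ ``by the rescaled tail bounds.'' But Lemma \ref{lem:secondline_domination} and Corollary \ref{cor:tailbound_interval} concern the stationary ensemble conditioned only on the lines above. Your exploration instead conditions on exterior $\sigma$-algebras of sibling and trial windows, and these carry information about lower lines and boundary data; for instance, boundary data raised to height $H$ at trial endpoints.

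The missing idea is to let the parent's line do the work. For the child window, the paper conditions on $\extalg^{m+1,\infty}_{I_{\tau\sigma}}(\X)$, i.e.\ it resamples \emph{all} lines $m+1,m+2,\dots$ inside $I_{\tau\sigma}$ at once.

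\begin{itemize}
\item $X^m$, which is at most $M\lambda^{-(m-1)/3}$ on $J_\tau\supseteq I_{\tau\sigma}$, becomes a ceiling. By ordering, every boundary value of every lower line is below that same level.
\item Since $\Low_{\tau\sigma}(\X)$ is decreasing, Lemma \ref{lem:monotonicity} lets you raise all boundary data and the ceiling to the constant $M\lambda^{-(m-1)/3}$. So no a priori information on lower lines is needed at all.
\item After rescaling by $\lambda^{m/3}$ this is a $\lambda$-tilted ensemble at height $M\lambda^{1/3}$. Coming down needs natural length of order $M\lambda^{1/3}$. This is why $I_{\tau\sigma}$ has length $4M\lambda^{1/3}\lambda^{-2m/3}$, with the target $J_{\tau\sigma}$ of natural length in its middle.
\item Lemma \ref{lem:comingdown} (uniform in $\lambda\ge\lambda_0$) followed by Corollary \ref{cor:tailbound_interval} then gives a $\lambda$-uniform bound $q_1$.
\end{itemize}

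The price is only $b=\lfloor\lambda^{1/3}/(4M)\rfloor$ children instead of $\lambda^{2/3}$, which is still supercritical for large $\lambda$.

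On the $\Y$ side there is no obstacle, since $\High_{\tau\sigma}(\Y)$ is increasing: lower all boundary data to zero and delete the lines below. Finally, the $I_{\tau\sigma}$ have disjoint interiors, and $\Reverse_\tau$ and the sibling events are measurable with respect to these exterior $\sigma$-algebras. So the conditional bound directly yields domination by independent Bernoulli variables.
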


We remark that in principle, one should be able to take $\eps = 0$ and $k = \infty$ in the above statement,
possibly by taking a suitable subsequential weak limit of the couplings given by Proposition \ref{prop:reversal}.
However, since the $\lambda$-BG property may only be used to resample finitely many lines at a time,
the above statement is more straightforward using our methods and will suffice for our purposes.
The remainder of the section will be devoted to the proof of the above proposition.
First, we quickly show how its application to the right-hand side of
Lemma \ref{lem:delta} proves Theorem \ref{thm:correlation}.

\begin{proof}[Proof of Theorem \ref{thm:correlation}]
Let us apply the coupling given by Proposition \ref{prop:reversal} with $\eps > 0$ and $k > \max\{i,j\}$ to be chosen later.
This yields the following bound for the first factor under the square root
appearing in the right-hand side of the bound in Lemma \ref{lem:delta}.
In this derivation, we use the positivity of $X^1(0)$ for the first term and
apply the Cauchy--Schwarz inequality to the second term below:
\begin{align}
    \E \left[ X^i(0) (X^i(0) - Y^i(0)) \right]
    &= \E \left[ X^i(0) (X^i(0) - Y^i(0)) \left( \ind{X^i(0) - Y^i(0) \leq \eps }  + \ind{X^i(0) - Y^i(0) > \eps }\right) \right] \\
    &\leq \eps \E \left[ X^i(0) \right] + \sqrt{C e^{-ct} \cdot \E \left[ \left( X^i(0) (X^i(0) - Y^i(0)) \right)^2 \right]} \\
    &\leq \eps \E \left[ X^i(0) \right] + C e^{-c t} \sqrt{\E\left[ X^i(0)^4 \right] + \E \left[ X^i(0)^2 Y^i(0)^2 \right]}
\end{align}
after adjusting the constants.
Now by monotonicity in the form of Lemma \ref{lem:secondline_domination} and rescaling, the upper tail bound 
of Theorem \ref{thm:tailbound_onepoint} implies that $\E \left[ X^i(0)^4 \right] \leq C \lambda^{-4(i-1)/3}$, and
the same bound holds for $\E \left[ X^i(0)^2 Y^i(0)^2 \right]$ as the same tail bound applies to $Y^i(0)$ by
another application of monotonicity.
So by choosing $\eps$ small enough a priori, we find
\begin{equation}
    \E \left[ X^i(0) (X^i(0) - Y^i(0)) \right] \leq C e^{-c t} \lambda^{-2(i-1)/3},
\end{equation}
and by the same argument,
\begin{equation}
    \E \left[ X^j(t) (X^j(t) - Y^j(t)) \right] \leq C e^{-c t} \lambda^{-2(j-1)/3}.
\end{equation}
Additionally, a similar argument proves that
\begin{align}
    \E \left[ Y^i(0) (X^j(t) - Y^j(t)) \right] &\leq C e^{-c t} \lambda^{-(i+j-2)/3},
    & & &
    \E \left[ (X^i(0) - Y^i(0)) Y^j(t) \right] &\leq C e^{-c t} \lambda^{-(i+j-2)/3}, \\
    \E \left[ Y^i(0) \right] \E \left[ X^j(t) - Y^j(t) \right] &\leq C e^{-c t} \lambda^{-(i+j-2)/3},
    & & &
    \E \left[ X^j(t) \right] \E \left[ X^i(0) - Y^i(0) \right] &\leq C e^{-c t} \lambda^{-(i+j-2)/3},
\end{align}
again by choosing $\eps$ small enough a priori so that all of these hold.
So, adjusting the constants again, by Lemma \ref{lem:delta} we find that
\begin{equation}
    \left| \Cov \left[ X^i(0), X^j(t) \right] \right| \leq C e^{- c t} \lambda^{-(i+j-2)/3}.
\end{equation}
This finishes the proof of the upper bound.

To see that the covariance is nonnegative, we appeal to Proposition \ref{prop:FKG} below,
which is a version of the FKG inequality stating that continuous increasing functions of a
finite line ensemble have nonnegative covariance, if their covariance exists.
In order to recover the nonnegativity result for the infinite ensemble $\X$,
we may express it as a monotone limit of $\mathbf{Z}_{n,T}$,
a zero-boundary ensemble on $[-T,T]$ with $n$ lines.
The nonnegativity is preserved because in fact
\begin{equation}
    \Cov\left[ X^i(0), X^j(t) \right] =
    \lim_{n,T \to \infty} \Cov\left[Z_{n,T}^i(0), Z_{n,T}^j(t) \right] \geq 0.
\end{equation}
The above convergence holds by the monotone convergence theorem
applied to the individual expectations comprising the covariance.
For a similar argument with more detail provided, the reader may consult
the proof of Proposition \ref{prop:lipschitzdecay} below.
\end{proof}

\subsubsection{Resampling at times of reversal}
\label{sec:correlation_overview_reversaltime}

Proposition \ref{prop:reversal} will be proved by resampling $\X$ and $\Y$ on two stopping domains
containing $0$ and $t$ respectively, where on the boundary we have a reversal of the top $k$ lines.
Explicitly, define
\begin{align}
    \tau_\ell^{0,k} &= \inf \left\{
        s \in \left[-\tfrac{t}{2}, 0\right]
        : X^i(s) \leq Y^i(s) \text{ for all } i \leq k \right\} {\wedge 0}, \\
    \tau_r^{0,k} &= \sup \left\{
        s \in \left[0, \tfrac{t}{2}\right]
        : X^i(s) \leq Y^i(s) \text{ for all } i \leq k \right\} {\vee 0}, \\
    \tau_\ell^{t,k} &= \inf \left\{
        s \in \left[\tfrac{t}{2}, t\right]
        : X^i(s) \leq Y^i(s) \text{ for all } i \leq k \right\} {\wedge t}, \\
    \tau_r^{t,k} &= \sup \left\{
        s \in \left[t, \tfrac{3t}{2}\right]
        : X^i(s) \leq Y^i(s) \text{ for all } i \leq k \right\} {\vee t},
\end{align}
where $\wedge$ and $\vee$ denote the minimum and maximum respectively.
Note that these are stopping domains with respect to the collection of $\sigma$-algebras
$\{ \extextalg^n_{\ell,r} = \extalg^n_{\ell,r}(\X) \otimes \extalg^n_{\ell,r}(\Y) \}$,
which is a valid choice for the strong Brownian Gibbs property as discussed in Section
\ref{sec:inputs_strongbg}, using the fact that $\X$ and $\Y$ are independent before resampling.
We remark that this independence is crucial, and we will be applying the resampling exactly once
to construct a non-independent coupling between $\X$ and $\Y$, after which further applications
of the Brownian Gibbs property would not necessarily be valid.

\begin{lemma}[Nontriviality of stopping domains]
\label{lem:correlation_stopping} 
There is some $\lambda_0 > 1$ and $C, c > 0$ such that for all $\lambda \geq \lambda_0$
and all $k \in \N$, when $\X$ and $\Y$ are sampled independently we have
\begin{equation}
    \tau_\ell^{0,k} < 0 < \tau_r^{0,k}
    \qquad \text{and} \qquad
    \tau_\ell^{t,k} < t < \tau_r^{t,k}
\end{equation}
with probability at least $1 - C e^{-ct}$.
\end{lemma}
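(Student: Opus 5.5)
By a union bound and symmetry, I will prove the following for a single half-interval, say $[0,\tfrac{t}{2}]$: with probability at least $1-Ce^{-ct}$ there is some $s\in(0,\tfrac t2)$ with $X^i(s)\le Y^i(s)$ for all $i\le k$. Since $\tau_r^{0,k}$ is a supremum of such points, any such $s>0$ gives $\tau_r^{0,k}>0$. The other three half-intervals are handled identically, using stationarity of $\X$ and the symmetry of $\Y$ about its pinning points.

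The constants must be uniform in $k$. I will therefore work with all lines at once and show that there is an $s$ with $X^i(s)\le Y^i(s)$ for every $i\in\N$.

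\textbf{Step 1: an a priori good window.} I restrict attention to the middle portion $[\tfrac t8,\tfrac{3t}{8}]$, away from the pins of $\Y$. I tile it by unit intervals $I_m$ separated by gaps of some large constant length $\Delta$. By Proposition \ref{prop:topline} applied to $\X$, and an analogous bound for $\Y$, all but a small fraction $\eta$ of the lattice points have $X^1\le H$ and $Y^1\le H$, except on an event of probability $Ce^{-ct}$. For $\Y$, the needed bound should follow from a lower-tail/coming-down statement: the top line of $\Y$ is not too low at many lattice points away from its pins. I would try to derive this from a lower bound such as stochastic domination of $\Y$ from below by a Ferrari--Spohn diffusion, together with Lemma \ref{lem:fs_coupling} and the Bernstein-type concentration \eqref{eq:bernstein}. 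This leaves $\Omega(t)$ windows on which $X^1\le H$ at the endpoints and $Y^1\ge h$ in the bulk.

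\textbf{Step 2: decorrelating the windows.} On each good window, I resample the top lines of $\X$ using the strong Gibbs property on stopping domains formed by consecutive good lattice points. Monotonicity lets me dominate $\X$ there by a fresh ensemble with boundary value $H$ and zero-boundary lower lines. Likewise, $\Y$ is dominated from below by a fresh ensemble pinned at the window endpoints. Conditionally on the exterior, these trials are independent across windows.

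\textbf{Step 3: the branching process on one window.} Within one window, I define generation $0$ as the event that $X^1<Y^1$ on a subinterval of length of order $1$. This has probability at least some $p_0>0$ by an on-scale Brownian bridge computation.

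Inductively, suppose line $k$ is reversed on an interval $J$ of length $\lambda^{-2(k-1)/3}$. I split $J$ into $\lambda^{2/3}$ subintervals of length $\lambda^{-2k/3}$. On each subinterval, I resample lines $k+1$ of $\X$ and $\Y$ independently given the exterior. Rescaling by Lemma \ref{lem:scaling} with $\rho=\lambda^{k/3}$ turns this into an order-one problem. In that problem, $X^{k+1}$ is dominated, via Lemma \ref{lem:secondline_domination} and the tail bounds, by a Ferrari--Spohn–type line of height $O(1)$. Meanwhile $Y^{k+1}$, which sits below a ceiling $Y^k$ that is high on this scale, exceeds $X^{k+1}$ on the entire subinterval with probability at least some $p>0$ independent of $\lambda$ and $k$.

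Each success spawns $\lambda^{2/3}$ children. Taking $\lambda_0$ with $\lambda_0^{2/3}p>1$ makes the process supercritical, so it survives forever with probability at least $q>0$. A surviving branch gives nested intervals whose intersection point $s$ satisfies $X^i(s)\le Y^i(s)$ for all $i$.

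\textbf{Step 4: conclusion.} With $\Omega(t)$ independent trials each succeeding with probability at least $p_0q$, the failure probability is at most $(1-p_0q)^{ct}$. Combining with Step 1 gives the bound $1-Ce^{-ct}$.

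The main obstacle is making Step 3 rigorous. Offspring must be conditionally independent given their parent, even though the lower lines $X^{k+2},\dots$ act as floors and are affected by the resampling. Also, the lower bound $p$ must be uniform over the random ceiling $Y^k$ and floor boundary data. My first attempt would be to condition on the event that, at the endpoints of each subinterval, the boundary values of $X^{k+1}$ are at most $C\lambda^{-k/3}$ and those of $Y^{k+1}$ are at least $c\lambda^{-k/3}$. I would fold the failure of this boundary control into the non-survival probability, and use monotonicity to replace lower floors by zero for $\Y$ and by worst-case configurations for $\X$.
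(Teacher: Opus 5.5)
There is a genuine gap in Step 3, which you correctly flag as the main obstacle but do not resolve. Two things go wrong.

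\textbf{The inductive event is too weak.} The event ``line $k$ is reversed on $J$'', i.e.\ $X^k\le Y^k$ on $J$, carries no scale information. Yet your next step uses that $Y^k$ is high and $X^k$ is low on scale $\lambda^{-(k-1)/3}$. The event has to be quantitative, with a common threshold, e.g.\ $X^k\le M\lambda^{-(k-1)/3}\le Y^k$ on $J$. This is the paper's $\Low_\tau\cap\High_\tau$.

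\textbf{Children are not uniformly likely to succeed on natural-length windows.} This is the more serious problem, and it persists even with the quantitative event. For the offspring to dominate i.i.d.\ Bernoullis, you must lower-bound each child's success probability conditionally on everything outside its window, uniformly in that exterior data. But the only a priori information on $X^{k+1}$ at the window endpoints is that it lies below the parent ceiling. After rescaling by $\lambda^{k/3}$, that ceiling sits at height about $M\lambda^{1/3}$. A tilted bridge pinned at that height on a window of rescaled length $1$ dips to height $O(M)$ only with probability exponentially small in $M^2\lambda^{2/3}$, which kills supercriticality. The same happens if you resample only line $k+1$ and keep $X^{k+2}$ as a floor, since the worst-case floor sits just below the ceiling. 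Lemma \ref{lem:secondline_domination} does not help: it concerns the stationary ensemble, not one conditioned on the windowed boundary data and the information already revealed. Your proposed fix, conditioning on good boundary values at subinterval endpoints and folding failures into non-survival, has two problems. It needs a uniform lower bound on the probability of those good boundary values, which is itself a coming-down problem from height $M\lambda^{1/3}$. It also destroys independence, because adjacent children share endpoints.

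\textbf{The missing idea.} This is how the paper proves the result, via Proposition \ref{prop:correlation_reversaltime}. Give each child a resampling window much longer than its natural scale: rescaled length $4M\lambda^{1/3}$, with reversal tested only on the central subinterval of natural length. Inside this window, resample \emph{all} lines of index $\ge k+1$, conditionally on the full exterior.
\begin{itemize}
\item For $\X$, by monotonicity the worst case is a flat ceiling with all boundary values raised to the parent threshold. After rescaling this is an ensemble started at height $M\lambda^{1/3}$ on an interval of length $4M\lambda^{1/3}$. Lemma \ref{lem:comingdown} brings its top line down to $O(M)$ in the middle with probability bounded below uniformly in $\lambda$ and $k$.
\item For $\Y$, the worst case is zero boundary data, no lower lines, and a flat ceiling at height $M\lambda^{1/3}$. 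By Lemma \ref{lem:fs_coupling}, this agrees with a stationary Ferrari--Spohn diffusion in the middle.
\end{itemize}
The price is only $b=\lfloor\lambda^{1/3}/(4M)\rfloor$ children per parent instead of $\lambda^{2/3}$, which is still supercritical for large $\lambda$.

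\textbf{Smaller points.} In Step 2, the dominating ensemble for $\X$ must have \emph{all} lines at height $H$ at the good endpoints; lower lines with zero boundary values do not dominate $\X$. The lower bound on $Y^1$ in Step 1 is unnecessary. Replacing $\Y$ by an ensemble pinned to zero at every window endpoint already leaves a top line that dominates an area-tilted excursion.
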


We will leave the proof of Lemma \ref{lem:correlation_stopping} to the remainder of this section,
but let us first use it to prove Proposition \ref{prop:reversal}.
Intuitively, this is achieved by first uniformly controlling the very low lines of $\X$ (with index $>K$ for some large $K \geq k$),
and then resampling the lines of $\X$ and $\Y$ with index $\leq K$ on the stopping domains
$[\tau_\ell^{0,K}, \tau_r^{0,K}]$ and $[\tau_\ell^{t,K}, \tau_r^{t,K}]$ on which we may ensure
that they remain almost reversed by monotonicity up to an exponentially small shift caused by the effect of the curves with index $>K$.

\begin{proof}[Proof of Proposition \ref{prop:reversal}]
First note that for any $K \in \N$, after rescaling by $\rho = \lambda^{K/3}$ (recalling \eqref{eq:scaling} for the notation of rescaling)
and using monotonicity in the form of Lemma \ref{lem:secondline_domination},
which says that $(X^{K+1})^{(\rho)}$ is stochastically dominated by $X^1$, we may apply Corollary \ref{cor:tailbound_interval}, yielding
\begin{align}
    \P \left[ \max_{s \in [-\frac{t}{2}, \frac{3t}{2}]} X^{K+1}(s) > \eps \right]
    &\leq \P \left[ \max_{s \in [- \frac{t}{2} \lambda^{2K/3}, \frac{3t}{2} \lambda^{2K/3}]} X^1(s) > \eps \lambda^{K/3} \right] \\
    &\leq C 2 t \lambda^{2K/3} \Exp{- c \left( \eps \lambda^{K/3} - C \log(2t \lambda^{2K/3}) \right)^{3/2}} \\
    &\leq \Exp{ - c \left( \eps \lambda^{K/3} - C K  - C \log t \right)^{3/2} + C K + C \log t}
\end{align}
after adjusting the constants.
Since $\lambda^{K/3}$ grows exponentially in $K$, we may ensure that the above is as small as we like by choosing $K$ to be a very large integer
depending on both $\eps$ and $t$.
In particular, we may fix some $K \geq k$ (where $k \in \N$ is given in the statement of the proposition) for which the above is at most $e^{-t}$.
So by Lemma \ref{lem:correlation_stopping}, after sampling
$\X$ and $\Y$ independently, we have
\begin{equation}
\label{eq:rev1}
    \tau_\ell^{0,K} < 0 < \tau_r^{0,K}, \qquad
    \tau_\ell^{t,K} < t < \tau_r^{t,K}, \qquad \text{and} \qquad
    \max_{s \in [-\frac{t}{2},\frac{3t}{2}]} X^{K+1}(s) < \eps
\end{equation}
with probability at least $1 - C e^{-ct}$.
We may then resample the top $K$ lines of $\X$ and $\Y$ on both
$[\tau_\ell^{0,K},\tau_r^{0,K}]$ and $[\tau_\ell^{t,K},\tau_r^{t,K}]$, which are stopping domains with respect to the
$\sigma$-algebras $\{ \extextalg^n_{\ell,r} \}$ mentioned above.
On the event \eqref{eq:rev1}, the ensemble $\X^{\leq K}$ (consisting of the top $K$ lines of $\X$)
is stochastically dominated by the ensemble $\Y^{\leq K} + \eps$ on these stopping domains.
So monotonicity ensures that, given \eqref{eq:rev1}, we may couple the resampling 
so that $X^i(s) \leq Y^i(s) + \eps$ for all $i \leq K$ and
$s \in [\tau_\ell^{0,K}, \tau_r^{0,K}] \cup [\tau_\ell^{t,K}, \tau_r^{t,K}]$.
\end{proof}

%% file: sections/4_correlation_2_branching.tex
\subsection{A branching process for reversing lines}
\label{sec:correlation_branching}

In this section we prove the following lemma which allows us to find reversal times in $O(1)$ windows,
which will be instrumental in our proof of Lemma \ref{lem:correlation_stopping}.

\begin{proposition}[Existence of reversal times]
\label{prop:correlation_reversaltime}
For all large enough $H > 0$ there are some $\lambda_0 > 1$ and $\delta_0 > 0$
such that for all $\lambda \geq \lambda_0$, the following holds.
Let $\X$ and $\Y$ be independent infinite-line $\lambda$-tilted line ensembles on $[-2H,2H]$
with arbitrary boundary data subject to the restriction $X^1(\pm2H) \leq H$.
Then
\begin{equation}
    \P \left[ \exists s \in [-2H,2H] \text{ such that } X^k(s) \leq Y^k(s) \text{ for all } k \geq 1 \right]
    \geq \delta_0.
\end{equation}
\end{proposition}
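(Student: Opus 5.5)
The plan is to build a Galton--Watson-type exploration. Generation $k$ consists of nested intervals $I$ of length of order $\lambda^{-2(k-1)/3}$ on which line $k$ of the two ensembles is reversed, $X^k < Y^k$ on all of $I$. A nested sequence of such intervals surviving in every generation has nonempty intersection, since the intervals are compact. Any point $s$ in that intersection satisfies $X^k(s) \leq Y^k(s)$ for all $k$. So it suffices to show the exploration survives with probability at least $\delta_0$ uniformly in $\lambda \geq \lambda_0$.

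\emph{Step 1: the root.} We produce, with probability at least some $p_1>0$ independent of $\lambda$, an interval $I_1 \subseteq [-H,H]$ of length $1$ and a level $a>0$ with $X^1 \leq a$ and $Y^1 \geq 2a$ on $I_1$.

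For $\Y$ we use that $Y^1$ has floor $Y^2 \geq 0$ and boundary data $\geq 0$. By Lemma \ref{lem:monotonicity} (with the lower lines removed), $Y^1$ therefore dominates a single Ferrari--Spohn excursion on $[-2H,2H]$ with zero boundary, whose law does not depend on $\lambda$. That excursion stays above $2a$ on $[-\frac12,\frac12]$ with positive probability.

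For $\X$ we first use $X^1(\pm 2H) \leq H$, monotonicity and Lemma \ref{lem:comingdown} (after pushing all boundary data up to $H$) to get $X^1 \leq M$ at two points of $[-H,H]$ with probability near $1$. We then use Lemma \ref{lem:secondline_domination} plus scaling: $X^2$ lives on scale $\lambda^{-1/3}$, so Corollary \ref{cor:tailbound_interval} gives $\max_{[-H,H]} X^2 \leq a/2$ with high probability once $\lambda_0$ is large. Resampling $X^1$ between the two points by the strong $\lambda$-BG property, it is a Brownian line with tilt $2$ and a floor below $a/2$. It therefore stays below $a$ on a unit subinterval with probability bounded below uniformly.

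\emph{Step 2: one generation.} Suppose generation $k$ produced an interval $I$ of length $\lambda^{-2(k-1)/3}$ with $X^k < Y^k$ on $I$. Split $I$ into $N \asymp \lambda^{2/3}$ subintervals $J$ of length $\lambda^{-2k/3}$, and call $J$ a \emph{success} if the following hold:
\begin{itemize}
\item $X^{k+1} \leq a\lambda^{-k/3}$ and $Y^{k+1} \geq 2a\lambda^{-k/3}$ on the middle half $J'$ of $J$;
\item the boundary values $X^{k+1}$ at the ends of $J$ and the floor $\max_J X^{k+2}$ are below $C\lambda^{-k/3}$.
\end{itemize}
The children of $I$ are the intervals $J'$ with $J$ a success.

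The key claim is that, conditionally on $\extalg^{k+1,k+1}_J$ for both ensembles (i.e.\ everything except line $k+1$ inside $J$), and on the event that the boundary and floor data are good, $J$ is a success with probability at least $p$. Here $p$ does not depend on $k$ or $\lambda$. The ceilings help in this step. For $X^{k+1}$ the ceiling $X^k$ only lowers it. For $Y^{k+1}$ the ceiling is $Y^k$, which after rescaling by $\rho = \lambda^{k/3}$ via Lemma \ref{lem:scaling} sits above $2a\lambda^{-(k-1)/3}$, i.e.\ at rescaled height $2a\lambda^{1/3} \gg 1$, so it is harmless. After this rescaling, line $k+1$ has tilt $2$ on an interval of length $1$ with $O(1)$ boundary data, which is exactly the root situation; this is the uniformity.

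The good-data requirement on the boundary values and the floor $X^{k+2}$ holds with probability $\geq 1-\eta$ for each $J$ by the rescaled Corollary \ref{cor:tailbound_interval} and Lemma \ref{lem:secondline_domination}, with $\eta$ small once $\lambda_0$ and $C$ are large.

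\emph{Step 3: comparison with a branching process, and the main obstacle.} The main difficulty is turning these per-subinterval bounds into domination by a supercritical branching process, given that successes of different $J$ within one generation, and across generations, are dependent. The plan is to use disjointness and the Gibbs property. For disjoint subintervals $J$ in one generation, resample line $k+1$ of both ensembles on all of them simultaneously; conditionally on the exterior $\sigma$-algebra these resamplings are independent across $J$, since $\X$ and $\Y$ are independent and line $k+1$ on disjoint intervals is conditionally independent given its values at the endpoints and lines $k$ and $k+2$. This gives independent success indicators with probability at least $p$ on the set of $J$ with good data. The number of bad $J$ is controlled by Markov's inequality: fewer than $N/2$ are bad with high probability, so the offspring count stochastically dominates $\mathrm{Bin}(N/2,p)$ up to an error event.

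I would explore the generations top-down, revealing only line $k$ and the data needed at generation $k$. The successive resamplings then refer to nested exterior $\sigma$-algebras, as in Section \ref{sec:inputs_strongbg}. The bad-data error events will need a careful union bound: I would make their probability summable over generations, e.g.\ choosing the thresholds $C_k$ to grow slowly so the error in generation $k$ is $\lesssim 2^{-k}$.

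Finally choose $\lambda_0$ with $\lambda_0^{2/3}p/4 > 1$. The dominating Galton--Watson process then survives with probability bounded below, and together with $p_1$ and the summable errors this gives $\delta_0>0$.
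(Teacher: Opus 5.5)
Your overall architecture (nested intervals, on-scale reversal events, supercritical branching for large $\lambda$, a point in the intersection of a surviving ray) matches the paper. The root step and the $\Y$-side are essentially fine. There is, however, a genuine gap in Steps 2--3, namely your ``good data'' events.

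\textbf{The gap.} Your induction only gives $X^{k+1}<X^k\le a\lambda^{-(k-1)/3}$ on $I$. After rescaling by $\lambda^{k/3}$ this is height $a\lambda^{1/3}$, not $O(1)$. So nothing in the hypothesis puts the endpoint values of $X^{k+1}$ on a subinterval $J$ on scale, especially for $J$ near the ends of $I$.

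The per-$J$ bound $1-\eta$ that you quote from Corollary \ref{cor:tailbound_interval} and Lemma \ref{lem:secondline_domination} is an unconditional statement about the stationary ensemble. It says nothing about the law given your exploration history. That history contains non-monotone information about lines $k,k+1,k+2$ elsewhere, including failures of other nodes. So the uniform $p$ in Step 2 is not established.

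Your proposed remedy fails. For any thresholds $C_k\to\infty$, the success probability tends to zero, roughly like $e^{-cC_k^2}$, since it requires $X^{k+1}$ to descend from height $C_k$ to $a$ within rescaled time $\tfrac14$. The process therefore eventually becomes subcritical. A true union bound over generation $k$ is worse still: that generation has of order $(\lambda^{2/3}p)^k$ nodes, so it would force $C_k\gtrsim (k\log\lambda)^{2/3}$.

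With fixed thresholds, the per-node error could be absorbed into the offspring law. That requires it to be bounded conditionally on the history, almost surely, which is exactly what is missing.

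There is also a smaller problem. A floor threshold $\max_J X^{k+2}\le C\lambda^{-k/3}$ with $C>a$ can make $X^{k+1}\le a\lambda^{-k/3}$ on $J'$ impossible, so the floor would have to be required below $a\lambda^{-k/3}$.

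\textbf{The missing idea.} The paper avoids boundary and floor data altogether. It uses the parent's reversal as a ceiling and resamples \emph{all} lower lines on a longer window.

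Take a window $I'\subseteq I$ of length $4M\lambda^{1/3}\lambda^{-2k/3}$, with a large constant $M$ in place of your $a$. Its rescaled length is $4M\lambda^{1/3}$, still far below the rescaled length $\asymp\lambda^{2/3}$ of $I$. Condition on $\extalg^{k+1,\infty}_{I'}$.

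\begin{enumerate}
\item Since $X^k\le M\lambda^{-(k-1)/3}$ on $I'$, Lemma \ref{lem:monotonicity} lets you flatten the ceiling and raise the boundary values of \emph{every} line $\ge k+1$ to that height. After rescaling this is height $M\lambda^{1/3}$ on an interval of half-length $2M\lambda^{1/3}$.
\item Lemma \ref{lem:comingdown}, whose hypothesis is boundary height at most the half-length, brings the top line down to $M/2$ at $\pm M\lambda^{1/3}$.
\item Corollary \ref{cor:tailbound_interval} then keeps it below $M$ on the central unit interval.
\end{enumerate}

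This yields a conditional lower bound $q$ valid almost surely given the exterior $\sigma$-algebra and the parent's reversal. Children on disjoint windows therefore dominate i.i.d.\ $\mathrm{Bernoulli}(q)$ variables, with no good-data events needed. The only cost is $b\asymp\lambda^{1/3}/(4M)$ children instead of $\asymp\lambda^{2/3}$, which is still supercritical for large $\lambda$.
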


This section is dedicated to the proof of Proposition \ref{prop:correlation_reversaltime}, and later in Section \ref{sec:correlation_topline}
we will use this proposition to prove Lemma \ref{lem:correlation_stopping} (which implies Theorem \ref{thm:correlation}
as discussed in the previous section).
Therein, we will break up the interval $\left[ - \frac{t}{2}, \frac{3t}{2} \right]$ into many
intervals of length $4H$ and apply Proposition \ref{prop:correlation_reversaltime} within each of them.
Since Proposition \ref{prop:correlation_reversaltime} allows for {arbitrary boundary data} {satisfying the bound $X^1(\pm 2H) \leq H$},
the existence of reversal times within each separate interval may be taken to be roughly independent, leading to the exponential bound on
the probability in Lemma \ref{lem:correlation_stopping}.

The broad idea of the proof of Proposition \ref{prop:correlation_reversaltime}, alluded to in Section \ref{sec:intro_iop_branching},
is that we will decompose the interval $[-2H,2H]$ into a nested
family of intervals capturing the natural scales of lines in $\X$ and $\Y$ via the scaling relation described in Section \ref{sec:inputs_scaling}.
Then every time $X^k$ and $Y^k$ remain reversed on an interval of their natural scale $\lambda^{-2(k-1)/3}$, the lower lines $X^{k+1}$ and $Y^{k+1}$ will have many
attempts to remain reversed on an interval of \emph{their} natural scale $\lambda^{-2k/3}$, since this scale is a factor of $\lambda^{-2/3}$ smaller.
Thus the set of intervals where reversal occurs is related to a branching process which we will ensure is supercritical
by taking $\lambda_0$ large enough.
If the branching process survives for infinitely many generations, then the intersections of the corresponding
intervals will result in a point $s$ where all lines are reversed.
Note that in the finite-line case of Theorem \ref{thm:correlation}, we simply require the branching
process to survive to generation $n$, where $n$ is the number of lines.

\subsubsection{Tree-indexed families of intervals}
\label{sec:correlation_branching_intervals}

In this section we will create two $b$-ary tree-indexed collections of intervals on which our proof
will be built. 
As just indicated above, in principle we should be able to take $b \approx \lambda^{2/3}$
to capture the relationship between scales at different indices.
However, we did not attempt to optimize our arguments for this purpose and will be satisfied
with any $b$ which grows with $\lambda$; in particular, we will take $b = \left\lfloor \frac{\lambda^{1/3}}{4M} \right\rfloor$,
where $M$ is a constant that will be determined later.

The vertices of the $b$-ary tree will be identified with elements of
$\treeb \coloneqq \bigcup_{k = 0}^\infty [b]^k$, where $[b] = \{1,\dotsc,b\}$,
and $[b]^k$ consists of length-$k$ strings of the symbols in $[b]$.
For $k=0$ we have $[b]^0 = \{ \emptyset \}$, where $\emptyset$ denotes the empty string.
For each $\tau \in [b]^k$ and $\sigma \in [b]$, we denote by $\tau \sigma \in [b]^{k+1}$ the string
consisting of $\tau$ with the symbol $\sigma$ appended to the end.
In addition, for $\tau \in [b]^k$ we set $|\tau| = k$, i.e.\ $|\tau|$ denotes the length of the string $\tau$,
and we say that $\tau \preceq \tau'$ if the string $\tau'$ begins with the substring $\tau$.

We will create two $\treeb$-indexed families of intervals $\{ I_\tau \}$ and $\{ J_\tau \}$
such that $J_\tau \subseteq I_\tau \subseteq [-2H,2H]$ for each $\tau \in \treeb$.
We will also ensure that $\{ I_\tau : |\tau| = k \}$ have pairwise disjoint interiors for each fixed $k$,
and that the length of $J_\tau$ is $|J_\tau| = \lambda^{-2|\tau|/3}$ for each
$\tau \in \treeb$.
First, as a base case we simply set $I_\emptyset = [-2H,2H]$, and we will define $J_\emptyset$
to be the middle interval of length $1$ inside of $I_\emptyset$, i.e.\ $J_\emptyset = [-\frac{1}{2},\frac{1}{2}]$.
Then inductively, if we have defined $J_\tau$ for some $\tau \in \treeb$, for each $\sigma \in [b]$
we will define $I_{\tau\sigma}$ to be the $\sigma$th interior-disjoint interval of length
$4 M \lambda^{1/3} \lambda^{-2|\tau\sigma|/3}$ inside of $J_\tau$.
Since
\begin{equation}
    \frac{|I_{\tau\sigma}|}{|J_\tau|}
    = \frac{4 M \lambda^{1/3} \lambda^{-2|\tau\sigma|/3}}{\lambda^{-2|\tau|/3}}
    = \frac{4 M}{\lambda^{1/3}} \leq \frac{1}{b},
\end{equation}
there are indeed at least $b$ such interior-disjoint intervals, and we simply ignore the unused
space if there is any.
After defining $I_{\tau\sigma}$ we simply let $J_{\tau\sigma}$ be the middle interval
of length $\lambda^{-2|\tau\sigma|/3}$ inside of $I_{\tau\sigma}$.
See Figure \ref{fig:correlation_intervals} for an illustration of this construction.

\begin{figure}
    \centering
    \includegraphics[width=0.75\textwidth]{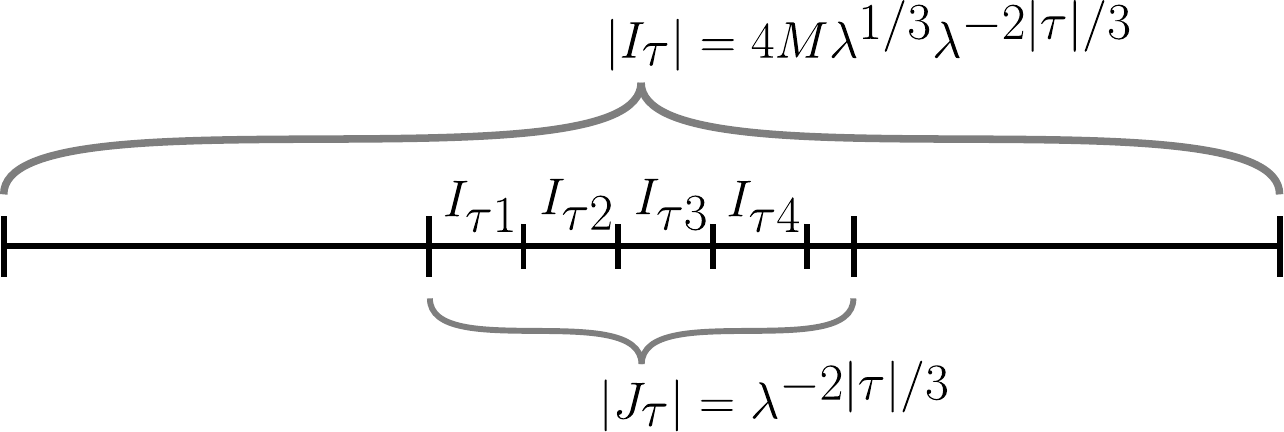}
    \caption{
        One iteration of the nested intervals in the construction,
        with $b = 4$ for the purpose of illustration; note that
        there is leftover room at the right end of $J_\tau$ which is
        not taken up by any interval $I_{\tau\sigma}$.
    }
\label{fig:correlation_intervals}
\end{figure}

\subsubsection{A branching process of reversal events}
\label{sec:correlation_branching_events}

Now we introduce the on-scale reversal events indexed by $\tau \in \treeb$
which will form the branching process of our argument.
The events depend on the same global choice of the constant $M > 0$ which was introduced
in the definition of the intervals in Section \ref{sec:correlation_branching_intervals} above,
and which will be specified later.
Let us first define
\begin{equation}
    \Low_\tau(\X) \coloneqq \left\{
        X^{|\tau|+1}(s) \leq M \lambda^{-|\tau|/3}
        \text{ for all } s \in J_\tau
    \right\}
\end{equation}
and
\begin{equation}
    \High_\tau(\Y) \coloneqq \left\{
        Y^{|\tau|+1}(s) \geq M \lambda^{-|\tau|/3}
        \text{ for all } s \in J_\tau
    \right\}.
\end{equation}
Note that both of these events are ``on-scale'' the sense that the typical height of the curve with index $k+1$
is of order $\lambda^{-k/3}$ as indicated by the scaling relation of Lemma \ref{lem:scaling}.
We then set
\begin{equation}
    \Reverse_\tau(\X,\Y) \coloneqq
    \Low_\tau(\X) \cap \High_\tau(\Y),
\end{equation}
and note that $\Reverse_\tau(\X,\Y)$ implies that $X^{|\tau|+1}(s) \leq Y^{|\tau|+1}(s)$
for all $s \in J_\tau$.
See Figure \ref{fig:reversal} for an illustration of this event.

\begin{figure}
    \centering
    \includegraphics[width=0.6\textwidth]{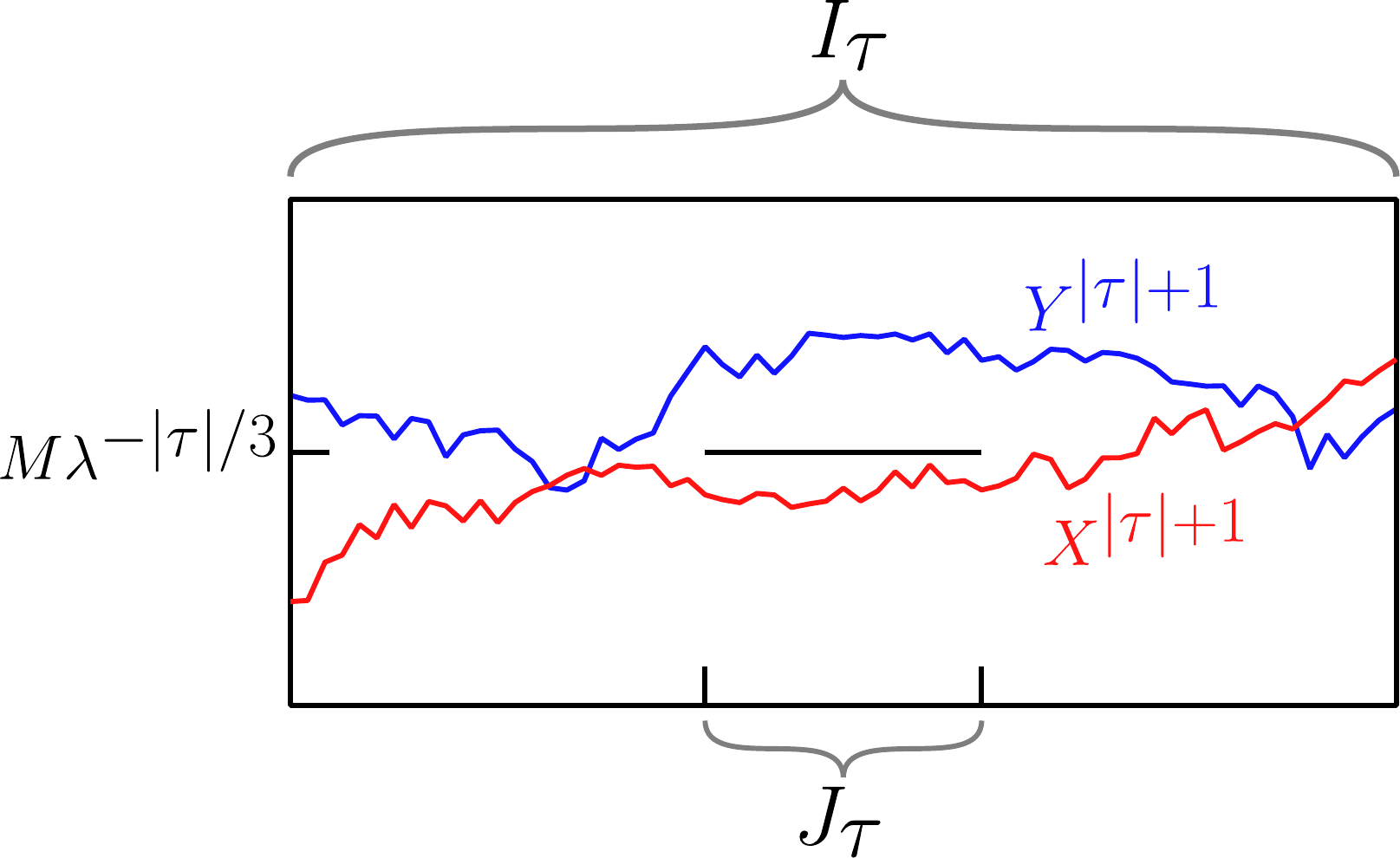}
    \caption{
        The event $\Reverse_\tau(\X,\Y)$,
        where $\X$ is drawn in red and $\Y$ is drawn in blue.
    }
\label{fig:reversal}    
\end{figure}

The main lemma in our argument is that the set of $\tau \in \treeb$ for which $\Reverse_\tau(\X,\Y)$ holds
stochastically dominates a branching process, which can be made supercritical by taking $\lambda$ large enough.
To state this precisely, {for $p, q > 0$} let us define the branching process $\BP^{p,q}_b \subseteq \treeb$
to be {a standard $\mathrm{Binomial}(b,q)$ branching process except that the root may also be absent with probability $1-p$; in other words, $\BP^{p,q}_b$ is}
the random set of vertices defined by the following properties:
\begin{enumerate}
    \item First, $\emptyset \in \BP^{p,q}_b$ with probability $p$.
    \item For any $\tau \in \treeb$, if $\tau \in \BP^{p,q}_b$ then for each $\sigma \in [b]$ independently,
    we have $\tau\sigma \in \BP^{p,q}_b$ with probability $q$.
    \item If $\tau \notin \BP^{p,q}_b$, then no descendant of $\tau$ (i.e.\ string which begins with $\tau$)
    is in $\BP^{p,q}_b$ either.
\end{enumerate}

Recall that in our setting, $b = \left\lfloor \frac{\lambda^{1/3}}{4M} \right\rfloor$ depends on $\lambda$ and
$M$, where $M$ is the parameter in the above constructions whose value has not yet been set.

\begin{lemma}
\label{lem:correlation_branchingprocess}
For all large enough $M, H > 0$, there is some $\lambda_0 > 1$ and $p, q > 0$ such that
for all $\lambda \geq \lambda_0$ the following holds.
Let $\X$ and $\Y$ be independent infinite-line $\lambda$-tilted line ensembles on $[-2H,2H]$
with arbitrary boundary data subject to the restriction $X^1(\pm 2H) \leq H$.
Then there is a coupling between $(\X,\Y)$ and $\BP^{p,q}_b$ such that,
for all $\tau \in \treeb$, $\tau \in \BP^{p,q}_b$ implies $\Reverse_\tau(\X,\Y)$.
\end{lemma}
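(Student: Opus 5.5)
We will build the coupling by exploring the tree $\treeb$ generation by generation. At each vertex we resample the relevant line of $\X$ and of $\Y$ on the interval $I_\tau$, using the strong $\lambda$-BG property of Section \ref{sec:inputs_strongbg} with respect to the product $\sigma$-algebras $\extextalg$. The goal is a uniform bound: conditionally on everything revealed so far, and on the event $\Reverse_\tau$ for the parent, each child event $\Reverse_{\tau\sigma}$ has probability at least $q$, and the child events are conditionally independent across $\sigma\in[b]$. A standard sequential coupling (reveal the children of an active vertex one at a time, and declare $\tau\sigma\in\BP^{p,q}_b$ with a thinned Bernoulli that is $\leq \ind*{\Reverse_{\tau\sigma}}$) then produces the claimed domination. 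The root step supplies $p$, the children steps supply $q$, and both must be independent of $\lambda\ge\lambda_0$ and of the depth $k=|\tau|$.

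\textbf{Root.} For $\Low_\emptyset(\X)$ we use $X^1(\pm 2H)\le H$ and monotonicity. These let us dominate $X^1$ on $[-2H,2H]$ by a Ferrari--Spohn diffusion with boundary values $H$. By Lemma \ref{lem:fs_comingdown} (or Lemma \ref{lem:fs_coupling} with $S=H$), once $H$ is large this diffusion stays below $M$ on $J_\emptyset=[-\frac12,\frac12]$ with probability bounded below, provided $M$ is large relative to $O(1)$. Here we take $H\gg$ the time needed to come down, which is fine since $H$ is chosen freely.

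For $\High_\emptyset(\Y)$ the boundary data of $\Y$ are arbitrary, but the event is increasing in $\Y$. By monotonicity it therefore suffices to lower-bound the probability for $\Y$ with zero boundary data and with lower lines replaced by their largest possible values. Equivalently, we view $Y^1$ as having a floor $Y^2\ge0$ and drop the floor down to $0$: since $\High$ is increasing, lowering the floor only lowers the probability. So we need a lower bound for the event that a single area-tilted Brownian line with zero boundary data on $[-2H,2H]$ exceeds $M$ on the unit interval $J_\emptyset$. This has some positive probability $p(M,H)$, uniform in $\lambda$ because the top tilt is $2\lambda^0$. Independence of $\X$ and $\Y$ gives $p=p_{\Low}\,p_{\High}$.

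\textbf{Children.} Suppose $\Reverse_\tau$ holds with $|\tau|=k$. We resample $X^{k+2}$ and $Y^{k+2}$ on each $I_{\tau\sigma}\subseteq J_\tau$, conditioning on all other lines. The subintervals are disjoint, and by the Gibbs property, once we condition on lines $k+1$ and $k+3$ and on the boundary values at the endpoints of the $I_{\tau\sigma}$, the resamplings are conditionally independent across $\sigma$.

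On $J_\tau$ the line $X^{k+1}\le M\lambda^{-k/3}$ is a ceiling for $X^{k+2}$. After rescaling by $\rho=\lambda^{(k+1)/3}$ (Lemma \ref{lem:scaling}), the line $X^{k+2}$ becomes a line with tilt $2$ on an interval of length $4M\lambda^{1/3}$. Its ceiling is at height $M\lambda^{1/3}$, which is huge, and its endpoint values are at most $M\lambda^{1/3}$. The target event $\Low_{\tau\sigma}$ asks it to be $\le M$ on the middle unit interval. By monotonicity (raising the boundary data to $M\lambda^{1/3}$, removing the ceiling, and taking the floor $X^{k+3}\ge 0$ to be the lowest lines dominated via Lemma \ref{lem:secondline_domination}), this reduces to the coming-down estimate of Lemma \ref{lem:comingdown}, applied on a half-length of order $M\lambda^{1/3}$ with boundary data of order $M\lambda^{1/3}$. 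That lemma gives probability $\ge 1-Ce^{-M^c}$, uniformly in $\lambda$.

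For $Y$, on $J_\tau$ we have $Y^{k+1}\ge M\lambda^{-k/3}$. After rescaling, the ceiling of $Y^{k+2}$ is at height $\ge M\lambda^{1/3}$. Since $\High_{\tau\sigma}$ is increasing, we lower the boundary data and floor to $0$ and keep the ceiling at $M\lambda^{1/3}$. We are left with a tilt-$2$ line with zero boundary on a length $4M\lambda^{1/3}$ interval and a distant ceiling, and we need it to stay $\ge M$ on the middle unit interval. Far from the endpoints this is essentially a stationary Ferrari--Spohn diffusion (Lemma \ref{lem:fs_coupling}; the ceiling only helps by monotonicity if we compare with its removal, though that direction needs care). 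So the probability is at least a constant $q_Y(M)>0$ that does not depend on $\lambda$ or $k$. Taking $q=\frac12 q_Y(M)$ for $M$ large then gives $q$ uniform.

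\textbf{Main obstacle.} The hardest step is making the conditional independence and uniformity rigorous. The exploration reveals information, and we need the lower bounds to hold conditionally on arbitrary realizations of the rest of the ensemble outside the resampled pieces. A specific difficulty in the $\High$ bound is that the ceiling $Y^{k+1}$ pushes downward, which is the wrong direction for monotonicity. My first attempt would be to replace the ceiling by the constant $M\lambda^{-k/3}$, which lies below it on $J_\tau$ and is therefore a lower ceiling, giving domination in the correct direction. I would then show that a tilt-$2$ line with zero boundary and a constant ceiling at height $M\lambda^{1/3}\gg M$ has probability bounded below of staying above $M$ on a unit interval. I expect this via Corollary \ref{cor:fs_uppertail_interval}: the ceiling is rarely felt, so conditioning on avoiding it changes probabilities by $1-o(1)$.
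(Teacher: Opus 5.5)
You have the paper's architecture: a root probability $p$, a uniform conditional child probability $q$, and sequential Bernoulli thinning. Your $\High$ steps also match the paper: replace the ceiling $Y^{k+1}$ by the constant $M\lambda^{-k/3}$, drop floor and boundary data to $0$, remove the ceiling at cost $1-o(1)$ via Corollary \ref{cor:fs_uppertail_interval}, and finish with Lemma \ref{lem:fs_coupling}.

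\textbf{The gap is in the inductive step for $\Low$.} You resample only $X^{k+2}$ on $I_{\tau\sigma}$ and condition on $X^{k+3}$ there. For the decreasing event $\Low_{\tau\sigma}(\X)$, this floor works against you, since raising it can only lower the probability. On $\Low_\tau(\X)$, the only constraint on $X^{k+3}$ over $I_{\tau\sigma}$ is that it lies below $X^{k+1}\le M\lambda^{-k/3}$, i.e.\ up to height $M\lambda^{1/3}\gg M$ after rescaling. On realizations with $X^{k+3}>M\lambda^{-(k+1)/3}$ somewhere on $J_{\tau\sigma}$, the conditional probability of $\Low_{\tau\sigma}(\X)$ is exactly zero. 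So there is no almost-sure lower bound $q$, and the domination by independent $\mathrm{Bernoulli}(q)$ variables fails.

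Lemma \ref{lem:secondline_domination} cannot repair this. It controls the \emph{law} of the lower lines (for the stationary ensemble), not a realization you have already conditioned on.

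The fix, which is what the paper does, is to resample all lines of index $\geq k+2$ jointly on $I_{\tau\sigma}$, conditioning only on $\extalg^{k+2,\infty}_{I_{\tau\sigma}}(\X)$. Then the only data to control are the values of these lines at the endpoints of $I_{\tau\sigma}$. These lie below $X^{k+1}\le M\lambda^{-k/3}$ because $I_{\tau\sigma}\subseteq J_\tau$. Raise all of them and the ceiling to $M\lambda^{-k/3}$ and rescale. This gives an infinite-line ensemble with boundary data $M\lambda^{1/3}$ on an interval of length $4M\lambda^{1/3}$, to which Lemma \ref{lem:comingdown} applies. Conditional independence across $\sigma$ is preserved because the $I_{\tau\sigma}$ are interior-disjoint.

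\textbf{A related error occurs at the root.} $X^1$ has floor $X^2$, not $0$. So it is \emph{not} stochastically dominated by a single Ferrari--Spohn diffusion with boundary values $H$, since the lower lines push it up. Lemmas \ref{lem:fs_comingdown} and \ref{lem:fs_coupling} therefore do not apply to it. You need the infinite-line estimate, Lemma \ref{lem:comingdown} with $T=2H$.

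\textbf{One more step is needed in both places.} Lemma \ref{lem:comingdown} only controls single points, so you must pass to the whole unit interval. As the paper does: bring the top line down to height $M/2$ at two points flanking the interval, dominate by a shifted zero-boundary ensemble, and apply Corollary \ref{cor:tailbound_interval}.
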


This lemma will be proved inductively via Lemmas \ref{lem:correlation_inductivestep} and \ref{lem:correlation_basecase}
below, which are the inductive step and the base case.
Namely, Lemma \ref{lem:correlation_inductivestep} will show that if $\Reverse_\tau(\X,\Y)$ holds, then
$( \ind*{\Reverse_{\tau\sigma}(\X,\Y)} : \sigma \in [b] )$ stochastically dominates a tuple of $b$ independent
$\mathrm{Bernoulli}(q)$ variables, for some fixed $q$ as long as $\lambda$ is large enough.
And Lemma \ref{lem:correlation_basecase} will show that $\P[\Reverse_\emptyset(\X,\Y)] \geq p$ for some fixed $p$.

Before proceeding with these lemmas,
let us now see how Lemma \ref{lem:correlation_branchingprocess} implies Proposition \ref{prop:correlation_reversaltime}.

\begin{proof}[Proof of Proposition \ref{prop:correlation_reversaltime}]
Let $H > 0$ be large enough and fix some choice of $M > 0$ large enough that Lemma \ref{lem:correlation_branchingprocess}
holds with the constants $\lambda_0 > 1$ and $p, q > 0$.
Note that if $\BP^{p,q}_b$ contains an infinite ray $\emptyset = \tau_0 \preceq \tau_1 \preceq \tau_2 \preceq \dotsb$
where $\tau_k \in [b]^k$ and $\tau \preceq \tau'$ means that $\tau'$ begins with $\tau$, then
$\Reverse_{\tau_k}(\X,\Y)$ holds for all $k \geq 0$.
Thus $X^k(s) \leq Y^k(s)$ for all $s \in J_{\tau_k}$.
Since $J_{\tau_0} \supseteq J_{\tau_1} \supseteq J_{\tau_2} \supseteq \dotsb$ are closed intervals,
there is some point $s \in \bigcap_{k \geq 0} J_{\tau_k}$, where all curves are reversed.

So it remains to show that (possibly after increasing the value of $\lambda_0$), there is some $\delta_0 > 0$
such that for all $\lambda \geq \lambda_0$, the branching process $\BP^{p,q}_b$ contains an infinite
ray with probability at least $\delta_0$.
Note that the average number of offspring of each element of the branching process is
\begin{equation}
    q b \geq \frac{q \lambda^{1/3}}{4M}.
\end{equation}
Since $M$ is fixed and $q$ does not depend on $\lambda$ as long as $\lambda \geq \lambda_0$,
this is greater than $1$ as soon as $\lambda$ is large enough.
So we may increase $\lambda_0$ until the above is greater than $1$ whenever $\lambda \geq \lambda_0$.
Then if $\emptyset \in \BP^{p,q}_b$, there is some chance $\eta > 0$ for $\BP^{p,q}_b$ to contain an infinite ray.
Finally, $\emptyset \in \BP^{p,q}_b$ with probability $p$ which does not depend on $\lambda$,
so we may take $\delta_0 = p \eta$.
\end{proof}

\subsubsection{Inductive step: on-scale lemmas}
\label{sec:correlation_branching_inductive}

In this section we prove the inductive step for Lemma \ref{lem:correlation_branchingprocess}.
For the statement, let us recall from \eqref{eq:defextalg} the exterior $\sigma$-algebra defined
for any interval $I \subseteq \R$
and $a < b \in \N \cup \{\infty\}$ via
\begin{equation}
    \extalg^{a,b}_{I}(\X) = \sigma \left(
        X^k(t) : \text{ either } t \notin I
        \text{ or } k \notin [a,b]
    \right).
\end{equation}
The following lemma states that if we have a reversal of one curve, we will have a chance
to have a reversal of the next curve on each on-scale interval, even after conditioning on the
values of the ensembles at the endpoints of the intervals, which makes these events independent.

\begin{lemma}[Inductive step for branching process]
\label{lem:correlation_inductivestep}
For all large enough $M$ in the above definitions, there exist constants $\lambda_0 > 1$
and $q > 0$ such that the following holds for all $\lambda \geq \lambda_0$.
For any independent $\lambda$-tilted line ensembles $\X$ and $\Y$, we have the following for all
$\tau \in \treeb$ and $\sigma \in [b]$ {almost surely}:
\begin{equation}
    \P \left[
        \Reverse_{\tau\sigma}(\X,\Y)
        \;\middle|\;
        \Reverse_\tau(\X,\Y), \;
        \extalg^{|\tau|+2,\infty}_{I_{\tau\sigma}}(\X), \;
        \extalg^{|\tau|+2,\infty}_{I_{\tau\sigma}}(\Y)
    \right] \geq q.
\end{equation}
\end{lemma}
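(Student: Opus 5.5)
Throughout, write $k=|\tau|$. The plan is to show that, after conditioning, the events $\Low_{\tau\sigma}(\X)$ and $\High_{\tau\sigma}(\Y)$ are conditionally independent. Each then has conditional probability bounded below by a constant that does not depend on $\lambda$, $k$, the boundary data, or the ceilings and floors. The bounds come from rescaling to unit order and using the inputs of Section~\ref{sec:inputs}.

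\textbf{Step 1: conditional independence and the resampling laws.} Since $\X$ and $\Y$ are independent, and $\Reverse_\tau = \Low_\tau(\X)\cap\High_\tau(\Y)$ is measurable with respect to the product of the two exterior $\sigma$-algebras (because $J_\tau \supseteq I_{\tau\sigma}$ and the event only involves line $k+1$), the conditional probability factorizes:
\[
\P\left[\Low_{\tau\sigma}(\X)\;\middle|\;\extalg^{k+2,\infty}_{I_{\tau\sigma}}(\X)\right]\cdot\P\left[\High_{\tau\sigma}(\Y)\;\middle|\;\extalg^{k+2,\infty}_{I_{\tau\sigma}}(\Y)\right].
\]
Each factor is evaluated on $\Low_\tau(\X)$, respectively $\High_\tau(\Y)$. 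By the $\lambda$-BG property, the conditional law of $(X^{k+2},X^{k+3},\dots)$ on $I_{\tau\sigma}$ is an infinite-line ensemble with tilts $\lambda^{k+1},\lambda^{k+2},\dots$, floor $0$, ceiling $X^{k+1}$, and the given boundary data (defined via monotone limits as in Remark~\ref{rmk:sameboundary}); the same holds for $\Y$. We then rescale by $\rho=\lambda^{(k+1)/3}$ using Lemma~\ref{lem:scaling}. After rescaling:
\begin{itemize}
\item the tilts become $1,\lambda,\lambda^2,\dots$;
\item $I_{\tau\sigma}$ becomes an interval of length $4M\lambda^{1/3}$, and $J_{\tau\sigma}$ becomes its middle unit interval;
\item the ceiling $X^{k+1}$ lies below $M\lambda^{1/3}$, while the ceiling $Y^{k+1}$ lies above $M\lambda^{1/3}$;
\item the target events become: the top rescaled line stays $\le M$ (for $\X$), respectively $\ge M$ (for $\Y$), on the middle unit interval.
\end{itemize}

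\textbf{Step 2: the $\Low$ factor.} By monotonicity (Lemma~\ref{lem:monotonicity}), we remove the ceiling and raise all boundary values to $M\lambda^{1/3}$; this is legitimate because all boundary values lie below the ceiling, which is itself at most $M\lambda^{1/3}$. We now have an infinite-line ensemble on an interval of half-length $T=2M\lambda^{1/3}$ with boundary values $\le T$. Apply Lemma~\ref{lem:comingdown} on slightly shrunk intervals centred at $\pm1$: for instance, on $[\pm1-(T-1),\pm1+(T-1)]$ the boundary values are still at most $T$ by monotonicity. This gives that the top line is at most $M/2$ at both points $\pm1$ with probability at least $1-2Ce^{-(M/2)^c}$. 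On that event, resample on $[-1,1]$ and dominate by a zero-boundary ensemble shifted up by $M/2$. Corollary~\ref{cor:tailbound_interval} (applied through monotonicity, i.e.\ to the stationary ensemble, which dominates the zero-boundary one) then bounds the probability of exceeding $M$ on $[-\tfrac12,\tfrac12]$ by $Ce^{-cM^{3/2}}$. For $M$ large this factor is at least $\tfrac12$, uniformly in $\lambda\ge\lambda_0$ and $k$.

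\textbf{Step 3: the $\High$ factor, which I expect to be the main obstacle.} Here one must bound a probability from below uniformly despite the random ceiling and floor.
\begin{itemize}
\item By monotonicity, conditionally on the lower lines, the top rescaled line dominates a single line with tilt $1$, floor $0$, zero boundary values, and constant ceiling $M\lambda^{1/3}$. This uses that raising the floor from $0$ to $Y^{k+3}$ and raising the ceiling from $M\lambda^{1/3}$ to $Y^{k+1}$ only increase the line.
\item Without the ceiling this single line is a Ferrari--Spohn diffusion on an interval of half-length $2M\lambda^{1/3}$ with zero boundary. By Lemma~\ref{lem:fs_coupling} (with $S$ of order one), it coincides with $\FS$ on the middle unit interval with probability at least $1-Ce^{-cM\lambda^{1/3}}$.
\item The ceiling is an event of probability at least $1-CM\lambda^{1/3}e^{-c(M\lambda^{1/3})^{3/2}}$ under the unconditioned law, by Corollary~\ref{cor:fs_uppertail_interval}. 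Conditioning on it therefore changes probabilities by a vanishing amount as $\lambda\to\infty$.
\end{itemize}
Hence the factor is at least $\P[\FS(s)\ge M\ \forall s\in[-\tfrac12,\tfrac12]]-o_\lambda(1)$. This quantity is a fixed positive number $q_M$, by the positivity of the stationary density \eqref{eq:fssd} together with Brownian-bridge support arguments. Choosing $\lambda_0$ so that the error is at most $q_M/2$, we may take $q=q_M/4$, which depends on $M$ but not on $\lambda$, $k$, $\tau$ or $\sigma$, as required.
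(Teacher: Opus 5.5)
Your proposal follows the paper's proof essentially step for step. The paper also factorizes by independence into the $\Low$ and $\High$ factors (Lemmas \ref{lem:correlation_low} and \ref{lem:correlation_high}, with $q=q_1q_2$) and rescales by $\lambda^{|\tau\sigma|/3}$. It uses monotonicity to flatten the ceilings and shift the boundary data, handles $\Low$ by coming down plus the interval tail bound, and handles $\High$ by the Ferrari--Spohn coupling plus rejection sampling.

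One step in your Step 2 needs repair. After you remove the ceiling, the top line at the inner endpoint $\mp(T-2)$ of your shrunk interval is random. So it is not bounded by the required half-length $T-1$ ``by monotonicity'', and Lemma \ref{lem:comingdown} does not yet apply there. There are two ways to fix this:
\begin{itemize}
\item Remove the ceiling only after restricting to the shrunk interval. The ceiling then gives the deterministic bound $M\lambda^{1/3}=T/2\le T-1$ at that endpoint.
\item Do as the paper does: apply Lemma \ref{lem:comingdown} on $[-T,0]$ and $[0,T]$, using the ceiling to bound the value at $0$, and then work from the points $\pm T/2$.
\end{itemize}
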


This lemma immediately implies the following corollary which will form the backbone of our proof of
the coupling in Lemma \ref{lem:correlation_branchingprocess}:

\begin{corollary}
\label{cor:inductivestep}
For all large enough $M$ in the above definitions, there exist constants $\lambda_0 > 1$
and $q > 0$ such that the following holds for all $\lambda \geq \lambda_0$.
For any independent $\lambda$-tilted line ensembles $\X$ and $\Y$, any $\tau \in \treeb$,
if $\Reverse_\tau(\X,\Y)$ holds then {conditionally}, $( \ind*{\Reverse_{\tau\sigma}(\X,\Y)} : \sigma \in [b] )$
stochastically dominates a tuple of $b$ independent $\mathrm{Bernoulli}(q)$ random variables.
\end{corollary}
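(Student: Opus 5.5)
The plan is to derive Corollary \ref{cor:inductivestep} from Lemma \ref{lem:correlation_inductivestep} by a sequential revealing argument over the children $\sigma = 1, \dotsc, b$. Fix $\tau$ and write $k = |\tau|$. The key structural fact is that the intervals $I_{\tau 1}, \dotsc, I_{\tau b}$ have pairwise disjoint interiors and lie in $J_\tau$. Consequently, for $\sigma' \neq \sigma$ the event $\Reverse_{\tau\sigma'}(\X,\Y)$ depends only on $X^{k+2}, Y^{k+2}$ restricted to $J_{\tau\sigma'} \subseteq I_{\tau\sigma'}$. This portion lies outside the interior of $I_{\tau\sigma}$, so it is measurable with respect to $\extalg^{k+2,\infty}_{I_{\tau\sigma}}(\X) \otimes \extalg^{k+2,\infty}_{I_{\tau\sigma}}(\Y)$. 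If endpoints are shared, the events only involve $J_{\tau\sigma'}$, which lies strictly inside $I_{\tau\sigma'}$ because $J$ is the middle subinterval, so there is no issue.

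The event $\Reverse_\tau(\X,\Y)$ concerns lines of index $k+1 < k+2$, so it is also measurable with respect to these exterior $\sigma$-algebras for every child $\sigma$.

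The main step is then the following. Let $\mathcal{G}_\sigma$ denote the $\sigma$-algebra generated by $\Reverse_\tau$ together with the events $\Reverse_{\tau\sigma'}$ for $\sigma' < \sigma$. Since $\mathcal{G}_\sigma$ is contained in the exterior $\sigma$-algebra associated with $I_{\tau\sigma}$, the tower property applied to Lemma \ref{lem:correlation_inductivestep} gives, on $\Reverse_\tau$,
\begin{equation}
    \P\left[\Reverse_{\tau\sigma}(\X,\Y) \,\middle|\, \mathcal{G}_\sigma\right] \geq q \quad \text{a.s.}
\end{equation}

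From here I would run the standard coupling. Take independent uniform variables $U_1, \dotsc, U_b$ on $[0,1]$ and set $\ind*{\Reverse_{\tau\sigma}} = \ind{U_\sigma \leq p_\sigma}$, where $p_\sigma \geq q$ is the conditional probability above, which is a function of the earlier outcomes. Then define $B_\sigma = \ind{U_\sigma \leq q}$. The $B_\sigma$ are i.i.d.\ $\mathrm{Bernoulli}(q)$ and satisfy $B_\sigma \leq \ind*{\Reverse_{\tau\sigma}}$. The joint law of the indicators built this way matches their true conditional law given $\Reverse_\tau$, which is exactly the claimed domination. Equivalently, one can bound $\P[\text{all } \sigma \in S \text{ fail}]$ by a product for every set $S$, but the sequential coupling is cleaner.

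The only delicate point I expect is the measurability verification, namely that the earlier events and the parent event are genuinely exterior to $I_{\tau\sigma}$ for lines of index at least $k+2$. This is where disjointness of the $I_{\tau\sigma}$ and the index shift are used. The independence of $\X$ and $\Y$ enters only through Lemma \ref{lem:correlation_inductivestep} itself.
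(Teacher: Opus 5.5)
Your proposal is correct and is essentially the argument the paper has in mind: the paper gives no proof beyond asserting that Lemma \ref{lem:correlation_inductivestep} ``immediately implies'' the corollary. That implicit argument is yours — $\Reverse_\tau$ and the sibling events $\Reverse_{\tau\sigma'}$, $\sigma'\neq\sigma$, are measurable with respect to the exterior $\sigma$-algebras of $I_{\tau\sigma}$, and the tower property plus a sequential uniform coupling then yield domination by i.i.d.\ $\mathrm{Bernoulli}(q)$ variables. Your measurability check, that $J_{\tau\sigma'}$ lies strictly inside $I_{\tau\sigma'}$ and is therefore disjoint even from the closed interval $I_{\tau\sigma}$, is precisely the detail the paper leaves unstated.
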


Now, since $\X$ and $\Y$ are independent, to prove Lemma \ref{lem:correlation_inductivestep} it suffices to prove
the following two lemmas showing that the low and high events each have a lower bound on their probabilities.

\begin{lemma}
\label{lem:correlation_low}
For all large enough $M$ in the above definitions, there exist constants $\lambda_0 > 1$
and $q_1 > 0$ such that the following holds for all $\lambda \geq \lambda_0$.
For any $\lambda$-tilted line ensemble $\X$, any $\tau \in \treeb$, and any $\sigma \in [b]$,
\begin{equation}
\label{eq:correlation_low}
    \P \left[
        \Low_{\tau\sigma}(\X)
        \; \middle| \;
        \Low_\tau(\X), \;
        \extalg^{|\tau|+2,\infty}_{I_{\tau\sigma}}(\X)
    \right] \geq q_1.
\end{equation}
\end{lemma}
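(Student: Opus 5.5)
Write $k = |\tau|$, so that we must show that the line $X^{k+2}$ stays below $M\lambda^{-(k+1)/3}$ on $J_{\tau\sigma}$ with conditional probability at least $q_1$. The conditioning $\sigma$-algebra $\extalg^{k+2,\infty}_{I_{\tau\sigma}}(\X)$ fixes everything except the lines of index at least $k+2$ on $I_{\tau\sigma}$; the event $\Low_\tau(\X)$ is measurable with respect to it, since $X^{k+1}$ is not resampled. By the $\lambda$-BG property (in the form resampling only lines with index in $[k+2,\infty)$, via finite truncation and a limit), the conditional law of $(X^{k+2},X^{k+3},\dots)$ on $I_{\tau\sigma}$ is an area-tilted ensemble. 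Its tilts are $\lambda^{k+1},\lambda^{k+2},\dots$, its ceiling is $X^{k+1}|_{I_{\tau\sigma}}$, and its boundary values are given at the endpoints of $I_{\tau\sigma}$.

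\textbf{Step 1: monotone reduction.} Since $\Low_{\tau\sigma}$ is a decreasing event, Lemma \ref{lem:monotonicity} lets us remove the ceiling and raise the boundary data to the worst case. The boundary data at the endpoints of $I_{\tau\sigma}\subseteq J_\tau$ are at most $M\lambda^{-k/3}$ for all lines of index at least $k+2$, because they lie below $X^{k+1}$, which is at most $M\lambda^{-k/3}$ on $J_\tau$ under $\Low_\tau$. So it suffices to lower bound the probability that the top line of a ceiling-free infinite ensemble $\W$ is at most $M\lambda^{-(k+1)/3}$ on $J_{\tau\sigma}$. Here $\W$ lives on $I_{\tau\sigma}$, has tilts $\lambda^{k+1},\lambda^{k+2},\dots$, and all boundary values equal $M\lambda^{-k/3}$.

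\textbf{Step 2: rescaling.} Apply Lemma \ref{lem:scaling} with $\rho=\lambda^{(k+1)/3}$. This turns $\W$ into a standard $\lambda$-tilted ensemble with the following parameters:
\begin{itemize}
\item domain length $4M\lambda^{1/3}$ (since $|I_{\tau\sigma}| = 4M\lambda^{1/3}\lambda^{-2(k+1)/3}$);
\item boundary values $M\lambda^{1/3}$;
\item target window $J_{\tau\sigma}$ of length $1$ in the middle;
\item target height $M$.
\end{itemize}
Call the rescaled half-length $T = 2M\lambda^{1/3}$, so the boundary value is $T/2 \le T$. This is exactly the regime of Lemma \ref{lem:comingdown}. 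The boundary height is linear in the half-length, which is why the interval $I_{\tau\sigma}$ was chosen of length $4M\lambda^{1/3}$ on the natural scale.

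\textbf{Step 3: uniform bound in the window.} We need a bound uniformly over all points of the unit window, not just at $0$. Apply Lemma \ref{lem:comingdown} on slightly shifted intervals centered at $\pm 1/2$, or at the endpoints of the window, of half-length $T-1 \ge T/2$; the boundary values are still at most this half-length. This gives that the top line is at most $M/2$ at both endpoints of the window with probability at least $1-2Ce^{-(M/2)^c}$, uniformly in $\lambda\ge\lambda_0$ (for $\lambda_0$ fixed, with $T$ large once $M$ or $\lambda$ is large). Resampling on the window and using monotonicity, the top line there is dominated by a zero-boundary infinite ensemble on an interval of length $1$ shifted up by $M/2$. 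That ensemble is in turn dominated by the stationary one, and Corollary \ref{cor:tailbound_interval} with $S=1$ shows its maximum exceeds $M/2$ with probability at most $Ce^{-cM^{3/2}}$. Combining, the conditional probability of $\Low_{\tau\sigma}$ is at least $1 - 2Ce^{-(M/2)^c} - Ce^{-cM^{3/2}} \ge 1/2 =: q_1$ for $M$ large.

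\textbf{Main obstacle.} The delicate point is Step 1, namely that the conditioning truly leaves a Gibbs resampling of infinitely many lines. This requires justifying the BG property for the lines with index at least $k+2$ simultaneously, via truncation at a large index $n$ and a monotone limit. It also requires checking that $\Low_\tau$, which involves $X^{k+1}$ on all of $J_\tau$ including points outside $I_{\tau\sigma}$, is measurable in the conditioning. The second point is immediate since $X^{k+1}$ is not resampled. The first point is handled as in Section \ref{sec:inputs_strongbg}, using the uniformity in $n$ of all the bounds above.
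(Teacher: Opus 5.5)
\textbf{There is a gap in Step 3.} Your intervals centered at $\pm\frac12$ with half-length $T-1$ have endpoints $-T+\frac32,\,T-\frac12$ (respectively $-T+\frac12,\,T-\frac32$). These lie strictly inside $[-T,T]$, so the boundary values there are random interior values of the rescaled $\W$, not the deterministic data $T/2$. Because you discarded the ceiling entirely in Step 1, nothing bounds these values by $T-1$. Lemma \ref{lem:comingdown} only controls the midpoint of an interval whose boundary data are already at most its half-length, so it cannot be invoked as written.

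The paper avoids this by not throwing the ceiling away. It raises $X^{|\tau|+1}|_{I_{\tau\sigma}}$ only to the constant $M\lambda^{-|\tau|/3}$. This is still a valid monotone reduction for the decreasing event $\Low_{\tau\sigma}$, since $\Low_\tau$ gives $X^{|\tau|+1}\le M\lambda^{-|\tau|/3}$ on $J_\tau\supseteq I_{\tau\sigma}$. After rescaling, it forces the top line to be at most $M\lambda^{1/3}$ everywhere. The paper then applies Lemma \ref{lem:comingdown} to the halves $[-2M\lambda^{1/3},0]$ and $[0,2M\lambda^{1/3}]$, using $Z^1(0)\le M\lambda^{1/3}$ as boundary data, to get $Z^1(\pm M\lambda^{1/3})\le M/2$ with high probability. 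It finishes with Corollary \ref{cor:tailbound_interval}, just as in your last step.

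\textbf{Either of two small changes repairs your argument.}

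(i) Keep the flattened ceiling at $M\lambda^{-k/3}$ in Step 1. Then the values at your shifted endpoints are deterministically at most $T/2\le T-1$. Condition on them, drop the ceiling by monotonicity, and apply Lemma \ref{lem:comingdown}.

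(ii) Keep the ceiling-free $\W$, but add a preliminary estimate. Raising the floor to $T/2$ gives $\W\preceq\mathbf{V}+T/2$, where $\mathbf{V}$ is a zero-boundary ensemble on $[-T,T]$ dominated by the stationary one. Corollary \ref{cor:tailbound_interval} with $S=T$ then gives $\max_{[-T,T]}W^1\le T-1$ except with probability at most $CTe^{-c(T/2-1-C\log T)^{3/2}}$. This is negligible once $M$ is large, since $T=2M\lambda^{1/3}$.

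With either fix, the rest of your proof (the monotone reduction, rescaling by $\lambda^{(k+1)/3}$, and the final union bound giving $q_1=\frac12$) is correct and is essentially the paper's argument.
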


\begin{lemma}
\label{lem:correlation_high}
For all large enough $M$ in the above definitions, there exist constants $\lambda_0 > 1$
and $q_2 > 0$ such that the following holds for all $\lambda \geq \lambda_0$.
For any $\lambda$-tilted line ensemble $\Y$, any $\tau \in \treeb$, and any $\sigma \in [b]$,
\begin{equation}
\label{eq:correlation_high}
    \P \left[
        \High_{\tau\sigma}(\Y)
        \; \middle| \;
        \High_\tau(\Y), \;
        \extalg^{|\tau|+2,\infty}_{I_{\tau\sigma}}(\Y)
    \right] \geq q_2,
\end{equation}
\end{lemma}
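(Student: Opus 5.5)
The plan is to reduce the conditional probability in \eqref{eq:correlation_high} to the probability of an on-scale event for a single area-tilted Brownian bridge. Monotonicity (Lemma \ref{lem:monotonicity}) will remove all dependence on the conditioning, and the scaling relation (Lemma \ref{lem:scaling}) will make the bound independent of $\tau$ and $\lambda$.

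Write $k = |\tau|$. Conditioning on $\extalg^{k+2,\infty}_{I_{\tau\sigma}}(\Y)$ fixes everything except the lines $Y^{k+2}, Y^{k+3}, \dotsc$ inside $I_{\tau\sigma}$. By the $\lambda$-BG property, their conditional law is a $\lambda$-tilted ensemble on $I_{\tau\sigma}$ with the following data:
\begin{itemize}
\item tilts $\lambda^{k+1}, \lambda^{k+2}, \dotsc$;
\item boundary values given by $\Y$ at the endpoints of $I_{\tau\sigma}$;
\item floor $0$;
\item ceiling $Y^{k+1}|_{I_{\tau\sigma}}$.
\end{itemize}
The event $\High_\tau(\Y)$ is measurable with respect to this $\sigma$-algebra. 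Since $I_{\tau\sigma} \subseteq J_\tau$, on $\High_\tau(\Y)$ the ceiling is at least $M\lambda^{-k/3}$ throughout $I_{\tau\sigma}$.

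Since $\High_{\tau\sigma}(\Y)$ is an increasing event in $Y^{k+2}$, monotonicity lets us lower the data in three steps:
\begin{enumerate}
\item Replace the ceiling by the constant $M\lambda^{-k/3}$.
\item Replace all boundary values by $0$.
\item Delete the lines $Y^{k+3}, Y^{k+4}, \dotsc$, so that $Y^{k+2}$ sees only the floor $0$. This is legitimate because those lines act as a floor for $Y^{k+2}$, so removing them only lowers it.
\end{enumerate}
The conditional probability is therefore bounded below by the probability that a single Brownian bridge $Z$ on $I_{\tau\sigma}$ satisfies $Z \geq M\lambda^{-(k+1)/3}$ on $J_{\tau\sigma}$. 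Here $Z$ has tilt $\lambda^{k+1}$, zero boundary values, floor $0$ and ceiling $M\lambda^{-k/3}$.

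Next apply Lemma \ref{lem:scaling} with $\rho = \lambda^{(k+1)/3}$. Under this rescaling:
\begin{itemize}
\item the tilt becomes $1$;
\item $I_{\tau\sigma}$ becomes an interval of length $L_\lambda := 4M\lambda^{1/3}$, with $J_{\tau\sigma}$ as its middle unit interval;
\item the ceiling becomes $M\lambda^{1/3}$;
\item the target event becomes $\{\tilde Z \geq M \text{ on the middle unit interval}\}$.
\end{itemize}
None of this depends on $\tau$ or $\sigma$.

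To make the bound uniform in $\lambda \geq \lambda_0$, apply monotonicity once more. Restrict to the centered subinterval of length $L_{\lambda_0}$. There the true boundary values are nonnegative, so we may lower them to $0$. We may also lower the ceiling from $M\lambda^{1/3}$ to $M\lambda_0^{1/3}$. This reduces everything to one fixed law: a single bridge $W$ with tilt $1$ on an interval of length $L_{\lambda_0}$, zero boundary values, floor $0$, and ceiling $M\lambda_0^{1/3} > M$. We then set
\[
q_2 := \P\bigl[W \geq M \text{ on the middle unit interval}\bigr].
\]

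The main remaining point is that $q_2 > 0$. The density with respect to Brownian bridge is bounded above and below on the set of paths confined between $0$ and the ceiling, since the area term is bounded there. So it suffices that a Brownian bridge from $0$ to $0$ has positive probability to stay in the strip $(0, M\lambda_0^{1/3})$ while exceeding $M$ on the middle unit interval. This holds because the strip is strictly wider than $M$: a tube around a fixed continuous path doing this has positive Wiener measure. The zero endpoints on the floor are handled by the standard limiting definition (Remark \ref{rmk:sameboundary}). If needed, one can instead start from boundary values $\eps$ and use monotonicity.

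This gives the lemma with $\lambda_0 > 1$ arbitrary and $q_2 = q_2(M, \lambda_0)$. In the finite-line case the same argument applies, with fewer lines to delete.
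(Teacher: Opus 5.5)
Your proof is correct. The reduction step matches the paper's: you flatten the ceiling $Y^{|\tau|+1}$ to $M\lambda^{-|\tau|/3}$, drop the boundary values to $0$, delete the lower lines, and rescale by $\lambda^{|\tau\sigma|/3}$.

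You differ from the paper in how you make the bound uniform in $\lambda$. The paper keeps the full window of length $4M\lambda^{1/3}$ and argues in three steps:
\begin{enumerate}
\item It removes the ceiling by rejection sampling, using Corollary \ref{cor:fs_uppertail_interval}, since the ceiling $M\lambda^{1/3}$ is essentially never touched.
\item It couples the ceiling-free Ferrari--Spohn diffusion to the stationary $\FS$ on $[-\frac12,\frac12]$ via Lemma \ref{lem:fs_coupling}.
\item It takes $q_2=\eta(M)/2$ with $\eta(M)=\P[\FS\geq M \text{ on } [-\frac12,\frac12]]$. This requires $\lambda_0$ large depending on $M$.
\end{enumerate}
You instead use the Gibbs property and monotonicity a second time, on a window of fixed length $4M\lambda_0^{1/3}$. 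Lowering both the window boundary values and the ceiling gives a single $\lambda$-free law, after which you only need qualitative positivity.

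Your route is more elementary, using no tail bounds and no mixing input, and it works for any $\lambda_0>1$. The price is that your $q_2(M,\lambda_0)$ carries no quantitative information and degenerates as $\lambda_0\downarrow 1$. You must fix it before enlarging $\lambda_0$ for supercriticality, which is harmless because your bound holds for all $\lambda\geq\lambda_0$. The paper's $q_2$ is independent of $\lambda_0$ and is the natural stationary quantity suggested by the on-scale heuristic.

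One small correction concerns your fallback of starting from boundary values $\eps$ and using monotonicity. It points the wrong way: the actual values at the window endpoints are positive but not bounded below by $\eps$. Sending $\eps\downarrow 0$ along a monotone limit would also need a lower bound uniform in $\eps$, which the tube argument does not provide. A clean way to get $q_2>0$ with zero endpoints is as follows. Condition on the values of $W$ at two interior points outside the middle unit interval; these are almost surely strictly between $0$ and the ceiling. Apply your tube argument to the tilted bridge between those points, then average. The paper makes the same kind of positivity assertion without comment in Lemma \ref{lem:correlation_basecase}.
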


With Lemmas \ref{lem:correlation_low} and \ref{lem:correlation_high} in hand,
Lemma \ref{lem:correlation_inductivestep} follows immediately:

\begin{proof}[Proof of Lemma \ref{lem:correlation_inductivestep}]
We may simply take $\lambda_0$ to be the maximum of the two values given in
Lemmas \ref{lem:correlation_low} and \ref{lem:correlation_high}, and take
the success probability $q$ to be the product $q_1 q_2$.
\end{proof}

We now turn to the proofs of Lemmas \ref{lem:correlation_low} and \ref{lem:correlation_high}.
Both of these will involve rescaling and then applying various tail bounds or coming-down estimates from Section \ref{sec:inputs}.

\begin{proof}[Proof of Lemma \ref{lem:correlation_low}]
This statement concerns $X^{|\tau\sigma|+1}$ which has area tilt strength $\lambda^{|\tau\sigma|}$,
on intervals $I_{\tau\sigma}$ and $J_{\tau\sigma}$ which have lengths $4 M \lambda^{1/3} \lambda^{-2|\tau\sigma|/3}$
and $\lambda^{-2|\tau\sigma|/3}$ respectively.
Since $\Low_\tau(\X)$ holds and we are conditioning on $X^{|\tau|+1}$, this curve acts as a ceiling for $X^{|\tau\sigma|+1}$
of height at most $M \lambda^{-|\tau|/3}$ on $I_{\tau\sigma}$.
Since $\Low_{\tau\sigma}(\X)$ is a decreasing event, by monotonicity the probability we want to lower bound can only
decrease if we flatten this ceiling to be at constant height $M \lambda^{-|\tau|/3}$, and increase all the boundary conditions
of $\X$ on the interval $I_{\tau\sigma}$ to this height as well.

Now we will apply a rescaling by $\rho = \lambda^{|\tau\sigma|/3}$ (recalling the notation for rescaling in \eqref{eq:scaling}),
resulting in a curve $\left(X^{|\tau\sigma|+1}\right)^{(\rho)}$ with area tilt strength $1$.
The intervals $I_{\tau\sigma}$ and $J_{\tau\sigma}$ rescale to intervals of length $4 M \lambda^{1/3}$ and $1$ respectively,
and the ceiling and boundary conditions all rise to height $M \lambda^{1/3}$.

So it suffices to prove that for all $M$ large enough, if $\mathbf{Z}$ is a $\lambda$-tilted line ensemble on $[-2M\lambda^{1/3},2M\lambda^{1/3}]$ with all
boundary conditions $M \lambda^{1/3}$ and a ceiling of height $M \lambda^{1/3}$, then
\begin{equation}
\label{eq:low_goal}
    \P \left[ Z^1(s) \leq M \text{ for all } |s| \leq \tfrac{1}{2} \right] \geq q_1,
\end{equation}
where $q_1$ may be taken uniform in all $\lambda \geq \lambda_0$, for some fixed $\lambda_0$ (possibly depending on $M$).

For this, we first note that $Z^1(0) \leq M \lambda^{1/3}$ by the imposition of the ceiling, and so we may apply Lemma \ref{lem:comingdown}
to both of the intervals $[-2M \lambda^{1/3}, 0]$ and $[0, 2M \lambda^{1/3}]$, which implies that
\begin{equation}
    \P \left[ Z^1(\pm M \lambda^{1/3}) \geq \tfrac{M}{2} \right] \leq 2 C e^{- (M/2)^c},
\end{equation}
where we may take $C, c > 0$ uniformly in $\lambda \geq 2$, for instance.
So let us assume that $M$ is large enough so that the above probability is at most $\frac{1}{3}$.
On the event that $Z^1(\pm M \lambda^{1/3}) \geq \frac{M}{2}$, we may apply Corollary \ref{cor:tailbound_interval},
which shows that
\begin{equation}
    \P\left[ Z^1(s) \leq M \text{ for all } |s| \leq \tfrac{1}{2} \right] \geq 1 - C e^{-c (M/2)^{3/2}},
\end{equation}
again with constants not depending on $\lambda$ as long as $\lambda \geq 2$.
Thus as long as $M$ is large enough that the above is at least $\frac{2}{3}$,
we may take $q_1 = \frac{1}{3}$ in \eqref{eq:low_goal}, finishing the proof.
\end{proof}

\begin{proof}[Proof of Lemma \ref{lem:correlation_high}]
As in the proof of Lemma \ref{lem:correlation_low} above, we will first rescale by $\rho = \lambda^{|\tau\sigma|/3}$.
Before rescaling, since $\High_\tau(\Y)$ holds and we are conditioning on $Y^{|\tau|+1}$, this curve acts as a ceiling
for $Y^{|\tau\sigma|+1}$ of height at least $M \lambda^{-|\tau|/3}$ on $I_{\tau\sigma}$.
Since $\High_{\tau\sigma}(\Y)$ is an increasing event, by monotonicity the probability we want to lower bound
can only decrease if we flatten this ceiling to be at constant height $M \lambda^{-|\tau|/3}$, and decrease all
the boundary conditions of $\Y$ on $I_{\tau\sigma}$ to be zero.
Additionally, if we remove all curves below $Y^{|\tau\sigma|+1}$, then the probability will also decrease.

So after rescaling it suffices to prove that for all $M$ large enough, if $Z$ is a Ferrari--Spohn diffusion
on $[-2M\lambda^{1/3}, 2M\lambda^{1/3}]$ with zero boundary conditions and a ceiling of height $M \lambda^{1/3}$,
then
\begin{equation}
\label{eq:high_goal}
    \P \left[ Z(s) \geq M \text{ for all } |s| \leq \tfrac{1}{2} \right] \geq q_2,
\end{equation}
where $q_2$ may be taken uniform in all $\lambda \geq \lambda_0$ for some fixed $\lambda_0$ (possibly depending on $M$).

For this, first let $W$ denote a Ferrari--Spohn diffusion on $[-2M\lambda^{1/3}, 2M\lambda^{1/3}]$ with zero boundary conditions
and no ceiling.
By Corollary \ref{cor:fs_uppertail_interval}, we have
\begin{equation}
    \P \left[ \max_{|s| \leq 2 M \lambda^{1/3}} W(s) \leq M \lambda^{1/3} \right] \geq 1 - C M \lambda^{1/3} e^{- c M^{3/2} \lambda^{1/2}},
\end{equation}
so by rejection sampling we may couple $Z$ and $W$ to be equal with at least the probability on the right-hand side above.
Now by Lemma \ref{lem:fs_coupling}, we may couple $W$ with $\FS$, a stationary Ferrari--Spohn diffusion,
so that they agree on $[-\frac{1}{2},\frac{1}{2}]$ with probability at least $1 - C e^{- c M \lambda^{1/3}}$.
Thus we may couple $Z$ and $\FS$ to be equal on $[-\frac{1}{2},\frac{1}{2}]$ with probability at least
\begin{equation}
\label{eq:highbound}
    1 - C e^{- c M \lambda^{1/3}} - C M \lambda^{1/3} e^{- c M^{3/2} \lambda^{1/2}}.
\end{equation}
Now the probability that $\FS(s) \geq M$ on $[-\frac{1}{2},\frac{1}{2}]$ is some constant $\eta = \eta(M) > 0$,
and we may ensure that \eqref{eq:highbound} is at least $1 - \frac{\eta}{2}$ for all $\lambda \geq \lambda_0$,
by choosing $\lambda_0$ to be large enough without changing the value of $M$.
Thus we may take $q_2 = \frac{\eta}{2}$ in \eqref{eq:high_goal}, finishing the proof.
\end{proof}

\subsubsection{Base case}
\label{sec:correlation_branching_basecase}

We now show the base case for Lemma \ref{lem:correlation_branchingprocess},
and then finish with a short proof of that lemma by
combining the inductive step above with the base case.

\begin{lemma}[Base case for branching process]
\label{lem:correlation_basecase}
For all large enough $M,H > 0$ there is some $p > 0$
such that for all $\lambda \geq 2$, the following holds.
Let $\X$ and $\Y$ be independent infinite-line $\lambda$-tilted line ensembles on $[-2H,2H]$
with arbitrary boundary data subject to the restriction $X^1(\pm2H) \leq H$.
Then
\begin{equation}
    \P \left[ \Reverse_\emptyset(\X,\Y) \right] \geq p.
\end{equation}
\end{lemma}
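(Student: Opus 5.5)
Since $\X$ and $\Y$ are independent, $\P[\Reverse_\emptyset(\X,\Y)] = \P[\Low_\emptyset(\X)]\,\P[\High_\emptyset(\Y)]$. It therefore suffices to bound each factor below by a constant $p_1, p_2 > 0$, uniformly in $\lambda \geq 2$ and in the admissible boundary data. Here $J_\emptyset = [-\frac12,\frac12]$, $\Low_\emptyset(\X) = \{X^1(s) \leq M \ \forall s \in J_\emptyset\}$ and $\High_\emptyset(\Y) = \{Y^1(s) \geq M \ \forall s \in J_\emptyset\}$.

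For the low event, $\Low_\emptyset$ is decreasing, so by monotonicity (Lemma \ref{lem:monotonicity}) we may raise every boundary value of $\X$ at $\pm 2H$ to $H$. The hypothesis $X^1(\pm 2H)\le H$ bounds all lines there by $H$. We then apply Lemma \ref{lem:comingdown} on the two halves $[-2H,0]$ and $[0,2H]$, after first controlling $X^1(0)$, or more simply on windows $[\mp 2H, \mp 2H + 2H']$ with $H' \leq H$ as in the proof of Lemma \ref{lem:correlation_low}. This gives, with probability at least $1 - 2Ce^{-(M/2)^c}$, some stopping points near $\pm H$ at which $X^1 \leq M/2$.

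On that event, the strong Brownian Gibbs property and monotonicity let us dominate $X^1$ on the interior domain by $\frac{M}{2}$ plus a zero-boundary $\lambda$-tilted ensemble. Corollary \ref{cor:tailbound_interval} (valid uniformly in $\lambda\ge 2$ by monotonicity) then bounds the probability that its top line exceeds $M/2$ on $[-\frac12,\frac12]$ by $Ce^{-cM^{3/2}}$. Choosing $M$ large gives $p_1 \geq \frac13$. The one subtlety is that Lemma \ref{lem:comingdown} needs boundary data at most the half-length of the window; since we may take $H \geq M$ large and windows of length comparable to $H$, this is satisfied.

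For the high event, $\High_\emptyset$ is increasing, so monotonicity lets us lower all boundary data of $\Y$ to $0$ and delete all lines below $Y^1$. There is no ceiling for the top line. Thus $Y^1$ dominates a Ferrari--Spohn diffusion $W$ on $[-2H,2H]$ with zero boundary conditions, and indeed $Y^1$ is itself a single line with tilt $2\lambda^0$ above a floor $Y^2 \ge 0$. By Lemma \ref{lem:fs_coupling} with $S=0$ (or small $S$) and $t = 2H - \frac12$, we couple $W$ with the stationary $\FS$ so that they agree on $[-\frac12,\frac12]$ with probability at least $1-Ce^{-cH}$. The stationary process satisfies $\P[\FS(s)\ge M\ \forall |s|\le\frac12] = \eta(M) > 0$, since the Ferrari--Spohn diffusion has full support: its law on a unit window is mutually absolutely continuous with a Brownian-type law, and the stationary density \eqref{eq:fssd} charges $[2M,\infty)$. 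Taking $H$ large depending on $M$ so that $Ce^{-cH}\le \eta/2$ yields $p_2 = \eta/2$, independent of $\lambda$, since this step never sees $\lambda$.

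Combining the two bounds gives $p = p_1 p_2$. The main obstacle is ordering the constants: $M$ is fixed first for the low event, and $H$ is then taken large relative to $M$. Both lower bounds must hold uniformly in arbitrary boundary data for lines other than $X^1$. This is handled by the observation that all lines lie below $X^1(\pm 2H)\le H$, together with monotonicity, which lets us replace the data by the extremal choices.
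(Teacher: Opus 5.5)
Your argument is correct and is essentially the paper's proof. You factor by independence, apply Lemma~\ref{lem:comingdown} first on $[-2H,2H]$ to get $X^1(0)\le H$ and then on the halves $[-2H,0]$, $[0,2H]$ to get $X^1(\pm H)\le M/2$, finish the low event with Corollary~\ref{cor:tailbound_interval}, and dominate $Y^1$ by a zero-boundary Ferrari--Spohn excursion for the high event. Your alternative windows $[\mp 2H,\mp 2H+2H']$ would still need control at their inner endpoints, so the preliminary bound on $X^1(0)$ is not optional.

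The one real difference is in the high event. You route it through Lemma~\ref{lem:fs_coupling} so that $p$ does not depend on $H$, which forces $H$ to be large relative to $M$. The paper instead observes that for fixed $M,H$ the excursion stays above $M$ on $J_\emptyset$ with some probability $\eta(M,H)>0$, so no ordering of constants is needed.
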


\begin{proof}[Proof of Lemma \ref{lem:correlation_basecase}]
First, Lemma \ref{lem:comingdown} implies that
\begin{equation}
    \P\left[ X^1(0) > H \right] \leq C e^{- H^c}
\end{equation}
for some constants $C, c > 0$ which we may take to be uniform in $\lambda \geq 2$, so we may assume that the above probability is $\leq \frac{1}{2}$
for $H$ large enough.
On the event that $X^1(0) \leq H$, then another application of Lemma \ref{lem:comingdown}
implies that
\begin{equation}
    \P \left[ X^1(\pm H) > \tfrac{M}{2} \right] \leq 2 C e^{- (M/2)^c},
\end{equation}
and we may also assume that $M$ is large enough that the above probability is $\leq \frac{1}{2}$.
Now if $X^1(\pm H) \leq \frac{M}{2}$, then Corollary \ref{cor:tailbound_interval} implies that
\begin{equation}
    \P \left[ X^1(s) \leq M \text{ for all } |s| \leq \tfrac{1}{2} \right] \geq 1 - C e^{- c M^{3/2}},
\end{equation}
so let us further assume that $M$ is large enough that the above is $\leq \frac{1}{2}$.
All of this implies that
\begin{equation}
    \P \left[ X^1(s) \leq M \text{ for all } |s| \leq \tfrac{1}{2} \right] \geq \tfrac{1}{8},
\end{equation}
i.e.\ that the probability of $\Low_\emptyset(\X)$ is at least $\frac{1}{8}$, recalling that
$J_\emptyset = \left[-\frac{1}{2},\frac{1}{2}\right]$.

As for $\High_\emptyset(\Y)$, note that $Y^1$ stochastically dominates a Brownian excursion with area tilt strength $2$
on $[-2H,2H]$, and since $M$ and $H$ are constants that are fixed by the above derivation,
there is some positive probability $\eta = \eta(M,H) > 0$ that this excursion remains above height $M$ on $J_\emptyset$.
Thus if $\X$ and $\Y$ are sampled independently, we have
\begin{equation}
    \P \left[
        \Reverse_\emptyset(\X,\Y)
    \right] \geq \tfrac{\eta}{8}.
\end{equation}
Thus we may take $p = \frac{\eta}{8}$.
\end{proof}

Finally we turn to the proof of Lemma \ref{lem:correlation_branchingprocess} itself.

\begin{proof}[Proof of Lemma \ref{lem:correlation_branchingprocess}]
Let us fix large enough $M, H > 0$ and let $\lambda_0$ and $q$ be as given by Corollary \ref{cor:inductivestep}
for this choice of $M$.
We will build the coupling between $(\X,\Y)$ and $\BP^{p,q}_b$ inductively to ensure that, for all $\tau \in \treeb$, if $\tau \in \BP^{p,q}_b$ then
$\Reverse_\tau(\X,\Y)$ holds.
For this, it suffices to ensure that this implication is hereditary, i.e.\ if $\tau \in \BP^{p,q}_b \Rightarrow \Reverse_\tau(\X,\Y)$,
then for all $\sigma \in [b]$, we have $\tau\sigma \in \BP^{p,q}_b \Rightarrow \Reverse_{\tau\sigma}(\X,\Y)$.
Additionally, we need to ensure that this implication is satisfied for $\tau = \emptyset$.

First, for this latter fact, Lemma \ref{lem:correlation_basecase} implies that there is some $p > 0$ such that
$\P\left[ \Reverse_\emptyset(\X,\Y) \right] \geq p$; note that for this we may assume that $\lambda_0 \geq 2$.
So we begin the coupling between $(\X,\Y)$ and $\BP^{p,q}_b$ by letting $\emptyset \in \BP^{p,q}_b$
with probability $p$ such that $\emptyset \in \BP^{p,q}_b$ implies $\Reverse_\emptyset(\X,\Y)$.

Now to prove that the implication is hereditary, let us suppose that the implication $\tau \in \BP^{p,q}_b \Rightarrow \Reverse_\tau(\X,\Y)$ holds for some $\tau \in \treeb$.
Then if $\tau \notin \BP^{p,q}_b$ the implication for $\tau\sigma$ holds vacuously since $\tau\sigma \notin \BP^{p,q}_b$,
so let's suppose that $\tau \in \BP^{p,q}_b$.
Then $\Reverse_\tau(\X,\Y)$ holds, so Corollary \ref{cor:inductivestep} implies that
\begin{equation}
    \left( \ind*{\Reverse_{\tau\sigma}}(\X,\Y) : \sigma \in [b] \right)
    \qquad \text{stochastically dominates} \qquad
    \left( B_{\tau\sigma} : \sigma \in [b] \right),
\end{equation}
the latter being a tuple of $b$ independent $\mathrm{Bernoulli}(q)$ random variables.
We may thus assume these are coupled so that $B_{\tau\sigma} = 1$ implies $\Reverse_{\tau\sigma}(\X,\Y)$.
So we may generate the children of $\tau$ in $\BP^{p,q}_b$ simply by including $\tau\sigma$ exactly
when $B_{\tau\sigma} = 1$.
This ensures that the desired implication is hereditary, finishing the proof.
\end{proof}

%% file: sections/4_correlation_3_topline.tex
\subsection{Many trials of the branching process}
\label{sec:correlation_topline}

In this section we prove Lemma \ref{lem:correlation_stopping} using Proposition \ref{prop:correlation_reversaltime},
thus finishing the proof of Proposition \ref{prop:reversal} and hence of Theorem \ref{thm:correlation}.
Essentially, we will prove that there are $\Omega(t)$ {independent copies} of the branching process described
in the previous section, each one having a positive chance to succeed.
We will use Proposition \ref{prop:topline} to control the top line in order to achieve this.
Note that for the proof of the finite-line case of Theorem \ref{thm:correlation}, we may also
use this proposition as the top line of the $n$-line ensemble is stochastically dominated by
the top line of the infinite-line ensemble.

Let us first divide up the interval $\left[ - \tfrac{t}{2}, \frac{3t}{2} \right]$ into subintervals
of length $4H$; specifically, for each $j \in \Z$ let us set $I^j = \left[ 4Hj, 4H(j+1) \right]$, and consider
the sets of integers
\begin{align}
    L^0_\ell &\coloneqq \left\{
        j \in \Z : I^j \subseteq \left[-\tfrac{t}{2}, 0 \right]
    \right\},
    &
    L^0_r &\coloneqq \left\{
        j \in \Z : I^j \subseteq \left[0, \tfrac{t}{2} \right]
    \right\}, \\
    L^t_\ell &\coloneqq \left\{
        j \in \Z : I^j \subseteq \left[\tfrac{t}{2}, t \right]
    \right\},
    &
    L^t_r &\coloneqq \left\{
        j \in \Z : I^j \subseteq \left[t, \tfrac{3t}{2} \right]
    \right\},
\end{align}
and set $L = L^0_\ell \cup L^0_r \cup L^t_\ell \cup L^t_r$.
For each $j \in L$, let us define
\begin{equation}
    \AllReversed^j(\X,\Y) \coloneqq \left\{ \exists s \in I^j \text{ such that } X^k(s) \leq Y^k(s) \text{ for all } k \geq 1 \right\}.
\end{equation}
We would like to apply Proposition \ref{prop:correlation_reversaltime} in each interval $I^j$ to show that
$\AllReversed^j(\X,\Y)$ holds with some positive probability.
In addition, we would like the trials of the branching processes in each interval $I^j$ to be independent
of one another to obtain the exponential bound on the probability of finding reversal times in Lemma \ref{lem:correlation_stopping}.
For this, we will introduce some auxiliary line ensembles where this independence does hold.

First, define $\ssY$ to be an infinite-line $\lambda$-tilted line ensemble on $[-\frac{t}{2},\frac{3t}{2}]$
which is pinned to zero at every multiple of $4H$ (and also at $-\frac{t}{2}, \frac{t}{2}, \frac{3t}{2}$);
thus $\ssY$ behaves independently on each interval $I^j$.
Note also that $\ssY$ is stochastically dominated by $\Y$, and we will assume that these have been
coupled so that $\ssY \preceq \Y$.

Next, we will define another auxiliary ensemble $\ssX$, though its definition will be a bit more involved
than that of $\ssY$.
Recall that to apply Proposition \ref{prop:correlation_reversaltime}
we need to ensure that the boundary conditions on each interval $I^j$ are $\leq H$.
However, this may not hold for all $j \in L$, but we will shortly apply Proposition \ref{prop:topline} to show that it happens for a good 
proportion of indices, assuming that $H$ is large enough.
For now, let us define
\begin{equation}
    \Endpoints^0_\ell(\X) \coloneqq \left\{ j \in L^0_\ell : X^1(4Hj), X^1(4H(j+1)) \leq H \right\},
\end{equation}
and similarly for $\Endpoints^0_r(\X), \Endpoints^t_\ell(\X)$, and $\Endpoints^t_r(\X)$,
and let us also set $\Endpoints(\X)$ to be the union of these four sets.
We first sample the values of $\X$ at all of the points $4Hi \in [-\frac{t}{2},\frac{3t}{2}]$ with integer $i$,
which allows us to determine whether or not each $j \in L$ lies in $\Endpoints(\X)$.
If $j \in \Endpoints(\X)$, then we set $\sX^k(4Hj) = \sX^k(4H(j+1)) = H$ for all $k \geq 1$.
For all other integer points $4Hi$ whose values have not been set in this way, we simply define $\ssX(4Hi) = \X(4Hi)$.
Then we sample both $\ssX$ and $\X$ on the intervals between these points; since the values of $\ssX$ at the endpoints
of all intervals are higher than those of $\X$, we may use monotonicity to ensure that $\X \preceq \ssX$.

Note that $(\X,\ssX)$ is still independent of $(\Y,\ssY)$, and if $\sX^i(s) \leq \sY^i(s)$ then $X^i(s) \leq Y^i(s)$
as well by stochastic domination.
So to prove Lemma \ref{lem:correlation_stopping}, it suffices to show the following lemma:

\begin{lemma}
\label{lem:correlation_stopping_tilde}    
There is some $\lambda_0 > 1$ and $C, c > 0$ such that for all $\lambda \geq \lambda_0$,
\begin{equation}
    \P \left[
        \exists j \in L^0_\ell \text{ such that } \AllReversed^j(\ssX,\ssY) \text{ holds}
    \right] \geq 1 - C e^{-c t},
\end{equation}
and the same holds with $L^0_\ell$ replaced by $L^0_r$, $L^t_\ell$, or $L^t_r$.
\end{lemma}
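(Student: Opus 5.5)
Fix one of the four index sets, say $L^0_\ell$; the other three are handled identically. The plan has three steps. First show that $\Endpoints^0_\ell(\X)$ is large with overwhelming probability. Then use the product structure of $(\ssX,\ssY)$ to make the events $\AllReversed^j(\ssX,\ssY)$, $j \in \Endpoints^0_\ell(\X)$, conditionally independent, each with probability at least $\delta_0$. Finally take a union of independent trials.

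\emph{Step 1: many good endpoints.} Apply Proposition \ref{prop:topline} with $\Delta = 4H$, $T$ of order $t$ (say $T = 3t/2$, up to a shift by stationarity), and a small $\eta$. Enlarge $H$ if needed so that the height threshold of Proposition \ref{prop:topline} is at most $H$. This is allowed, since Proposition \ref{prop:correlation_reversaltime} holds for all large $H$, and raising the threshold only shrinks the bad event. We get that, outside an event of probability $Ce^{-ct}$, at most $\eta \cdot 2T/\Delta$ lattice points $4Hi$ in $[-\tfrac t2,\tfrac{3t}{2}]$ have $X^1(4Hi) > H$. Each bad point spoils at most two intervals $I^j$, and $|L^0_\ell| \geq t/(8H) - 2$. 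Choosing $\eta$ small compared to $1/H$ then gives $|\Endpoints^0_\ell(\X)| \geq t/(16H)$ on the good event. Note that $\Endpoints(\X)$ is measurable with respect to the lattice values $\X(4Hi)$, which are sampled first in the construction of $\ssX$.

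\emph{Step 2: conditional independence.} Condition on the lattice values $\{\X(4Hi)\}$. Given these, $\ssX$ is determined on the lattice points. By the $\lambda$-BG property applied to the intervals between lattice points, and the infinite-line boundary extension of Remark \ref{rmk:sameboundary}, $\ssX$ restricted to distinct $I^j$ are conditionally independent $\lambda$-tilted ensembles. For $j \in \Endpoints(\X)$, the ensemble on $I^j$ has all boundary values equal to $H$. Likewise $\ssY$ is pinned to zero at every multiple of $4H$, so its restrictions to distinct $I^j$ are independent, each a zero-boundary ensemble on $I^j$. Moreover $\ssY$ is independent of $\X$ and $\ssX$.

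Hence, conditionally on the lattice values, the events $\AllReversed^j(\ssX,\ssY)$ for $j \in \Endpoints^0_\ell(\X)$ are independent. After translating $I^j$ to $[-2H,2H]$, each one is exactly the situation of Proposition \ref{prop:correlation_reversaltime}: $\ssX$ has boundary data with top line at most $H$, and $\ssY$ has arbitrary (zero) boundary data. So each has conditional probability at least $\delta_0$ for $\lambda \geq \lambda_0$.

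\emph{Step 3: conclusion.} On the good event of Step 1, the conditional probability that no $j \in L^0_\ell$ has $\AllReversed^j$ is at most $(1-\delta_0)^{t/(16H)} = e^{-c't}$. Adding the $Ce^{-ct}$ failure probability of Step 1 gives the claim. For small $t$, where $L^0_\ell$ might be empty, the bound is trivial after enlarging $C$.

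\emph{Main obstacle.} The delicate step is Step 2. The independence structure must come from the exact conditioning in the construction of $\ssX$, namely conditioning only on lattice values, with $\Endpoints$ measurable with respect to them. It also requires justifying the Gibbs property for infinitely many lines with the non-distinct boundary values $\ssX^k = H$ for all $k$; this is handled via Remark \ref{rmk:sameboundary}. A secondary care point is matching the constant $H$ between Proposition \ref{prop:topline} and Proposition \ref{prop:correlation_reversaltime}; this works because both statements allow $H$ to be taken large.
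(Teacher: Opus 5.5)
Your proposal is correct and is essentially the paper's argument. Step~1 reproves the paper's Lemma~\ref{lem:correlation_init} (Proposition~\ref{prop:topline} with $\Delta = 4H$ and the threshold enlarged to $H$), and Steps~2--3 are exactly how the paper concludes: conditional independence of $(\ssX,\ssY)$ across the intervals $I^j$, $j \in \Endpoints(\X)$, Proposition~\ref{prop:correlation_reversaltime} on each such interval, and a product of $\Omega(t)$ failure probabilities.

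One small correction concerns the choice of $\eta$. With $\Delta = 4H$, both the bad-point count ($\lesssim \eta t/H$) and $|L^0_\ell|$ ($\approx t/(8H)$) already scale like $t/H$. So you only need $\eta$ to be a small absolute constant; the paper takes $\eta = 1/4$ on a window of half-length $t/4$. Literally demanding $\eta \ll 1/H$ would be circular, since the threshold $H$ supplied by Proposition~\ref{prop:topline} depends on $\eta$.
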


\begin{proof}[{Proof of Lemma \ref{lem:correlation_stopping_tilde}}]
After conditioning on the value of $\Endpoints(\X)$, the behavior of $\ssX$ is independent
on each interval $I^j$ with $j \in \Endpoints(\X)$.
Further, if $j \in \Endpoints(\X)$, then we may apply Proposition \ref{prop:correlation_reversaltime}
(which requires the boundary conditions be $\leq H$)
on the interval $I^j$, which shows that there is some positive probability $\delta_0 > 0$
for $\AllReversed^j(\ssX,\ssY)$ to hold as long as $\lambda \geq \lambda_0$.
Therefore Lemma \ref{lem:correlation_stopping_tilde} follows from the next lemma,
which shows that $\Endpoints(\X)$ is a large set with high probability.
\end{proof}

\begin{lemma}
\label{lem:correlation_init}
For all $H > 0$ large enough,
there are some $C, c > 0$  such that for all $\lambda \geq 2$,
if $\X$ is the stationary infinite-line $\lambda$-tilted ensemble, then we have
\begin{equation}
    \P \left[ \left| \Endpoints^0_\ell(\X) \right| \geq \tfrac{t}{16 H} \right] \geq 1 - C e^{-ct},
\end{equation}
and the same holds with $\Endpoints^0_\ell(\X)$ replaced by $\Endpoints^0_r(\X)$, $\Endpoints^t_\ell(\X)$,
or $\Endpoints^t_r(\X)$.
\end{lemma}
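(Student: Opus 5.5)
This is a direct consequence of Proposition \ref{prop:topline} applied with $\Delta = 4H$ and a suitable $\eta$. By stationarity of $\X$, it suffices to treat $\Endpoints^0_\ell(\X)$; after translating time so that $[-\frac{t}{2},0]$ is centered at the origin, we are counting lattice points $4Hi$ in an interval of length $\frac{t}{2}$. The shift is by $\frac{t}{4}$, which need not be a multiple of $4H$. To handle this, I would apply Proposition \ref{prop:topline} directly to the points $4Hi$ lying in the larger window $[-T,T]$ with $T = \frac{t}{2}$. Since $[-\frac{t}{2},0]$ sits inside this window and the points are genuinely of the form $j\Delta$, no shift is needed at all.

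The plan is then a counting argument. The set $L^0_\ell$ consists of at least $\frac{t}{8H} - 2$ integers $j$, namely those whose interval $I^j$ fits inside $[-\frac{t}{2},0]$. Any lattice point $4Hi$ with $X^1(4Hi) > H$ can spoil at most two indices of $L^0_\ell$, namely $j=i$ and $j=i-1$. Hence
\begin{equation}
    |\Endpoints^0_\ell(\X)| \geq |L^0_\ell| - 2\,\#\{ i : |4Hi| \leq \tfrac{t}{2},\ X^1(4Hi) > H\}.
\end{equation}
Proposition \ref{prop:topline} is applied with $\lambda_0 = 2$, $\Delta = 4H$, $T = \frac{t}{2}$ and a small $\eta$, say $\eta = \frac{1}{64}$. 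It provides some $H_0$ such that, for every $H \geq H_0$, with probability at least $1 - Ce^{-ct}$ the count of bad points is at most $\eta \frac{t}{4H} = \frac{t}{256 H}$. Using $H \geq H_0$ is legitimate by monotonicity of the event in $H$, as the paper itself notes within the proof of Proposition \ref{prop:topline}.

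On this event we get $|\Endpoints^0_\ell(\X)| \geq \frac{t}{8H} - 2 - \frac{t}{128H} \geq \frac{t}{16H}$ once $t$ is large, say $t \geq 64H$. For small $t$ the bound holds trivially after enlarging $C$. The constants $C,c$ depend on $H$ but are uniform in $\lambda \geq 2$, as guaranteed by Proposition \ref{prop:topline}. The other three sets $\Endpoints^0_r$, $\Endpoints^t_\ell$, $\Endpoints^t_r$ follow identically. For those lying near $t$, I would use stationarity to shift by a multiple of $4H$ close to $t$, or simply apply Proposition \ref{prop:topline} with $T = \frac{3t}{2}$, which contains all four windows, adjusting $\eta$ accordingly.

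The main obstacle is only bookkeeping, and in particular the requirement that ``large enough $T$'' in Proposition \ref{prop:topline} translates into ``large enough $t$''. This is absorbed into $C$. All the real difficulty has already been handled by Proposition \ref{prop:topline}.
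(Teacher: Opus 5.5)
Your proposal is correct and is essentially the paper's argument: apply Proposition \ref{prop:topline} with $\lambda_0 = 2$ and $\Delta = 4H$, then observe that each bad lattice point $4Hi$ disqualifies at most the two indices $j = i-1, i$. Your window $T = \frac{t}{2}$ avoids the non-lattice shift implicit in the paper's choice $T = \frac{t}{4}$. Your smaller $\eta$ absorbs the $O(1)$ boundary loss in $|L^0_\ell|$ that the paper's $\eta = \frac{1}{4}$ leaves unaccounted for, so both changes are just more careful bookkeeping of the same proof.
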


\begin{proof}[Proof of Lemma \ref{lem:correlation_init}]
We will present the proof for $\Endpoints^0_\ell(\X)$, with the other cases following identically.
Let us apply Proposition \ref{prop:topline} with $\eta = \frac{1}{4}$ and $\lambda_0 = 2$
(not the same $\lambda_0$ as in the statement of Proposition \ref{prop:correlation_reversaltime}).
The result is that there is some $H > 0$ and $C, c > 0$ such that for all $\lambda \geq 2$,
all $\Delta \geq 1$, and all large enough $t$ we have
\begin{equation}
    \P \left[
        \# \left\{ j \in \Z : j\Delta \in \left[-\tfrac{t}{2},0\right] \text{ and } X^1(j\Delta) > H \right\}
        > \tfrac{1}{4} \tfrac{t/2}{\Delta}
     \right] \leq C e^{-c t}.
\end{equation}
Note that we have used $T = \frac{t}{4}$ here.
Note also that the above bound remains true if we increase $H$,
and that we may choose any $\Delta \geq 1$, even one which depends on $H$.
We will plug in $\Delta = 4 H$ to obtain
\begin{equation}
    \P \left[
        \# \left\{ j \in \Z : 4Hj \in \left[-\tfrac{t}{2},0\right] \text{ and } X^1(4Hj) > H \right\}
        > \tfrac{1}{4} \tfrac{t}{8H}
     \right] \leq C e^{-c t}.
\end{equation}
So by a union bound, we have
\begin{equation}
    \P \left[
        \# \left\{ j \in \Z : j \in L^0_\ell \text{ and either } X^1(4Hj) > H \text{ or } X^1(4H(j+1)) > H \right\}
        > \tfrac{1}{2} \tfrac{t}{8H}
     \right] \leq C e^{-c t}.
\end{equation}
But the condition $X^1(4Hj) > H$ or $X^1(4H(j+1)) > H$
is equivalent to $j \notin \Endpoints^0_\ell(\X)$,
so this finishes the proof.
\end{proof}

This finishes the proof of Lemma \ref{lem:correlation_stopping},
which in turn finishes the proof of Theorem \ref{thm:correlation}.

%% file: sections/5_spectralgap_0.tex
\section{Spectral gap}
\label{sec:spectralgap}

In this section we prove Theorem \ref{thm:spectralgap}, showing that the generator $\gennl$
of the $n$-line ensemble (thought of as a Langevin diffusion with stationary distribution $\pinl$)
has a spectral gap $\gapnl$ which is bounded away from zero uniformly in $n$, as long as $\lambda$
is large enough.
Throughout this section, we will use $\X_n$ to denote this $n$-line ensemble for some fixed $n$.
Recall from \eqref{eq:gennldef} that the generator $\gennl$ is a densely defined self-adjoint operator on $L^2(\A_+^n,\pinl)$,
defined by
\begin{equation}
    (\gennl f)(\x) = \lim_{t \to 0} \frac{\E \left[ f(\X_n(t)) \middle| \X_n(0) = \x \right] - f(\x)}{t},
\end{equation}
whenever the limit of functions of $\x \in \A_+^n$ on the right-hand side exists in $L^2(\A_+^n,\pinl)$.
The set of $f$ for which this limit exists is the operator's domain, denoted by $\cD(\gennl)$.
Additionally, recall from \eqref{eq:gapnldef} that the spectral gap $\gapnl$ is defined by
\begin{equation}
    \gapnl = \inf \left\{
        \left< - \gennl f, f \right> :
        {f \in {\cD(\gennl)}} \text{ with } \left< f, 1 \right> = 0
        \text{ and } \left\| f \right\| = 1
    \right\}.
\end{equation}
Here and in what follows, we will abbreviate $\left< \cdot, \cdot \right>_\pinl$ as $\left< \cdot, \cdot \right>$
and similarly $\left\| \cdot \right\|_\pinl$ as $\left\| \cdot \right\|$.
We will show that there is some $\lambda_0 > 1$ and $\gamma > 0$
such that for all $\lambda \geq \lambda_0$ and all $n \in \N$ we have $\gapnl \geq \gamma$.
In fact, $\lambda_0$ and $\gamma$ will take the same values as in Theorem \ref{thm:correlation}.

The spectral gap is related to the exponential decay rate of the quantity $\langle e^{t \gennl} f, f \rangle$
for $f \in L^2(\A_+^n, \pinl)$ with $\left< f, 1 \right> = 0$ and $\left\| f \right\| = 1$.
This is the same as $\Cov[f(\X_n(0)), f(\X_n(t))].$ Thus, Theorem \ref{thm:spectralgap} essentially demands a vast generalization of Theorem \ref{thm:correlation} with the observables $X^{i}$ replaced by arbitrary, significantly more complicated, functions $f$.

To handle this, we first rely on some general approximation theory to reduce to the case of functions which are
\emph{Lipschitz continuous} in each coordinate, which is implied by being smooth and compactly supported. This is done in Section \ref{sec:spectralgap_operators}.
We then rely crucially on a  classical correlation inequality due to  
\cite{N80Normal,BS98Asymptotical} 
which allows us to bound the covariance of Lipschitz continuous functions in terms of the covariances of the individual coordinates, which Theorem \ref{thm:correlation} provides.
We state and prove this classical correlation inequality in Section \ref{sec:spectralgap_inequality} for completeness.
The hypothesis of this inequality is positive correlations for increasing functions, which is typically afforded
by the FKG inequality.  Since the FKG inequality, although expected, has not yet appeared in the literature for area-tilted line ensembles, and is likely to be of independent
interest and future use, we provide a self-contained proof of it in Section \ref{sec:spectralgap_discrete}.

\input{sections/5_spectralgap_1_operators}
\input{sections/5_spectralgap_2_inequality}
\input{sections/5_spectralgap_3_discrete}

%% file: sections/5_spectralgap_1_operators.tex
\subsection{Spectral gap via covariance of Lipschitz functions}
\label{sec:spectralgap_operators}

For notational convenience, let us define the Markov evolution operators $\pnl^t$ for $t \geq 0$ by
\begin{equation}
    (\pnl^t f)(\x) = \E \left[ f(\X_n(t)) \middle| \X_n(0) = \x \right];
\end{equation}
these are bounded self-adjoint positive semidefinite operators on $L^2(\A_+^n,\pinl)$ satisfying
the semigroup property, namely $\pnl^t \pnl^s = \pnl^{t+s}$.
The first result of this section will be that we can find some Lipschitz continuous function which serves
as an approximate witnesses to the spectral gap of $\gennl$.

\begin{lemma}
\label{lem:lipschitz_witness}
For all $\eps > 0$, there is some $s > 0$ and a Lipschitz continuous function $h : \A_+^n \to \R$
with $\left< h, 1 \right> = 0$, $\| h \| = 1$, and
\begin{equation}
    \left< \pnl^s h, h \right> \geq e^{-(\gapnl + \eps) s}.
\end{equation}
\end{lemma}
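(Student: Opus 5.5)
Let $\gamma = \gapnl$ and $E(\cdot)$ be the spectral measure of the self-adjoint, negative semidefinite operator $\gennl$, restricted to the mean-zero subspace $\{f : \langle f,1\rangle = 0\}$. This subspace is invariant, and by definition of $\gapnl$ the spectrum of $-\gennl$ on it is contained in $[\gamma,\infty)$ with $\gamma$ in the spectrum. The plan has two steps. First, use the spectral theorem to produce a (not necessarily Lipschitz) witness $g$. Second, approximate $g$ in $L^2(\pinl)$ by a Lipschitz function, using that $\langle \pnl^s f, f\rangle$ is continuous in $f$ for fixed $s$.

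\emph{Step 1 (spectral witness).} Fix $\eps>0$. Since $\gamma$ lies in the spectrum of $-\gennl$ on the mean-zero subspace, the spectral projection $E([\gamma,\gamma+\eps/2])$ is nonzero there. Pick a unit vector $g$ in its range; then $\langle g,1\rangle=0$. We have $\pnl^s = e^{s\gennl}$, which acts on this range by multiplication by $e^{-s\mu}$ with $\mu \in [\gamma,\gamma+\eps/2]$. Hence for every $s>0$,
\[
\langle \pnl^s g, g\rangle = \int_{[\gamma,\gamma+\eps/2]} e^{-s\mu}\, d\langle E(\mu) g, g\rangle \geq e^{-(\gamma+\eps/2)s}.
\]
We fix $s=1$, although any $s>0$ works.

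\emph{Step 2 (Lipschitz approximation).} Lipschitz functions, for instance smooth compactly supported functions on the open set $\A_+^n$, are dense in $L^2(\A_+^n,\pinl)$ because $\pinl$ is a finite Borel measure on an open subset of $\R^n$. So choose Lipschitz $\tilde h$ with $\|\tilde h - g\|\leq \delta$. Set $h_0 = \tilde h - \langle \tilde h,1\rangle$; this is still Lipschitz, and since $\langle g,1\rangle = 0$ and $\|1\| = 1$ we have $|\langle\tilde h,1\rangle| \leq \delta$, so $\|h_0-g\|\leq 2\delta$. Then set $h = h_0/\|h_0\|$, which is Lipschitz, mean zero, of unit norm, and satisfies $\|h-g\|\leq C\delta$ for small $\delta$. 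Since $\pnl^s$ is a contraction,
\[
|\langle \pnl^s h,h\rangle - \langle \pnl^s g,g\rangle| \leq 2\|h-g\| \leq 2C\delta.
\]
Choosing $\delta$ so that $2C\delta \leq e^{-(\gamma+\eps/2)s} - e^{-(\gamma+\eps)s}$, which is positive, gives $\langle \pnl^s h,h\rangle \geq e^{-(\gamma+\eps)s}$.

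The only point needing care is Step 1: that $\gamma$, defined variationally over $\cD(\gennl)$, is the bottom of the spectrum of $-\gennl$ on the mean-zero subspace. This is standard. The mean-zero subspace reduces $\gennl$ because $\gennl 1 = 0$ and $\gennl$ is self-adjoint. The quadratic-form infimum over the domain equals the infimum of the spectrum, since the domain is a core for the form. Even without invoking that $\gamma$ itself is in the spectrum, the variational definition directly gives $\langle E([\gamma,\gamma+\eps/2])\cdot,\cdot\rangle \neq 0$: if this projection vanished on mean-zero vectors, the form would be at least $\gamma+\eps/2$ on the domain, contradicting the infimum. The density step is routine.
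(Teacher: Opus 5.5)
Your proof is correct, but it obtains the initial (non-Lipschitz) witness by a different route than the paper.

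The paper never uses the spectral theorem for the unbounded generator. It proceeds in four steps:
\begin{enumerate}
\item It takes $g \in \cD(\gennl)$ with $\langle -\gennl g, g\rangle \leq \gapnl + \eps/4$, directly from the variational definition.
\item It uses only that $(\pnl^s g - g)/s \to \gennl g$ in $L^2$ to get $\langle \pnl^s g, g\rangle \geq 1 - (\gapnl + \eps/2)s$ for all small $s$.
\item It approximates $g$ by a Lipschitz function with a tolerance of order $\eps s$.
\item It turns the linear lower bound into $e^{-(\gapnl+\eps)s}$ by taking $s$ small.
\end{enumerate}

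You instead make two identifications: $\gapnl$ is the bottom of the spectrum of $-\gennl$ on the reducing mean-zero subspace, and $\pnl^s = e^{s\gennl}$. This gives the exponential bound for the spectral witness at every $s$ simultaneously. As a result, $s$ can be fixed (e.g.\ $s=1$), and the approximation tolerance no longer has to be tied to a linearization in $s$. The price is heavier functional-analytic input: the spectral theorem for an unbounded self-adjoint operator, plus uniqueness of the $C_0$-semigroup with a given generator. The paper needs only the definition of the generator as an $L^2$-derivative at $0$.

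Your density step is also slightly more general than the paper's Lemma~\ref{lem:cinfdense}. You use that Lipschitz (or $C_c^\infty(\A_+^n)$) functions are dense in $L^2$ of any finite Borel measure on an open subset of $\R^n$. The paper instead relies on $\pinl$ being absolutely continuous with a locally bounded density, an input it borrows from the PDE description in \cite{DLZ24Limiting}.
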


We will use the density of compactly supported smooth functions in $L^2$, and
for completeness, we next state and prove the following standard approximation lemma.  

\begin{lemma}
\label{lem:cinfdense}
For any $\lambda > 1$ and $n \in \N$, {$C_c^\infty(\R^n)$} is dense in $L^2(\A_+^n, \pinl)$, where $C_c^\infty(\R^n)$ denotes the set of all compactly supported smooth functions, and 
where we identify $L^2(\A_+^n, \pinl)$ with $L^2(\R^n, \pinl)$ by letting $\pinl$ assign zero mass to $\R^n \setminus \A_+^n$.
\end{lemma}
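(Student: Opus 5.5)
The plan is to reduce the claim to the classical density of $C_c^\infty(\R^n)$ in $L^2(\R^n,\mu)$ for a finite Borel measure $\mu$ on $\R^n$. Here $\mu=\pinl$, extended by zero off $\A_+^n$. Since $\pinl$ is a probability measure on $\R^n$, it is a finite Radon measure; every finite Borel measure on $\R^n$ is automatically regular. The identification in the statement is legitimate because $\A_+^n$ is a Borel subset of $\R^n$, so $L^2(\A_+^n,\pinl)$ and $L^2(\R^n,\pinl)$ coincide as Hilbert spaces. No special structure of the $\lambda$-tilted diffusion (the implicit PDE description, for instance) will be needed, only finiteness of $\pinl$.

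The steps, in order, are as follows.
\begin{enumerate}
\item Simple functions $\sum_m a_m \mathbf{1}_{B_m}$ with $B_m$ Borel are dense in $L^2(\pinl)$. Because $\pinl(\R^n\setminus[-R,R]^n)\to 0$, we may also assume each $B_m$ is bounded.
\item By regularity of the finite measure $\pinl$, for each bounded Borel $B$ and $\delta>0$ there are a compact $K\subseteq B$ and a bounded open $U\supseteq B$ with $\pinl(U\setminus K)<\delta$.
\item Choose a smooth cutoff $\chi\in C_c^\infty(\R^n)$ with $0\le\chi\le 1$, $\chi=1$ on $K$ and $\operatorname{supp}\chi\subseteq U$. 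One can build it by mollifying the indicator of a small neighborhood of $K$ with a mollifier whose support radius is smaller than $\operatorname{dist}(K,U^c)/2$.
\item Then $\|\chi-\mathbf{1}_B\|_{L^2(\pinl)}^2\le \pinl(U\setminus K)<\delta$. Linearity and the triangle inequality finish the argument.
\end{enumerate}

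The only point requiring any care is step 2, the regularity. I will justify it by noting that $\R^n$ is a Polish (indeed $\sigma$-compact metric) space, so every finite Borel measure on it is both inner regular by compacts and outer regular by open sets. No real obstacle is expected.

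One remark is needed for the subsequent use of this lemma in Lemma~\ref{lem:lipschitz_witness}. The approximants are only required to be smooth on $\R^n$; their restriction to $\A_+^n$ is then Lipschitz, since they are compactly supported. Boundary behavior at $\partial\A_+^n$ causes no issue, because $\pinl$ does not charge it: it is absolutely continuous, with a density given by the square of the PDE solution.
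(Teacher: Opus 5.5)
Your argument is correct, but it takes a different route from the paper.

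\textbf{The paper's route.} The paper relies on an external input from \cite{DLZ24Limiting}: $\pinl$ has a positive $C^2$, hence locally bounded, density $\rho$ with respect to Lebesgue measure. It truncates $f$ to a large compact set $K_N$ and approximates it there in Lebesgue $L^2(K_N)$ by a $C_c^\infty$ function. It then converts the Lebesgue error into a $\pinl$-error at the cost of a factor $\sup_{K_N}\rho$.

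\textbf{Your route.} You use only that $\pinl$ is a finite Borel measure on $\R^n$. The ingredients are:
\begin{itemize}
\item density of simple functions built on bounded Borel sets;
\item inner regularity by compacts and outer regularity by open sets;
\item smooth Urysohn-type cutoffs $\chi$ satisfying $|\chi-\mathbf{1}_B|\le \mathbf{1}_{U\setminus K}$.
\end{itemize}
Each step checks out, including the construction of $\chi$ by mollifying a neighborhood of $K$ at a radius below $\operatorname{dist}(K,U^c)/2$.

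\textbf{What each buys.} Your version is self-contained and more general: it applies verbatim to any finite Borel measure on $\R^n$. It shows the lemma does not depend on the PDE description or regularity of the stationary density at all. The paper's version is shorter once the density input is granted, and it reduces to the familiar Lebesgue-measure density statement.

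\textbf{A small point on your closing remark.} Invoking absolute continuity there is unnecessary. Since $\A_+^n$ is open, $\partial\A_+^n\subseteq \R^n\setminus\A_+^n$, and this set already carries no $\pinl$-mass under the identification in the statement. Also, the restriction to $\A_+^n$ of a compactly supported smooth function is Lipschitz regardless of what happens at the boundary.
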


\begin{proof}
This holds because $\pinl$ is absolutely continuous with respect to the Lebesgue measure on $\R^n$.
Indeed, as mentioned in \cite[end of Section 1]{DLZ24Limiting} it has a positive $C^2$ density function $\rho$ 
which is the square of the solution to an elliptic PDE with Dirichlet boundary conditions in $\A_+^n$. 
Thus the density $\rho$ is bounded on every compact set, so let us express $\R^n$ as the union of an increasing sequence $(K_j)_{j \in \N}$
of compact sets.
For any $f \in L^2(\R^n,\pinl)$ and any $\eps > 0$ we may find some $N$ such that $\| f - f|_{K_N} \| \leq \frac{\eps}{2}$.
Then, since $C_c^\infty(K_N)$ is dense in $L^2(K_N)$, we may find some $g \in C_c^\infty(K_N)$ for which
\begin{equation}
    \int_{K_N} (g(x) - f(x))^2 \,dx \leq \frac{\eps/2}{\sup_{x \in K_N} \rho(x)},
\end{equation}
which implies that $\| f - g \| \leq \eps$.
\end{proof}

Now we turn to the proof of Lemma \ref{lem:lipschitz_witness}.

\begin{proof}[Proof of Lemma \ref{lem:lipschitz_witness}]
By the definition of the spectral gap $\gapnl$, there is some $g \in \cD(\gennl) \subseteq L^2(\A_+^n, \pinl)$
with $\left< g, 1 \right> = 0$ and $\left\| g \right\| = 1$ satisfying
\begin{equation}
    \left< - \gennl g, g \right> \leq \gapnl + \tfrac{\eps}{4}.
\end{equation}
And by the definition of $\gennl$, there is some $s > 0$ such that
\begin{equation}
    \tfrac{1}{s} \left< g - \pnl^s g, g \right>
    \leq \gapnl + \tfrac{\eps}{2}.
\end{equation}
Rearranging this equation, we find that
\begin{equation}
\label{eq:opderiv1}
    \left< \pnl^s g, g \right> \geq 1 - \left( \gapnl + \tfrac{\eps}{2} \right) s.
\end{equation}
Now recall that $\pnl^s$ is the kernel of a Markov process, so its operator norm is bounded by $1$ as can be seen e.g.\ by Jensen's inequality.
So since $\left< \cdot, 1 \right>$, $\left\| \cdot \right\|^2$, and $\langle \pnl^s \cdot, \cdot \rangle$
are all continuous functions on $L^2(\A_+^n,\pinl)$,
by Lemma \ref{lem:cinfdense} we can find some $u \in C_c^\infty(\R^n)$ such that
\begin{equation}
    \left| \left< u, 1 \right> \right| \leq \frac{\eps s}{1000},
    \qquad
    \left| \left\| u \right\|^2 - 1 \right| \leq \frac{\eps s}{1000},
    \qquad \text{and} \qquad
    \left| \left< \pnl^s u, u \right> - \left< \pnl^s g, g \right> \right| \leq \frac{\eps s}{1000}.
\end{equation}
So, setting
\begin{equation}
    h = \frac{u - \left< u, 1 \right> }{\left\| u - \left< u, 1 \right> \right\|},
\end{equation}
we find that $\left< h, 1 \right> = 0$, $\left\| h \right\| = 1$, and
\begin{align}
    \left| \left< \pnl^s h, h \right> - \left< \pnl^s g, g \right> \right|
    &\leq \left| \left< \pnl^s h, h \right> - \left< \pnl^s u, u \right> \right|
    + \left| \left< \pnl^s u, u \right> - \left< \pnl^s g, g \right> \right| \\
    &\leq \frac{\left| \left( 1 - \left\| u - \left< u, 1 \right> \right\|^2 \right) \left< \pnl^s u, u \right>
    - 2 \left< \pnl^s u, \left< u, 1 \right> \right>
    + |\left< u, 1 \right>|^2
    \right|}
    {\left\| u - \left< u, 1 \right> \right\|^2}
    + \frac{\eps s}{1000} \\
    &\leq \frac{\left( \left| 1 - \| u \|^2 \right| + |\left< u, 1 \right>|^2 + 2 \| u \| \left< u, 1 \right> \right) \| u \|^2
    + 2 \| u \| \left< u, 1 \right>
    + |\left< u, 1 \right>|^2
    }
    {\left\| u - \left< u, 1 \right> \right\|^2}
    + \frac{\eps s}{1000} \\
    &\leq \frac{\eps s}{4}
\end{align}
as long as $\eps$ is small enough.
Therefore, recalling \eqref{eq:opderiv1}, we have
\begin{equation}
    \left< \pnl^s h, h \right> \geq 1 - \left( \gapnl + \tfrac{3\eps}{4} \right) s.
\end{equation}
Note that the resulting $h$ may not be compactly supported since we are adding a constant.
However, since all smooth compactly supported functions such as $u$ are Lipschitz continuous,
$h$ will retain this property.

Note also that we may assume that $s$ is small enough in the first step of the above derivation so that
we may approximate the right-hand side from below by an exponential with a slightly faster decay rate.
In particular, we find that for any $\eps > 0$,
there is some $s > 0$ and a Lipschitz continuous function $h$ with $\left< h, 1 \right> = 0$ and $\left\| h \right\| = 1$ for which
\begin{equation}
    \left< \pnl^s h, h \right> \geq e^{- (\gapnl + \eps) s}.
\end{equation}
Finally, although $h$ was defined as a function from $\R^n \to \R$, we may simply consider its restriction to $\A_+^n$
as $\pinl$ assigns zero mass to $\R^n \setminus \A_+^n$.
This finishes the proof.
\end{proof}

Now note that by Jensen's inequality applied to the spectral measure of $h$ constructed in Lemma \ref{lem:lipschitz_witness}
(see e.g. \cite[Theorem VII.7]{RS81Functional}), for any positive integer $m$ we have
\begin{equation}
\label{eq:opbd1}
    \left< (\pnl^s)^m h, h \right> \geq
    \left< \pnl^s h, h \right>^m \geq
    e^{- (\gapnl+\eps) s m},
\end{equation}
using the fact that the map $x \mapsto x^m$ is convex for $x \geq 0$ (which is where the spectral measure
is supported as $\pnl^s$ is positive semidefinite) as well as the fact that $\left\| h \right\| = 1$
so that the spectral measure is a probability measure.
On the other hand, we have
\begin{equation}
\label{eq:opderiv2}
    \left< (\pnl^s)^m h, h \right> = \left< \pnl^{sm} h, h \right>
    = \E \left[ h(\X_n(0)) h(\X_n(sm)) \right].
\end{equation}
Our main proposition, stated below, will imply that the right-hand side above decays exponentially.
This fact was alluded to in \eqref{eq:lipschitzdecay}.

\begin{proposition}
\label{prop:lipschitzdecay}
Let $\lambda_0 > 1$ and $\gamma > 0$ be as in the statement of Theorem \ref{thm:correlation},
and let $\X_n$ be the stationary $n$-line $\lambda$-tilted ensemble for some $n \in \N$.
Then for any $h : \A_+^n \to \R$ which is Lipschitz continuous in all coordinates,
there is some constant $C_h$ such that for all $t > 0$, we have
\begin{equation}
    \left| \Cov \left[ h(\X_n(0)), h(\X_n(t)) \right] \right| \leq C_h e^{- \gamma t}.
\end{equation}
\end{proposition}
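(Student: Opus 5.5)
The plan is to combine the FKG inequality with the classical Newman covariance inequality \cite{N80Normal,BS98Asymptotical}, and then feed in the coordinatewise bounds of Theorem \ref{thm:correlation}. The inequality I will use states: if a random vector $(U_1,\dots,U_m)$ is associated, meaning increasing functions of it are positively correlated, and $f,g$ are Lipschitz in each coordinate with constants $L_i(f), L_i(g)$, then
\begin{equation}
    \left| \Cov[f(U), g(U)] \right| \leq \sum_{a,b} L_a(f) L_b(g) \Cov[U_a, U_b].
\end{equation}
I will apply this to the $2n$-dimensional vector $U = (\X_n(0), \X_n(t))$ with $f(U) = h(\X_n(0))$ and $g(U) = h(\X_n(t))$. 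Since $f$ depends only on the first $n$ coordinates and $g$ only on the last $n$, only the cross terms survive. Writing $L_i$ for the Lipschitz constant of $h$ in coordinate $i$, this gives
\begin{equation}
    \left| \Cov[h(\X_n(0)), h(\X_n(t))] \right| \leq \sum_{i,j=1}^n L_i L_j \Cov[X_n^i(0), X_n^j(t)].
\end{equation}
By the finite-line case of Theorem \ref{thm:correlation}, each term on the right is at most $C e^{-\gamma t}\lambda^{-(i+j-2)/3}$. Since $n$ is fixed, the sum is at most $C_h e^{-\gamma t}$ with $C_h = C(\sum_i L_i \lambda^{-(i-1)/3})^2$. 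If $h$ is merely globally Lipschitz, every $L_i \le \Lip(h)$ and the geometric weights even make $C_h$ uniform in $n$, although that is not needed here.

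The first ingredient is association of $U$. It should follow from Proposition \ref{prop:FKG}, the FKG inequality for finite line ensembles, which is referred to above and which I would prove in Section \ref{sec:spectralgap_discrete}. The plan there is to discretize: Brownian bridges become random walks on a fine lattice, and the measure on the finite-dimensional vector of heights at grid points has density
\begin{equation}
    \prod (\text{Gaussian kernel in the increments}) \times \exp\bigl(-\textstyle\sum \text{linear area terms}\bigr) \times \mathbf{1}\{\text{ordering and positivity}\}.
\end{equation}
The Gaussian increment kernels are log-supermodular because their cross terms $x_s x_{s'}$ have positive coefficients. The linear terms are modular. The constraint set $\{x^1 > \dots > x^n > 0\}$ is a sublattice, since it is closed under coordinatewise max and min. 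The Holley/FKG lattice condition then gives positive correlations for increasing functions of the grid values.

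I would pass to the limit in two stages. First, let the mesh go to zero and use weak convergence to get association for the finite-interval ensemble $\atmeasure^{\mathbf 0,\mathbf 0,\lambda}_{n,-T,T}$. Then let $T\to\infty$ to reach the stationary $\X_n$. Association survives weak limits for bounded continuous increasing functions, and bounded increasing functions of $(\X_n(0),\X_n(t))$ can be approximated by these.

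The second ingredient is the Newman inequality itself, which I would prove in Section \ref{sec:spectralgap_inequality} via Hoeffding's identity. For each coordinate pair, $\Cov[\phi(U_a),\psi(U_b)] = \iint \Cov[\mathbf 1\{U_a>x\}, \mathbf 1\{U_b>y\}]\,dx\,dy$ for absolutely continuous $\phi,\psi$. The functions $L_a U_a \pm f$ are increasing in $U_a$, and the same holds for $g$. I would handle the multivariate case by the standard argument: decompose $f = \tilde f - \sum_a L_a U_a$ where $\tilde f := f + \sum_a L_a U_a$ is coordinatewise increasing, and similarly for $g$. Then compare $\Cov[f,g]$ with $\Cov[\sum_a L_a U_a, \sum_b L_b U_b]$ using positivity of the covariances of the increasing combinations in both orders of sign. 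This requires second moments, which hold by the tail bounds Theorem \ref{thm:tailbound_onepoint} together with monotonicity.

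The main obstacle I expect is the limiting step in the FKG argument. One issue is justifying that association passes from the discrete, finite-$T$ ensembles to the stationary $n$-line diffusion. The other is that covariances of Lipschitz, hence unbounded, functions converge, which is what makes the covariance bound meaningful. I would handle both by monotone coupling in $T$ together with the uniform tail bounds of Section \ref{sec:inputs_tailbounds}, which give uniform integrability of $|U|^2$, and hence convergence of each expectation, exactly as sketched at the end of the proof of Theorem \ref{thm:correlation}.
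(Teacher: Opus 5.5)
Your proposal is correct and follows essentially the paper's route: FKG for the finite-volume ensemble via a discretization satisfying the lattice condition, then the Newman/Bulinski--Shashkin inequality (proved by exactly the $f \pm \sum_a L_a U_a$ decomposition used in Lemma \ref{lem:NBS}), then the finite-line case of Theorem \ref{thm:correlation}, which gives the same constant $C_h$. The differences are minor: the paper discretizes with simple random walks (getting equality in the lattice condition) rather than Gaussian increments, and it applies the covariance inequality to the truncations $h_M$ at finite $T$ before letting $T\to\infty$ and then $M\to\infty$, instead of first transferring association to the stationary $\X_n$; also, for zero boundary data you should, as the paper does, first take boundary values in $\A_+^n$ and then pass to a monotone limit.
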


Note that the right-hand side of \eqref{eq:opderiv2} is indeed the covariance between
$h(\X_n(0))$ and $h(\X_n(sm))$ since we have $\E[h(\X_n(0))] = \left< h, 1 \right> = 0$.
So Proposition \ref{prop:lipschitzdecay}
immediately implies Theorem \ref{thm:spectralgap}, as we now quickly demonstrate.

\begin{proof}[Proof of Theorem \ref{thm:spectralgap}]
Applying Proposition \ref{prop:lipschitzdecay} with $t = sm$ to the inequality obtained by
combining \eqref{eq:opbd1} and \eqref{eq:opderiv2}, we find that
\begin{equation}
    e^{- (\gapnl + \eps) s m } \leq C_h e^{- \gamma sm}.
\end{equation}
Taking logarithms, dividing by $s m$, and sending $m$ to infinity, we find that
\begin{equation}
    \gapnl + \eps \geq \gamma,
\end{equation}
and since $\eps > 0$ was arbitrary, this finishes the proof.
\end{proof}

The rest of this section is dedicated to the proof of Proposition \ref{prop:lipschitzdecay}
via our main result, Theorem \ref{thm:correlation}, in conjunction with a classical correlation inequality,
which is stated and proved for completeness in Section \ref{sec:spectralgap_inequality}.
This inequality is a consequence of the FKG positive association inequality, a version of which
is proved for area-tilted line ensembles in Section \ref{sec:spectralgap_discrete} below.

%% file: sections/5_spectralgap_2_inequality.tex
\subsection{Lipschitz covariance inequality via FKG inequality}
\label{sec:spectralgap_inequality}

We now turn to the proof of Proposition \ref{prop:lipschitzdecay}.
For any domain $\Omega \subseteq \R^N$ and a
Lipschitz continuous function $F : \Omega \to \R$, it will be helpful to
consider its Lipschitz constants in each direction.
These are defined for each $i \in \{ 1, \dotsc, N \}$
(using $x^i$ to denote the $i$th coordinate of $\x \in \R^N$ as usual)
by
\begin{equation}
    \Lip_i^\Omega(F) \coloneqq \sup \left\{
        \frac{|F(\x) - F(\y)|}{|x^i - y^i|}
    : \x, \y \in \Omega \text{ with } x^j = y^j \text{ for all } j \neq i
    \text{ and } x^i \neq y^i
    \right\}.
\end{equation}
We will also consider increasing functions $f : \Omega \to \R$, by which we mean
increasing in each coordinate.
The following result provides a bound on the covariance of Lipschitz functions
under the assumption that the measure has positive correlations between all
\emph{increasing} Lipschitz functions.
A weaker form of this was first stated in \cite{N80Normal} without proof;
later, a short proof was given by \cite{BS98Asymptotical}.
We repeat this proof for completeness, 
based on an English translation in \cite[Theorem 5.3]{BS07Limit}.

\begin{lemma}
\label{lem:NBS}
Suppose that $\mu$ is a {probability} {measure} on $\Omega \subseteq \R^N$ such
that for all {Lipschitz} continuous increasing functions
$f, g : \Omega \to \R$ and $\x \sim \mu$ we have
\begin{equation}\label{fkg12}
    \Cov\left[ f(\x), g(\x) \right] \geq 0.
\end{equation}
Then for all {Lipschitz continuous} functions $F, G : \Omega \to \R$ (not necessarily increasing), we have
\begin{equation}
    \left| \Cov \left[ F(\x), G(\x) \right] \right|
    \leq \sum_{i,j=1}^N \Lip_i^\Omega(F) \Lip_j^\Omega(G) \Cov\left[ x^i, x^j \right].
\end{equation}
\end{lemma}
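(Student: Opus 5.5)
The plan is to decompose $F$ and $G$ into differences of increasing functions along each coordinate, and then bound each cross term by a single covariance via the FKG hypothesis \eqref{fkg12}. Write $a_i = \Lip_i^\Omega(F)$ and $b_j = \Lip_j^\Omega(G)$, and set
\begin{equation}
    f_\pm(\x) = \sum_{i=1}^N a_i x^i \pm F(\x), \qquad g_\pm(\x) = \sum_{j=1}^N b_j x^j \pm G(\x).
\end{equation}
Each of these four functions is Lipschitz. Each is also increasing in every coordinate. To see this, fix the other coordinates and raise $x^i$ by $h>0$: the linear part increases by $a_i h$, while $\pm F$ changes by at most $a_i h$. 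If $\Omega$ is not coordinatewise convex, I will just use the definition of $\Lip_i^\Omega$ directly on the two points, which differ only in coordinate $i$. This already gives monotonicity on $\Omega$ in the sense needed for \eqref{fkg12}.

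By the hypothesis, $\Cov[f_+,g_+] \ge 0$, $\Cov[f_-,g_-] \ge 0$, $\Cov[f_+,g_-] \ge 0$ and $\Cov[f_-,g_+] \ge 0$. Write $L_a = \sum_i a_i x^i$ and $L_b = \sum_j b_j x^j$. Expanding bilinearly,
\begin{equation}
    \Cov[f_+,g_+] + \Cov[f_-,g_-] = 2\Cov[L_a,L_b] + 2\Cov[F,G],
\end{equation}
\begin{equation}
    \Cov[f_+,g_-] + \Cov[f_-,g_+] = 2\Cov[L_a,L_b] - 2\Cov[F,G].
\end{equation}
Nonnegativity of the first sum gives $\Cov[F,G] \ge -\Cov[L_a,L_b]$. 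Nonnegativity of the second gives $\Cov[F,G] \le \Cov[L_a,L_b]$. Hence $|\Cov[F,G]| \le \Cov[L_a,L_b] = \sum_{i,j} a_i b_j \Cov[x^i,x^j]$, which is the claim.

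The one genuine technical point is integrability: all the covariances must exist. I will state the lemma under the implicit assumption that $\mu$ has finite second moments, which the line ensembles have by the tail bounds of Section~\ref{sec:inputs_tailbounds}. Under that assumption, Lipschitz functions have at most linear growth, so everything is in $L^2(\mu)$. If some $a_i = \infty$ the bound is trivial, so we may assume all the constants are finite.

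I expect no real obstacle here. The only care needed is to check that $f_\pm$ is increasing using the coordinatewise Lipschitz constants alone, without invoking a gradient, since $F$ need not be differentiable.
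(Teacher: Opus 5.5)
Your proposal is correct and is essentially the paper's proof: the paper sets $F_\pm = F \pm \sum_i \Lip_i^\Omega(F) x^i$ (so your $f_+ = F_+$ and $f_- = -F_-$) and combines the same four covariance inequalities. The only difference is that you apply \eqref{fkg12} directly to four increasing pairs, whereas the paper works with increasing/decreasing pairs. Your checks of monotonicity via $\Lip_i^\Omega$ on points differing in one coordinate, and of square-integrability, are appropriate. The paper leaves integrability implicit in the lemma and handles it in the application through Remark \ref{rmk:cov}.
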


\begin{proof}[Proof of Lemma \ref{lem:NBS}]
Let us define increasing functions $F_+, G_+$ and decreasing functions $F_-, G_-$ by
\begin{equation}
    F_\pm(\x) \coloneqq F(\x) \pm \sum_{i=1}^N \Lip_i^\Omega(F) x^i
    \qquad \text{and} \qquad
    G_\pm(\x) \coloneqq G(\x) \pm \sum_{j=1}^N \Lip_j^\Omega(G) x^j.
\end{equation}
Note also that these functions are Lipschitz continuous. 
Thus by the assumption \eqref{fkg12}, we have
\begin{align}
    0 &\geq \Cov\left[ F_+(\x), G_-(\x) \right] + \Cov\left[ F_-(\x), G_+(\x) \right] \\
    &= 2 \Cov\left[ F(\x), G(\x) \right] - 2 \sum_{i,j=1}^N \Lip_i^\Omega(F) \Lip_j^\Omega(G) \Cov[x^i, x^j],
\end{align}
where the final equality is straightforward algebra. This covers the case where $\Cov[F(\x),G(\x)] \geq 0$.
For the other case, we have
\begin{align}
    0 &\leq \Cov\left[ F_+(\x), G_+(\x) \right] + \Cov\left[ F_-(\x), G_-(\x) \right] \\
    &= 2 \Cov\left[ F(\x), G(\x) \right] + 2 \sum_{i,j=1}^N \Lip_i^\Omega(F) \Lip_j^\Omega(G) \Cov[x^i, x^j],
\end{align}
which finishes the proof.
\end{proof}

To verify the hypothesis of Lemma \ref{lem:NBS}, we prove a version of the FKG inequality
for area-tilted line ensembles.
This is related to the monotonicity property which was mentioned in
Section \ref{sec:inputs_monotonicity} and used throughout the proof of our main result,
but we did not find a statement of this precise form in the literature, so we provide a proof
in Section \ref{sec:spectralgap_discrete} below for completeness.
Note that the statement we provide is much more general than we actually require for
our present purposes, but we expect this statement will be of broader interest.

\begin{proposition}[FKG inequality]
\label{prop:FKG}
Let $\Y \sim \atmeasure^{\x,\y,\bl}_{n,\ell,r,\f,\g}$, recalling Definition \ref{def:extFLE}
of this measure with $n$ curves having boundary conditions $\x, \y \in \overline{\A_+^n}$
on $[\ell,r]$, Lipschitz floors and ceilings given by $\f, \g : [\ell,r] \to \A_+^n$ with $\f \prec \g$,
and varying area tilt strengths given by $\bl : [\ell,r] \to \R_+^n$. 
We may also consider the measure without floors or ceilings, or with floors but not ceilings, et cetera.
Consider any functions $f, g$ on the space of continuous curves $[\ell,r] \to \A_+^n$
which are continuous in the topology of uniform convergence, increasing
with respect to the ordering $\preceq$, and such that $\Cov[f(\Y),g(\Y)]$ is well-defined.
Then $\Cov \left[ f(\Y), g(\Y) \right] \geq 0$.
\end{proposition}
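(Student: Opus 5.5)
The approach is a discretization argument. We prove FKG for a discrete approximating model, where it follows from the Holley/FKG lattice criterion, and then pass to the limit. The section title (``discrete'') suggests this route.

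Fix a mesh $\delta = (r-\ell)/N$ and times $t_m = \ell + m\delta$. Replace the Brownian bridges by their finite-dimensional marginals at the grid points. This gives a density on $\R^{n(N-1)}$ for the interior values $\{y^i_m\}$, proportional to
\begin{equation}
    \prod_{i=1}^n \prod_{m=0}^{N-1} \Exp{-\frac{(y^i_{m+1}-y^i_m)^2}{2\delta}}
    \prod_{i,m} \Exp{-2\delta \lambda^i(t_m) y^i_m}
    \prod_{m}\ind{f^i(t_m) < y^i_m < g^i(t_m),\ y^1_m > \dotsb > y^n_m},
\end{equation}
with the boundary values fixed to $\x,\y$. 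We then verify the FKG lattice condition $\rho(\mathbf{u}\vee\mathbf{v})\rho(\mathbf{u}\wedge\mathbf{v}) \geq \rho(\mathbf{u})\rho(\mathbf{v})$ for the coordinatewise partial order on $\R^{n(N-1)}$, taking the factors one at a time.

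The area-tilt factor is a product of single-coordinate functions, so it satisfies the condition with equality. The Gaussian increment factor is of the form $\exp(-\phi(y_{m+1}-y_m))$ with $\phi$ convex, so each pair factor is log-supermodular. This is the standard computation: $(a\vee c - b\vee d)^2 + (a\wedge c - b\wedge d)^2 \leq (a-b)^2 + (c-d)^2$. The floor and ceiling indicators are products of one-coordinate indicators. The non-intersection indicator is the indicator of a sublattice, since $\{y^i_m > y^{i+1}_m\}$ is closed under coordinatewise max and min. A product of log-supermodular functions is log-supermodular, so the FKG inequality (in its continuous version, e.g.\ via Preston or Holley on $\R^d$, or by a further discretization of heights to reach the finite lattice statement) gives $\Cov[f_N,g_N]\geq 0$ for increasing functions of the discrete ensemble. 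Degenerate boundary data in $\overline{\A_+^n}$ are first handled with strictly ordered data and then recovered as a monotone limit, as in Remark~\ref{rmk:sameboundary}.

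The final step passes to the limit $N\to\infty$. Let $\Y_N$ be the piecewise-linear interpolation of the discrete ensemble. Interpolation preserves the order $\preceq$, so $f(\Y_N)$ and $g(\Y_N)$ are increasing functions of the discrete coordinates and have nonnegative covariance.

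We expect the main obstacle to be showing that $\Y_N \Rightarrow \Y$ in the uniform topology, together with enough uniform integrability to pass covariances to the limit. Weak convergence should follow from a Riemann-sum approximation of the tilt term, the convergence of random-walk or bridge marginals to Brownian bridges, and the fact that the conditioning events have positive probability and a boundary of measure zero under the bridge law. Continuity of the floors and ceilings is used here, and Lipschitz suffices. Uniform integrability is not needed for bounded $f,g$. For general $f,g$ with well-defined covariance, we truncate: $f\wedge K\vee(-K)$ is still increasing and continuous. We then let $K\to\infty$ using monotone or dominated convergence. For the unbounded-tilt cases without a ceiling, the Gaussian tails of the bridge measure supply domination.
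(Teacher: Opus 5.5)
Your proof is correct, but it discretizes differently from the paper. The paper discretizes space as well as time. It uses $n$ diffusively rescaled simple random walks with steps exactly $\pm 1/\sqrt{N}$, with the boundary data rounded, shifted by $2(n-i)$ and parity-corrected, and weights them by the area tilt of their linear interpolation. Because the reference measure is uniform and the tilt is linear, the FKG lattice condition holds with \emph{equality} (Lemma \ref{lem:linear}). This needs a check that pointwise max/min of walk ensembles stay in $\cW_N$ (Lemma \ref{lem:stayin}), which is where the parity bookkeeping enters. The elementary finite-lattice FKG theorem then applies, and convergence to $\atmeasure^{\x,\y,\bl}_{n,\ell,r,\f,\g}$ comes from Donsker's invariance principle together with Lemma \ref{lem:mtfact}.

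You instead keep continuous heights and the exact Gaussian bridge marginals, so the lattice condition is a genuine inequality. You therefore need two things: the log-supermodularity of the heat kernel, where your convexity computation is correct, and a continuous FKG theorem for MTP$_2$ densities on $\R^d$ (or an extra height discretization). In exchange, your limit step is simpler than the paper's. Your approximant is exactly the linear interpolation $\pi_N B$ of a single Brownian bridge $B$, reweighted by a factor $F_N(B)$. So on one probability space:
\begin{itemize}
\item $\pi_N B \to B$ uniformly;
\item $F_N(B) \to F(B)$ almost surely, since the grid constraints converge to the continuum ones off the null event of touching without crossing;
\item $0 \le F_N \le 1$, because the paths are positive and the tilts nonnegative.
\end{itemize}
Dominated convergence then gives weak convergence, with no invariance principle and no rounding of boundary data.

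Two small remarks. First, a Riemann sum $\delta\sum_m \lambda^i(t_m) y^i_m$ requires $\bl$ to be Riemann integrable, which the statement does not assume. Replacing it by $\int_\ell^r \lambda^i(t)\,\widetilde{y}^i(t)\,dt$, with $\widetilde{y}$ the linear interpolation, fixes this: it is still linear in the grid values with nonnegative coefficients, so it still factorizes over single coordinates. Second, your Gaussian-tail domination for the cases without a ceiling is unnecessary. The indicator of $\A_+^n$ in Definition \ref{def:extFLE} already forces positivity, so you should also impose $y^n_m > 0$ at the grid points, and then the tilt weight is at most $1$ in every case.
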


\begin{remark}
\label{rmk:cov}
Since $\Y$ satisfies strong tail bounds (Theorem \ref{thm:tailbound_onepoint}), a function $f$ is square integrable as soon as $f$ satisfies mild growth bounds. 
For instance, for our purposes it is enough  that it is satisfied for Lipschitz continuous
functions (thus having linear growth) of $\Y(t_i)$ for finitely many $t_i$ when $\Y = \Y_{n,T}$ is the ensemble with $n$ lines
on $[-T,T]$ with zero boundary conditions and a floor at zero.
For such functions $f$ and $g$ in this setting, $\Cov[f(\Y_{n,T}),g(\Y_{n,T})]$ thus indeed exists.
\end{remark}

Now before proving the FKG inequality of Proposition \ref{prop:FKG}, let us see how it and Lemma \ref{lem:NBS} allow us to prove
Proposition \ref{prop:lipschitzdecay}, thus finishing the proof of Theorem \ref{thm:spectralgap}.

\begin{proof}[Proof of Proposition \ref{prop:lipschitzdecay}]
Let us introduce the truncation
\begin{equation}
\label{eq:truncation}
    h_M(x) = \max\{ - M, \min\{M, h(x) \} \}.
\end{equation}
Now recall from the discussion below Definition \ref{def:finiteLE} that $\X_n$ is the local weak limit as $T \to \infty$ of $\Y_{n,T}$ which is an $n$-line $\lambda$-tilted ensemble
on $[-T,T]$ with zero boundary conditions.
Thus since $h_M$ is a bounded continuous function of the paths at a single point, we have
\begin{equation}
    \Cov \left[ h_M(\X_n(0)), h_M(\X_n(t)) \right]
    = \lim_{T \to \infty} \Cov \left[ h_M(\Y_{n,T}(0)), h_M(\Y_{n,T}(t)) \right].
\end{equation}
Also, for each $i,j \in \{1,\dotsc,n\}$ we also have
\begin{align}
    \Cov\left[ X_n^i(0), X_n^j(t) \right]
    &= \E \left[ X_n^i(0) X_n^j(t) \right]
    - \E \left[ X_n^i(0) \right] \E \left[ X_n^j(t) \right] \\
    &= \lim_{T \to \infty} \E \left[ Y_{n,T}^i(0) Y_{n,T}^j(t) \right]
    - \lim_{T \to \infty} \E \left[ Y_{n,T}^i(0) \right] \cdot \lim_{T \to \infty} \E \left[ Y_{n,T}^j(t) \right] \label{limits} \\
    &= \lim_{T \to \infty} \Cov \left[ Y_{n,T}^i(0), Y_{n,T}^j(t) \right],
\end{align}
where the equality \eqref{limits} holds by the monotone convergence theorem
since $\Y_{n,T}$ may be coupled to increase to $\X_n$ along a subsequence $T_j \nearrow \infty$
and the expectations are increasing in $T$ by monotonicity.

Let us take $N = 2n$ and consider the domain $\Omega = \A_+^n \times \A_+^n$; for notational convenience,
we will write $(\x,\y)$ for an element of $\Omega$.
Now let $\mu$ be the joint distribution of $(\Y_{n,T}(0),\Y_{n,T}(t))$.
We will apply Lemma \ref{lem:NBS} with this setup, so we must verify the hypothesis of positive covariances
for Lipschitz continuous increasing functions.
This holds by Proposition \ref{prop:FKG}, specialized to functions of $\Y_{n,T}(0)$ and $\Y_{n,T}(t)$,
with Remark \ref{rmk:cov} ensuring that the covariance exists.
So we may indeed apply Lemma \ref{lem:NBS}.

Therefore, taking $F(\x,\y) = h_M(\x)$ and $G(\x,\y) = h_M(\y)$, we find that
\begin{equation}
    \left| \Cov \left[ h_M(\Y_{n,T}(0)), h_M(\Y_{n,T}(t)) \right] \right|
    \leq \sum_{i,j=1}^n \Lip_i(h) \Lip_j(h) \Cov \left[ Y_{n,T}^i(0), Y_{n,T}^j(t) \right].
\end{equation}
Here for notational convenience we set $\Lip_i(h) = \Lip_i^{\A_+^n}(h)$
and use the fact that for $i,j \in \{1,\dotsc,2n\}$ we have
\begin{equation}
    \Lip_i^\Omega(F) = \begin{cases}
        \Lip_i(h_M) \leq \Lip_i(h) &\text{if } i \leq n \\
        0 &\text{if } i > n,
    \end{cases}
    \qquad \text{and} \qquad
    \Lip_j^\Omega(G) = \begin{cases}
        0 &\text{if } j \leq n \\
        \Lip_{j-n}(h_M) \leq \Lip_{j-n}(h) &\text{if } j > n.
    \end{cases}
\end{equation}
Now taking the limit $T \to \infty$ on both sides, we find that
\begin{equation}
    \left| \Cov\left[ h_M(\X_n(0)), h_M(\X_n(t)) \right] \right|
    \leq \sum_{i,j=1}^n \Lip_i(h) \Lip_j(h) \Cov \left[ X_n^i(0), X_n^j(t) \right].
\end{equation}
Finally, by the dominated convergence theorem, we may replace the left-hand side in the above inequality
with
$\left| \Cov\left[ h(\X_n(0)), h(\X_n(t)) \right] \right|$ by taking the limit as $M \to \infty$.
Therefore by Theorem \ref{thm:correlation} we may take
\begin{equation}
    C_h = C \sum_{i,j=1}^n \Lip_i(h) \Lip_j(h) \lambda^{-(i+j-2)/3},
\end{equation}
where $C$ is the constant in that theorem.
\end{proof}

%% file: sections/5_spectralgap_3_discrete.tex
\subsection{FKG inequality via discrete approximation}
\label{sec:spectralgap_discrete}

We now turn to the proof of Proposition \ref{prop:FKG},
which yields positive correlations for increasing functions.
There are various ways to prove such a result,
and a method due to Holley \cite{H74Remarks} which has been applied in many contexts constructs
stochastic domination couplings via Glauber dynamics.
This method has inspired proofs in the line ensemble literature of monotonicity properties
related to the positive correlation inequality we aim to prove, such as \cite{CH14Brownian}
which proceeded via Glauber dynamics on a discrete approximation to the continuous line ensemble.
Later \cite{CIW19Confinement} applied the same method to area-tilted line ensembles,
resulting in the monotonicity property discussed in Section \ref{sec:inputs_monotonicity} above.

While we will also work with a discrete approximation to the area-tilted line
ensemble, instead of proceeding by analyzing  Glauber dynamics, we will directly
verify using the form of the measure that it satisfies the \emph{FKG lattice
condition}. That this implies positive correlations is by now classical and goes
back to \cite{FKG71Correlation}, coming to be known as the classical FKG
inequality.

Let us now set up the discrete model.
We will work with a fixed interval $[\ell,r] \subseteq \R$.
For notational convenience, for any $N \in \N$
let us use $[\ell,r]_N$ to denote $\frac{1}{N} \Z \cap [\ell,r]$.
Additionally, we let $\ell_N = \min [\ell, r]_N$ and $r_N = \max [\ell, r]_N$.
Now for $\x,\y \in \A_+^n$ and $\f, \g$ appropriate floor and ceiling functions (which may also take the values $\pm \infty$
which means there is no corresponding floor or ceiling),
we now define a set $\cW_N = \cW_{n,\ell,r,\f,\g}^{\x,\y}(N)$ of all possible diffusively scaled
random walk ensemble trajectories subject to the boundary, floor, and ceiling conditions.
Later, in \eqref{eq:discat} below, we will consider an area tilted measure on this space.

\begin{definition}[The space $\cW_N = \cW_{n,\ell,r,\f,\g}^{\x,\y}(N)$]
\label{def:wn}
The set $\cW_N$ consists of all collections $\W$ of $n$ random walk trajectories $\W = (W^i : i \in \{1,\dotsc,n\})$,
where each $W^i$ is a function $[\ell,r]_N \to \frac{1}{\sqrt{N}} \Z$, such that $\W$ satisfies the following conditions.
\begin{enumerate}
    \item For each $i \in \{1,\dotsc,n\}$ and all $t_1, t_2 \in [\ell,r]_N$ with $|t_1 - t_2| = \frac{1}{N}$, we have
    $\left| W^i(t_1) - W^i(t_2) \right| = \frac{1}{\sqrt{N}}$, so that $W^i$ is a diffusively rescaled random walk trajectory.
    \item For each $t \in [\ell,r]_N$, we have the nonintersection constraint $W^1(t) > \dotsb > W^n(t)$.
    \item For each $i \in \{1,\dotsc,n\}$, the trajectory $W^i$ has appropriate boundary conditions, which we now explain.
    We would like to have $W^i(\ell_N) = x^i + O(\frac{1}{\sqrt{N}})$ and $W^i(r_N) = y^i + O(\frac{1}{\sqrt{N}})$,
    but some care is needed here.
    First, if $x^i$ and $x^{i+1}$ are very close (or equal), then choosing arbitrary discrete approximations of the boundary conditions
    may violate the strict ordering of the random walk trajectories.
    Furthermore, since the parity of the two boundary conditions must be compatible in order for $W^i$ satisfying the above conditions to exist,
    we will need to correct this issue if it arises.
    So we will impose the boundary conditions
    \begin{equation}
        W^i(\ell_N) = \frac{1}{\sqrt{N}} \left( \left\lfloor \sqrt{N} x^i \right\rfloor + 2 (n-i) \right)
        \qquad \text{ and } \qquad
        W^i(r_N) = \frac{1}{\sqrt{N}} \left( \left\lfloor \sqrt{N} y^i \right\rfloor + 2(n-i) + \epsilon^i \right),
    \end{equation}
    where $\epsilon^i \in \{0,1\}$ is chosen (depending on $x^i, y^i$, and $N$) to mitigate the parity issue,
    and the term $2(n-i)$ is added to ensure strict ordering of the boundary conditions.
    \item For each $t \in [\ell,r]_N$ and each $i \in \{1,\dotsc,n\}$ we have the floor and ceiling constraints.
    To state these constraints, we introduce the linear interpolation $\ssW = (\sW^i)_{i=1}^n$ of $\W$, which is a continuous function
    $[\ell,r] \to \A_+^n$ obtained by interpolating $\W$ linearly between points of $[\ell,r]_N$, and extending by a constant
    in $[\ell,r] \setminus [\ell,r]_N$ (i.e.\ at the edges).
    The floor and ceiling constraint is then that $\sW^i(t) \in (f^i(t), g^i(t))$ for all $t \in [\ell,r]$. \label{def:lerp}
\end{enumerate}
This completes the definition of our set of possible states $\cW_N$ in the
discrete model of a line ensemble. Note that due to our choices of boundary
conditions,
$\cW_N$ is nonempty as long as $N$ is large enough and the
boundary conditions are compatible with the floors and ceilings.

For example,
to construct one element $\W \in \cW_N$, we may first let $W^n_\ell$ take
downwards steps away from the left boundary, only taking upwards steps when a
downwards step would intersect $f^n$.  Similarly, we may let $W^n_r$ take
downwards steps away from the right boundary, and set $W^n =
\max\{W^n_\ell,W^n_r\}$.  Then we may inductively construct $W^i$ in the same
way, ensuring that it also does not intersect $f^i$ or $W^{i+1}$.  As long as
$N$ is large enough, the ceiling constraint will also be satisfied since there
is some constant minimum amount of room between $f^i$ and $g^i$ as $\f \prec
\g$ and these functions are continuous on a compact interval.
\end{definition}

We now introduce a discrete probability distribution $\mu_N = \mu_N^{\bl}$ on the space of continuous paths $[\ell,r] \to \A_+^n$
which is concentrated on the linear interpolations of elements of $\cW_N$, which were defined in part \ref{def:lerp} of
Definition \ref{def:wn} above.
The following measure uses the same exponential tilting factor as the area-tilted line ensemble
$\atmeasure^{\x,\y,\bl}_{n,\ell,r,\f,\g}$ of Definition \ref{def:extFLE}, but is simply restricted to the aforementioned discrete set.
\begin{equation}
\label{eq:discat}
    \mu_N [ \mathbf{Z} ] \propto \Exp{- 2 \sum_{i=1}^n \int_\ell^r \lambda^i(t) Z^i(t) \,dt }
    \cdot \ind{\mathbf{Z} = \ssW \text{ for some } \W \in \cW_N},
\end{equation}
We also obtain a probability distribution on $\cW_N$ itself via $\W \mapsto \mu_N\left[\ssW\right]$.

In the remainder of this section we will prove two lemmas.
First, in Lemma \ref{lem:inv} we will show that $\mu_N$ converges weakly to the desired area-tilted line ensemble using standard measure-theoretic
arguments and Donsker's invariance principle \cite[Theorem 8.1.4]{D19Probability}.
Next, in Lemma \ref{lem:linear}, we will prove the classical FKG lattice criterion
which, by \cite[Proposition 1]{FKG71Correlation}, implies
positive correlations for any increasing functions $f$ and $g$ of a random element $\W \in \cW_N$
such that $\ssW \sim \mu_N$.
These lemmas will then allow us to prove Proposition \ref{prop:FKG} at the end of the section.

First, to show that $\mu_N$ converges to $\atmeasure^{\x,\y,\bl}_{n,\ell,r,\f,\g}$, we will rely on the following
measure-theoretic fact.

\begin{lemma}
\label{lem:mtfact}
Suppose that $\Omega$ is a topological space and that $(\nu_N)_{N \in \N}$ are probability measures on $\Omega$
converging weakly to the probability measure $\nu$.
If $f : \Omega \to \R$ is a bounded continuous function and $A \subseteq \Omega$ is a Borel set
with $\nu(\partial A) = 0$, then
\begin{equation}
    \mu_N(dx) \propto f(x) \ind*{A}(x) \nu_N(dx)
    \qquad \text{converges weakly to} \qquad
    \mu(dx) \propto f(x) \ind*{A}(x) \nu(dx),
\end{equation}
where $\mu_N$ and $\mu$ are normalized to be probability measures.
\end{lemma}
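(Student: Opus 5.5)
We reduce to weak convergence of the unnormalized measures $f\ind*{A}\,\nu_N$ to $f\ind*{A}\,\nu$, i.e.\ we show that for every bounded continuous $g$,
\begin{equation}
    \int g f \ind*{A} \, d\nu_N \to \int g f \ind*{A} \, d\nu .
\end{equation}
Taking $g \equiv 1$ shows that the normalizing constants converge as well. Provided the limiting constant $Z = \int f \ind*{A}\, d\nu$ is nonzero, which is implicit in the requirement that $\mu$ is a probability measure, the quotient then converges and $\mu_N \Rightarrow \mu$. For finitely many $N$ the constant $Z_N$ could vanish, but since $Z_N\to Z\neq 0$ this happens only finitely often and is irrelevant. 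If $f$ may be negative, the same reasoning applies as long as the normalizations are nonzero, which is implicit in the statement; in the intended application $f>0$.

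To prove the displayed convergence, set $h = gf$, which is bounded and continuous. We need $\int h\ind*{A}\,d\nu_N \to \int h\ind*{A}\,d\nu$. This is the standard extension of the portmanteau theorem: if $\phi$ is bounded and measurable and its set of discontinuity points $D_\phi$ satisfies $\nu(D_\phi)=0$, then $\int\phi\,d\nu_N\to\int\phi\,d\nu$. We apply this to $\phi = h\ind*{A}$. The function $\ind*{A}$ is continuous on the interior of $A$ and on the interior of its complement, so $D_\phi\subseteq\partial A$, which is $\nu$-null by hypothesis.

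The only substantive step is justifying this extended portmanteau statement on a general topological space. For metric spaces, as in our application (continuous paths with the uniform topology), we have two routes.

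\emph{Route 1 (Skorokhod).} Use Skorokhod representation when the space is separable, and then apply dominated convergence.

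\emph{Route 2 (direct).} This route avoids separability. Write $h = h^+ - h^-$ and add a constant so that $h\ge 0$. Then use the layer-cake formula
\begin{equation}
    \int h\ind*{A}\,d\nu_N = \int_0^{\|h\|_\infty} \nu_N\bigl(\{h>s\}\cap A\bigr)\,ds .
\end{equation}
For all but countably many $s$, the boundary of $\{h>s\}\cap A$ is contained in $\{h=s\}\cup\partial A$, which is $\nu$-null. The ordinary portmanteau theorem for continuity sets then gives pointwise convergence in $s$, and bounded convergence finishes the argument.

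I expect no real obstacle. The only care needed is the countable exceptional set of levels $s$, which comes from the fact that $\nu(\{h=s\})>0$ for at most countably many $s$.
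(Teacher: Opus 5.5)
Your argument is correct, but it is organized differently from the paper's. The paper works in two stages. It first shows that the restricted measures $\mathbf{1}_A\nu_N/\nu_N(A)$ converge weakly to $\mathbf{1}_A\nu/\nu(A)$, by checking convergence on every continuity set $B$ of the limit (via $\partial(A\cap B)\subseteq\partial A\cup(\partial B\cap A)$) and invoking the converse direction of the portmanteau theorem. Only then does it reweight by $f$, using $hf$ as a bounded continuous test function. You instead handle $gf\mathbf{1}_A$ in a single step, as a bounded test function whose discontinuities lie in the $\nu$-null set $\partial A$, and you prove this almost-everywhere-continuous form of the portmanteau theorem by the layer-cake formula (or by Skorokhod).

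The paper's route needs nothing beyond the textbook portmanteau equivalences. Yours requires that one extra lemma, but it works with unnormalized integrals throughout, so the normalizing constants are handled transparently by taking $g\equiv 1$. This also avoids a small slip in the paper's last display, which writes $\int h\,d\mu_N=\int hf\,d\mu_N'$ and omits the factor $1/\int f\,d\mu_N'$ (that factor converges by the same argument, so the slip is harmless).

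Your explicit restriction to metric spaces is appropriate rather than a weakness. Both arguments rest on the metric-space portmanteau theorem, which is not available on arbitrary topological spaces. The application, to continuous paths on $[\ell,r]$ with the uniform topology, is metric.
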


\begin{proof}[Proof of Lemma \ref{lem:mtfact}]
First we show that $\mu_N'(dx) \propto \ind*{A}(x) \nu_N(dx)$ converges weakly to $\mu'(dx) \propto \ind*{A}(x) \nu(dx)$.
Note that the normalization constant for $\mu_N'$ is $\nu_N(A)$ which converges to $\nu(A)$ by the portmanteau lemma \cite[Theorem 3.2.11]{D19Probability}.
Now suppose that $B$ is a Borel set in $\Omega$ with $\mu'(\partial B) = 0$.
Then, since
\begin{equation}
    \partial(A \cap B) \subseteq \partial A \cup (\partial B \cap A),
\end{equation}
we have $\nu(\partial(A \cap B)) = 0$ as well.
Therefore by the portmanteau lemma we have
\begin{equation}
    \mu_N'(B) = \frac{1}{\nu_N(A)} \nu_N(A \cap B)
    \xlongrightarrow{N \to \infty}
    \frac{1}{\nu(A)} \nu(A \cap B) = \mu'(B),
\end{equation}
i.e.\ $\mu_N'$ converges weakly to $\mu'$.
Now note that for any bounded continuous function $h : \Omega \to \R$,
the function $f h$ is also bounded and continuous, so we have
\begin{equation}
    \int h(x) \mu_N(dx) = \int h(x) f(x) \mu_N'(dx)
    \xlongrightarrow{N \to \infty} \int h(x) f(x) \mu'(dx)
    = \int h(x) \mu(dx),
\end{equation}
which finishes the proof.
\end{proof}

\begin{lemma}
\label{lem:inv}
If $\x, \y \in \A_+^n$ (not $\overline{\A_+^n}$)
then the measure $\mu_N$ converges weakly to $\atmeasure^{\x,\y,\bl}_{n,\ell,r,\f,\g}$
as $N \to \infty$.
\end{lemma}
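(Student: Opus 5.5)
The plan is to realize both $\mu_N$ and the target measure as tilted, conditioned versions of a product of random walk bridge measures, and then to apply Lemma \ref{lem:mtfact}. Let $\nu_N$ be the law of the linear interpolations of $n$ independent simple random walk bridges on $[\ell,r]_N$, diffusively rescaled, with the discretized boundary conditions of Definition \ref{def:wn}. The walks are extended by constants near the edges of $[\ell,r]$. Let $\nu = \bridgemeasure^{\x,\y}_{n,\ell,r}$. All walk paths with fixed endpoints are equally likely under the uniform measure. Hence $\mu_N$ is exactly
\[
\mu_N(d\mathbf{Z}) \propto F(\mathbf{Z})\,\ind*{A}(\mathbf{Z})\,\nu_N(d\mathbf{Z}),
\]
where $F(\mathbf{Z}) = \Exp{-2\sum_i \int_\ell^r \lambda^i(t) Z^i(t)\,dt}$ and $A = \{\mathbf{Z} : Z^1 > \dotsb > Z^n \text{ on } [\ell_N,r_N],\ f^i < Z^i < g^i\}$. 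Two small discrepancies need handling. First, the nonintersection condition at lattice points is equivalent, for linear interpolations of walks with parity-matched steps, to strict ordering of the interpolations. Second, the set $A$ depends mildly on $N$ through $[\ell_N,r_N]$; both of these are easily absorbed.

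The key steps, in order, are as follows. (i) Invoke Donsker's invariance principle for bridges to get $\nu_N \Rightarrow \nu$ in the uniform topology. The discretized endpoints converge to $x^i, y^i$, since the corrections $2(n-i)/\sqrt N$ and $\epsilon^i/\sqrt N$ vanish, and the constant extension on intervals of length at most $1/N$ is negligible. (ii) Make $F$ bounded and continuous. It is continuous in the uniform topology whenever $\bl$ is bounded, but it is bounded only on sets where the paths are bounded below. The floors give a lower bound when they are present; without floors, note that $\A_+^n$ forces positivity, so $F \le 1$ on $A$. So I will replace $F$ by $F\wedge 1$, which agrees with $F$ on $A$. (iii) Verify $\nu(\partial A)=0$. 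The boundary of $A$ is contained in the set of paths where some pair of adjacent lines touches, where a line touches the floor or ceiling, or where a line touches zero, all on the closed interval $[\ell,r]$. At the endpoints these events are excluded because $\x,\y \in \A_+^n$ (strictly ordered) and lie strictly between floor and ceiling. In the interior, each such event requires a Brownian bridge (or a difference of two, which is again a scaled Brownian bridge) to touch a continuous, or Lipschitz, barrier without crossing it. This has probability zero by the standard fact that Brownian motion immediately crosses any Lipschitz curve it hits. Here we use that $\f,\g$ are Lipschitz, as in Proposition \ref{prop:FKG}. (iv) Check $\nu(A)>0$, so that the normalization is nonzero; this follows by the usual positivity of tube events for Brownian bridges. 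Then apply Lemma \ref{lem:mtfact}, which identifies the limit as $\atmeasure^{\x,\y,\bl}_{n,\ell,r,\f,\g}$ by Definition \ref{def:extFLE}.

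The main obstacle is step (iii), together with the $N$-dependence of $A$. To handle the latter, I will compare $A_N$, the lattice-interval version, with the fixed set $A$. Their symmetric difference involves only behavior on intervals of length $O(1/N)$ near $\ell$ and $r$. There the boundary data are strictly separated, so by uniform continuity this difference has $\nu_N$-probability tending to zero. One then sandwiches $A_N$ between $A$ intersected with shrinking neighborhoods and reruns the portmanteau argument in Lemma \ref{lem:mtfact}. For the touching-without-crossing claim for a barrier $h$, I will write $B - h$ as a Brownian motion with a Lipschitz (finite-variation) drift. By Girsanov on subintervals, or by a direct local argument, this process has a local minimum at a given level with probability zero, exactly as for Brownian motion itself.
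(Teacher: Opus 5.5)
Your proposal is correct and follows the paper's route: Donsker's invariance principle for the unconstrained walks, then Lemma \ref{lem:mtfact} with $F$ the area-tilt factor and $A$ the ordering/positivity/floor/ceiling event, where $\nu(\partial A)=0$ because a Brownian path cannot touch a Lipschitz barrier without crossing it (the paper phrases this via Blumenthal's 0-1 law). Your extra care is welcome: you use the bridge version of Donsker, replace $F$ by $F\wedge 1$ so that it is bounded everywhere rather than only on $A$, and check $\nu(A)>0$. The $N$-dependence of $A$ you flag is not actually an issue: the interpolations are constant outside $[\ell_N,r_N]$, so strict ordering at lattice points already gives strict ordering on all of $[\ell,r]$, and $\mu_N\propto F\,\mathbf{1}_A\,\nu_N$ holds exactly with the fixed set $A$, so no sandwiching is needed.
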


\begin{proof}[Proof of Lemma \ref{lem:inv}]
First note that {by Donsker's invariance principle \cite[Theorem 8.1.4]{D19Probability},} without the nonintersection conditioning or area tilt,
a uniform sample of $n$ random walk trajectories converges to $n$ independent Brownian bridges with endpoints
given by $\x,\y$.
Thus we may apply Lemma \ref{lem:mtfact} with $\nu_N$ being uniform on $\cW_N$ and $\nu$ being $n$ independent Brownian bridges,
with $f$ being the exponential area tilt factor in \eqref{eq:discat}, and $A$ being the event of nonintersection of paths, plus
the floor and ceiling constraints.
Note that the boundary of $A$ is the set of paths which intersect but do not cross (or do the same for the floors or ceilings),
which has measure zero under $\nu$ by Blumenthal's 0-1 law \cite[Theorem 7.2.3]{D19Probability} (using the fact that the floors
and ceilings are Lipschitz here).
Since $\bl$ takes positive values and the floors are all at least $0$, the function $f$ is bounded (by $1$)
and continuous on the space of paths.
Therefore Lemma \ref{lem:mtfact} applies which finishes the proof by the definition of
$\atmeasure^{\x,\y,\bl}_{n,\ell,r,\f,\g}$.
\end{proof}

To show that $\mu_N$ has positive correlations for increasing functions,
we will simply invoke the classical FKG criterion due to \cite{FKG71Correlation}, as mentioned above.
Specifically, \cite[Proposition 1]{FKG71Correlation} states that for any measure $\nu$ on a finite distributive
lattice (meaning a set with minimum and maximum operations $\wedge$ and $\vee$ satisfying appropriate conditions),
we have positive correlations for increasing functions as soon as
\begin{equation}
    \nu[x \wedge y] \cdot \nu[x \vee y] \geq \nu[x] \cdot \nu[y].
\end{equation}
For our purposes, we will work with the measure $\nu[\W] = \mu_N[\ssW]$.
Note that the underlying lattice may be taken to be finite because every element of $\cW_N$
is upper bounded by some constant multiple of $N$.
So it suffices to show that for any two collections of discrete
random walk paths $\W_1, \W_2 \in \cW_N$, we have
\begin{equation}
\label{eq:fkgcond}
    \mu_N \left[ \widetilde{\W_1 \wedge \W_2} \right] \cdot \mu_N \left[ \widetilde{\W_1 \vee \W_2} \right]
    \geq \mu_N \left[\ssW_1\right] \cdot \mu_N \left[\ssW_2\right],
\end{equation}
where $\wedge$ and $\vee$ denote the pointwise minimum and maximum of paths.
First, we demonstrate that these maxima and minima cannot escape $\cW_N$:

\begin{lemma}
\label{lem:stayin}
If $\W_1, \W_2 \in \cW_N$, then $\W_1 \wedge \W_2, \W_1 \vee \W_2 \in \cW_N$ as well.
Moreover, we also have
\begin{equation}
    \widetilde{\W_1 \wedge \W_2} = \ssW_1 \wedge \ssW_2
    \qquad \text{and}  \qquad
    \widetilde{\W_1 \vee \W_2} = \ssW_1 \vee \ssW_2,
\end{equation}
i.e.\ the maximum or minimum can be taken before or after linear interpolation.
\end{lemma}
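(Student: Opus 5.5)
We prove the two claims separately: first that $\W_1\wedge\W_2$ and $\W_1\vee\W_2$ satisfy each of the four conditions of Definition \ref{def:wn}, then that taking extrema commutes with linear interpolation. We treat only $\wedge$; the $\vee$ case is symmetric. Write $\W=\W_1\wedge\W_2$, so $W^i(t)=\min\{W_1^i(t),W_2^i(t)\}$ for $t\in[\ell,r]_N$.

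\emph{Boundary conditions and nonintersection.} The boundary values in condition 3 are deterministic, depending only on $\x,\y,N$, so $W_1^i$ and $W_2^i$ agree at $\ell_N$ and $r_N$, and so does their minimum. For nonintersection, fix $t$ and $i$. We have $W_1^i(t)>W_1^{i+1}(t)\ge W^{i+1}(t)$ and $W_2^i(t)>W_2^{i+1}(t)\ge W^{i+1}(t)$. Taking the minimum of the two left-hand sides gives $W^i(t)>W^{i+1}(t)$.

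\emph{Step condition (the main point).} Let $|t_1-t_2|=1/N$ and write $a_k=W_k^i(t_1)$, $b_k=W_k^i(t_2)$, so that $|a_k-b_k|=1/\sqrt N$. We use a parity observation: each walk lies on the lattice $\frac1{\sqrt N}\Z$ and changes parity at every step. Since all walks share the same left boundary value, $\sqrt N\,W_1^i(t)$ and $\sqrt N\,W_2^i(t)$ have the same parity for every $t$. Hence at each time either $W_1^i=W_2^i$, or the two differ by at least $2/\sqrt N$. Now consider two cases.
\begin{itemize}
\item If the same walk attains the minimum at both $t_1$ and $t_2$, then $W^i$ follows that walk, and the step has size $1/\sqrt N$.
\item Otherwise, say $a_1<a_2$ and $b_2<b_1$. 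Then $a_2-a_1\ge 2/\sqrt N$ by the parity observation, and the two walks cross within a single step. This forces $b_1=a_1+1/\sqrt N$ and $b_2=a_2-1/\sqrt N$, which gives $a_2-a_1=2/\sqrt N$ and $b_1=b_2$. Then $\min(b_1,b_2)-\min(a_1,a_2)=b_2-a_1=1/\sqrt N$.
\end{itemize}
In either case the step condition holds, and ties are covered by the same reasoning.

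\emph{Commuting with interpolation.} On each interval $[t_1,t_2]$ between consecutive grid points, the argument above shows that either the same walk is minimal at both endpoints, or $W_1^i$ and $W_2^i$ coincide at one endpoint. In both cases one of the two linear interpolants lies below the other on the whole interval. So the minimum of the interpolants is itself linear there and equals the interpolation of the minimum. On the edge pieces of $[\ell,r]\setminus[\ell,r]_N$ all functions are constant, so the identity holds trivially. This gives $\widetilde{\W_1\wedge\W_2}=\ssW_1\wedge\ssW_2$.

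\emph{Floor and ceiling.} Using this identity, $f^i<\sW_k^i<g^i$ for $k=1,2$ immediately gives $f^i<\min_k\sW_k^i<g^i$. Therefore $\W\in\cW_N$.

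The only delicate step is the step-condition case analysis, which relies on the shared-parity observation.
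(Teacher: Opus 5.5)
Your proof is correct and is essentially the paper's argument. The shared parity of $\sqrt N W_1^i(t)$ and $\sqrt N W_2^i(t)$ prevents the minimizing walk from switching strictly inside a grid step, and this gives both the step condition and the commutation with interpolation. The paper packages the step bound differently: parity rules out zero increments, and a minimum of $\sqrt N$-Lipschitz functions is $\sqrt N$-Lipschitz. Deducing the floor/ceiling constraint from the interpolation identity, as you do, is if anything more careful than the paper's ``trivially''.

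One cosmetic fix is needed. With strict $a_1<a_2$ and $b_2<b_1$, your second bullet derives $b_1=b_2$, so that case is actually empty. State it with $b_2\le b_1$, or simply declare the strict crossing impossible.
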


\begin{proof}[Proof of Lemma \ref{lem:stayin}]
The boundary conditions do not change when taking the pointwise maximum or minimum,
and likewise the constraints of staying between the floors and ceilings hold trivially.
Additionally, the constraint of remaining ordered is retained because if $a_1 < b_1$ and $a_2 < b_2$
then $a_1 \wedge a_2 < b_1 \wedge b_2$ and similarly for $\vee$.

The last property to check is that the increments are of size exactly $\frac{1}{\sqrt{N}}$.
First, the increments cannot be of size $0$ since $\sqrt{N} W_1^i(t)$ and $\sqrt{N} W_2^i(t)$
have the same parity for all $t \in [\ell,r]_N$ since the paths $W_1^i$ and $W_2^i$ have the same
boundary conditions, and the maximum and minimum thus have this same parity as well.
Next, the increments cannot be of size greater than $\frac{1}{\sqrt{N}}$ because the continuous
functions corresponding to $W_1^i$ and $W_2^i$ are $\sqrt{N}$-Lipschitz and the minimum or maximum
of any two $L$-Lipschitz functions are also $L$-Lipschitz.

This proves the first claim, and the second follows because of the preservation of parity as well,
which implies that the linear segment between two discrete points in $[\ell,r]_N$ in $\ssW_1 \vee \ssW_2$
is the same as one of the corresponding linear segments in $\ssW_1$ or $\ssW_2$, so the curve which attains
the maximum cannot change \emph{between} two discrete points, and similarly for the minimum.
\end{proof}

Finally we can verify the hypothesis \eqref{eq:fkgcond} of the classical FKG inequality,
in fact showing equality whenever the right-hand side is nonzero.

\begin{lemma}
\label{lem:linear}
For $\W_1, \W_2 \in \cW_N$, we have
\begin{equation}
    \mu_N \left[ \widetilde{\W_1 \wedge \W_2} \right] \cdot \mu_N \left[ \widetilde{\W_1 \vee \W_2} \right]
    = \mu_N \left[\ssW_1\right] \cdot \mu_N \left[\ssW_2\right],
\end{equation}
\end{lemma}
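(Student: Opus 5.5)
The plan is to verify the lattice identity directly, using the form of $\mu_N$ in \eqref{eq:discat}. Both sides factor into an indicator part and an exponential weight, and we handle these two parts separately.

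\emph{Indicator part.} By Lemma \ref{lem:stayin}, whenever $\W_1, \W_2 \in \cW_N$ we also have $\W_1 \wedge \W_2, \W_1 \vee \W_2 \in \cW_N$. So if the right-hand side is nonzero, all four indicators equal $1$. If the right-hand side is zero, then $\W_1$ or $\W_2$ lies outside $\cW_N$; in that case we interpret $\mu_N$ as charging only elements of $\cW_N$, so the statement is understood for $\W_1,\W_2 \in \cW_N$ as written. Either way, the indicator factors agree on both sides. The normalizing constants also cancel, since each side carries the constant squared.

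\emph{Exponential part.} Again by Lemma \ref{lem:stayin}, $\widetilde{\W_1 \wedge \W_2} = \ssW_1 \wedge \ssW_2$ and $\widetilde{\W_1 \vee \W_2} = \ssW_1 \vee \ssW_2$. It therefore suffices to show that for each $i$,
\begin{equation}
    \int_\ell^r \lambda^i(t) \left[ (\sW_1^i \wedge \sW_2^i)(t) + (\sW_1^i \vee \sW_2^i)(t) \right] dt
    = \int_\ell^r \lambda^i(t) \left[ \sW_1^i(t) + \sW_2^i(t) \right] dt .
\end{equation}
This follows from the pointwise identity $a \wedge b + a \vee b = a + b$. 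Multiplying by $\lambda^i(t)$, integrating, summing over $i$, and exponentiating then gives the product identity for the weights.

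The only point needing care is that the tilt depends on the path only through a functional that is linear in the path (area against the fixed weight $\lambda^i(t)$), together with the commutation of $\wedge$ and $\vee$ with linear interpolation. Lemma \ref{lem:stayin} already supplies that commutation, so this step is not a genuine obstacle. Equality, and not merely the inequality \eqref{eq:fkgcond}, holds because the energy is modular in the path.
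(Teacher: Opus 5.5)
Your proof is correct and follows the same route as the paper: Lemma \ref{lem:stayin} makes all four indicators equal to $1$ and lets you take $\wedge,\vee$ before or after interpolation, and the pointwise identity $a \wedge b + a \vee b = a + b$ then makes the exponential area weights match exactly. Your aside about the right-hand side vanishing is vacuous, since $\W_1,\W_2 \in \cW_N$ by hypothesis and the exponential weight is strictly positive.
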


\begin{proof}[Proof of Lemma \ref{lem:linear}]
By Lemma \ref{lem:stayin}, all of the relevant indicators in the definition of $\mu_N$ are simply $1$, 
and so we have
\begin{align}
    \frac{ \mu_N \left[ \widetilde{\W_1 \wedge \W_2} \right] \cdot \mu_N \left[ \widetilde{\W_1 \vee \W_2} \right] }
    { \mu_N \left[ \ssW_1 \right] \cdot \mu_N \left[ \ssW_2 \right] }
    &= \frac{ \mu_N \left[ \ssW_1 \wedge \ssW_2 \right] \cdot \mu_N \left[ \ssW_1 \vee \ssW_2 \right] }
    { \mu_N \left[ \ssW_1 \right] \cdot \mu_N \left[ \ssW_2 \right] } \\
    &\hspace{-2cm} = \Exp{
        - 2 \sum_{i=1}^n \int_\ell^r \lambda^i(t)
        \left( \sW_1^i(t) \wedge \sW_2^i(t) + \sW_1^i(t) \vee \sW_2^i(t)
        - \sW_1^i(t) - \sW_2^i(t) \right) \,dt
    },
\end{align}
and the expression inside of the inner parentheses above is zero.
\end{proof}

We conclude by collecting these lemmas into a proof of Proposition \ref{prop:FKG},
which finishes the proof of Proposition \ref{prop:lipschitzdecay} and in turn Theorem \ref{thm:spectralgap}.

\begin{proof}[Proof of Proposition \ref{prop:FKG}]
Let us first consider the truncations $f_M, g_M$ as in \eqref{eq:truncation}; these are
now bounded, continuous, and increasing functions on the space of paths.
As discussed above, Lemma \ref{lem:linear} and the classical FKG lattice criterion imply that
\begin{equation}
    \Cov \left[ f_M(\mathbf{Z}), g_M(\mathbf{Z}) \right] \geq 0
\end{equation}
for $\mathbf{Z} \sim \mu_N$, using the increasing nature of $f_M$ and $g_M$.
Let us first assume that $\x,\y \in \A_+^n$ (not its closure).
Then taking the limit $N \to \infty$ and using the bounded continuous nature of $f_M$ and $g_M$,
Lemma \ref{lem:inv} implies that
\begin{equation}
\label{eq:bdcov}
    \Cov \left[ f_M(\Y), g_M(\Y) \right] \geq 0
\end{equation}
as well.
If $\x, \y \in \overline{\A_+^n}$ we may take another {monotone} limit from within
$\A_+^n$ {(using monotonicity which implies that both terms in the covariance expression converge)}
to conclude \eqref{eq:bdcov} in this case as well.

Finally, since $\Cov[f(\Y),g(\Y)]$ exists by assumption, all of $f(\Y)$, $g(\Y)$, and $f(\Y) g(\Y)$
are integrable.
Thus by the dominated convergence theorem, we have
\begin{equation}
    \Cov \left[f(\Y), g(\Y)\right] = \lim_{M \to \infty} \Cov \left[ f_M(\Y), g_M(\Y) \right] \geq 0,
\end{equation}
finishing the proof.
\end{proof}